\title{Iitaka fibrations for dlt pairs polarized by a nef and log big divisor}
\author{Kenta Hashizume}
\date{2022/09/04}
\keywords{generalized lc pair, minimal model theory, Iitaka fibration}
\subjclass[2010]{14E30, 14J17, 14J40}
\address{Department of Mathematics, Graduate School of Science, Kyoto University, Kyoto 606-8502, Japan}
\email{hkenta@math.kyoto-u.ac.jp}
\newtheorem{thm}{Theorem}[section]
\newtheorem{lem}[thm]{Lemma}
\newtheorem{cor}[thm]{Corollary}
\theoremstyle{definition}
\newtheorem{defn}[thm]{Definition}
\newtheorem{rem}[thm]{Remark}
\newtheorem{exam}[thm]{Example}
\newtheorem*{ack}{Acknowledgments} 
\newtheorem*{b-divisor}{b-divisors} 
\newtheorem*{g-pair}{Generalized pairs} 
\newtheorem*{adj-g-pair}{Divisorial adjunction for generalized pairs} 
\newtheorem*{mmp-g-pair}{MMP for generalized pairs}
\newtheorem{step1}{Step}
\newtheorem{step2}{Step}
\newtheorem{step3}{Step}
\newtheorem*{claim*}{Claim}
\begin{document}

\maketitle

\begin{abstract}
We study lc pairs polarized by a nef and log big divisor. 
After proving the minimal model theory for projective lc pairs polarized by a nef and log big divisor, we prove the effectivity of the Iitaka fibrations and some boundedness results for dlt pairs polarized by a nef and log big divisor. 
\end{abstract}

\tableofcontents

\section{Introduction}\label{sec1}

Throughout this paper, we will work over the complex number field $\mathbb{C}$. 

In this paper, we study triples $(X,\Delta,M)$ such that $(X,\Delta)$ is a projective lc pair and $M$ is a nef $\mathbb{R}$-divisor on $X$ that is log big with respect to $(X,\Delta)$ (see Definition \ref{defn--abund}). 
By definition, the log bigness coincides with the bigness in the case of klt pairs, and 
 a nef $\mathbb{R}$-divisor $D$ on a normal projective variety $V$ is log big with respect to a projective lc pair $(V,B)$ if and only if $(D^{{\rm dim}V})>0$ and $(S\cdot D^{n})>0$ for any $n$-dimensional lc center $S$ of $(V,B)$. 
Especially, all ample divisors are nef and log big with respect to any projective lc pair. 
The triples $(X,\Delta,M)$ can be regarded as generalized pairs defined by Birkar--Zhang \cite{bz}. 
Generalized pairs are the main objects in the study of lc-trivial fibrations. 
Because the log bigness is one of the special cases of the property of being log abundant (see Definition \ref{defn--abund}), the recent progress of the canonical bundle formula for lc-trivial fibrations by Floris--Lazi\'c \cite{floris-lazic} (see also \cite[Theorem 1.2]{hu-lctrivial-b-div} by Hu) implies that the above triples $(X,\Delta,M)$ appear as the structures on the base varieties of special lc-trivial fibrations. 
The goal of this paper is to develop the theory of the above triples $(X,\Delta,M)$ from viewpoints of the minimal model theory. 

We start with the minimal model theory for the above triples $(X,\Delta,M)$. 
If $(X,\Delta)$ is a klt pair, then we can run a $(K_{X}+\Delta+M)$-MMP and we get a birational contraction $\phi \colon X\dashrightarrow X'$ such that $\phi_{*}(K_{X}+\Delta+M)$ is semi-ample or $X'$ has the structure of a Mori fiber space with respect to $\phi_{*}(K_{X}+\Delta+M)$. 
This fact is a consequence of the celebrated result by Birkar--Cascini--Hacon--M\textsuperscript{c}Kernan \cite{bchm}. 
Even if $(X,\Delta)$ is not klt, the same statement is known by the author and Hu \cite{hashizumehu} under the assumption of the ampleness of $M$. 
In the general case, the minimal model theory for $K_{X}+\Delta+M$ is not known. 
However, the abundance theorem (more strongly, the effective base point free theorem in the $\mathbb{Q}$-boundary case) was proved by Fujino \cite{fujino-eff-basepointfree} if $K_{X}+\Delta+M$ is nef and Cartier. 
Moreover, Fujino \cite{fujino-fund-slc} proved the abundance theorem for $K_{X}+\Delta+M$ in the case where $(X,\Delta)$ is slc, a more general situation  than the lc case. 

The first main result of this paper is the minimal model theory for $K_{X}+\Delta+M$. 

\begin{thm}\label{thm--mmp-neflogbig-main-intro}
Let $(X,\Delta)$ be a projective lc pair, and let $M$ be a nef $\mathbb{R}$-divisor on $X$ that is log big with respect to $(X,\Delta)$. 
Suppose that $M$ is a finite $\mathbb{R}_{>0}$-linear combination of nef $\mathbb{Q}$-Cartier divisors on $X$. 
Then there exists a birational contraction $\phi \colon X\dashrightarrow X'$, which is a sequence of steps of a $(K_{X}+\Delta+M)$-MMP, such that $X'$ satisfies one of the following conditions. 
\begin{itemize}
\item
$\phi_{*}(K_{X}+\Delta+M)$ is semi-ample, or 
\item
there is a contraction $X' \to Z$ to a projective variety $Z$ such that ${\rm dim}Z <{\rm dim}X'$, $-\phi_{*}(K_{X}+\Delta+M)$ is ample over $Z$, and the relative Picard number is one. 
\end{itemize}
Note that $K_{X'}+\phi_{*}\Delta$ is $\mathbb{R}$-Cartier and $(X',\phi_{*}\Delta)$ is an lc pair. 
\end{thm}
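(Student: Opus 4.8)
The plan is to deduce the statement from the minimal model theory for lc pairs polarized by an \emph{ample} $\mathbb{R}$-divisor, established by the author and Hu in \cite{hashizumehu}. Note first that $K_{X}+\Delta+M$ is $\mathbb{R}$-Cartier, since $K_{X}+\Delta$ is $\mathbb{R}$-Cartier and $M$ is a finite $\mathbb{R}_{>0}$-linear combination of $\mathbb{Q}$-Cartier divisors. The key point is that \emph{log} bigness of $M$ is exactly the amount of positivity needed to replace $M$, up to $\mathbb{R}$-linear equivalence, by an ample divisor plus a small effective divisor that does not interact with the lc structure of $(X,\Delta)$.

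I would first carry out this replacement. Since $M$ is nef, by Nakamaye's theorem the augmented base locus $\mathbf{B}_{+}(M)$ equals the union of the subvarieties $V\subseteq X$ with $(M^{\dim V}\cdot V)=0$; hence the hypothesis that $M$ is log big with respect to $(X,\Delta)$ means precisely that no lc center of $(X,\Delta)$ is contained in $\mathbf{B}_{+}(M)$. By Kodaira's lemma there is an ample $\mathbb{R}$-divisor $A$ and an effective $\mathbb{R}$-divisor $E$ with $M\sim_{\mathbb{R}}A+E$ and $\Supp E\subseteq\mathbf{B}_{+}(M)$, so that no lc center of $(X,\Delta)$ is contained in $\Supp E$; in particular the log canonical threshold of $E$ with respect to $(X,\Delta)$ is positive. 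Thus, for all sufficiently small $\varepsilon>0$, the pair $(X,\Delta+\varepsilon E)$ is lc, and for generic such $\varepsilon$ it has exactly the same lc centers as $(X,\Delta)$. Writing $A_{\varepsilon}:=\varepsilon A+(1-\varepsilon)M$, which is ample, we get $M\sim_{\mathbb{R}}A_{\varepsilon}+\varepsilon E$ and therefore $K_{X}+\Delta+M\sim_{\mathbb{R}}K_{X}+(\Delta+\varepsilon E)+A_{\varepsilon}$.

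Next I would apply \cite{hashizumehu} to the projective lc pair $(X,\Delta+\varepsilon E)$ polarized by the ample $\mathbb{R}$-divisor $A_{\varepsilon}$, obtaining $\phi\colon X\dashrightarrow X'$ — finitely many steps of a $\bigl(K_{X}+(\Delta+\varepsilon E)+A_{\varepsilon}\bigr)$-MMP — such that either $\phi_{*}\bigl(K_{X}+\Delta+\varepsilon E+A_{\varepsilon}\bigr)$ is semi-ample, or there is a contraction $X'\to Z$ as in the second bullet for the divisor $K_{X}+\Delta+\varepsilon E+A_{\varepsilon}$. Since $K_{X}+\Delta+M$ and $K_{X}+\Delta+\varepsilon E+A_{\varepsilon}$ are $\mathbb{R}$-linearly equivalent, they have the same numerical class and the same flips, so $\phi$ is also finitely many steps of a $(K_{X}+\Delta+M)$-MMP, and $\phi_{*}(K_{X}+\Delta+M)\sim_{\mathbb{R}}\phi_{*}(K_{X}+\Delta+\varepsilon E+A_{\varepsilon})$; both alternatives of the theorem now transfer. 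For the final sentence: since $M$ is nef, each contracted extremal ray $R$ satisfies $(K_{X}+\Delta+M)\cdot R<0$ and $M\cdot R\ge 0$, hence $(K_{X}+\Delta)\cdot R<0$, so $\phi$ is simultaneously a $(K_{X}+\Delta)$-MMP; as $(X,\Delta)$ is lc, this preserves the lc property and the $\mathbb{R}$-Cartier property of $K_{X}+\Delta$.

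The main obstacle is the choice of $E$ in the middle step: one must arrange that adding $\varepsilon E$ neither violates log canonicity nor creates new lc centers, which is exactly where the strength of \emph{log} bigness (rather than bare bigness of $M$) enters — it is what guarantees that $\Supp E$ can be kept away from every lc center. A secondary technical point is the justification of the last sentence: to make sense of the $(K_{X}+\Delta+M)$-MMP step by step and to see that it stays a $(K_{X}+\Delta)$-MMP throughout — in particular that the relevant transforms of $M$ remain nef — it is convenient to phrase everything in terms of the generalized pair $(X,\Delta,M)$, for which the hypothesis that $M$ is a finite $\mathbb{R}_{>0}$-linear combination of nef $\mathbb{Q}$-Cartier divisors is precisely what is required.
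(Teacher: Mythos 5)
There is a genuine gap, and it occurs at the very first step of your reduction. You assert that log bigness of $M$ with respect to $(X,\Delta)$ ``means precisely that no lc center of $(X,\Delta)$ is contained in $\mathbf{B}_{+}(M)$.'' This is false. By Nakamaye's theorem $\mathbf{B}_{+}(M)=\mathrm{Null}(M)=\bigcup\{V : (M|_{V})^{\dim V}=0\}$, and log bigness only says that no lc center $S$ is itself one of the null subvarieties $V$; it does not prevent $S$ from being \emph{contained in} a larger null subvariety. For instance, if $V\subset X$ is a surface with $M|_{V}$ nef of numerical dimension $1$ (say $M=f^{*}(\text{ample})$ for a birational contraction $f$ contracting the divisor $V$ to a curve), then $V\subseteq\mathrm{Null}(M)$, yet a curve $S\subset V$ dominating $f(V)$ satisfies $(M\cdot S)>0$; if $S$ is an lc center of $(X,\Delta)$ and $V$ is not, then $M$ is log big with respect to $(X,\Delta)$ while $S\subseteq\mathbf{B}_{+}(M)$. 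Since $\Supp E\supseteq\mathbf{B}(M-A)\supseteq\mathbf{B}_{+}(M)$ for \emph{every} decomposition $M\sim_{\mathbb{R}}A+E$ with $A$ ample and $E\geq 0$, in such a situation every admissible $E$ contains the lc center $S$ in its support, the lc threshold of $E$ with respect to $(X,\Delta)$ is $0$, and $(X,\Delta+\varepsilon E)$ is not lc for any $\varepsilon>0$. So the pair to which you want to apply the ample-polarization result of \cite{hashizumehu} need not exist, and the reduction collapses. This is exactly the point: log bigness is strictly weaker than ``ample modulo an effective divisor missing the lc centers,'' which is why the theorem does not follow from \cite{hashizumehu} by a perturbation and why Section \ref{sec3} develops the termination machinery for generalized pairs (Theorems \ref{thm--mmp-induction-2}, \ref{thm--mmp-main-1}, \ref{thm--mmp-neflogbig}, generalizing \cite[Theorem 3.5]{has-finite}).

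For comparison, the actual proof never decomposes $M$ itself. It first uses \cite[Theorem 1.5]{hashizumehu} with the ample perturbation placed in the \emph{nef part} of the generalized pair $(X,\Delta,l\boldsymbol{\rm M}+t\overline{A})$ to reach a model where $K_{X}+\Delta+lM$ is nef, then invokes Fujino's semi-ampleness to produce $B\sim_{\mathbb{R}}K_{X}+\Delta+lM$ which is semi-ample and log big, rewrites $l(K_{X}+\Delta+M)\sim_{\mathbb{R}}(l-1)(K_{X}+\Delta+\tfrac{1}{l-1}B)$, and terminates that MMP using the preservation of log abundance (Corollary \ref{cor--abundant-mmp}) together with Theorem \ref{thm--mmp-main-1}. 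Your treatment of the non-pseudo-effective case and your observation that a $(K_{X}+\Delta+M)$-negative extremal ray is $(K_{X}+\Delta)$-negative (so lc-ness of $(X',\phi_{*}\Delta)$ is preserved, modulo the length-of-extremal-rays argument needed to keep the transforms of $M$ nef) are fine and agree with the paper, but the pseudo-effective case, which is the heart of the theorem, is not established by your argument.
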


Theorem \ref{thm--mmp-neflogbig-main-intro} is proved by generalizing \cite[Theorem 3.5]{has-finite} to the context of generalized pairs. 
For the generalization, see Theorem \ref{thm--mmp-main-1}. 
To prove Theorem \ref{thm--mmp-neflogbig-main-intro}, we only need to prove the termination of the MMP. 
It is because the abundance theorem in the situation of Theorem \ref{thm--mmp-neflogbig-main-intro} was already proved by Fujino \cite{fujino-eff-basepointfree}. 
The proof in \cite{fujino-eff-basepointfree} heavily depends on a vanishing theorem for quasi-log schemes (see \cite[Theorem 5.7.3]{fujino-book}). 
On the other hand, for the termination of the MMP of Theorem \ref{thm--mmp-neflogbig-main-intro}, we need to discuss the MMP in the framework of generalized pairs (see, for example, \cite{hanli} by Han--Li) and we need the techniques developed in \cite{has-finite}.
By Theorem \ref{thm--mmp-main-1}, we also know the existence of a minimal model for generalized lc pairs with a polarization (Theorem \ref{thm--mmp-polarized-gpair}), which was mentioned in \cite{has-nonvan-gpair}.  
We note that we do not need the existence of flips for $\mathbb{Q}$-factorial generalized lc pairs by Hacon--Liu \cite{hacon-liu} in the proof of Theorem \ref{thm--mmp-neflogbig-main-intro} or Theorem \ref{thm--mmp-polarized-gpair}. 

We apply Theorem \ref{thm--mmp-neflogbig-main-intro} to study the effectivity of the Iitaka fibrations for the above triples $(X,\Delta,M)$ such that $(X,\Delta)$ is a dlt pair. 
The effectivity of the Iitaka fibrations for higher dimensional lc pairs was studied by Fujino--Mori \cite{fujino-mori}, Viehweg--Zhang \cite{viehweg-zhang}, Birkar--Zhang \cite{bz}, and Hacon--Xu \cite{haconxu-cy-bound}. 
Those results are based on the canonical bundle formula for the Iitaka fibrations \cite{fujino-mori}. 
If we know the effectivity of the Iitaka fibrations for lc pairs, then we know the existence of an integer $m$, which is independent of the variety, such that every projective lc pair $(Y,0)$ of fixed dimension with $\kappa(Y,K_{Y})\geq 0$ satisfies $H^{0}(Y, \mathcal{O}_{Y}( m K_{Y}))\neq 0$. 
Especially, if $K_{Y}$ is numerically trivial, then $mK_{Y}$ is Cartier. 
Hence, the effectivity of the Iitaka fibrations for lc pairs is stronger than the boundedness of the Cartier indices of numerically trivial lc varieties of fixed dimension. 

In the case of klt pairs whose boundary divisors are big, the effectivity of the Iitaka fibrations is known by Hacon--Xu \cite{haconxu-cy-bound}. 
In the non-klt case, the effectivity of the Iitaka fibrations for lc pairs of log general type was proved by Hacon--M\textsuperscript{c}Kernan--Xu \cite{hmx-acc}, and their result was  generalized to the context of generalized lc pairs (see Definition \ref{defn--gen-pair}) by Birkar--Zhang \cite{bz}. 
The theorem by Birkar--Zhang \cite{bz} implies the effectivity of the Iitaka fibrations for $K_{X}+\Delta+M$ of the triples $(X,\Delta,M)$ such that $K_{X}+\Delta+M$ are big and $M$ are finite $\mathbb{R}_{>0}$-linear combinations of nef $\mathbb{Q}$-Cartier divisors. 
In this paper, we study the case where $K_{X}+\Delta+M$ has an intermediate Iitaka dimension. 

\begin{thm}\label{thm--eff-iitaka-intro}
Let $d$ and $p$ be positive integers, and let $\Phi \subset \mathbb{Q}_{\geq0}$ be a DCC set. 
Then there exists a positive integer $m$, depending only on $d$, $p$ and $\Phi$, satisfying the following. 
Let $(X,\Delta)$ be a projective dlt pair and let $M$ be a nef $\mathbb{Q}$-divisor on $X$ such that 
\begin{itemize}
\item
${\rm dim}X=d$, 
\item
the coefficients of $\Delta$ belong to $\Phi$, 
\item
$pM$ is Cartier, and 
\item
$M$ is log big with respect to $(X,\Delta)$ and $K_{X}+\Delta+M$ is pseudo-effective. 
\end{itemize}
Then, the linear system $|\lfloor lm(K_{X}+\Delta+M)\rfloor |$ is not empty and it defines a map birational to the Iitaka fibration for every positive integer $l$. 
\end{thm}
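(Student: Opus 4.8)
We outline the argument, which follows the general strategy of Fujino--Mori and Birkar--Zhang: reduce by the minimal model theory to the case where the Iitaka fibration is a morphism, descend to the base by the canonical bundle formula, and invoke the known effectivity of the Iitaka fibration in the big case. First I would apply Theorem~\ref{thm--mmp-neflogbig-main-intro} and run a $(K_{X}+\Delta+M)$-MMP $\phi\colon X\dashrightarrow X'$. Since $K_{X}+\Delta+M$ is pseudo-effective and pseudo-effectivity is preserved along the steps of the MMP, the Mori fiber space alternative cannot occur, because otherwise the restriction of $\phi_{*}(K_{X}+\Delta+M)$ to a general fiber would be anti-ample. Hence $K_{X'}+\Delta'+M':=\phi_{*}(K_{X}+\Delta+M)$ is semi-ample, so there is a contraction $f\colon X'\to Y$ with $K_{X'}+\Delta'+M'\sim_{\mathbb{Q}}f^{*}A$ for an ample $\mathbb{Q}$-divisor $A$ on $Y$; here $\dim Y=\kappa(X,K_{X}+\Delta+M)\le d$ and $f$ represents the Iitaka fibration. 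Because $\phi$ is a birational contraction whose steps are $(K_{X}+\Delta+M)$-non-positive, comparing $K_{X}+\Delta+M$ and $K_{X'}+\Delta'+M'$ on a common resolution shows that the Iitaka fibrations of the two classes are birationally equivalent, and that it is enough to produce an integer $m$, depending only on $d,p,\Phi$, with $|\lfloor lm(K_{X'}+\Delta'+M')\rfloor|$ defining a map birational to $f$ for every $l\ge 1$; the transfer back to $X$ only modifies the round-downs along divisors contracted by $\phi$. If $\dim Y=d$, i.e. $K_{X}+\Delta+M$ is big, we are already done by Birkar--Zhang as recalled in the introduction, so we may assume $\dim Y<d$.

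Next, since $M$ is nef and log big with respect to $(X,\Delta)$ it is log abundant, so the canonical bundle formula of Floris--Lazi\'c applies to $f$. Combined with the known boundedness of the invariants of this formula, this should yield a generalized lc pair $(Y,\Delta_{Y},M_{Y})$ with $K_{X'}+\Delta'+M'\sim_{\mathbb{Q}}f^{*}(K_{Y}+\Delta_{Y}+M_{Y})$, where $K_{Y}+\Delta_{Y}+M_{Y}$ is big (because $\dim Y=\kappa$), $\dim Y\le d$, the coefficients of $\Delta_{Y}$ lie in a DCC set depending only on $d$ and $\Phi$, and $M_{Y}$ is nef with $N M_{Y}$ Cartier (equivalently, a bounded $\mathbb{R}_{>0}$-combination of nef Cartier divisors) for some $N$ depending only on $d$ and $p$. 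Granting this, the effectivity of the Iitaka fibration in the big case for generalized lc pairs --- the theorem of Birkar--Zhang cited in the introduction --- produces $m$, depending only on $d,p,\Phi$, such that $|\lfloor lm(K_{Y}+\Delta_{Y}+M_{Y})\rfloor|$ is nonempty and defines a map birational to the Iitaka fibration of $(Y,\Delta_{Y},M_{Y})$ for every $l\ge 1$.

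To finish, pull these sections back by $f$: since $K_{X'}+\Delta'+M'$ is $f$-trivial, the system $|\lfloor lm(K_{X'}+\Delta'+M')\rfloor|$ contains the pullback of $|\lfloor lm(K_{Y}+\Delta_{Y}+M_{Y})\rfloor|$ up to a fixed effective divisor, hence defines a map birational to $f$; by the reduction in the first paragraph the same then holds on $X$ for $K_{X}+\Delta+M$, which is the assertion. The passage from $l=1$ to all $l$ is automatic from bigness on $Y$ (the map attached to $|lD|$ dominates that of $l|D|$). The degenerate case $\dim Y=0$ is the boundary case, where the statement is equivalent to the boundedness of the Cartier index of the associated generalized Calabi--Yau type pair and is covered by the same circle of results.

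I expect the crux of the proof to be the middle step: establishing the canonical bundle formula on $Y$ with all the data controlled \emph{uniformly} in $d,p,\Phi$ --- the DCC-ness of the discriminant $\Delta_{Y}$ and the bounded denominator of the moduli part $M_{Y}$ --- so that $(Y,\Delta_{Y},M_{Y})$ varies in a class to which the big-case effectivity theorem applies. The log bigness of $M$ and the dlt hypothesis on $(X,\Delta)$ enter precisely here, through the control of the lc centers of $(X',\Delta')$ and hence of the discriminant of $f$. The remaining points --- preservation of pseudo-effectivity under the MMP, the comparison of Iitaka fibrations under the birational contraction $\phi$, and the bookkeeping of round-downs under $f^{*}$ and under $\phi$ --- are routine.
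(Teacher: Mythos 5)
Your overall architecture---run the MMP of Theorem \ref{thm--mmp-neflogbig-main-intro} so that the Iitaka fibration becomes a morphism $f\colon X'\to Y$, descend $\phi_{*}(K_{X}+\Delta+M)$ to a generalized lc pair on $Y$ with uniformly controlled coefficients and moduli part, and then invoke \cite[Theorem 1.3]{bz} in the big case---is exactly the paper's: Theorem \ref{thm--eff-iitaka-intro} is deduced from Theorem \ref{thm--base-iitaka-intro} together with \cite[Theorem 1.3]{bz}, and Theorem \ref{thm--base-iitaka-intro} is proved by applying a canonical bundle formula to $f$. The surrounding reductions you describe (pseudo-effectivity excluding the Mori fiber space outcome, comparison of section rings along the MMP, pullback of sections from $Y$) are all fine and match the paper.

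The genuine gap is the middle step, which you yourself identify as the crux but then dispose of by appealing to the Floris--Lazi\'c canonical bundle formula ``combined with the known boundedness of the invariants of this formula.'' That boundedness is not known in the setting you need. The DCC property of the discriminant $\Delta_{Y}$ does follow from the ACC for generalized lc thresholds, but a bounded b-Cartier index for the moduli part is only available in the literature when the total space is of Fano type over the base, i.e.\ in the generalized klt case (Filipazzi--Moraga, recorded as Lemma \ref{lem--klt-canbundleformula}); here $(X',\Delta',\overline{M})$ is only generalized dlt and typically has lc centers, possibly horizontal over $Y$. Supplying this step is the content of the paper's Section \ref{sec4}: Theorem \ref{thm--eff-dlt-general} is proved by induction on dimension, split into the case where all generalized lc centers are vertical (Theorem \ref{thm--eff-vertical}, which reduces to the Fano-type formula after running an auxiliary MMP) and the case of a component $S$ of $\lfloor\Delta'\rfloor$ dominating $Y$, where one restricts to $S$, applies the inductive hypothesis, and glues the two moduli b-divisors. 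The induction is powered by Lemma \ref{lem--extension-lc-center}, a surjectivity of the restriction maps $H^{0}(X',\mathcal{O}_{X'}(lp(K_{X'}+\Delta'+M')))\to H^{0}(S,\cdots)$ coming from the Kodaira-type vanishing theorem for nef and log big divisors \cite{fujino-abund-logbig}; this is precisely where the log bigness of $M$ and the dlt hypothesis are consumed, and it also handles your ``$\dim Y=0$'' case via the effective non-vanishing Theorem \ref{thm--eff-dlt-trivial}. Without an argument for this step, your write-up is a correct outline of the paper's strategy rather than a proof.
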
 

More precisely, we prove the existence of a generalized lc pair such that the coefficients of the boundary divisor and the b-Cartier index of the nef part are controlled. 

\begin{thm}\label{thm--base-iitaka-intro}
Let $d$ be a positive integer, and let $\Phi \subset \mathbb{Q}_{\geq0}$ be a DCC set. 
Then there exist positive integers $n$ and $q$ and a DCC set $\Omega \subset\mathbb{Q}_{\geq 0}$, depending only on $d$ and $\Phi$, satisfying the following. 
Let $(X,\Delta)$ be a projective dlt pair and let $M=\sum_{i}\mu_{i}M_{i}$ be a nef $\mathbb{R}$-divisor on $X$, which is the sum of nef and log big Cartier divisors $M_{i}$ with respect to $(X,\Delta)$, such that 
\begin{itemize}
\item
${\rm dim}X=d$, 
\item
the coefficients of $\Delta$ and $\mu_{i}$ belong to $\Phi$, and 
\item
$K_{X}+\Delta+M$ is pseudo-effective but not big. 
\end{itemize}
Then the variety $Z:={\boldsymbol{\rm Proj}}\bigoplus_{l \in \mathbb{Z}_{\geq 0}}H^{0}(X,\mathcal{O}_{X}(\lfloor l(K_{X}+\Delta+M)\rfloor))$, which is well defined by Theorem \ref{thm--mmp-neflogbig-main-intro}, has the structure of a generalized lc pair $(Z,\Delta_{Z},\boldsymbol{\rm N})$ such that 
\begin{itemize}
\item
$H^{0}(X,\mathcal{O}_{X}(\lfloor ln(K_{X}+\Delta+M)\rfloor)) \simeq H^{0}(Z,\mathcal{O}_{Z}(\lfloor ln(K_{Z}+\Delta_{Z}+\boldsymbol{\rm N}_{Z})\rfloor))$ for every positive integer $l$, 
\item
the coefficients of $\Delta_{Z}$ belong to $\Omega$, and 
\item
$q \boldsymbol{\rm N}$ is b-Cartier. 
\end{itemize}
\end{thm}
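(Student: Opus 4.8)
The plan is to reduce the assertion to the canonical bundle formula for the Iitaka fibration, applied in the framework of generalized pairs. First I would invoke Theorem \ref{thm--mmp-neflogbig-main-intro}: since $M$ is a finite $\mathbb{R}_{>0}$-linear combination of nef $\mathbb{Q}$-Cartier divisors (each $M_i$ is Cartier, so this holds) and $M$ is log big, running a $(K_X+\Delta+M)$-MMP terminates, and because $K_X+\Delta+M$ is pseudo-effective but not big we land on a model $\phi\colon X\dashrightarrow X'$ with $\phi_*(K_X+\Delta+M)$ semi-ample, inducing a contraction $f'\colon X'\to Z$ with $0<\dim Z<\dim X'$ and $\phi_*(K_X+\Delta+M)\sim_{\mathbb{R}}f'^*A_Z$ for some ample $\mathbb{R}$-divisor $A_Z$ on $Z$. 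Since $\phi$ is a birational contraction, the section rings of $K_X+\Delta+M$ and of $\phi_*(K_X+\Delta+M)$ agree, so $Z$ is indeed the Proj of the stated ring and $f'$ is the Iitaka fibration. Passing to a dlt model or using the MMP output, I may assume $(X',\phi_*\Delta)$ is (divisorially) log terminal with $\phi_*\Delta$ having coefficients still in the DCC set $\Phi$.

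Next I would set up the lc-trivial fibration: on $X'$ (or a suitable higher birational model where things become klt along the generic fibre, after subtracting components of $\phi_*\Delta$ that dominate $Z$), the divisor $K_{X'}+\phi_*\Delta+\phi_*M$ is $f'$-trivial, i.e.\ $\mathbb{R}$-linearly equivalent over $Z$ to $0$. Applying the generalized canonical bundle formula — the Floris--Lazi\'c version of the canonical bundle formula for lc-trivial fibrations cited in the introduction, in the log big (hence log abundant) case — produces a generalized pair structure $(Z,\Delta_Z,\boldsymbol{\mathrm N})$ with
\[
K_{X'}+\phi_*\Delta+\phi_*M \sim_{\mathbb{R}} f'^*\bigl(K_Z+\Delta_Z+\boldsymbol{\mathrm N}_Z\bigr),
\]
where $\Delta_Z$ is the discriminant part and $\boldsymbol{\mathrm N}$ the moduli b-divisor, and $K_Z+\Delta_Z+\boldsymbol{\mathrm N}_Z$ is big (it is an ample class pulled back). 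The compatibility of section rings, $H^0(X,\mathcal O_X(\lfloor ln(K_X+\Delta+M)\rfloor))\simeq H^0(Z,\mathcal O_Z(\lfloor ln(K_Z+\Delta_Z+\boldsymbol{\mathrm N}_Z)\rfloor))$ for a suitably divisible $n$, follows from this linear equivalence together with the projection formula and the semi-ampleness, once $n$ is chosen to clear denominators of $\phi_*\Delta$, $\phi_*M$ and $\Delta_Z$.

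The substantive point is the control of the coefficients and the b-Cartier index. For the discriminant part $\Delta_Z$: its coefficients are computed as log canonical thresholds of the boundary against fibres, and since the coefficients of $\phi_*\Delta$ lie in the DCC set $\Phi$ and those of $\phi_*M$ (being an $\mathbb{R}_{>0}$-combination with coefficients $\mu_i\in\Phi$ of Cartier divisors) lie in a DCC set depending only on $\Phi$, the ACC for log canonical thresholds (Hacon--M\textsuperscript{c}Kernan--Xu) forces the coefficients of $\Delta_Z$ into a DCC set $\Omega$ depending only on $d$ and $\Phi$; here one also uses that $\dim Z<d$ to bound the relevant dimension. For the b-Cartier index of $\boldsymbol{\mathrm N}$: the moduli part is governed by the variation of the fibres, and in the log big case the generalized pair on the fibre is of a bounded type (its canonical divisor plus nef part is numerically trivial, with coefficients in a DCC set and nef part of bounded index since each $M_i$ is Cartier), so by the boundedness of such fibres — the Calabi--Yau type boundedness in the spirit of Hacon--Xu, or effective adjunction — the b-Cartier index $q$ can be taken depending only on $d$ and $\Phi$. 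The main obstacle is precisely this last step: extracting an effective, dimension-and-$\Phi$-only bound on the index of $\boldsymbol{\mathrm N}$, which requires effective adjunction for the lc-trivial fibration and hence boundedness of the generic fibre together with its polarizing structure; everything else is a fairly direct assembly of Theorem \ref{thm--mmp-neflogbig-main-intro}, the canonical bundle formula, and ACC/DCC for thresholds.
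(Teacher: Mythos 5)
Your opening reduction is exactly the paper's: run the MMP of Theorem \ref{thm--mmp-neflogbig-main-intro} to reach a semi-ample model, let $\pi\colon X'\to Z$ be the induced contraction, and use the global ACC on the general fibre to put the $\mu_i$ in a finite set and hence bound the Cartier index $p$ of the nef part. The DCC for $\Delta_Z$ via ACC for (generalized) lc thresholds is also the right mechanism.

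The genuine gap is the one you yourself flag and then propose to close by the wrong route. You suggest bounding the b-Cartier index of $\boldsymbol{\rm N}$ (and implicitly the integer $n$ with $n(K_{X'}+\Delta'+M')\sim\pi^{*}D$) via ``boundedness of the generic fibre together with its polarizing structure, in the spirit of Hacon--Xu.'' That route is unavailable: Example \ref{exam--unbounded} exhibits, already for $d=2$ with $\Delta$ reduced and $M$ Cartier, the unbounded family $X_{n}=\mathbb{P}_{\mathbb{P}^1}(\mathcal{O}\oplus\mathcal{O}(-n))$ of numerically trivial fibres of exactly the type that would occur here. The paper instead proceeds by induction on dimension (Theorems \ref{thm--eff-dlt-trivial}--\ref{thm--eff-dlt-general}): effective non-vanishing on the fibres is obtained by restricting to a component $S$ of $\lfloor\Delta\rfloor$ and using the surjectivity of the restriction map on pluri-log-canonical sections (Lemma \ref{lem--extension-lc-center}), which rests on the Kodaira-type vanishing theorem for nef and log big divisors --- this is precisely where the log bigness of $M$ enters and why it cannot be relaxed to bigness (Example \ref{exam-eff-nonvan-counter}); the base case is Birkar's effective complements. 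Likewise, the canonical bundle formula is only applied in the klt/Fano-type situation where the Filipazzi--Moraga effective version (Lemma \ref{lem--klt-canbundleformula}) gives $n$ and $q$ from boundedness of complements, and the horizontal non-klt locus is handled by adjunction to $S$ and induction, with the moduli b-divisors on $Z$ coming from $X'$ and from $S$ glued together at the end. Your alternative of ``subtracting the components of $\phi_{*}\Delta$ that dominate $Z$'' to force the klt case changes the discriminant and moduli parts and does not produce the generalized pair structure claimed for the original $(X,\Delta,M)$; without the restriction-surjectivity step there is also no argument that $\pi|_{S}$ is a contraction, which the induction needs.
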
 
Theorem \ref{thm--eff-iitaka-intro} directly follows from Theorem \ref{thm--base-iitaka-intro} and \cite[Theorem 1.3]{bz}. 

We further study the structures of the Iitaka fibrations. 
In \cite{li-bounded}, Li defined the Iitaka volumes of $\mathbb{Q}$-Cartier divisors on normal projective varieties (see Definition \ref{defn--iitakavol}). 
In our context, with notations as in Theorem \ref{thm--base-iitaka-intro}, the Iitaka volume of $K_{X}+\Delta+M$ coincides with the volume of $K_{Z}+\Delta_{Z}+\boldsymbol{\rm N}_{Z}$. 
The following theorem is the DCC for the Iitaka volumes of $K_{X}+\Delta+M$. 

\begin{thm}\label{thm--dcc-iitakavol-intro}
Let $d$ be a positive integer and $\Phi \subset \mathbb{Q}_{\geq0}$ a DCC set. 
Then there exists a DCC set $\Omega \subset \mathbb{Q}_{>0}$, depending only on $d$ and $\Phi$, satisfying the following. 
Let $(X,\Delta)$ be a projective dlt pair and let $M=\sum_{i}\mu_{i}M_{i}$ be a nef $\mathbb{R}$-divisor on $X$, which is the sum of nef and log big Cartier divisors $M_{i}$ with respect to $(X,\Delta)$, such that 
\begin{itemize}
\item
${\rm dim}X=d$, 
\item
the coefficients of $\Delta$ and $\mu_{i}$ belong to $\Phi$, and 
\item
$K_{X}+\Delta+M$ is pseudo-effective. 
\end{itemize}
Then the Iitaka volume ${\rm Ivol}(K_{X}+\Delta+M)$ is an element of $\Omega$. 
\end{thm}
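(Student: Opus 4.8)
The plan is to reduce the statement to the DCC of volumes for generalized lc pairs of log general type, treating separately the case where $K_X+\Delta+M$ is big and the case where it is pseudo-effective but not big. Two preliminary observations: since the coefficients of $\Delta$ and the $\mu_{i}$ lie in $\Phi\subset\mathbb{Q}_{\geq0}$ and the $M_{i}$ are Cartier, both $\Delta$ and $M$ are $\mathbb{Q}$-divisors, so $K_{X}+\Delta+M$ is $\mathbb{Q}$-Cartier; and by running a $(K_{X}+\Delta+M)$-MMP as in Theorem \ref{thm--mmp-neflogbig-main-intro} (the Mori fiber space case being excluded by pseudo-effectivity, so that some minimal model has a semi-ample polarized log canonical divisor) we see that $\kappa(X,K_{X}+\Delta+M)\geq0$, hence ${\rm Ivol}(K_{X}+\Delta+M)$ is a well-defined positive rational number. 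Discarding the indices $i$ with $\mu_{i}=0$, we may also assume $M$ is a positive combination of the $M_{i}$.

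If $K_{X}+\Delta+M$ is big, then $\kappa(X,K_{X}+\Delta+M)=d$ and ${\rm Ivol}(K_{X}+\Delta+M)={\rm vol}(K_{X}+\Delta+M)$ by the very definition of the Iitaka volume. Viewing $(X,\Delta,\boldsymbol{\rm M})$ with $\boldsymbol{\rm M}=\sum_{i}\mu_{i}\overline{M_{i}}$ (the Cartier closures of the $M_{i}$) as a generalized pair, it is generalized lc because $(X,\Delta)$ is lc and the nef part is the Cartier closure of a divisor on $X$ itself, its boundary coefficients lie in $\Phi$, and $\boldsymbol{\rm M}$ is a combination of nef Cartier b-divisors with coefficients in $\Phi$. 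Then the DCC of volumes for generalized lc pairs of log general type of dimension $d$ with DCC data (see \cite{bz}, cf.\ \cite{hmx-acc}) shows that ${\rm vol}(K_{X}+\Delta+M)$ lies in a DCC subset of $\mathbb{Q}_{>0}$ depending only on $d$ and $\Phi$.

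If $K_{X}+\Delta+M$ is pseudo-effective but not big, apply Theorem \ref{thm--base-iitaka-intro} to obtain the base $Z$ of the Iitaka fibration together with a generalized lc pair structure $(Z,\Delta_{Z},\boldsymbol{\rm N})$ whose boundary coefficients lie in a DCC set $\Omega_{0}$ and with $q\boldsymbol{\rm N}$ b-Cartier, where $\Omega_{0}$ and $q$ depend only on $d$ and $\Phi$. The isomorphism of section spaces in Theorem \ref{thm--base-iitaka-intro} shows $\kappa(Z,K_{Z}+\Delta_{Z}+\boldsymbol{\rm N}_{Z})=\dim Z=\kappa(X,K_{X}+\Delta+M)$, so $K_{Z}+\Delta_{Z}+\boldsymbol{\rm N}_{Z}$ is big, and comparing the two section rings gives ${\rm Ivol}(K_{X}+\Delta+M)={\rm vol}(K_{Z}+\Delta_{Z}+\boldsymbol{\rm N}_{Z})$. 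Writing $\boldsymbol{\rm N}=\tfrac{1}{q}(q\boldsymbol{\rm N})$ exhibits $\boldsymbol{\rm N}$ as $\tfrac{1}{q}$ times a nef Cartier b-divisor, so $(Z,\Delta_{Z},\boldsymbol{\rm N})$ again satisfies the hypotheses of the same DCC-of-volumes statement, now in dimension $\dim Z\in\{0,1,\dots,d-1\}$ (when $\dim Z=0$ one simply has ${\rm Ivol}(K_{X}+\Delta+M)=1$). Taking the finite union over $\dim Z$ of the resulting DCC sets, together with $\{1\}$ and the DCC set from the big case, produces the required $\Omega\subset\mathbb{Q}_{>0}$.

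The essential input is Theorem \ref{thm--base-iitaka-intro}; the rest is assembly. The point that needs care is that the output of Theorem \ref{thm--base-iitaka-intro} genuinely fits the hypotheses of the DCC-of-volumes theorem — a DCC bound on the boundary coefficients of $\Delta_{Z}$ together with a uniform bound on the b-Cartier index of $\boldsymbol{\rm N}$, which lets one express $\boldsymbol{\rm N}$ as a combination of nef Cartier b-divisors with coefficients in a fixed DCC set — together with the identification of ${\rm Ivol}$ on $X$ with ${\rm vol}$ on $Z$, which is immediate from the section-space isomorphism once $\kappa=\dim Z$ is known.
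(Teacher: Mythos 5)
Your proof is correct and follows essentially the same route as the paper: reduce via Theorem \ref{thm--base-iitaka-intro} to the base $(Z,\Delta_{Z},\boldsymbol{\rm N})$ of the Iitaka fibration, identify ${\rm Ivol}(K_{X}+\Delta+M)$ with ${\rm vol}(K_{Z}+\Delta_{Z}+\boldsymbol{\rm N}_{Z})$, and invoke the DCC of volumes for generalized lc pairs with DCC coefficients and bounded b-Cartier index on the nef part. The only quibble is the citation for that last ingredient: the relevant statement is \cite[Theorem 1.3]{birkar-nefpart} (Birkar's boundedness-and-volume paper for generalised pairs), not \cite{bz}, whose Theorem 1.3 is the effective birationality statement.
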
 

We give remarks on general fibers of the Iitaka fibrations. 
As shown in the example below, compared to \cite[Theorem 1.3]{haconxu-cy-bound}, we cannot prove a kind of the boundedness of general fibers of the Iitaka fibrations. 
\begin{exam}[see Example \ref{exam--unbounded}] 
We put $d=2$, $p=1$, and $\Phi=\{1\}$. 
We consider the category $\mathcal{C}$ whose objects are the triples $(X,\Delta,M)$, where $(X,\Delta)$ is a projective dlt pair and $M$ is a nef Cartier divisor on $X$ which is log big with respect to $(X,\Delta)$, such that ${\rm dim}X=2$, $\Delta$ is a Weil divisor, and $K_{X}+\Delta+M\sim_{\mathbb{Q}}0$. 
Then, for some set 
 $$\mathcal{D}\subset \{X\,|\,\text{$(X,\Delta,M)$ is a object of $\mathcal{C}$ for some $\Delta$ and $M$}\},$$ 
$\mathcal{D}$ is unbounded. 
Indeed, for each positive integer $n$, defining $X_{n}:=\mathbb{P}_{\mathbb{P}^{1}}(\mathcal{O}_{\mathbb{P}^{1}}\oplus \mathcal{O}_{\mathbb{P}^{1}}(-n))$ then there are divisors $\Delta_{n}$ and $M_{n}$ on $X_{n}$ such that $(X_{n},\Delta_{n},M_{n})$ is an object of $\mathcal{C}$. 
However, we see that the set $\{X_{n}\}_{n\geq 1}$ is unbounded. 
For details, see Example \ref{exam--unbounded}. 
\end{exam}

Despite the example, it is possible to prove the effective non-vanishing (and the boundedness of complements) for general fibers of the Iitaka fibrations for $K_{X}+\Delta+M$. 

\begin{thm}\label{thm--compl-dlt-intro} Let $d$ be a positive integer, and let $\Phi \subset \mathbb{Q}_{\geq0}$ be a DCC set. Then there exists a positive integer $n$, depending only on $d$ and $\Phi$, satisfying the following. Let $(X,\Delta)$ be a projective dlt pair and let $M=\sum_{i}\mu_{i}M_{i}$ be a nef $\mathbb{R}$-divisor on $X$, which is the sum of nef and log big Cartier divisors $M_{i}$ with respect to $(X,\Delta)$, such that \begin{itemize} \item ${\rm dim}X=d$, \item the coefficients of $\Delta$ and $\mu_{i}$ belong to $\Phi$, and \item $K_{X}+\Delta+M$ is pseudo-effective. \end{itemize} Let $\phi\colon X \dashrightarrow X'$ be finite steps of a $(K_{X}+\Delta+M)$-MMP such that $\phi_{*}(K_{X}+\Delta+M)$ is semi-ample (see Theorem \ref{thm--mmp-neflogbig-main-intro}), and let $F$ be a general fiber of the contraction induced by $\phi_{*}(K_{X}+\Delta+M)$. Put $\Delta'_{F}=\phi_{*}\Delta|_{F}$ and $M'_{F}=\phi_{*}M|_{F}$. 
Then 
\begin{itemize}
\item
$n(K_{F}+\Delta'_{F}+M'_{F})\sim 0$, and 
\item
there is $B'_{F} \in |nM'_{F}|$ such that $(F,\Delta'_{F}+\frac{1}{n}B'_{F})$ is an lc pair.
\end{itemize} \end{thm}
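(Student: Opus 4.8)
The plan is to run the minimal model program, reduce the theorem to an assertion about a generalized log Calabi--Yau pair on the general fiber $F$, and then invoke the theory of complements for generalized pairs. \emph{Step 1 (reduction to the general fiber).} Let $\boldsymbol{\rm M}$ be the b-divisor on $X$ with $\boldsymbol{\rm M}_{X}=M$. Since every $M_{i}$ is Cartier, $\boldsymbol{\rm M}=\sum_{i}\mu_{i}\overline{M_{i}}$ is a finite $\mathbb{Q}_{>0}$-linear combination, with multiplicities $\mu_{i}\in\Phi$, of nef and b-Cartier b-divisors, and $(X,\Delta,\boldsymbol{\rm M})$ is a generalized lc pair. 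As $\phi$ is composed of divisorial contractions and flips, $(X',\phi_{*}\Delta,\boldsymbol{\rm M})$ is again a generalized lc pair with $\boldsymbol{\rm M}_{X'}=\phi_{*}M$, and $\phi_{*}(K_{X}+\Delta+M)=K_{X'}+\phi_{*}\Delta+\boldsymbol{\rm M}_{X'}$ is semi-ample; it induces the contraction $f'\colon X'\to Z$ with $\phi_{*}(K_{X}+\Delta+M)\sim_{\mathbb{R}}f'^{*}A$ for an ample $\mathbb{R}$-divisor $A$ on $Z$. For a general fiber $F$ of $f'$, adjunction to $F$ produces a generalized lc pair $(F,\Delta'_{F},\boldsymbol{\rm M}|_{F})$, where $\Delta'_{F}=\phi_{*}\Delta|_{F}$ has coefficients in $\Phi$, the restriction $\boldsymbol{\rm M}|_{F}=\sum_{i}\mu_{i}\overline{M_{i}}|_{F}$ is again a $\mathbb{Q}_{>0}$-linear combination of nef and b-Cartier b-divisors with trace $M'_{F}$ on $F$, and $K_{F}+\Delta'_{F}+M'_{F}=(K_{X'}+\phi_{*}\Delta+\boldsymbol{\rm M}_{X'})|_{F}\sim_{\mathbb{R}}f'^{*}A|_{F}\sim_{\mathbb{R}}0$. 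Hence $(F,\Delta'_{F},\boldsymbol{\rm M}|_{F})$ is a generalized log Calabi--Yau pair of dimension at most $d$.

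\emph{Step 2 (rationality and finiteness of coefficients).} Since $\Phi\subset\mathbb{Q}_{\geq0}$, the divisor $K_{F}+\Delta'_{F}+M'_{F}$ is $\mathbb{Q}$-Cartier, so $K_{F}+\Delta'_{F}+M'_{F}\sim_{\mathbb{R}}0$ forces $K_{F}+\Delta'_{F}+M'_{F}\sim_{\mathbb{Q}}0$. Applying the global ACC for generalized pairs \cite{bz} to $(F,\Delta'_{F},\boldsymbol{\rm M}|_{F})$ --- whose dimension is at most $d$, whose boundary coefficients lie in $\Phi$, and whose nef part is a $\Phi$-linear combination of nef and b-Cartier b-divisors, with $K_{F}+\Delta'_{F}+M'_{F}\equiv0$ --- shows that the coefficients of $\Delta'_{F}$ and the $\mu_{i}$ that actually occur belong to a finite set $\mathfrak{R}=\mathfrak{R}(d,\Phi)$.

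\emph{Step 3 (bounded complement; the main point).} It now suffices to find a positive integer $n$, depending only on $d$ and $\mathfrak{R}$ (hence only on $d$ and $\Phi$), with $n(K_{F}+\Delta'_{F}+M'_{F})\sim0$ and with a member $B'_{F}\in|nM'_{F}|$ such that $(F,\Delta'_{F}+\frac{1}{n}B'_{F})$ is lc; enlarging $n$, one may impose both conditions with a single $n$. The bound on the index $n$ with $n(K_{F}+\Delta'_{F}+M'_{F})\sim0$ follows from the theory of complements for generalized lc pairs applied to the generalized log Calabi--Yau pair $(F,\Delta'_{F},\boldsymbol{\rm M}|_{F})$, all of whose invariants now lie in fixed sets. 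For the effective member $B'_{F}\in|nM'_{F}|$ giving an lc complement of $(F,\Delta'_{F})$, one uses that $M'_{F}\sim_{\mathbb{Q}}-(K_{F}+\Delta'_{F})$ and that $\boldsymbol{\rm M}|_{F}$ is b-nef of bounded b-Cartier index, and produces $B'_{F}$ via the complement machinery for generalized pairs; this is precisely the step where the hypothesis that each $M_{i}$ is not merely nef but nef and \emph{log big} with respect to $(X,\Delta)$ is essential, since it supplies the positivity needed to guarantee $|nM'_{F}|\neq\emptyset$ together with a uniformly log canonical member. I expect this effective non-vanishing of $|nM'_{F}|$ accompanied by an lc member, with $n$ independent of the (in general unbounded, see the Example) family of fibers, to be the principal obstacle; its proof rests on the minimal model theory of Theorem \ref{thm--mmp-neflogbig-main-intro}, on the global ACC, and on the theory of complements for generalized pairs.
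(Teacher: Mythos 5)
Your Steps 1 and 2 agree with the paper's reduction: the general fiber $F$ carries a generalized log Calabi--Yau structure $(F,\Delta'_{F},\boldsymbol{\rm M}|_{F})$, and the global ACC of Birkar--Zhang pins the coefficients of $\Delta'_{F}$ and the $\mu_{i}$ in a finite set, hence bounds the Cartier index of the nef part. The problem is Step 3, which you yourself flag as ``the main point'': you invoke ``the theory of complements for generalized lc pairs'' as though a bounded-complement theorem for generalized \emph{lc} (non-klt) log Calabi--Yau pairs with nef and log big nef part were available off the shelf. It is not. The existing complement results (Birkar, Filipazzi--Moraga) require the pair to be of Fano type or generalized klt; extending them to the dlt/lc setting at hand is precisely the content of Theorems \ref{thm--eff-dlt-trivial} and \ref{thm--compl-dlt} of the paper, each proved by induction on dimension. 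The mechanism you are missing is the surjectivity of the restriction maps $H^{0}(X,\mathcal{O}_{X}(\lfloor lp(K_{X}+\Delta+\boldsymbol{\rm M}_{X})\rfloor))\to H^{0}(S,\mathcal{O}_{S}(\lfloor lp(K_{S}+\Delta_{S}+\boldsymbol{\rm N}_{S})\rfloor))$ onto lc centers (Lemma \ref{lem--extension-lc-center}, resting on the Kodaira-type vanishing for nef and log big divisors): one lifts the non-vanishing section, respectively the lc $n$-complement, from the boundary strata, glues general members via Lemma \ref{lem--gen-member}, and concludes log canonicity globally by the connectedness principle. Without this, your Step 3 restates the theorem rather than proving it.

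There is a second, more technical gap: to run any complement argument on $F$ you need $M'_{F}\sim_{\mathbb{Q}}-(K_{F}+\Delta'_{F})$ to be not merely $\mathbb{Q}$-linearly anti-canonical but nef and \emph{log big} with respect to $(F,\Delta'_{F})$, and the given MMP $\phi$ (an arbitrary run terminating in a semi-ample model) does not guarantee that $\phi_{*}M$ restricted to $F$ retains these properties. The paper handles this by running an auxiliary $(K_{X}+\Delta+3dmM)$-MMP, using the length of extremal rays to keep $m\tilde{M}$ nef, Cartier, and log big along the way, applying Theorem \ref{thm--compl-dlt} to the general fiber of the resulting model $\tilde{X}'\to Z$, and then transporting the complement back to $F$ via the fact that $X'\dashrightarrow\tilde{X}'$ is an isomorphism in codimension one. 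Your proposal does not address this transfer at all.
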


Theorem \ref{thm--compl-dlt-intro} is a consequence of the boundedness of complements for a special kind of dlt pairs (Theorem \ref{thm--compl-dlt}). 
In Example \ref{exam-eff-nonvan-counter}, we show that the log bigness of $M_{i}$ cannot be relaxed to the bigness in Theorem \ref{thm--eff-iitaka-intro} and Theorem \ref{thm--compl-dlt-intro}. 
It is not clear that the dlt property can be generalized to log canonicity. 

Finally, we discuss the effective finite generation of the generalized log canonical rings for the above $(X,\Delta,M)$. 
A kind of the effective finite generation for projective klt threefolds $(V,B)$ was proved by Cascini--Zhang \cite{cz} when $K_{V}+B$ is big or $B$ is nef and big. 
In this paper, we prove the following theorem. 

\begin{thm}\label{thm--base-boundedness-intro}
Let $d$ and $p$ be positive integers, and let $v$ be a positive real number. 
Then there exists a positive integer $m$, depending only on $d$, $p$, and $v$, satisfying the following. 
Let $(X,\Delta)$ be a projective dlt pair, let $M$ be a nef $\mathbb{Q}$-divisor on $X$ that is log big with respect to $(X,\Delta)$, and let $\phi \colon X \dashrightarrow X'$ be finite steps of a $(K_{X}+\Delta+M)$-MMP such that 
\begin{itemize}
\item
${\rm dim}X=d$, 
\item
$pM$ is Cartier and $p\phi_{*}(K_{X}+\Delta+M)$ is nef and Cartier, and 
\item
the Iitaka volume ${\rm Ivol}(K_{X}+\Delta+M)$ is less than or equal to $v$. 
\end{itemize}
Putting $R_{l}=H^{0}(X,\mathcal{O}_{X}(\lfloor l(K_{X}+\Delta+M)\rfloor))$ for every $l \in \mathbb{Z}_{\geq 0}$, then 
\begin{itemize}
\item
$\bigoplus_{l \in \mathbb{Z}_{\geq 0}} R_{lm}$ 
is generated by $R_{m}$ as a graded $\mathbb{C}$-algebra, and 
\item
the variety $Z:={\boldsymbol{\rm Proj}}\bigoplus_{l \in \mathbb{Z}_{\geq 0}}R_{l}$ belongs to a bounded family $\mathfrak{F}$ that depends only on $d$, $p$, and $v$. 
\end{itemize}
\end{thm}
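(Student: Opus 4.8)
The plan is to identify $Z$ with the ample model of $K_{X}+\Delta+M$, transport the problem to a generalized lc pair on $Z$ whose invariants are controlled by $d$ and $p$, invoke the boundedness of generalized lc pairs polarized by an ample divisor of bounded volume, and then read off the effective finite generation from a uniform effective projective normality statement over the resulting bounded family. To begin, since $p\phi_{*}(K_{X}+\Delta+M)$ is nef and Cartier, it is semi-ample by Fujino's base point free theorem \cite{fujino-eff-basepointfree} (the relevant abundance holds in this polarized setting; cf.\ Theorem \ref{thm--mmp-neflogbig-main-intro}), so $\bigoplus_{l}R_{l}$ is a finitely generated $\mathbb{C}$-algebra and $Z={\boldsymbol{\rm Proj}}\bigoplus_{l}R_{l}$ is the ample model: there are a contraction $g\colon X'\to Z$ and an ample $\mathbb{Q}$-divisor $A$ on $Z$ with $\phi_{*}(K_{X}+\Delta+M)=g^{*}A$. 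Since $\phi$ is a composite of MMP steps, $R_{l}\simeq H^{0}(X',\lfloor l\phi_{*}(K_{X}+\Delta+M)\rfloor)$ for all $l$, and $(A^{\dim Z})={\rm Ivol}(K_{X}+\Delta+M)\le v$.

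Next I would equip $Z$ with a controlled generalized lc structure. Evaluating the hypothesis that $p(K_{X'}+\phi_{*}\Delta+\phi_{*}M)$ is Cartier at the generic point of a prime divisor $P$ on $X'$, where $X'$ is smooth and $\phi_{*}M$ has a coefficient lying in $\frac{1}{p}\mathbb{Z}$ because $pM$ is Cartier, one finds that the coefficients of $\phi_{*}\Delta$ belong to the finite set $\frac{1}{p}\mathbb{Z}\cap[0,1]$, while the nef part $\boldsymbol{\rm M}$ of the generalized pair $(X',\phi_{*}\Delta,\boldsymbol{\rm M})$ --- preserved by the MMP --- satisfies that $p\boldsymbol{\rm M}$ is b-Cartier, as $\boldsymbol{\rm M}$ descends to $X$ where $pM$ is Cartier. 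The morphism $g$ is an lc-trivial fibration for $(X',\phi_{*}\Delta,\boldsymbol{\rm M})$, the log bigness of $M$ provides the log abundance required by the canonical bundle formula (\cite{floris-lazic}; see also \cite{hu-lctrivial-b-div}), and the boundedness of complements for the general fibers of $g$ (Theorem \ref{thm--compl-dlt-intro} applied with $\Phi=\frac{1}{p}\mathbb{Z}_{\geq 0}$) makes the formula effective. The outcome is a generalized lc pair $(Z,\Delta_{Z},\boldsymbol{\rm N})$ with $K_{Z}+\Delta_{Z}+\boldsymbol{\rm N}_{Z}\sim_{\mathbb{R}}A$, whose discriminant $\Delta_{Z}$ has coefficients in a DCC set $\Omega$ (by the ACC for generalized lc thresholds) and whose moduli part satisfies that $q\boldsymbol{\rm N}$ is b-Cartier, with $\Omega$ and $q$ depending only on $d$ and $p$; moreover, exactly as in Theorem \ref{thm--base-iitaka-intro}, $R_{l}\simeq H^{0}(Z,\lfloor l(K_{Z}+\Delta_{Z}+\boldsymbol{\rm N}_{Z})\rfloor)$ whenever $n\mid l$ for a suitable $n=n(d,p)$, and ${\rm vol}(K_{Z}+\Delta_{Z}+\boldsymbol{\rm N}_{Z})=(A^{\dim Z})\le v$.

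Then I would conclude. By the boundedness of generalized lc pairs polarized by an ample $\mathbb{R}$-divisor of bounded volume --- the generalized-pair analogue of the boundedness of stable log varieties, cf.\ \cite{hmx-acc}, \cite{bz} --- applied to $(Z,\Delta_{Z},\boldsymbol{\rm N})$ with $\dim Z\le d$, coefficients of $\Delta_{Z}$ in $\Omega$, $q\boldsymbol{\rm N}$ b-Cartier, and volume $\le v$, the polarized pairs, hence the varieties $Z$, belong to a bounded family $\mathfrak{F}=\mathfrak{F}(d,p,v)$; in particular there is $n_{0}=n_{0}(d,p,v)$ such that $n_{0}A$ is an ample Cartier divisor on $Z$, of bounded volume $(n_{0}A)^{\dim Z}\le n_{0}^{d}v$. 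Over this bounded family of polarized pairs, effective very ampleness together with a uniform Castelnuovo--Mumford regularity bound produce $m_{1}=m_{1}(d,p,v)$ for which $m_{1}n_{0}A$ is very ample and the section ring $\bigoplus_{k\ge 0}H^{0}(Z,\mathcal{O}_{Z}(k\,m_{1}n_{0}A))$ is generated in degree one. Putting $m:=m_{1}n_{0}n$, the identifications of the previous paragraph give $R_{lm}\simeq H^{0}(Z,\mathcal{O}_{Z}(lmA))$ for every $l$ (note $n\mid lm$ and $n_{0}\mid lm$), so $\bigoplus_{l\ge 0}R_{lm}$ is the Veronese subring, in degrees divisible by $n$, of the section ring of $m_{1}n_{0}A$ and hence is generated by $R_{m}$; and $Z={\boldsymbol{\rm Proj}}\bigoplus_{l}R_{l}$ lies in $\mathfrak{F}$.

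The main obstacle is the second paragraph: without any DCC hypothesis on the coefficients of $\Delta$, one must extract from the two Cartier assumptions alone the fact that the discriminant and moduli data of the canonical bundle formula on $Z$ are pinned into a DCC set and a bounded b-Cartier index depending only on $d$ and $p$ --- this is exactly what forces the use of Theorem \ref{thm--compl-dlt-intro} (boundedness of complements of the general fibers) and of the ACC for generalized lc thresholds --- and one must then quote the boundedness of generalized lc pairs polarized by an ample divisor of bounded volume, itself a substantial theorem. A minor technical point is the compatibility between the round-down and pullback along $g$, together with the reduction ensuring that $n_{0}A$ is Cartier, both handled as in the proof of Theorem \ref{thm--base-iitaka-intro}.
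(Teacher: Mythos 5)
Your first two paragraphs track the paper's strategy: pass to the ample model $Z$ of $\phi_{*}(K_{X}+\Delta+M)$, observe that the two Cartier hypotheses force the coefficients of $\phi_{*}\Delta$ into $\tfrac{1}{p}\mathbb{Z}$, and endow $Z$ with a generalized lc structure $(Z,\Delta_{Z},\boldsymbol{\rm N})$ whose coefficient set and b-Cartier index depend only on $d$ and $p$. (The tool that actually delivers this uniform control is Theorem \ref{thm--eff-dlt-general}, proved by induction via Theorem \ref{thm--eff-vertical} and Lemma \ref{lem--klt-canbundleformula}, rather than Theorem \ref{thm--compl-dlt-intro}, which concerns complements on general fibers; but that is a misattribution within the paper's own toolkit, not a gap.)

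The genuine gap is in your concluding step. You invoke ``the boundedness of generalized lc pairs polarized by an ample $\mathbb{R}$-divisor of bounded volume'' and cite \cite{hmx-acc} and \cite{bz}; neither reference contains such a statement, and the theorem you need is essentially the full strength of Birkar's boundedness of generalised pairs \cite{birkar-nefpart}, which the paper does not use as an input (indeed, Corollary \ref{cor--gendlt-bounded} derives a statement of that flavour as a consequence of this very circle of results). The paper's substitute, Lemma \ref{lem--bound-veryampleindex}, is strictly weaker: it requires the polarization $p(K_{Z}+\Delta_{Z}+\boldsymbol{\rm N}_{Z})$ to be an ample \emph{and base point free Cartier} divisor, and its proof uses that hypothesis essentially (the base point free members $G_{i}$ are what allow one to realize each $Z$ as the image of a contraction from a fiber of a fixed log smooth family via \cite[Proposition 4.4]{birkar-compl}). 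Supplying that base point freeness with an index depending only on $d$ and $p$ is exactly the content of Theorem \ref{thm--eff-basepoint-free-gen-dlt}, the effective base point free theorem for these generalized dlt pairs, proved by induction on dimension together with the quasi-log scheme machinery; your proposal contains no substitute for it, so the effectivity you need is smuggled into the black box. A secondary point: even granting boundedness of the varieties $Z$, your claim that there is $n_{0}=n_{0}(d,p,v)$ with $n_{0}A$ Cartier does not follow from boundedness of the underlying family alone; one needs the log-bounded \emph{polarized} form of the statement (a uniform bound on the Cartier index of $K_{Z}+\Delta_{Z}+\boldsymbol{\rm N}_{Z}$ over the family), which again is part of what must be proved, not assumed.
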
 

The proof of the theorem heavily depends on the effective base point free theorem for spacial generalized dlt pairs (Theorem \ref{thm--eff-basepoint-free-gen-dlt}) and the boundedness of generalized lc pairs in a special case (Lemma \ref{lem--bound-veryampleindex}), thus we will need some extra works to remove the upper bound of the Cartier index of $\phi_{*}(K_{X}+\Delta+M)$ in Theorem \ref{thm--base-boundedness-intro}. 

The contents of the paper are as follows: 
In Section \ref{sec2}, we collect notations and definitions.
In Section \ref{sec3}, we prove Theorem \ref{thm--mmp-neflogbig-main-intro}. 
In Section \ref{sec4}, we study the effectivity of the Iitaka fibrations for projective dlt pairs polarized by nef and log big divisors, and we prove Theorem \ref{thm--eff-iitaka-intro} and Theorem \ref{thm--base-iitaka-intro}. 
In Section \ref{sec5}, we show some boundedness results, and we prove Theorem \ref{thm--dcc-iitakavol-intro}, Theorem \ref{thm--compl-dlt-intro}, and Theorem \ref{thm--base-boundedness-intro}.
In Section \ref{sec6}, which is an appendix, we discuss the definition of generalized dlt pairs. 

\begin{ack}
The author was partially supported by JSPS KAKENHI Grant Numbers JP16J05875,  JP19J00046, JP22K13887. 
He thanks Professor Osamu Fujino for answering questions. 
He thanks the referee for carefully reading the manuscript and a lot of comments. 
\end{ack}

\section{Preliminaries}\label{sec2}
In this section, we collect notations and definitions. 
We will freely use the notations in \cite{kollar-mori} and \cite{bchm}. 

\subsection{Divisors, morphisms and singularities}
We collect notations and definitions on divisors, morphisms and singularities of  generalized pairs. 

We will use the standard definitions of nef $\mathbb{R}$-divisor, ample $\mathbb{R}$-divisor, semi-ample $\mathbb{R}$-divisor, and pseudo-effective $\mathbb{R}$-divisor. 
All big $\mathbb{R}$-divisors in this paper are $\mathbb{R}$-Cartier. 
For a morphism $f\colon X\to Y$ and an $\mathbb{R}$-Cartier divisor $D$ on $Y$, we sometimes denote $f^{*}D$ by $D|_{X}$. 
For a prime divisor $P$ over $X$, the image of $P$ on $X$ is denoted by $c_{X}(P)$. 
A projective morphism $f\colon X\to Y$ of varieties is called a {\em contraction} if $f_{*}\mathcal{O}_{X} \simeq \mathcal{O}_{Y}$. 
For a variety $X$ and an $\mathbb{R}$-divisor $D'$ on it, a {\em log resolution of} $(X,D')$ denotes a projective birational morphism $f\colon Y\to X$ from a smooth variety $Y$ such that the exceptional locus ${\rm Ex}(f)$ of $f$ is pure codimension one and ${\rm Ex}(f)\cup {\rm Supp}f_{*}^{-1}D'$ is an snc divisor.

\begin{defn}
Let $a$ be a real number. 
Then we define $\lfloor a \rfloor$ to be the unique integer satisfying $ \lfloor a \rfloor \leq a <\lfloor a \rfloor+1$, and we define $\{a\}$ by $\{a\}=a-\lfloor a \rfloor$. 

Let $X$ be a normal variety, and let $D$ be an $\mathbb{R}$-divisor on $X$. 
Let $D=\sum_{i} d_{i}D_{i}$ be the prime decomposition. 
Then we define 
$\lfloor D \rfloor:=\sum_{i} \lfloor d_{i} \rfloor D_{i}$, $\{D\}:=\sum_{i} \{d_{i}\}D_{i}$, and $\lceil D \rceil:=-\lfloor -D \rfloor$. 
By definition, we have $D=\lfloor D \rfloor + \{D\}$. 
\end{defn}

\begin{defn}[Hyperstandard set]\label{defn--hyperstandardset}
Let $\mathfrak{R}\subset \mathbb{R}$ be a subset. 
Throughout this paper, $\Phi(\mathfrak{R})$ is defined by
$$\Phi(\mathfrak{R})=\left\{1-\frac{a}{r}\;\middle| \;a\in \mathfrak{R}, \, r\in \mathbb{Z}_{>0}\right\},$$
and we call it a {\em hyperstandard set associated to $\mathfrak{R}$}. 
\end{defn}

\begin{defn}[Asymptotic vanishing order]\label{defn--asy-van-ord}
Let $X$ be a normal projective variety and $D$ a pseudo-effective $\mathbb{R}$-Cartier divisor on $X$. 
Let $P$ be a prime divisor over $X$. 
We define {\em asymptotic vanishing order} of $P$ with respect to $D$, which we denote $\sigma_{P}(D)$, as follows: 
We take a projective birational morphism $f\colon Y \to X$ such that $P$ appears as a prime divisor on $Y$. 
When $D$ is big, we define $\sigma_{P}(D)$ by
$$\sigma_{P}(D)={\rm inf}\!\set{{\rm coeff}_{P}(D')|f^{*}D\sim_{\mathbb{R}}D'\geq0}.$$
When $D$ is not necessarily big, $\sigma_{P}(D)$ is defined by
$$\sigma_{P}(D)=\underset{\epsilon\to+0}{\rm lim}\sigma_{P}(D+\epsilon A)$$
for an ample $\mathbb{R}$-divisor $A$ on $X$. 
It is easy to see that $\sigma_{P}(D)$ is independent of $f\colon Y\to X$. 
We have $\sigma_{P}(D)\geq0$, and $\sigma_{P}(D)$ is also independent of $A$ (\cite[III, 1.5 (2) Lemma]{nakayama}). 
We can easily check $\sigma_{P}(D)={\rm sup}\!\set{\sigma_{P}(D+H)|H \text{ is ample}}$. 
\end{defn}

\begin{defn}[Negative part of Nakayama--Zariski decomposition] 
For $X$ and $D$ as in Definition \ref{defn--asy-van-ord}, the {\em negative part of Nakayama--Zariski decomposition} of $D$, denoted by $N_{\sigma}(D)$, is defined by
$$N_{\sigma}(D)=\sum_{\substack {P:{\rm \,prime\,divisor}\\{\rm on\,}X}}\sigma_{P}(D)P.$$
Note that $N_{\sigma}(D)$ is not necessarily $\mathbb{R}$-Cartier in this paper.
\end{defn}

When $X$ is smooth, the definition of $N_{\sigma}(D)$ coincides with \cite[III, 1.12 Definition]{nakayama}. 

We refer to \cite[Remark 2.3]{has-finite} for basic properties of asymptotic vanishing order and the negative part of Nakayama--Zariski decomposition.

\begin{defn}[b-divisor]
Let $X$ be a normal variety. 
Then an {\em $\mathbb{R}$-b-divisor} $\boldsymbol{\rm D}$ on $X$ is a (possibly infinite) $\mathbb{R}$-linear combination of divisorial valuations $v_{P}$
$$\boldsymbol{\rm D}=\sum_{\substack {P:{\rm \,prime\,divisor}\\{\rm over \,}X}} r_{P}v_{P} \quad (r_{P}\in \mathbb{R})$$
such that for every projective birational morphism $Y\to X$, the set 
$$\{P\;|\; \text{$c_{Y}(P)$ is a divisor on $Y$ such that $r_{P}\neq 0$}\}$$
 is a finite set. 
When $r_{P}\in \mathbb{Q}$ for all $P$, we call $\boldsymbol{\rm D}$ a {\em $\mathbb{Q}$-b-divisor}. 
For a projective birational model $Y$ of $X$, the {\em trace} of an $\mathbb{R}$-b-divisor $\boldsymbol{\rm D}=\sum_{P} r_{P}v_{P}$ on $Y$, which we denote $\boldsymbol{\rm D}_{Y}$, is defined by
$$\boldsymbol{\rm D}_{Y}=\sum_{\substack {c_{Y}(P){\rm \,is \;a\;divisor}\\{\rm  on \,}Y}} r_{P}c_{Y}(P).$$
By definition, $\boldsymbol{\rm D}_{Y}$ is an $\mathbb{R}$-divisor on $Y$. 

Let $\boldsymbol{\rm D}$ be an $\mathbb{R}$-b-divisor on $X$. 
If there exists an $\mathbb{R}$-Cartier divisor $D$ on a projective birational model $Y$ of $X$ such that 
 $$\boldsymbol{\rm D}=\sum_{\substack {P:{\rm \,prime\,divisor}\\{\rm over \,}X}} {\rm ord}_{P}(D)\cdot v_{P},$$
 then, we say that $\boldsymbol{\rm D}$ is {\em $\mathbb{R}$-b-Cartier}, we say that {\em $\boldsymbol{\rm D}$ descends to $Y$}, and we denote $\boldsymbol{\rm D}=\overline{D}$. 
When $\boldsymbol{\rm D}=\overline{D}$ and $D$ is $\mathbb{Q}$-Cartier, we say that $\boldsymbol{\rm D}$ is {\em $\mathbb{Q}$-b-Cartier}. 
When $\boldsymbol{\rm D}=\overline{\boldsymbol{\rm D}_{Y'}}$ and $\boldsymbol{\rm D}_{Y'}$ is Cartier for some projective birational model $Y'$, we say that $\boldsymbol{\rm D}$ is {\em b-Cartier}. 

Suppose that $X$ has a projective morphism $X\to Z$ to a variety $Z$, and suppose in addition that $\boldsymbol{\rm D}=\overline{\boldsymbol{\rm D}_{Y'}}$ and $\boldsymbol{\rm D}_{Y'}$ is nef over $Z$ (resp.~big over $Z$) for some projective birational model $Y'$ of $X$. 
Then, we say that $\boldsymbol{\rm D}$ is {\em b-nef$/Z$} (resp.~{\em b-big$/Z$}).   

Let $X$ and $X'$ be normal varieties which are projective over a variety $Z$, and let $\phi \colon X \dashrightarrow X'$ be a birational map over $Z$. 
Let $\boldsymbol{\rm D}$ and $\boldsymbol{\rm D}'$ be $\mathbb{R}$-b-divisors on $X$ and $X'$, respectively. 
Then, we say that {\em $\boldsymbol{\rm D}=\boldsymbol{\rm D}'$ by $\phi$} if $\boldsymbol{\rm D}_{Y}=\boldsymbol{\rm D}'_{Y}$ for all projective  birational morphisms $f \colon Y \to X$ and $f' \colon Y \to X'$ such that $f'=\phi \circ f$.
\end{defn}

\begin{defn}[Singularities of generalized pairs]\label{defn--gen-pair}
A {\em generalized pair} $(X,\Delta,\boldsymbol{\rm M})/Z$ consists of 
\begin{itemize}
\item
a projective morphism $X\to Z$ from a normal variety to a variety, 
\item
an effective $\mathbb{R}$-divisor $\Delta$ on $X$, and
\item
a b-nef$/Z$ $\mathbb{R}$-b-Cartier $\mathbb{R}$-b-divisor $\boldsymbol{\rm M}$ on $X$ 
\end{itemize}
such that $K_{X}+\Delta+\boldsymbol{\rm M}_{X}$ is $\mathbb{R}$-Cartier. 
When $\boldsymbol{\rm M}=0$, the generalized pair $(X,\Delta,\boldsymbol{\rm M})/Z$ is a usual pair $(X,\Delta)$ with $X\to Z$. 
When $Z$ is a point, we simply denote $(X,\Delta,\boldsymbol{\rm M})$. 

Let $(X,\Delta,\boldsymbol{\rm M})/Z$ be a generalized pair and $P$ a prime divisor over $X$. 
Let $f \colon Y \to X$ be a projective birational morphism such that $\boldsymbol{\rm M}$ descends to $Y$ and $P$ appears as a divisor on $Y$. 
Then there is an $\mathbb{R}$-divisor $\Gamma$ on $Y$ such that
$$K_{Y}+\Gamma+\boldsymbol{\rm M}_{Y}=f^{*}(K_{X}+\Delta+\boldsymbol{\rm M}_{X}).$$
Then the {\em generalized log discrepancy $a(P,X,\Delta+\boldsymbol{\rm M}_{X})$} of $P$ with respect to $(X,\Delta,\boldsymbol{\rm M})/Z$ is defined to be $1-{\rm coeff}_{P}(\Gamma)$. 
When $\boldsymbol{\rm M}=0$, the generalized log discrepancy $a(P,X,\Delta)$ coincides with the {\em log discrepancy} of $P$ with respect to the usual pair $(X,\Delta)$. 

A generalized pair $(X,\Delta,\boldsymbol{\rm M})/Z$ is called a {\em generalized klt} (resp.~{\em generalized lc}) {\em pair} if $a(P,X,\Delta+\boldsymbol{\rm M}_{X})>0$ (resp.~$a(P,X,\Delta+\boldsymbol{\rm M}_{X})\geq 0$) for all prime divisors $P$ over $X$. 
A {\em generalized lc center} of $(X,\Delta,\boldsymbol{\rm M})/Z$ is the image on $X$ of a prime divisor $P$ over $X$ satisfying $a(P,X,\Delta+\boldsymbol{\rm M}_{X})=0$. 

A generalized pair $(X,\Delta,\boldsymbol{\rm M})/Z$ is a {\em generalized dlt pair} if it is generalized lc and for any generic point $\eta$ of any generalized lc center of $(X,\Delta,\boldsymbol{\rm M})/Z$, $(X,\Delta)$ is log smooth near $\eta$ and $\boldsymbol{\rm M}$ descends to $X$ on a neighborhood of $\eta$. 
\end{defn}

\begin{lem}
Let $(X,\Delta,\boldsymbol{\rm M})/Z$ be a generalized dlt pair such that $Z$ is quasi-projective, and let $S$ be a component of $\lfloor \Delta \rfloor$. 
Then there is a plt pair $(X,B)$ such that $S= \lfloor B \rfloor$. 
In particular, $S$ is normal, and $(X,B')$ is a klt pair for some $B'$. 
\end{lem}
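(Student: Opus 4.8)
The plan is to reduce the statement about generalized dlt pairs to the classical fact that a component of the boundary of an (ordinary) dlt pair can be made the unique lc place of a plt pair. First I would use the defining property of a generalized dlt pair: at the generic point $\eta$ of $S$ (and indeed at the generic point of every generalized lc center), the pair $(X,\Delta)$ is log smooth and $\boldsymbol{\rm M}$ descends to $X$ near $\eta$, so in a neighborhood of $\eta$ the generalized structure is genuinely a log smooth pair with trivial nef part. Since $S$ is a component of $\lfloor\Delta\rfloor$, this tells us $X$ is smooth at the generic point of $S$ and $\Delta$ is snc there, hence $S$ is generically reduced and, being a prime divisor on a normal variety, is normal in codimension one; combined with $S$ being Cartier near $\eta$ this is the standard input for normality.

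Next I would construct $B$ explicitly. The idea is to perturb $\Delta+\boldsymbol{\rm M}_X$ so that $S$ becomes the only lc center while staying lc. Concretely, write $K_X+\Delta+\boldsymbol{\rm M}_X$ and choose a small rational (or real) $\epsilon>0$ and a sufficiently ample effective divisor $A\sim_{\mathbb R}\boldsymbol{\rm M}_X+\epsilon H$-type correction so that $\boldsymbol{\rm M}_X$ can be absorbed into an effective $\mathbb R$-divisor; more precisely, since $\boldsymbol{\rm M}$ is b-nef and b-$\mathbb R$-Cartier, for a suitable ample $\mathbb R$-divisor $A$ on $X$ one has $\boldsymbol{\rm M}_X+A\sim_{\mathbb R}G$ for some effective $G$ whose support avoids the generic point of $S$ and any fixed general point, and with $(X,\Delta+G)$ still lc near the generic points of all lc centers. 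Then set $B:=(1-\delta)(\Delta-S)+S+(\text{small multiple of }G)+(\text{an ample perturbation})$ for small $\delta>0$, choosing the coefficients so that $\lfloor B\rfloor=S$, the pair $(X,B)$ is $\mathbb R$-Cartier, and $K_X+B$ is still lc; the only surviving lc center is $S$ because all other generalized lc centers of $(X,\Delta,\boldsymbol{\rm M})$ had log discrepancy $0$ coming from components of $\lfloor\Delta\rfloor$ or from valuations, and multiplying the non-$S$ part of $\Delta$ by $1-\delta$ and adding the perturbation pushes their log discrepancies strictly positive. This is exactly the generalized-pair analogue of the classical dlt-to-plt construction, and it should go through using divisorial adjunction and the MMP machinery for generalized pairs cited in the preliminaries.

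With $(X,B)$ a plt pair such that $S=\lfloor B\rfloor$, the remaining conclusions are immediate from standard theory: a plt pair has normal lc center, so $S$ is normal (this is \cite{kollar-mori}-type adjunction, or one can cite the normality of minimal lc centers); and since $(X,B)$ is plt with $\lfloor B\rfloor=S$, shrinking the non-integral part of $B$ slightly or choosing $B'=B-S+(\text{tiny ample})$ appropriately yields a klt pair $(X,B')$ — alternatively, plt plus $\lfloor B\rfloor\neq\emptyset$ already gives that $(X,\{B\})$ is klt. I would spell out $B'$ to make this last sentence precise.

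The main obstacle I anticipate is the perturbation step: ensuring simultaneously that (i) $\boldsymbol{\rm M}_X$ is replaced by an honest effective $\mathbb R$-divisor without creating new non-klt places, (ii) the coefficients of the modified boundary stay in $[0,1]$ with $S$ the only component of the round-down, and (iii) log canonicity is preserved globally (not just near $\eta$). Point (i) is where the b-nef and b-$\mathbb R$-Cartier hypotheses on $\boldsymbol{\rm M}$ are essential, and one must be careful that ``descending near $\eta$'' is only a local statement, so the effective representative $G$ of $\boldsymbol{\rm M}_X$ (plus ample) must be chosen general enough to avoid the generic points of all lc centers; this is possible because those generic points form a finite set in codimension $\geq 1$. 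Once $G$ is fixed, (ii) and (iii) are routine coefficient bookkeeping and an application of the definition of generalized lc singularities.
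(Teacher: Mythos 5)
Your overall strategy is the same as the paper's: perturb $\Delta+\boldsymbol{\rm M}_{X}$ into an honest boundary $B$ with $\lfloor B\rfloor=S$ and $(X,B)$ plt, then read off normality of $S$ and the klt pair $(X,B')$ from standard adjunction. However, the step you yourself flag as ``the main obstacle'' is a genuine gap, not a routine verification. You propose to choose a \emph{general} effective $G\sim_{\mathbb R}\boldsymbol{\rm M}_{X}+A$ on $X$ and argue that generality (avoiding the finitely many generic points of the generalized lc centers) prevents new non-klt places. This does not work as stated: $\boldsymbol{\rm M}_{X}$ is only the trace of a b-nef b-divisor and need not be nef, so $\boldsymbol{\rm M}_{X}+A$ need not be ample or base-point free, and ``general member'' downstairs has no force. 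Concretely, if $g\colon\tilde X\to X$ is a model where $\boldsymbol{\rm M}$ descends, then $g^{*}\boldsymbol{\rm M}_{X}=\boldsymbol{\rm M}_{\tilde X}+E$ with $E\geq 0$ exceptional, and a member of $|\boldsymbol{\rm M}_{X}+A|_{\mathbb R}$ chosen on $X$ can acquire large multiplicity along exceptional valuations lying over the locus where $\boldsymbol{\rm M}$ fails to descend; avoiding generic points of lc centers does not control these multiplicities, so $(X,\Delta'+\tfrac{1}{n}G)$ may fail to be lc. The paper's fix is to perform the perturbation \emph{upstairs}: take a log resolution $g\colon\tilde X\to X$ on which $\boldsymbol{\rm M}$ descends and which carries an effective $g$-exceptional $\tilde F$ with $-\tilde F$ $g$-ample, so that $\boldsymbol{\rm M}_{\tilde X}-\tilde F+g^{*}H$ is ample for suitable ample $H$; one then chooses the general member there and pushes down, the extra exceptional contribution being harmless because every component of $\tilde F$ has positive generalized log discrepancy.

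That last point explains why the paper first reduces to the case $\lfloor\Delta\rfloor=S$ (by a separate perturbation using a resolution that is an isomorphism over a neighborhood of the generalized lc centers): only after that reduction is $(U,\Delta|_{U})$ plt, so that \emph{every} prime divisor over $X$ other than $S$ has positive generalized log discrepancy and the exceptional error term can be absorbed. Your one-step recipe $B=(1-\delta)(\Delta-S)+S+\cdots$ handles the other components of $\lfloor\Delta\rfloor$ but does not by itself justify the absorption of the exceptional contributions over the non-descent locus. A minor further point: your opening claim that log smoothness of $(X,\Delta)$ at the generic point of $S$ is ``the standard input for normality'' is not correct — normality is not a generic property — but this is harmless since you correctly re-derive normality from plt-ness of $(X,B)$ at the end.
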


\begin{proof}
In this proof, we will use the notion of {\em generalized sub-lc pair}, whose definition is the same as that of generalized lc pairs except that the boundary part is not necessarily effective. 
This is an analogous definition of sub-lc pairs. 

First, we reduce the lemma to the case where $\lfloor \Delta \rfloor =S$. 
We can take an open subset $U\subset X$ such that $(U,\Delta|_{U})$ is log smooth, $U$ contains the generic points of all generalized lc centers of $(X,\Delta,\boldsymbol{\rm M})/Z$, and $\boldsymbol{\rm M}|_{U}$ descends to $U$. 
We take a log resolution $f \colon Y \to X$ of $(X,\Delta)$ such that 
\begin{itemize}
\item
$f$ is an isomorphism over $U$, and 
\item
$-E$ is $f$-ample for some effective $f$-exceptional divisor $E$ on $Y$. 
\end{itemize}
We note that we do not assume $\boldsymbol{\rm M}_{Y}$ to be nef over $Z$. 
We may write 
$$K_{Y}+\Gamma+\boldsymbol{\rm M}_{Y}=f^{*}(K_{X}+\Delta+\boldsymbol{\rm M}_{X}). $$
Since $U$ contains the generic points of all generalized lc centers of $(X,\Delta,\boldsymbol{\rm M})/Z$, there is a real number $t>0$ such that $(Y, \Gamma+tE,\boldsymbol{\rm M})/Z$ is generalized sub-lc. 
Take an ample divisor $A$ on $X$ such that $-\frac{t}{2}E+f^{*}A$ is ample. 
By perturbing the coefficients of $\Gamma$ with $-\frac{t}{2}E+f^{*}A$ and pushing down, we get an $\mathbb{R}$-divisor $G \geq0$ on $X$ such that 
\begin{itemize}
\item
$K_{X}+\Delta+\boldsymbol{\rm M}_{X}+A\sim_{\mathbb{R},Z}K_{X}+G+\boldsymbol{\rm M}_{X}$, 
\item
$(U,G|_{U})$ is log smooth, 
\item
$\lfloor G \rfloor =S$, and 
\item
writing
$K_{Y}+G_{Y}+\boldsymbol{\rm M}_{Y}=f^{*}(K_{X}+G+\boldsymbol{\rm M}_{X})$, then $(Y, G_{Y}, \boldsymbol{\rm M})/Z$ is a generalized sub-lc pair whose generalized lc centers intersect $f^{-1}(U)$. 
\end{itemize}
In this way, we get a generalized dlt pair $(X, G, \boldsymbol{\rm M})/Z$  such that $\lfloor G \rfloor =S$. 

In the case where $\lfloor \Delta \rfloor =S$, we see that $(U,\Delta|_{U})$ is plt. 
Hence $a(P,X,\Delta+\boldsymbol{\rm M}_{X})>0$ for all prime divisors $P$ over $X$ except $P=S$. 
Let $g \colon Y' \to X$ be a log resolution of $(X,\Delta)$ such that 
\begin{itemize}
\item
$\boldsymbol{\rm M}$ descends to $Y'$, and 
\item
$-F'$ is $g$-ample for some $g$-exceptional divisor $F'$. 
\end{itemize}
Let $H$ be an ample divisor on $X$ such that $-F'+g^{*}H$ is ample. 
Since $\boldsymbol{\rm M}_{Y'}$ is nef over $Z$ and $a(F_{i},X,\Delta+\boldsymbol{\rm M}_{X})>0$ for every component $F_{i}$ of $F'$, we can apply the  argument of perturbation of coefficients. 
We get a plt pair $(X,B)$ such that $\lfloor B \rfloor =S$. 
\end{proof}

We introduce properties of divisorial adjunction for generalized pairs. 

\begin{rem}[Divisorial adjunction]\label{rem--coeff-adj}
Let $(X,\Delta,\boldsymbol{\rm M})/Z$ be a generalized lc pair, let $S$ be a component of $\lfloor \Delta \rfloor$ with the normalization $S^{\nu}$, and let $(S^{\nu},\Delta_{S^{\nu}},\boldsymbol{\rm N})/Z$ be a generalized lc pair defined with  divisorial adjunction for generalized pairs. 
\begin{itemize} 
\item 
If $p\boldsymbol{\rm M}$ is b-Cartier, then $p \boldsymbol{\rm N}$ is b-Cartier. 
\item 
We fix a DCC set $\Lambda \subset \mathbb{R}_{\geq 0}$.  By \cite[Proposition 4.9]{bz}, there is a DCC set $\Omega \subset \mathbb{R}_{\geq 0}$, depending only on $\Lambda$, such that if the coefficients of $\Delta$ belong to $\Lambda$ and $\boldsymbol{\rm M}$ is the sum of finitely many b-nef$/Z$ b-Cartier b-divisors with the coefficients in $\Lambda$, then the coefficients of $\Delta_{S^{\nu}}$ belong to $\Omega$. 
\item 
We fix a finite set of rational numbers $\mathfrak{R}\subset [0,1]$ and a positive integer $p$. By \cite[Lemma 3.3]{birkar-compl}, 
there is a finite set of rational numbers $\mathfrak{S}\subset [0,1]$, depending only on $\mathfrak{R}$ and $p$, such that if the coefficients of $\Delta$ belong to $\Phi(\mathfrak{R})$ and $p \boldsymbol{\rm M}$ is b-Cartier then the coefficients of $\Delta_{S^{\nu}}$ belong to $\Phi(\mathfrak{S})$. \end{itemize}
\end{rem}

\begin{defn}[Restriction morphism]\label{defn--rest-mor} 
Let $(X,\Delta,\boldsymbol{\rm M})/Z$ be a generalized lc pair, and let $S$ be a component of $\lfloor \Delta \rfloor$ with the normalization $S^{\nu}$. 
With divisorial adjunction for generalized pairs we define a generalized lc pair $(S^{\nu},\Delta_{S^{\nu}},\boldsymbol{\rm N})/Z$. 
Suppose that $p \boldsymbol{\rm M}$ is b-Cartier. 
Let $\tau\colon S^{\nu}\to S \hookrightarrow X$ be the natural morphism. 
With the idea of \cite[3.1]{birkar-compl}, we will define a morphism 
$$\mathcal{O}_{X}(\lfloor p(K_{X}+\Delta+\boldsymbol{\rm M}_{X})\rfloor) 
\longrightarrow 
\tau_{*}\mathcal{O}_{S^{\nu}}(\lfloor p(K_{S^{\nu}}+\Delta_{S^{\nu}}+\boldsymbol{\rm N}_{S^{\nu}})\rfloor).$$

We take a log resolution $f\colon Y \to X$ of $(X,\Delta)$ such that $\boldsymbol{\rm M}$ descends to $Y$, and we put $T = f^{-1}_{*}S$. 
Note that $p \boldsymbol{\rm M}_{Y}$ is Cartier since $p \boldsymbol{\rm M}$ is b-Cartier. 
Hence, we may replace $p \boldsymbol{\rm M}_{Y}$ by a linear equivalence class so that ${\rm Supp}\boldsymbol{\rm M}_{Y}$ does not contain $T$. 
We can write 
$$p(K_{Y}+T+\Gamma+\boldsymbol{\rm M}_{Y})=pf^{*}(K_{X}+\Delta+\boldsymbol{\rm M}_{X}),$$ 
and we have 
$$\lfloor p(K_{Y}+T+\Gamma+\boldsymbol{\rm M}_{Y})\rfloor|_{T}\sim pK_{T}+\lfloor p\Gamma \rfloor|_{T}+p \boldsymbol{\rm M}_{Y}|_{T}.$$ 
Thus, we obtain a natural morphism
\begin{equation*}\tag{$*$}\label{defn--rest-mor-*}
f_{*}\mathcal{O}_{Y}\bigl(\lfloor p(K_{Y}+T+\Gamma+\boldsymbol{\rm M}_{Y})\rfloor\bigr) \longrightarrow \tau_{*}f_{T*}\mathcal{O}_{T}(pK_{T}+\lfloor p\Gamma \rfloor|_{T}+p \boldsymbol{\rm M}_{Y}|_{T}),
\end{equation*}
where $f_{T}\colon T \to S^{\nu}$ and $\tau \colon S^{\nu} \to S \to X$ are the natural morphisms. 
With the relation 
$$K_{Y}+T+\Gamma+\boldsymbol{\rm M}_{Y}=f^{*}(K_{X}+\Delta+\boldsymbol{\rm M}_{X}),$$
we have
\begin{equation*}\tag{$**$}\label{defn--rest-mor-**}
f_{*}\mathcal{O}_{Y}\bigl(\lfloor p(K_{Y}+T+\Gamma+\boldsymbol{\rm M}_{Y})\rfloor\bigr) \simeq \mathcal{O}_{X}\bigl(\lfloor p(K_{X}+\Delta+\boldsymbol{\rm M}_{X})\rfloor\bigr)
\end{equation*}
By construction of divisorial adjunction for generalized pairs, we have
$$f_{T}^{*}(K_{S^{\nu}}+\Delta_{S^{\nu}}+\boldsymbol{\rm N}_{S^{\nu}})=K_{T}+\Gamma|_{T} +\boldsymbol{\rm M}_{Y}|_{T}$$
as $\mathbb{R}$-divisors. 
Since $p \boldsymbol{\rm M}_{Y}|_{T}$ is a Weil divisor, we see that
$$\lfloor f_{T}^{*}\bigl(p(K_{S^{\nu}}+\Delta_{S^{\nu}}+\boldsymbol{\rm N}_{S^{\nu}})\bigr) \rfloor =pK_{T}+\lfloor p\Gamma\rfloor|_{T}+p \boldsymbol{\rm M}_{Y}|_{T}.$$
From this fact, we see that 
\begin{equation*}\tag{$*\!*\!*$}\label{defn--rest-mor-***}
f_{T*}\mathcal{O}_{T}(pK_{T}+\lfloor p\Gamma\rfloor|_{T}+p \boldsymbol{\rm M}_{Y}|_{T}) \simeq \mathcal{O}_{S^{\nu}}\bigl(\lfloor p(K_{S^{\nu}}+\Delta_{S^{\nu}}+\boldsymbol{\rm N}_{S^{\nu}}) \rfloor\bigr). 
\end{equation*}
By (\ref{defn--rest-mor-*}), (\ref{defn--rest-mor-**}), and (\ref{defn--rest-mor-***}), 
we can define 
the desired morphism.  
\end{defn}

\begin{defn}[Models, cf.~{\cite{hanli}}]\label{defn--models}
Let $(X,\Delta,\boldsymbol{\rm M})/Z$ be a generalized lc pair, and let $(X',\Delta',\boldsymbol{\rm M}')/Z$ be a generalized pair. 
Let $\phi\colon X\dashrightarrow X'$ be a birational map over $Z$.  

We say that $(X',\Delta',\boldsymbol{\rm M}')/Z$ is a {\em generalized log birational model} of $(X,\Delta,\boldsymbol{\rm M})/Z$ if $\boldsymbol{\rm M}=\boldsymbol{\rm M}'$ by $\phi$ and $\Delta'=\phi_{*}\Delta+\sum_{i}E_{i}$, where $E_{i}$ runs over $\phi^{-1}$-exceptional prime divisors. 

We say that a generalized log birational model $(X',\Delta',\boldsymbol{\rm M}')/Z$ of $(X,\Delta,\boldsymbol{\rm M})/Z$ is a {\em weak generalized log canonical model} ({\em weak generalized lc model}, for short)  {\em over $Z$} if 
\begin{itemize}
\item
$K_{X'}+\Delta'+\boldsymbol{\rm M}'_{X'}$ is nef over $Z$, and 
\item
$a(D, X, \Delta+\boldsymbol{\rm M}_{X}) \leq a(D, X', \Delta'+\boldsymbol{\rm M}'_{X'})$ for every prime divisor $D$ on $X$ which is exceptional over $X'$.  
\end{itemize}
We say that a weak generalized lc model $(X',\Delta',\boldsymbol{\rm M}')/Z$ of $(X,\Delta,\boldsymbol{\rm M})/Z$ is a {\it minimal model over $Z$} if the inequality on generalized log discrepancies is strict. 

We say that a minimal model $(X',\Delta',\boldsymbol{\rm M}')/Z$ of $(X,\Delta,\boldsymbol{\rm M})/Z$ is a {\it good minimal model over $Z$} if $K_{X'}+\Delta'+\boldsymbol{\rm M}'_{X'}$ is semi-ample over $Z$. 
\end{defn}

\subsection{Abundant divisors}\label{subsec2.2}

We define the invariant Iitaka dimension and the numerical dimension (\cite{nakayama}) for $\mathbb{R}$-Cartier divisors on normal projective varieties, then we define abundant divisors, log abundant divisors, and log big divisors. 

\begin{defn}[Invariant Iitaka dimension]\label{defn--inv-iitaka-dim}Let $X$ be a normal projective variety, and let $D$ be an $\mathbb{R}$-Cartier  divisor on $X$. We define the {\em invariant  Iitaka dimension} of $D$, denoted by $\kappa_{\iota}(X,D)$, as follows (\cite[Definition 2.2.1]{choi}, see also \cite[Definition 2.5.5]{fujino-book}):  If there is an $\mathbb{R}$-divisor $E\geq 0$ such that $D\sim_{\mathbb{R}}E$, set $\kappa_{\iota}(X,D)=\kappa(X,E)$. Here, the right hand side is the usual Iitaka dimension of $E$. Otherwise, we set $\kappa_{\iota}(X,D)=-\infty$. 

Let $X\to Z$ be a projective morphism from a normal variety to a variety and $D$ an $\mathbb{R}$-Cartier divisor on $X$. Then the {\em relative invariant Iitaka dimension} of $D$, denoted by $\kappa_{\iota}(X/Z,D)$, is similarly defined: If there is an $\mathbb{R}$-divisor $E\geq 0$ such that $D\sim_{\mathbb{R},Z}E$ then we set $\kappa_{\iota}(X/Z,D)=\kappa_{\iota}(F,D|_{F})$, where $F$ is a sufficiently general fiber of the Stein factorization of $X\to Z$, and otherwise we set $\kappa_{\iota}(X/Z,D)=-\infty$. 
\end{defn}

\begin{defn}[Numerical dimension]\label{defn--num-dim}
Let $X$ be a normal projective variety, and let $D$ be an $\mathbb{R}$-Cartier divisor on $X$. 
We define the {\em numerical dimension} of $D$, denoted by $\kappa_{\sigma}(X,D)$, as follows (\cite[V, 2.5 Definition]{nakayama}): 
For any Cartier divisor $A$ on $X$, we set
\begin{equation*}
\sigma(D;A)={\rm max}\!\Set{\!k\in \mathbb{Z}_{\geq0} | \underset{m\to \infty}{\rm lim\,sup}\frac{{\rm dim}H^{0}(X,\mathcal{O}_{X}(\lfloor mD \rfloor+A))}{m^{k}}>0\!}
\end{equation*}
if ${\rm dim}H^{0}(X,\mathcal{O}_{X}(\lfloor mD \rfloor+A))>0$ for infinitely many $m\in \mathbb{Z}_{>0}$, and otherwise we set $\sigma(D;A):=-\infty$. 
Then, we define 
\begin{equation*}
\kappa_{\sigma}(X,D):={\rm max}\!\set{\sigma(D;A) | \text{$A$ is a Cartier divisor on $X$}\!}.
\end{equation*}

Let $X\to Z$ be a projective morphism from a normal variety to a variety, and let $D$ be an $\mathbb{R}$-Cartier divisor on $X$. 
Then, the {\em relative numerical dimension} of $D$ over $Z$ is defined by $\kappa_{\sigma}(F,D|_{F})$, where $F$ is a sufficiently general fiber of the Stein factorization of $X\to Z$.  
Then $\kappa_{\sigma}(F,D|_{F})$ does not depend on the choice of $F$, so the relative numerical dimension is well-defined. 
In this paper, we denote $\kappa_{\sigma}(F,D|_{F})$ by $\kappa_{\sigma}(X/Z,D)$. 
\end{defn}

We refer to \cite[Remark 2.14]{has-finite} (see also \cite[V, 2.7 Proposition]{nakayama}, \cite[Remark 2.8]{hashizumehu}) for basic properties of the invariant Iitaka dimension and the numerical dimension. 

\begin{defn}[Abundant divisor, log abundant divisor, log big divisor]\label{defn--abund}
Let $X \to Z$ be a projective morphism from a normal variety to a variety, and let $D$ be an $\mathbb{R}$-Cartier divisor on $X$. 
We say that $D$ is {\em abundant over $Z$} if $\kappa_{\iota}(X/Z,D)=\kappa_{\sigma}(X/Z,D)$. 

Let $X\to Z$ and $D$ be as above, and let $(X,\Delta,\boldsymbol{\rm M})/Z$ be a generalized lc pair. 
We say that $D$ is {\em log abundant over $Z$ with respect to $(X,\Delta,\boldsymbol{\rm M})/Z$} if $D$ is abundant over $Z$ and for any generalized lc center $S$ of $(X,\Delta,\boldsymbol{\rm M})/Z$ with the normalization $S^{\nu}\to S$, the pullback $D|_{S^{\nu}}$ is abundant over $Z$. 

We say that $D$ is {\em log big over $Z$ with respect to $(X,\Delta,\boldsymbol{\rm M})/Z$} if $D$ is big over $Z$ and for any generalized lc center $S$ of $(X,\Delta,\boldsymbol{\rm M})/Z$ with the normalization $S^{\nu}\to S$, the pullback $D|_{S^{\nu}}$ is big over $Z$. 

When $Z$ is a point in the above definitions, we remove ``over $Z$'' in each terminology.   
\end{defn}

\begin{lem}[cf.~{\cite[Lemma 2.11]{hashizumehu}, \cite{nakayama}, \cite{fujino-subadd-correct}}]
\label{lem--iitakafib} 
Let $(X,\Delta,\boldsymbol{\rm M})$ be a generalized lc pair such that $K_{X}+\Delta+\boldsymbol{\rm M}_{X}$ is abundant and there is  an effective $\mathbb{R}$-divisor $D$ on $X$ such that $D\sim_{\mathbb{R}}K_{X}+\Delta+\boldsymbol{\rm M}_{X}$. 
Let $X\dashrightarrow V$ be the Iitaka fibration associated to $D$. 
We take a log resolution $f\colon Y\to X$ of $(X,\Delta)$ such that $\boldsymbol{\rm M}$ descends to $Y$ and the induced map $Y\dashrightarrow V$ is a morphism. 
Let $(Y,\Gamma,\boldsymbol{\rm M})$ be a  generalized lc pair such that $$K_{Y}+\Gamma+\boldsymbol{\rm M}_{Y}=f^{*}(K_{X}+\Delta+\boldsymbol{\rm M}_{X})+E$$ for some effective $f$-exceptional $\mathbb{R}$-divisor $E$. 
Then $\kappa_{\sigma}(Y/V,K_{Y}+\Gamma+\boldsymbol{\rm M}_{Y})=0$. 
\end{lem}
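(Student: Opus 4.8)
The plan is to reduce the statement to a computation on a sufficiently general fiber of the Iitaka fibration $Y \to V$, and then to show that the restriction of $K_Y+\Gamma+\boldsymbol{\rm M}_Y$ to that fiber is numerically trivial. First I would recall that, by definition of the relative numerical dimension, $\kappa_\sigma(Y/V, K_Y+\Gamma+\boldsymbol{\rm M}_Y)=\kappa_\sigma(G, (K_Y+\Gamma+\boldsymbol{\rm M}_Y)|_G)$, where $G$ is a sufficiently general fiber of the Stein factorization of $Y\to V$. Since $G$ is general, $f|_G\colon G\to f(G)$ realizes $f(G)$ as a subvariety of $X$ lying in a general fiber of the rational map $X\dashrightarrow V$, and $(G,\Gamma|_G,\boldsymbol{\rm M}|_G)$ is again a generalized lc pair (the discrepancy computation is compatible with restriction to general members, and $\boldsymbol{\rm M}$ descends to $Y$ so $\boldsymbol{\rm M}_Y|_G$ is nef). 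Also $(K_Y+\Gamma+\boldsymbol{\rm M}_Y)|_G = (f^*(K_X+\Delta+\boldsymbol{\rm M}_X)+E)|_G$, and $E|_G$ is effective and exceptional over $f(G)$.

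Next I would use the hypothesis that $K_X+\Delta+\boldsymbol{\rm M}_X$ is abundant together with $D\sim_\mathbb{R}K_X+\Delta+\boldsymbol{\rm M}_X\geq 0$ and the construction of the Iitaka fibration associated to $D$. By definition of the Iitaka fibration, the restriction of $D$ to a general fiber has invariant Iitaka dimension $0$: after passing to $Y$ and subtracting the exceptional part, we get that $(f^*D)|_G \sim_\mathbb{R} (\text{effective divisor with }\kappa = 0)$, so $\kappa_\iota(G,(K_Y+\Gamma+\boldsymbol{\rm M}_Y)|_G)=0$. The key input is then abundance: because $K_X+\Delta+\boldsymbol{\rm M}_X$ is abundant, the general fiber version of this — namely that $\kappa_\sigma$ equals $\kappa_\iota$ on the general fiber of the Iitaka fibration — should follow from the additivity/semipositivity package for the numerical dimension. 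Concretely, I would invoke the behavior of $\kappa_\sigma$ under the Iitaka fibration: for an abundant divisor, the numerical dimension of the restriction to a general fiber of its Iitaka fibration is $0$; this is essentially \cite[V, 2.7 Proposition]{nakayama} combined with \cite[Lemma 2.11]{hashizumehu} in the generalized pair setting, after noting the generalized pair structure does not affect the argument since $\boldsymbol{\rm M}$ is b-nef and descends to $Y$.

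I would then assemble these: $0 \le \kappa_\sigma(G,(K_Y+\Gamma+\boldsymbol{\rm M}_Y)|_G)$ always, and since $(K_Y+\Gamma+\boldsymbol{\rm M}_Y)|_G$ is $\mathbb{R}$-linearly equivalent to an effective divisor whose numerical dimension is controlled by the abundance of the original divisor, we obtain $\kappa_\sigma(G,(K_Y+\Gamma+\boldsymbol{\rm M}_Y)|_G) \le \kappa_\sigma(X/V, K_X+\Delta+\boldsymbol{\rm M}_X) = \kappa_\sigma(X,K_X+\Delta+\boldsymbol{\rm M}_X) - \dim V$. Using abundance, $\kappa_\sigma(X,K_X+\Delta+\boldsymbol{\rm M}_X) = \kappa_\iota(X,K_X+\Delta+\boldsymbol{\rm M}_X) = \dim V$, so the right-hand side is $0$, forcing $\kappa_\sigma(Y/V,K_Y+\Gamma+\boldsymbol{\rm M}_Y)=0$.

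The main obstacle I expect is the bookkeeping around passing from $X$ to the log resolution $Y$ and from $V$ to the general fiber $G$ while keeping track of the effective exceptional divisors $E$ and the generalized pair structure: one needs that adding $E$ and restricting to a general fiber neither increases the numerical dimension nor destroys the generalized lc property, and that the relative numerical dimension over $V$ is unchanged by the birational modification $f$ (this uses that $E$ is $f$-exceptional and effective, so $\kappa_\sigma$ is preserved, cf.\ the basic properties cited after Definition \ref{defn--num-dim}). The clean way is to first reduce to the case $Y=X$ and $E=0$ by these invariance properties, then apply the Nakayama-type result for abundant divisors directly; the generalized pair aspect then adds nothing essential because $\boldsymbol{\rm M}$ descends to $Y$ and its trace is an honest nef divisor there.
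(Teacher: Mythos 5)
Your proposal is correct and takes essentially the same route as the paper, whose proof simply defers to \cite[Lemma 2.11]{hashizumehu} and \cite[Section 3]{fujino-subadd-correct}: reduce to a general fiber of $Y\to V$ (where adding the effective $f$-exceptional divisor $E$ and pulling back do not change $\kappa_{\sigma}$), observe that the invariant Iitaka dimension of the restriction is zero by the definition of the Iitaka fibration, and combine the inequality $\kappa_{\sigma}(X,D)\geq \kappa_{\sigma}(X/V,D)+\dim V$ (valid here because $D$ dominates the pullback of an ample divisor from $V$) with abundance to force the relative numerical dimension down to zero. The only caution is that the equality you assert, $\kappa_{\sigma}(X/V,D)=\kappa_{\sigma}(X,D)-\dim V$, is not routine bookkeeping: its nontrivial direction is exactly the statement from \cite[V, 2.7 Proposition]{nakayama} whose proof required the correction in \cite{fujino-subadd-correct}, so that is the load-bearing citation.
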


\begin{proof}
The argument in \cite[Proof of Lemma 2.11]{hashizumehu} works with no changes because we may apply the discussion in \cite[Section 3]{fujino-subadd-correct}. 
\end{proof}

\begin{lem}\label{lem--lcpair-neflogbig}
Let $(X,\Delta)$ be a projective lc pair and $M=\sum_{i}\mu_{i}M_{i}$ a finite $\mathbb{R}_{>0}$-linear combination of nef Cartier divisors $M_{i}$ on $X$ such that $M$ is log big with respect to $(X,\Delta)$ and $\mu_{i} > 2\cdot{\rm dim}X$ for all $i$. 
Then $K_{X}+\Delta+M$ is log big with respect to $(X,\Delta)$. 
\end{lem}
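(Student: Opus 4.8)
The plan is to reduce the log bigness of $K_{X}+\Delta+M$ to an ordinary bigness statement on an auxiliary lc pair, and then to prove that bigness by running a minimal model program and combining the arithmetic of $M=\sum\mu_{i}M_{i}$ with bend and break and the bound on the length of extremal rays of lc pairs.

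First I would isolate the following claim: \emph{if $(Y,\Gamma)$ is a projective lc pair and $N=\sum_{i}\mu_{i}N_{i}$ is a finite $\mathbb{R}_{>0}$-linear combination of nef Cartier divisors $N_{i}$ on $Y$ with $\mu_{i}>2\,{\rm dim}Y$ for all $i$ and $N$ is big, then $K_{Y}+\Gamma+N$ is big.} Granting the claim, applying it to $(X,\Delta)$ and $M$ gives that $K_{X}+\Delta+M$ is big. For an lc center $S$ of $(X,\Delta)$ with normalization $\nu\colon S^{\nu}\to X$, divisorial adjunction produces an lc pair $(S^{\nu},\Delta_{S^{\nu}})$ with $(K_{X}+\Delta)|_{S^{\nu}}=K_{S^{\nu}}+\Delta_{S^{\nu}}$, so $(K_{X}+\Delta+M)|_{S^{\nu}}=K_{S^{\nu}}+\Delta_{S^{\nu}}+M|_{S^{\nu}}$; here $M|_{S^{\nu}}=\sum_{i}\mu_{i}(M_{i}|_{S^{\nu}})$ is again a finite $\mathbb{R}_{>0}$-linear combination of nef Cartier divisors with $\mu_{i}>2\,{\rm dim}X\geq 2\,{\rm dim}S^{\nu}$, and $M|_{S^{\nu}}$ is big since $M$ is log big with respect to $(X,\Delta)$. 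Applying the claim to $(S^{\nu},\Delta_{S^{\nu}})$ and $M|_{S^{\nu}}$ shows $(K_{X}+\Delta+M)|_{S^{\nu}}$ is big, so $K_{X}+\Delta+M$ is log big with respect to $(X,\Delta)$.

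To prove the claim, the key point is the following dichotomy: for every curve $C\subset Y$ we have $N\cdot C=\sum_{i}\mu_{i}(N_{i}\cdot C)$ with each $N_{i}\cdot C\in\mathbb{Z}_{\geq 0}$, so $N\cdot C$ is either $0$ or strictly larger than $2\,{\rm dim}Y$; and since $N$ is big, $N\cdot C>0$ whenever $C$ passes through a very general point of $Y$. I would then run a $(K_{Y}+\Gamma+N)$-MMP. Any $(K_{Y}+\Gamma+N)$-negative extremal ray is $(K_{Y}+\Gamma)$-negative because $N$ is nef, hence is spanned by a curve $C$ with $0<-(K_{Y}+\Gamma)\cdot C\leq 2\,{\rm dim}Y$ by the length bound for lc pairs, and then $(K_{Y}+\Gamma+N)\cdot C<0$ forces $N\cdot C=0$. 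So every step of this MMP contracts an $N$-trivial extremal ray, the strict transform $N'$ of $N$ stays nef, big, and a finite $\mathbb{R}_{>0}$-linear combination of nef Cartier divisors, and the MMP ends with $\phi\colon Y\dashrightarrow Y'$ such that either $K_{Y'}+\Gamma'+N'$ is nef or there is a Mori fiber space $Y'\to W$. Since $\kappa_{\sigma}$ is unchanged by the MMP, it suffices to show $K_{Y'}+\Gamma'+N'$ is big and to exclude the Mori fiber space case. In the nef case, $(K_{Y'}+\Gamma'+N')-(K_{Y'}+\Gamma')=N'$ is nef and big, so $K_{Y'}+\Gamma'+N'$ is semi-ample by Fujino's effective base point free theorem \cite{fujino-eff-basepointfree}; let $g\colon Y'\to W$ be the induced contraction. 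If ${\rm dim}W<{\rm dim}Y'$, a general fiber $F$ has positive dimension and $(K_{Y'}+\Gamma'+N')|_{F}\equiv 0$, so $-(K_{F}+\Gamma'|_{F})\equiv N'|_{F}$ is nef and big on $F$ and is a finite $\mathbb{R}_{>0}$-linear combination of nef Cartier divisors with coefficients $>2\,{\rm dim}Y\geq 2\,{\rm dim}F$, so the dichotomy applies to curves on $F$. Bend and break then yields a rational curve $\Gamma_{0}$ through a very general point of $F$ with $0<-(K_{F}+\Gamma'|_{F})\cdot\Gamma_{0}\leq 2\,{\rm dim}F$, while the dichotomy gives $-(K_{F}+\Gamma'|_{F})\cdot\Gamma_{0}=N'|_{F}\cdot\Gamma_{0}>2\,{\rm dim}F$, a contradiction. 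Hence ${\rm dim}W={\rm dim}Y'$, i.e. $K_{Y'}+\Gamma'+N'$ is big, and therefore so is $K_{Y}+\Gamma+N$. The Mori fiber space case is ruled out by the same bend and break argument on a general fiber $F$, on which $-(K_{F}+\Gamma'|_{F}+N'|_{F})$ is ample while the dichotomy forces $(K_{F}+\Gamma'|_{F}+N'|_{F})\cdot\Gamma_{0}>0$.

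The main obstacle is the step in the third paragraph: making the $(K_{Y}+\Gamma+N)$-MMP precise for an lc pair (passing to a $\mathbb{Q}$-factorial dlt modification, and guaranteeing termination so that one actually reaches a nef model or a Mori fiber space) and using bend and break with the correct numerical bounds in the lc setting. It is exactly at this step that the hypothesis $\mu_{i}>2\,{\rm dim}X$ enters, matching the bound $2\,{\rm dim}$ on the length of extremal rays of lc pairs.
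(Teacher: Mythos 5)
Your central mechanism --- each $M_i$ is Cartier and nef, so for any curve $C$ spanning a $(K+\Delta+M)$-negative extremal ray the length bound $-(K+\Delta)\cdot C\leq 2\dim X$ together with $\mu_i>2\dim X$ forces $M\cdot C=0$ --- is exactly the mechanism the paper uses. But there are two genuine gaps in how you deploy it. First, your reduction to the ``claim'' invokes divisorial adjunction to put an honest lc pair structure $(S^{\nu},\Delta_{S^{\nu}})$ on the normalization of an arbitrary lc center $S$; this only works for components of $\lfloor\Delta\rfloor$. For lc centers of codimension $\geq 2$ one must use subadjunction (Ambro's canonical bundle formula after a dlt blow-up, as the paper does), which produces a \emph{generalized} lc pair $(S^{\nu},\Delta_{S^{\nu}},\boldsymbol{\rm N})$ with $K_{S^{\nu}}+\Delta_{S^{\nu}}+\boldsymbol{\rm N}_{S^{\nu}}\sim_{\mathbb R}(K_X+\Delta)|_{S^{\nu}}$; your claim would have to be restated and proved in that generality (the length-of-extremal-rays input does extend, via Han--Li, but as written the reduction is broken).

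Second, and more seriously, you run a $(K_Y+\Gamma+N)$-MMP and assert it ``ends with'' a nef model or a Mori fiber space. Termination of such an MMP when $K_Y+\Gamma+N$ is pseudo-effective is precisely the content of Theorem \ref{thm--mmp-neflogbig-main-intro}, whose proof uses this very lemma, so you cannot assume it here; for lc (or generalized lc) pairs it is not otherwise known. The paper avoids this by arguing by contradiction: if some restriction $(K_X+\Delta+M)|_{S^{\nu}}$ is not big, rescale all $\mu_i$ to $(1-t)\mu_i$ for $0<t\ll 1$ so that it is not even pseudo-effective; then an MMP with scaling of an ample divisor on a $\mathbb{Q}$-factorial generalized dlt model is known to terminate with a Mori fiber space $Y'\to V$, every step is $M$-trivial by your dichotomy, and one gets $\phi_*M_Y$ big yet $\sim_{\mathbb R,V}0$ with $\dim Y'>\dim V$ --- a contradiction. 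This also makes your bend-and-break step superfluous (and that step is itself shaky: producing a rational curve of $-(K_F+\Gamma'|_F)$-degree at most $2\dim F$ through a \emph{very general} point of $F$ when $-(K_F+\Gamma'|_F)$ is merely nef and big on an lc pair is not a standard statement). The fix is to adopt the contradiction-plus-perturbation structure rather than the ``reach a minimal model or Mori fiber space'' structure.
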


\begin{proof}
We need to prove that if $S$ is $X$ or an lc center of $(X,\Delta)$ with the normalization $S^{\nu}$ then $(K_{X}+\Delta+M)|_{S^{\nu}}$ is big.
Suppose by contradiction that there is an lc center $S$ of $(X,\Delta)$ or $S=X$ such that $(K_{X}+\Delta+M)|_{S^{\nu}}$ is not big. 
By replacing $M$ with $(1-t)M$ for some $0<t \ll 1$, we may assume that $(K_{X}+\Delta+M)|_{S^{\nu}}$ is not pseudo-effective. 

Taking a dlt blow-up of $(X,\Delta)$ and applying Ambro's canonical bundle formula as  in \cite[Corollary 3.2]{fg-bundle}, we get a generalized lc pair $(S^{\nu},\Delta_{S^{\nu}},\boldsymbol{\rm N})$ such that 
$$K_{S^{\nu}}+\Delta_{S^{\nu}}+\boldsymbol{\rm N}_{S^{\nu}}\sim_{\mathbb{R}}(K_{X}+\Delta)|_{S^{\nu}}.$$ 
Let $(Y,\Gamma, \boldsymbol{\rm N})$ be a $\mathbb{Q}$-factorial generalized dlt model of $(S^{\nu},\Delta_{S^{\nu}},\boldsymbol{\rm N})$, and let $M_{Y}$ be the pullback of $M|_{S^{\nu}}$ to $Y$. 
Then $M_{Y}$ is big and 
$K_{Y}+\Gamma+\boldsymbol{\rm N}_{Y}+M_{Y}$
is not pseudo-effective. 

By running a $(K_{Y}+\Gamma+\boldsymbol{\rm N}_{Y}+M_{Y})$-MMP with scaling of an ample divisor, we get a birational contraction $\phi \colon Y \dashrightarrow Y'$ to a normal projective variety $Y'$ which has the structure of a Mori fiber space $Y' \to V$ with respect to $\phi_{*}(K_{Y}+\Gamma+\boldsymbol{\rm N}_{Y}+M_{Y})$. 
By the length of extremal rays (cf.~\cite[Proposition 3.17]{hanli}), the birational transform of $M_{Y}$ is numerically trivial with respect to the extremal contraction in each step of the MMP.  
Thus, $\phi_{*}M_{Y}$ is big and $\phi_{*}M_{Y} \sim_{\mathbb{R}, V}0$, a contradiction because ${\rm dim}Y'>{\rm dim}V$. 

In this way, we see that $(K_{X}+\Delta+M)|_{S^{\nu}}$ is big, and therefore 
$K_{X}+\Delta+M$ is log big with respect to $(X,\Delta)$. 
\end{proof}

The following lemma plays a crucial role in Section \ref{sec4} and Section \ref{sec5}.

\begin{lem}\label{lem--extension-lc-center}
Let $(X,\Delta,\boldsymbol{\rm M})/Z$ be a generalized lc pair with a morphism $\pi\colon X\to Z$ such that 
\begin{itemize}
\item
$\Delta$ is a $\mathbb{Q}$-divisor and $\boldsymbol{\rm M}$ is a $\mathbb{Q}$-b-Cartier $\mathbb{Q}$-b-divisor, 
\item
$K_{X}+\Delta+\boldsymbol{\rm M}_{X}$ is nef over $Z$, and
\item 
there is a log resolution $f\colon Y \to X$ of $(X,\Delta)$ such that 
\begin{itemize}
\item
$\boldsymbol{\rm M}$ descends to $Y$, and 
\item
writing $K_{Y}+\Gamma+\boldsymbol{\rm M}_{Y}=f^{*}(K_{X}+\Delta+\boldsymbol{\rm M}_{X})+E$,
where $\Gamma \geq 0$ and $E \geq 0$ have no common components, then $\boldsymbol{\rm M}_{Y}$ is log big over $Z$ with respect to $(Y,\Gamma)$. 
\end{itemize}
\end{itemize}
Let $S$ be a component of $\lfloor \Delta \rfloor$, let $S^{\nu}$ be the normalization of $S$, and let $(S^{\nu},\Delta_{S^{\nu}},\boldsymbol{\rm N})/Z$ be a generalized lc pair with a morphism $\pi_{S^{\nu}}\colon S^{\nu}\to Z$ defined with divisorial adjunction for $(X,\Delta,\boldsymbol{\rm M})/Z$ and $S$. 
Let $p$ be a positive integer such that $p\Delta$ is a Weil divisor and $p\boldsymbol{\rm M}_{Y}$ is Cartier. 
Then, the morphism
\begin{equation*}
\begin{split}
\pi_{*}\mathcal{O}_{X}(lp(K_{X}+\Delta+\boldsymbol{\rm M}_{X})) 
\longrightarrow 
\pi_{S^{\nu}*}\mathcal{O}_{S^{\nu}}(\lfloor lp(K_{S^{\nu}}+\Delta_{S^{\nu}}+\boldsymbol{\rm N}_{S^{\nu}})\rfloor)
\end{split}
\end{equation*}
induced by Definition \ref{defn--rest-mor} is surjective for every positive integer $l$. 
\end{lem}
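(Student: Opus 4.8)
The plan is to prove the surjectivity by reducing to a vanishing theorem on a log resolution, using the concrete description of the restriction morphism from Definition \ref{defn--rest-mor}. Let $f\colon \tilde X \to X$ be the log resolution in the hypotheses, $\tilde S = f^{-1}_*S$, and write as in Definition \ref{defn--rest-mor} the identity $p(K_{\tilde X}+\tilde S+\tilde\Delta+\boldsymbol{\rm M}_{\tilde X}) = pf^*(K_X+\Delta+\boldsymbol{\rm M}_X)+p\tilde E$, where I must be careful to keep track of the exceptional divisor $\tilde E$ coming from the fact that we only have $K_X+\Delta+\boldsymbol{\rm M}_X$ nef over $Z$ (not that $f$ is crepant). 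The morphism in question factors, after twisting by $lp$, through the pushforward along $f$ of the restriction-to-$\tilde S$ short exact sequence
\begin{equation*}
0 \to \mathcal{O}_{\tilde X}(A - \tilde S) \to \mathcal{O}_{\tilde X}(A) \to \mathcal{O}_{\tilde S}(A|_{\tilde S}) \to 0,
\end{equation*}
where $A = lp(K_{\tilde X}+\tilde S+\tilde\Delta+\boldsymbol{\rm M}_{\tilde X}) + (\text{round-up correction})$ is chosen so that $f_*$ of the middle term is $\pi_*\mathcal{O}_X(lp(K_X+\Delta+\boldsymbol{\rm M}_X))$ and $f_{\tilde S *}$ of the right term is $\pi_{S^\nu*}\mathcal{O}_{S^\nu}(\lfloor lp(K_{S^\nu}+\Delta_{S^\nu}+\boldsymbol{\rm N}_{S^\nu})\rfloor)$, exactly as the three isomorphisms \eqref{defn--rest-mor-*}, \eqref{defn--rest-mor-**}, \eqref{defn--rest-mor-***} in that definition do. Then surjectivity follows from pushing this sequence forward to $Z$ and showing $R^1f_*\mathcal{O}_{\tilde X}(A-\tilde S) = 0$, or more precisely that $R^1(\pi\circ f)_*$ of it vanishes.

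The key step is thus the vanishing $R^1 f_* \mathcal{O}_{\tilde X}(A - \tilde S) = 0$, which I would deduce from a Kawamata--Viehweg-type vanishing theorem. Writing out $A - \tilde S - K_{\tilde X}$, on $\tilde X$ this divisor is $\mathbb{Q}$-linearly equivalent (or just equals, up to rounding) $(lp-1)f^*(K_X+\Delta+\boldsymbol{\rm M}_X) + lp\boldsymbol{\rm M}_{\tilde X} + \{lp\tilde\Delta\}\text{-type terms}$ plus an effective exceptional piece, relative to which the non-integral part $\{\cdot\}$ together with $\tilde S$ forms an snc boundary with coefficients in $[0,1)$. The nef-ness of $K_X+\Delta+\boldsymbol{\rm M}_X$ over $Z$ handles the first summand; but to run Kawamata--Viehweg I need the class to be big (not just nef) over $Z$ on $\tilde X$ and on all relevant strata — this is precisely where the hypothesis that $\boldsymbol{\rm M}_{\tilde X}$ is \emph{log big} over $Z$ with respect to $(\tilde X,\tilde\Delta)$ enters: it supplies the bigness needed on $\tilde X$ and, crucially, on the lc strata of the boundary so that the vanishing survives restriction. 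Concretely I would invoke the relative Kawamata--Viehweg vanishing for the snc pair $(\tilde X, \{lp\tilde\Delta\} + \tilde S)$ with the nef-and-big (over $Z$) twist $lp\boldsymbol{\rm M}_{\tilde X} + (lp-1)f^*(K_X+\Delta+\boldsymbol{\rm M}_X) + (\text{exceptional})$, or equivalently the Fujino-type injectivity/vanishing package for reducible boundary, to get both $R^1f_*\mathcal{O}_{\tilde X}(A-\tilde S)=0$ and the needed vanishing after going down to $Z$.

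I expect the main obstacle to be bookkeeping the rounding and the exceptional divisor $\tilde E$ simultaneously: one must verify that $\lfloor A \rfloor$ on $\tilde X$ restricts to $\tilde S$ giving exactly $\lfloor lp(K_{S^\nu}+\Delta_{S^\nu}+\boldsymbol{\rm N}_{S^\nu}) \rfloor$ pulled back (this is the content of \eqref{defn--rest-mor-***} and uses that $p\boldsymbol{\rm M}_{\tilde X}|_{\tilde S}$ is an integral Cartier divisor after moving the support off $\tilde S$), and that the "$-\tilde S$" twisted sheaf has the form to which vanishing applies, i.e. that $A - \tilde S - K_{\tilde X} - \lceil\{lp\tilde\Delta\}\rceil_{\text{snc part}}$ is nef-and-big over $Z$ after absorbing the effective exceptional correction. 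A secondary subtlety is that log bigness of $\boldsymbol{\rm M}_{\tilde X}$ is assumed with respect to $(\tilde X,\tilde\Delta)$ whose lc centers are the strata of $\tilde\Delta$ (including those over $\lfloor\Delta\rfloor \setminus S$); I need that these are enough to feed the vanishing on every stratum of $\{lp\tilde\Delta\}+\tilde S$ met during the Fujino-style induction on strata, which should follow since the relevant strata are intersections of components of $\tilde\Delta$. Once the vanishing is in place, taking $\pi$-pushforward of the short exact sequence and using $R^1(\pi\circ f)_* \mathcal{O}_{\tilde X}(A-\tilde S) = 0$ (equivalently $\pi_* R^1 f_* = 0$ and $R^1\pi_*$ of the pushforward also vanishes by the same vanishing applied on $X$ directly, or by a spectral sequence argument) gives the surjectivity for every $l$, completing the proof.
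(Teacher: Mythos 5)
Your proposal follows essentially the same route as the paper's proof: restrict the twisted short exact sequence along $\tilde S$ on the log resolution, arrange the round-down so that the boundary becomes an snc divisor $\tilde S+\tilde B$ with an effective $f$-exceptional correction $\tilde G$, and kill $R^{1}(\pi\circ f)_{*}$ of the kernel by the Reid--Fukuda-type vanishing for the nef and log big divisor $\boldsymbol{\rm M}_{\tilde X}+(lp-1)f^{*}(K_{X}+\Delta+\boldsymbol{\rm M}_{X})$ (the paper invokes \cite[Lemma 1.5]{fujino-abund-logbig}), with the compatibility $\lfloor \tilde D\rfloor|_{\tilde S}=\lfloor \tilde D|_{\tilde S}\rfloor$ handled exactly as you anticipate. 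The bookkeeping issues you flag (the fractional part of $lpf^{*}(K_X+\Delta+\boldsymbol{\rm M}_X)$ being exceptional thanks to $p\Delta$ Weil and $p\boldsymbol{\rm M}_{\tilde X}$ Cartier, and absorbing negative fractional coefficients into an auxiliary reduced divisor) are precisely the steps the paper carries out.
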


\begin{proof}
The idea is very similar to \cite{fujino-abund-logbig}. 
We fix $l$. 

We put 
$$L=\boldsymbol{\rm M}_{Y}+(lp-1) f^{*}(K_{X}+\Delta+\boldsymbol{\rm M}_{X}).$$ 
Since $K_{X}+\Delta+\boldsymbol{\rm M}_{X}$ is nef over $Z$ and $\boldsymbol{\rm M}_{Y}$ is nef and log big over $Z$ with respect to $(Y,\Gamma)$, it follows that $L$ is nef and log big over $Z$ with respect to $(Y,\Gamma)$. 
We have
\begin{equation*}\tag{$1$} \label{surj-proof-(1)}
K_{Y}+\Gamma+L=lp f^{*}(K_{X}+\Delta+\boldsymbol{\rm M}_{X})+E.
\end{equation*}
Since the coefficients of $p\Delta$ are integers and $p\boldsymbol{\rm M}_{Y}$ is Cartier, we can write
\begin{equation*}\tag{$2$}\label{surj-proof-(2)}
lp f^{*}(K_{X}+\Delta+\boldsymbol{\rm M}_{X})=\lfloor lp f^{*}(K_{X}+\Delta+\boldsymbol{\rm M}_{X}) \rfloor + E'
\end{equation*}
with an effective $f$-exceptional $\mathbb{Q}$-divisor $E'$. 
Put $T=f^{-1}_{*}S$. 
By restricting both hand sides of (\ref{surj-proof-(2)}) to $T$ and taking the round down, we have
\begin{equation*}\tag{$3$}\label{surj-proof-(4)}
\lfloor \bigl(lp f^{*}(K_{X}+\Delta+\boldsymbol{\rm M}_{X})\bigr)\big{|}_{T}\rfloor=\lfloor lp f^{*}(K_{X}+\Delta+\boldsymbol{\rm M}_{X}) \rfloor|_{T}. 
\end{equation*}
We define $\Gamma':=\Gamma-T.$
By the definition, $(Y,T+\Gamma')$ is a log smooth lc pair and $L$ is nef and log big over $Z$ with respect to $(Y,T+\Gamma')$. 
The relations (\ref{surj-proof-(1)}) and (\ref{surj-proof-(2)}) show 
\begin{equation*}\tag{$4$}\label{surj-proof-(5)}
K_{Y}+T+\Gamma'+L=\lfloor lp f^{*}(K_{X}+\Delta+\boldsymbol{\rm M}_{X}) \rfloor  + E'+E.
\end{equation*}
We define a Weil divisor $F$ on $Y$ so that 
\begin{equation*}
{\rm coeff}_{P}(F)=\left\{  \begin{array}{l}{0 \qquad ({\rm coeff}_{P}(\Gamma'-\{(E+E')\})\geq 0)} \\ {1 \qquad  ({\rm coeff}_{P}(\Gamma'-\{(E+E')\})< 0)} \end{array}\right. \end{equation*} 
for every prime divisor $P$. 
We put 
$$B=\Gamma'-\{(E+E')\}+F \qquad {\rm and} \qquad G=\lfloor( E + E')\rfloor + F.$$
By the definition, we can check that $B$ is a boundary divisor, $\lfloor B \rfloor \leq \lfloor \Gamma' \rfloor$, and $G$ is an effective $f$-exceptional Weil divisor. 
Since $\lfloor B \rfloor \leq \lfloor \Gamma' \rfloor$, the divisor $L$ is nef and log big over $Z$ with respect to $(Y,T+B)$. 
Furthermore, the relation (\ref{surj-proof-(5)}) implies
\begin{equation*}\tag{$5$}\label{surj-proof-(6)}
K_{Y}+T+B+L=\lfloor lp f^{*}(K_{X}+\Delta+\boldsymbol{\rm M}_{X}) \rfloor  +G.
\end{equation*}
We put $D=lp f^{*}(K_{X}+\Delta+\boldsymbol{\rm M}_{X})$. 
Since $G$ is $f$-exceptional, we have
$$\mathcal{O}_{X}( lp(K_{X}+\Delta+\boldsymbol{\rm M}_{X}) )\simeq f_{*}\mathcal{O}_{Y}(\lfloor D \rfloor  +G).$$
By construction, $(Y,B)$ is a log smooth lc pair and $L$ is nef and log big over $Z$ with respect to $(Y,B)$. 
Thus, the Kodaira type vanishing theorem \cite[Lemma 1.5]{fujino-abund-logbig} implies
 $$R^{1}(\pi \circ f)_{*}\mathcal{O}_{Y}(\lfloor D \rfloor  +G-T)=R^{1}(\pi \circ f)_{*}\mathcal{O}_{Y}(K_{Y}+B+L)=0.$$ 
From these facts, the morphism
$$(\pi \circ f)_{*}\mathcal{O}_{Y}(\lfloor D \rfloor  +G) \longrightarrow (\pi_{S^{\nu}} \circ f_{T})_{*}\mathcal{O}_{T}\bigl((\lfloor D \rfloor  +G)|_{T}\bigr)$$
is surjective, where $f_{T}\colon T \to S^{\nu}$ is the natural morphism. 
Since $\lfloor D \rfloor|_{T}=\lfloor D|_{T} \rfloor$ by (\ref{surj-proof-(4)}),
a natural morphism 
$$\mathcal{O}_{S^{\nu}}(\lfloor lp(K_{S^{\nu}}+\Delta_{S^{\nu}}+\boldsymbol{\rm N}_{S^{\nu}})\rfloor) \longrightarrow f_{T*}\mathcal{O}_{T}(\lfloor D|_{T} \rfloor) \hookrightarrow f_{T*}\mathcal{O}_{T}(\lfloor D \rfloor|_{T}  + G|_{T})$$
is defined. 
From these facts, we obtain the diagram
 $$
\xymatrix{
\pi_{*}f_{*}\mathcal{O}_{Y}(\lfloor D \rfloor  + G) \ar@{->>}[r]&\pi_{S^{\nu}*}f_{T*}\mathcal{O}_{T}(\lfloor D \rfloor|_{T}  + G|_{T})\\
\pi_{*}\mathcal{O}_{X}( lp(K_{X}+\Delta+\boldsymbol{\rm M}_{X}))\ar[u]^{\simeq}\ar[r]&\pi_{S^{\nu}*}\mathcal{O}_{S^{\nu}}\bigl(\lfloor lp(K_{S^{\nu}}+\Delta_{S^{\nu}}+\boldsymbol{\rm N}_{S^{\nu}})\rfloor\bigr),\ar@{^{(}->}[u]
}
$$
and therefore the morphism
\begin{equation*}
\begin{split}
\pi_{*}\mathcal{O}_{X}(lp(K_{X}+\Delta+\boldsymbol{\rm M}_{X})) 
\longrightarrow 
\pi_{S^{\nu}*}\mathcal{O}_{S^{\nu}}\bigl(\lfloor lp(K_{S^{\nu}}+\Delta_{S^{\nu}}+\boldsymbol{\rm N}_{S^{\nu}})\rfloor\bigr)
\end{split}
\end{equation*}
is surjective. 
\end{proof}

\section{Minimal model theory for generalized pairs}\label{sec3}

The goal of this section is to prove Theorem \ref{thm--mmp-neflogbig-main-intro}. 
All the arguments in this section are very similar to those in \cite{hashizumehu}, \cite{has-mmp}, \cite{has-finite}, and \cite{has-nonvan-gpair}. 

The following results will be used without any mention. 

\begin{lem}\label{lem--equiv-log-birat}
Let $(X,\Delta,\boldsymbol{\rm M})/Z$ be a generalized lc pair and $(X',\Delta',\boldsymbol{\rm M})/Z$ a generalized lc pair with a birational morphism $X' \to X$ such that $(X',\Delta',\boldsymbol{\rm M})/Z$ is a generalized log birational model of $(X,\Delta,\boldsymbol{\rm M})/Z$ as in Definition \ref{defn--models}. 
Then $(X,\Delta,\boldsymbol{\rm M})/Z$ has a minimal model (resp.~a good minimal model) if and only if $(X',\Delta',\boldsymbol{\rm M})/Z$ has a minimal model (resp.~a good minimal model).  
\end{lem}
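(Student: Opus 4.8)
The plan is to show that having a (good) minimal model is a property invariant under passing between generalized log birational models connected by a birational \emph{morphism} $g \colon X' \to X$. First I would recall the key fact from the definition of generalized log birational model: $\boldsymbol{\rm M} = \boldsymbol{\rm M}$ by $g$ and $\Delta' = g_*^{-1}\Delta + \sum_i E_i$ where $E_i$ runs over the $g$-exceptional prime divisors. Since $g$ is a morphism, all such $E_i$ appear with coefficient $1$ in $\Delta'$, so $(X',\Delta',\boldsymbol{\rm M})/Z$ is crepant to $(X,\Delta,\boldsymbol{\rm M})/Z$ in the strong sense: writing $K_{X'}+\Delta'+\boldsymbol{\rm M}_{X'} = g^*(K_X+\Delta+\boldsymbol{\rm M}_X) + \sum_i a_i E_i$, a direct computation of generalized log discrepancies gives $a_i = 0$ for every $g$-exceptional $E_i$, hence $K_{X'}+\Delta'+\boldsymbol{\rm M}_{X'} = g^*(K_X+\Delta+\boldsymbol{\rm M}_X)$. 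In particular the two generalized pairs have the same generalized log discrepancy for \emph{every} divisorial valuation over $X$.

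Next I would compare the sets of birational contractions to potential minimal models. Suppose $(X,\Delta,\boldsymbol{\rm M})/Z$ has a minimal model $(W,\Delta_W,\boldsymbol{\rm M})/Z$ with $\phi \colon X \dashrightarrow W$. I claim $(W,\Delta_W,\boldsymbol{\rm M})/Z$ is also a minimal model of $(X',\Delta',\boldsymbol{\rm M})/Z$ via $\phi \circ g \colon X' \dashrightarrow W$. The map $\phi \circ g$ is a birational contraction: any prime divisor on $X'$ exceptional over $W$ is either $g$-exceptional (hence has generalized log discrepancy $0$ for both pairs, and in fact by the minimal model property must be contracted appropriately) or the strict transform of a divisor on $X$ that $\phi$ contracts; one checks $\phi\circ g$ extracts no divisors. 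The nefness of $K_W+\Delta_W+\boldsymbol{\rm M}_W$ over $Z$ is unchanged, and the strict discrepancy inequality $a(D,X',\Delta'+\boldsymbol{\rm M}_{X'}) < a(D,W,\Delta_W+\boldsymbol{\rm M}_W)$ for $\phi\circ g$-exceptional $D$ follows from the corresponding inequality for $X$ together with the equality of discrepancies between $X$ and $X'$ (taking care of the $g$-exceptional divisors, for which the discrepancy is $0$ on $X'$ and is $<1$, the coefficient being $1$ — i.e. positive generalized log discrepancy on $W$ — as they are contracted). If moreover $K_W+\Delta_W+\boldsymbol{\rm M}_W$ is semi-ample over $Z$, the good minimal model property is immediate. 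The converse direction is symmetric but slightly easier: given a minimal model $(W',\Delta_{W'},\boldsymbol{\rm M})/Z$ of $(X',\Delta',\boldsymbol{\rm M})/Z$ via $\psi$, one checks that $(X',\Delta',\boldsymbol{\rm M})/Z \dashrightarrow (W',\Delta_{W'},\boldsymbol{\rm M})/Z$ contracts every $g$-exceptional divisor (each has generalized log discrepancy $0 = $ coefficient-$1$ divisor, which a minimal model must contract), so $\psi$ factors through $g$ birationally and $(W',\Delta_{W'},\boldsymbol{\rm M})/Z$ serves as a minimal model of $(X,\Delta,\boldsymbol{\rm M})/Z$; semi-ampleness transfers verbatim.

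The main obstacle I anticipate is the careful bookkeeping with the $g$-exceptional divisors in the definition of \emph{minimal model}: one must verify that a minimal model of $(X',\Delta',\boldsymbol{\rm M})/Z$ necessarily contracts all the $g$-exceptional prime divisors (because they have generalized log discrepancy $0$ with respect to $(X',\Delta',\boldsymbol{\rm M})/Z$ but a minimal model requires strict increase of generalized log discrepancy on contracted divisors, and a non-contracted divisor keeping discrepancy $0$ is incompatible with being a minimal model — here one uses that on $X'$ the coefficient in $\Delta'$ is exactly $1$) so that the birational maps genuinely factor as claimed. This is essentially the standard argument (as in \cite{hanli} or \cite{has-mmp}); I would cite the relevant lemma on comparison of minimal models under crepant birational morphisms rather than reprove it, and fill in only the verification that the hypotheses — equality of generalized log discrepancies and the birational-contraction property — hold in our situation.
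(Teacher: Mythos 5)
The paper states this lemma without proof, as a standard fact; so your argument has to stand on its own, and as written it does not. The first and most serious problem is your crepancy claim. Writing $K_{X'}+\Delta'+\boldsymbol{\rm M}_{X'}=g^{*}(K_{X}+\Delta+\boldsymbol{\rm M}_{X})+F$, one computes $F=\sum_{i}a(E_{i},X,\Delta+\boldsymbol{\rm M}_{X})E_{i}$, because the coefficient of $E_{i}$ in $\Delta'$ is $1$ while its coefficient in the crepant pullback is $1-a(E_{i},X,\Delta+\boldsymbol{\rm M}_{X})$. This $F$ is effective and $g$-exceptional but is \emph{not} zero in general: if $g$ blows up a smooth point of $X$ away from $\Delta$ and away from all generalized lc centers, then $a(E,X,\Delta+\boldsymbol{\rm M}_{X})={\rm dim}X$, whereas $a(E,X',\Delta'+\boldsymbol{\rm M}_{X'})=0$. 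So the two pairs do \emph{not} have the same generalized log discrepancy at every divisorial valuation; they agree only at valuations whose center on $X'$ is not contained in ${\rm Supp}\,F$ (in particular at all prime divisors on $X$, which is the statement you actually need). Your second structural error is the assertion, used in the converse direction, that a minimal model of $(X',\Delta',\boldsymbol{\rm M})/Z$ must contract every coefficient-one $g$-exceptional divisor: the strict-inequality clause in Definition \ref{defn--models} constrains only the divisors that \emph{are} contracted, and if $K_{X'}+\Delta'+\boldsymbol{\rm M}_{X'}$ is already nef over $Z$ then $(X',\Delta',\boldsymbol{\rm M})/Z$ is its own minimal model and contracts nothing. (As it happens, the direction you treat as "slightly easier" is the one that does work by literal transfer: for a prime divisor $D$ on $X$ contracted by $\psi\circ g^{-1}$ one has $a(D,X,\Delta+\boldsymbol{\rm M}_{X})=a(D,X',\Delta'+\boldsymbol{\rm M}_{X'})<a(D,W',\Delta_{W'}+\boldsymbol{\rm N}_{W'})$, with no factoring through $g$ required.)

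The genuinely delicate direction is the one you try to settle by pushing the same $W$ across: if $(W,\Delta_{W},\boldsymbol{\rm M})/Z$ is a minimal model of $(X,\Delta,\boldsymbol{\rm M})/Z$, then for a $g$-exceptional $E_{i}$ that is a generalized lc place of $(X,\Delta,\boldsymbol{\rm M})/Z$ and is contracted by $\phi\circ g$, one only gets $a(E_{i},X',\Delta'+\boldsymbol{\rm M}_{X'})=0\leq a(E_{i},W,\Delta_{W}+\boldsymbol{\rm M}_{W})$, and equality can hold (take $K_{X}+\Delta+\boldsymbol{\rm M}_{X}$ nef, $W=X$, and $g$ a crepant extraction of a generalized lc place). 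Then $(W,\Delta_{W},\boldsymbol{\rm M})/Z$ is merely a \emph{weak} generalized lc model of $(X',\Delta',\boldsymbol{\rm M})/Z$, not a minimal model in the sense of Definition \ref{defn--models}, and your parenthetical claiming "positive generalized log discrepancy on $W$" is unjustified. To close this gap you must either produce a possibly different minimal model of $(X',\Delta',\boldsymbol{\rm M})/Z$ (e.g.\ run a $(K_{Y}+\Gamma+\boldsymbol{\rm M}_{Y})$-MMP over $W$ on a $\mathbb{Q}$-factorial generalized dlt model $Y$ of $(X',\Delta',\boldsymbol{\rm M})/Z$ and use the negativity lemma for very exceptional divisors), or invoke the known equivalence between the existence of a weak generalized lc model and of a minimal model as in \cite{hanli}. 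Neither step appears in your proposal, so the proof is incomplete even after the discrepancy computation is corrected.
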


\begin{lem}
Let $(X,\Delta,\boldsymbol{\rm M})/Z$ be a generalized lc pair and $(X,\Delta,\boldsymbol{\rm M})\dashrightarrow (X',\Delta',\boldsymbol{\rm M})$ finite steps of a $(K_{X}+\Delta+\boldsymbol{\rm M}_{X})$-MMP over $Z$. 
Then $(X',\Delta',\boldsymbol{\rm M})/Z$ is a generalized lc pair, and $(X,\Delta,\boldsymbol{\rm M})/Z$ has a minimal model (resp.~a good minimal model) if and only if $(X',\Delta',\boldsymbol{\rm M})/Z$ has a minimal model (resp.~a good minimal model).  
\end{lem}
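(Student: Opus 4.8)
The plan is to prove the two assertions of the lemma in turn, using the standard behavior of MMP steps for generalized lc pairs and then invoking Lemma \ref{lem--equiv-log-birat} together with a factorization argument.

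\medskip

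\textbf{Step 1: Preservation of the generalized lc condition.} First I would recall that a single step of a $(K_X+\Delta+\boldsymbol{\rm M}_X)$-MMP over $Z$ is either a divisorial contraction or a flip associated to a $(K_X+\Delta+\boldsymbol{\rm M}_X)$-negative extremal ray; in either case the resulting birational map $\psi\colon X\dashrightarrow X_1$ is $(K_X+\Delta+\boldsymbol{\rm M}_X)$-negative and $K_{X_1}+\psi_*\Delta+\boldsymbol{\rm M}_{X_1}$ is $\mathbb{R}$-Cartier. Since MMP steps only increase generalized log discrepancies (the usual negativity-of-contraction computation carries over verbatim to generalized pairs, because $\boldsymbol{\rm M}$ is pulled back and hence contributes nothing to the discrepancy comparison), we get $a(D,X,\Delta+\boldsymbol{\rm M}_X)\le a(D,X_1,\psi_*\Delta+\boldsymbol{\rm M}_{X_1})$ for every prime divisor $D$ over $X$. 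Hence $(X_1,\psi_*\Delta,\boldsymbol{\rm M})/Z$ is again generalized lc, and iterating over the finitely many steps shows $(X',\Delta',\boldsymbol{\rm M})/Z$ is a generalized lc pair. I should also note that $\boldsymbol{\rm M}$ remains b-nef$/Z$ because its defining nef model is unchanged along the MMP; this makes $(X',\Delta',\boldsymbol{\rm M})/Z$ a legitimate generalized pair.

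\medskip

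\textbf{Step 2: Equivalence of existence of (good) minimal models.} Now I would reduce to Lemma \ref{lem--equiv-log-birat}. Since $\phi\colon X\dashrightarrow X'$ is a composition of MMP steps, it is in particular a birational contraction and $\boldsymbol{\rm M}=\boldsymbol{\rm M}$ by $\phi$. The point is that a birational contraction obtained by an MMP is $(K_X+\Delta+\boldsymbol{\rm M}_X)$-nonpositive, so $(X',\Delta',\boldsymbol{\rm M})/Z$ shares the same minimal models as $(X,\Delta,\boldsymbol{\rm M})/Z$: a minimal model $(Y,\Delta_Y,\boldsymbol{\rm M})/Z$ of one is automatically a minimal model of the other, because the generalized log discrepancy inequalities defining "minimal model" compose transitively along $X\dashrightarrow X'\dashrightarrow Y$ (using that $\phi$ is $(K_X+\Delta+\boldsymbol{\rm M}_X)$-nonpositive, so no discrepancy is decreased, and conversely that divisors extracted or contracted by $\phi$ behave as required). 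The same transitivity applies to the semi-ampleness over $Z$ of $K_Y+\Delta_Y+\boldsymbol{\rm M}_Y$, giving the statement for good minimal models as well. Concretely, I would take a common resolution $p\colon W\to X$, $q\colon W\to X'$ with $q=\phi\circ p$, write $p^*(K_X+\Delta+\boldsymbol{\rm M}_X)=q^*(K_{X'}+\Delta'+\boldsymbol{\rm M}_{X'})+F$ with $F\ge 0$ $q$-exceptional (negativity of the MMP), and then run the comparison of discrepancies through $W$; alternatively one can directly cite Lemma \ref{lem--equiv-log-birat} after passing to a high enough model dominating both $X$ and $X'$ on which $\phi$ becomes a morphism and applying it twice.

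\medskip

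\textbf{Main obstacle.} The only genuinely delicate point is verifying that a birational contraction produced by a sequence of MMP steps is $(K_X+\Delta+\boldsymbol{\rm M}_X)$-nonpositive and that the negative part $F$ on a common resolution is effective and exceptional over $X'$ — i.e., the standard "negativity lemma for MMP" in the generalized setting. This is well known (it follows step by step from the negativity of a single divisorial contraction or flip for generalized pairs, cf.\ the references to \cite{hanli}), so I would state it and invoke it rather than reprove it. Once that is in hand, the reduction to Lemma \ref{lem--equiv-log-birat} is essentially formal, since one can always replace $X'$ by a log resolution on which $\phi$ is a morphism without changing the existence of (good) minimal models.
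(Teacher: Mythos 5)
The paper itself offers no proof of this lemma: it is listed among the results that "will be used without any mention," i.e.\ it is treated as a standard fact from the MMP literature. So your proposal is not competing with an argument in the text, and your Step 1 (monotonicity of generalized log discrepancies along MMP steps, hence preservation of generalized log canonicity, plus the observation that the b-divisor $\boldsymbol{\rm M}$ is literally unchanged) is correct and is exactly what one would write.

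In Step 2, however, the phrase "the inequalities compose transitively along $X\dashrightarrow X'\dashrightarrow Y$" hides the one non-formal point. The direction you need for "minimal model of $(X,\Delta,\boldsymbol{\rm M})$ implies minimal model of $(X',\Delta',\boldsymbol{\rm M})$" requires $a(D,X',\Delta'+\boldsymbol{\rm M}_{X'})\leq a(D,Y,\Delta_{Y}+\boldsymbol{\rm M}_{Y})$, and this does \emph{not} follow by composing the two inequalities you actually have, namely $a(D,X,\cdot)\leq a(D,X',\cdot)$ and $a(D,X,\cdot)\leq a(D,Y,\cdot)$: both say $X$ sits below the other two, which gives no comparison between $X'$ and $Y$. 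The standard fix is one more application of the negativity lemma \emph{over $X'$}: on a common resolution $W$ with $p,q,r$ to $X,X',Y$, write $q^{*}(K_{X'}+\Delta'+\boldsymbol{\rm M}_{X'})-r^{*}(K_{Y}+\Delta_{Y}+\boldsymbol{\rm M}_{Y})=G-F$ where $F$ comes from the MMP and $G$ from the weak lc model; then $F-G$ is nef over $X'$ (it is $\equiv_{X'}$ the pullback of the nef divisor on $Y$) and $q_{*}(G-F)=q_{*}G\geq 0$, so negativity gives $G\geq F$ and hence the missing inequality; strictness for contracted divisors then uses that the MMP leaves the discrepancy of a non-contracted divisor unchanged. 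Your fallback route --- applying Lemma \ref{lem--equiv-log-birat} twice on a common resolution --- does not work as stated, because the two induced generalized log birational models on $W$ have \emph{different} boundaries: a divisor contracted by $\phi$ keeps its coefficient from $\Delta$ in the model over $X$ but receives coefficient $1$ in the model over $X'$, so the two applications do not meet in the middle (bridging them is essentially Lemma \ref{lem--minmodel-zariskidecom}, which is stated only for $Z$ a point). With the negativity step made explicit your first route is complete; as written, the argument has a genuine gap at exactly the point where the claim stops being formal.
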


\begin{lem}[{cf.~\cite[Theorem 4.1]{hanli}}]
Let $(X,\Delta,\boldsymbol{\rm M})/Z$ be a generalized lc pair such that $Z$ is quasi-projective, $(X,0)$ is a $\mathbb{Q}$-factorial klt pair, and $\boldsymbol{\rm M}$ is a finite $\mathbb{R}_{>0}$-linear combination of b-nef$/Z$ $\mathbb{Q}$-b-Cartier $\mathbb{Q}$-b-divisors. 
If $(X,\Delta,\boldsymbol{\rm M})/Z$ has a minimal model, then any sequence of steps of a $(K_{X}+\Delta+\boldsymbol{\rm M}_{X})$-MMP over $Z$ with scaling of an ample divisor terminates with a minimal model. 
\end{lem}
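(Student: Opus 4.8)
The statement to prove is a special case (the $\mathbb{Q}$-factorial klt, polarized, termination-with-scaling statement) of the general machinery developed by Han--Li. The plan is to reduce it to the termination of MMP with scaling for generalized klt pairs via a standard perturbation trick: since $(X,0)$ is $\mathbb{Q}$-factorial klt and $\boldsymbol{\rm M}$ is an $\mathbb{R}_{>0}$-linear combination of b-nef$/Z$ $\mathbb{Q}$-b-Cartier b-divisors, one can absorb a small multiple of the nef parts and a small multiple of $\Delta$ into the klt structure. More precisely, I would first fix a step of the $(K_X+\Delta+\boldsymbol{\rm M}_X)$-MMP over $Z$ with scaling of an ample divisor $A$, producing a decreasing sequence $\lambda_i \downarrow \lambda_\infty$ of scaling numbers, and argue by contradiction that the MMP does not terminate.

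The key steps, in order, are: (1) Use the existence of a minimal model of $(X,\Delta,\boldsymbol{\rm M})/Z$ together with \cite[Theorem 4.1]{hanli} (or the argument underlying it) to reduce to showing the MMP with scaling terminates; here the point is that once a minimal model exists, one applies the general result of Han--Li that MMP with scaling of an ample divisor for generalized lc pairs with a minimal model terminates, provided one knows this in the relevant sub-case. (2) In the $\mathbb{Q}$-factorial klt case, replace $(X,\Delta,\boldsymbol{\rm M})$ by a klt pair $(X,\Delta')$ with $\Delta' = \Delta + \epsilon(\text{ample part realizing } \boldsymbol{\rm M}_X) + \cdots$, chosen so that $K_X+\Delta+\boldsymbol{\rm M}_X$ and $K_X+\Delta'$ differ by an $\mathbb{R}$-linearly trivial (or nef) perturbation, so that the two MMPs coincide step by step (same extremal rays are contracted, by the length of extremal rays, cf.~\cite[Proposition 3.13]{hanli}). (3) Invoke \cite{bchm}-type termination for klt pairs, or rather the termination of MMP with scaling for klt pairs with a minimal model, to conclude. (4) Check that the output is a minimal model of the original generalized pair, using that generalized log discrepancies are preserved/increased along the MMP and that the perturbation does not affect which divisors are contracted.

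The main obstacle I expect is step (2)–(3): making the perturbation \emph{exactly} compatible so that a single MMP with scaling simultaneously serves as an MMP for $K_X+\Delta+\boldsymbol{\rm M}_X$ and for a genuine klt $K_X+\Delta'$, while keeping track of the scaling divisor $A$ and the nef b-divisor $\boldsymbol{\rm M}$ (whose trace changes under the birational maps but whose descent model one may need to keep fixed). One must be careful that $\boldsymbol{\rm M}$ being only an $\mathbb{R}_{>0}$-combination of $\mathbb{Q}$-b-Cartier pieces (not itself $\mathbb{Q}$-b-Cartier) forces a limiting/ACC argument on the scaling numbers rather than a direct appeal to a single klt MMP; this is precisely where the techniques of \cite{has-finite} (polytope of boundaries, special termination, behavior of Nakayama--Zariski negative parts) enter, following the strategy already used in \cite{hashizumehu} and \cite{has-nonvan-gpair}. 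So the heart of the proof is the reduction to a finite-dimensional problem about the rational polytope spanned by the coefficients of $\Delta$ and the $\mu_i$, after which termination of MMP with scaling for $\mathbb{Q}$-factorial klt pairs (known by \cite{bchm}) closes the argument.
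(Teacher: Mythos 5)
The paper gives no proof of this lemma at all: it is listed among the ``results \ldots used without any mention'' and is simply quoted from Han--Li \cite[Theorem 4.1]{hanli}, so there is nothing in the paper to compare your argument against step by step. Judged on its own, your proposal has two problems. First, step (1) is circular: invoking \cite[Theorem 4.1]{hanli} ``to reduce to showing the MMP with scaling terminates'' is invoking the very statement you are asked to prove; either you cite Han--Li and are done, or you must reprove their theorem, but you cannot do both.

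Second, and more seriously, the perturbation in steps (2)--(3) does not work. You want to replace $(X,\Delta,\boldsymbol{\rm M})$ by an honest klt pair $(X,\Delta')$ with $K_{X}+\Delta'\sim_{\mathbb{R},Z}K_{X}+\Delta+\boldsymbol{\rm M}_{X}$ and then quote termination for klt pairs. But $\boldsymbol{\rm M}_{X}$ is only the trace of a b-nef$/Z$ b-divisor --- it need not be nef on $X$ at all --- and even when it is nef it need not be $\mathbb{R}$-linearly equivalent over $Z$ to any effective divisor, let alone one generic enough to keep the pair klt (take $\boldsymbol{\rm M}=\overline{M}$ with $M$ a nef divisor of invariant Iitaka dimension $-\infty$). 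This is precisely the obstruction that makes generalized pairs a genuinely larger category than crepant models of klt pairs, and it is why \cite[Theorem 4.1]{hanli} is proved by induction on dimension via special termination for generalized dlt pairs (restriction to generalized lc centers, divisorial adjunction, and a lifting argument), entirely inside the generalized-pair framework, rather than by reduction to \cite{bchm}. Your closing remark that the heart of the matter is a rational-polytope reduction followed by klt termination therefore misidentifies where the difficulty lies; the correct route here is simply to cite \cite[Theorem 4.1]{hanli} (the hypothesis that $\boldsymbol{\rm M}$ is a finite $\mathbb{R}_{>0}$-combination of $\mathbb{Q}$-b-Cartier b-nef divisors is exactly the NQC condition under which their theorem applies).
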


\subsection{Auxiliary results}
We collect results used in this section. 
Note that all results in this subsection are known in the case of usual pairs.

\begin{lem}[cf.~{\cite[Lemma 2.16]{hashizumehu}}]\label{lem--extraction-gpair}
Let $(X,\Delta,\boldsymbol{\rm M})/Z$ be a generalized lc pair such that $Z$ is quasi-projective and $(X,B)$ is a dlt pair for some $B$.
Let $\mathcal{T}$ be an empty set or a finite set of exceptional prime divisors over $X$ such that $0< a(P,X,\Delta+\boldsymbol{\rm M}_{X})< 1$ for any $P\in \mathcal{T}$. 
Then there is a $\mathbb{Q}$-factorial variety $\widetilde{X}$ and a projective birational morphism $f\colon \widetilde{X} \to X$ such that $f$-exceptional prime divisors are exactly elements of $\mathcal{T}$. 
\end{lem}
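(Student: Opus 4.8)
The goal is to extract the divisors in $\mathcal{T}$ over $X$ while keeping the ambient variety $\mathbb{Q}$-factorial, so the plan is to reduce to the known statement for usual dlt pairs (for instance \cite[Lemma 2.16]{hashizumehu} or the standard extraction via \cite{bchm}) by an MMP argument run on a suitable auxiliary dlt pair. First I would choose a log resolution $g\colon Y\to X$ of $(X,B)$ (which we may also take so that $\boldsymbol{\rm M}$ descends to $Y$) such that every element of $\mathcal{T}$ appears as a divisor on $Y$; write $K_Y+B_Y=g^*(K_X+B)+E_Y$ and also record the coefficients of each $P\in\mathcal{T}$ with respect to $(X,\Delta,\boldsymbol{\rm M})$. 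The key point is that the hypothesis $0<a(P,X,\Delta+\boldsymbol{\rm M}_X)<1$ for $P\in\mathcal{T}$ lets us build an effective boundary $\Gamma$ on $Y$, supported on the strict transforms of components of $B$, the $g$-exceptional divisors, and the elements of $\mathcal{T}$, so that $(Y,\Gamma)$ is $\mathbb{Q}$-factorial dlt, $K_Y+\Gamma\equiv_X$ (a combination that is anti-nef over $X$ plus an exceptional divisor supported away from $\mathcal{T}$), and the only divisors we want to keep are exactly $\mathcal{T}$.

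The main step is then to run a $(K_Y+\Gamma)$-MMP over $X$. Since $(Y,\Gamma)$ is dlt and $Y\to X$ is birational, such an MMP exists and terminates (this is the usual-pair statement, hence available to us by the remark preceding the lemma that all results of this subsection are known for usual pairs, or directly by \cite{bchm}); it contracts precisely the $g$-exceptional divisors that do \emph{not} lie in $\mathcal{T}$, because those are the components of $\Gamma$ with discrepancy making $K_Y+\Gamma$ relatively positive along them, while the divisors in $\mathcal{T}$ are preserved thanks to the strict inequality $a(P,X,\Delta+\boldsymbol{\rm M}_X)<1$ (which forces their coefficient in $\Gamma$ to be positive and bounded away from making them contractible). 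The output $f\colon\widetilde X\to X$ is $\mathbb{Q}$-factorial, projective and birational over $X$, and its exceptional locus in codimension one is exactly $\mathcal{T}$. The role of $\boldsymbol{\rm M}$ here is essentially inert: $\boldsymbol{\rm M}$ descends to $Y$, so on $Y$ the b-divisor contributes an honest nef$/X$ divisor which can be absorbed into the numerical equivalence bookkeeping and does not affect which divisors the MMP extracts; one only needs to check that generalized log discrepancies with respect to $(X,\Delta,\boldsymbol{\rm M})$ agree with ordinary log discrepancies for divisors realized on $Y$, which is immediate once $\boldsymbol{\rm M}$ descends.

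The step I expect to be the main obstacle is arranging the boundary $\Gamma$ on $Y$ and the scaling so that the MMP contracts every unwanted $g$-exceptional divisor but no element of $\mathcal{T}$, uniformly over all of $X$; concretely, one must choose the coefficients of $\Gamma$ along the non-$\mathcal{T}$ exceptional divisors large enough (close to $1$, or using a small ample perturbation $-F+g^*H$ as in the perturbation arguments already used in the excerpt) that $K_Y+\Gamma$ is relatively big and relatively positive along exactly those divisors, while keeping $(Y,\Gamma)$ dlt and keeping the coefficients along $\mathcal{T}$ strictly between $0$ and $1$. This is the standard ``extraction via MMP'' bookkeeping, and the dlt (rather than merely lc) hypothesis on $(X,B)$ together with $Z$ quasi-projective is exactly what makes the resolution and the perturbation available; once $\Gamma$ is set up correctly, termination and the identification of the contracted locus follow from the usual-pair results cited above.
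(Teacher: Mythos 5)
Your outline is essentially correct, but it takes a genuinely different (and longer) route than the paper. The paper never goes up to a resolution: it performs the whole reduction on $X$ itself. First it replaces $B$ so that $(X,B)$ is klt and passes to the convex combination $\bigl(X,(1-t)\Delta+tB,(1-t)\boldsymbol{\rm M}\bigr)/Z$ for $0<t\ll 1$, which is generalized \emph{klt} and still satisfies $0<a(P,\cdot)<1$ for the finitely many $P\in\mathcal{T}$; then it perturbs the nef part into the boundary to produce an honest klt pair $(X,\Delta')$ with $0<a(P,X,\Delta')<1$ for all $P\in\mathcal{T}$, and simply cites \cite[Lemma 2.16]{hashizumehu}. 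Your proposal instead re-proves that cited lemma from scratch: you pass to a log resolution $Y$, build an auxiliary boundary $\Gamma$, and run a $(K_Y+\Gamma)$-MMP over $X$. That is the standard proof of the extraction statement and it does work, but it re-derives machinery the paper deliberately outsources; the paper's two-step perturbation is what makes the generalized-pair hypothesis disappear cheaply.

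One claim in your write-up is imprecise enough to flag: you say that generalized log discrepancies with respect to $(X,\Delta,\boldsymbol{\rm M})$ ``agree with ordinary log discrepancies for divisors realized on $Y$, which is immediate once $\boldsymbol{\rm M}$ descends.'' This is not right as stated. The generalized log discrepancy is computed from the crepant relation $K_Y+\tilde{\Delta}+\boldsymbol{\rm M}_Y=g^{*}(K_X+\Delta+\boldsymbol{\rm M}_X)$ and genuinely involves $\boldsymbol{\rm M}_Y$; it is not the log discrepancy of any usual pair until you absorb the nef part into the boundary. Moreover $\boldsymbol{\rm M}_X$ need not be $\mathbb{R}$-Cartier (only $K_X+\Delta+\boldsymbol{\rm M}_X$ is), so you cannot write $\boldsymbol{\rm M}_Y$ as a pullback plus an exceptional divisor and fold it into ``numerical equivalence bookkeeping over $X$.'' The correct fix is exactly the perturbation you mention in passing at the end: $\boldsymbol{\rm M}_Y$ is nef over $Z$ (hence over $X$) and big over $X$ since $Y\to X$ is birational, so using the quasi-projectivity of $Z$ one replaces it $\mathbb{R}$-linearly over $X$ by an ample plus effective divisor and perturbs into a klt boundary, after which your MMP argument goes through. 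With that correction your proof closes; without it the key step (identifying which divisors the MMP contracts via the negativity lemma) is not justified.
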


\begin{proof}
Replacing $B$, we may assume that $(X,B)$ is klt. 
We pick a real number $0<t\ll 1$ so that the generalized klt pair $\bigl(X,(1-t)\Delta+tB,(1-t)\boldsymbol{\rm M}\bigr)/Z$ satisfies
$$0< a(P,X,((1-t)\Delta+tB)+(1-t)\boldsymbol{\rm M}_{X})< 1$$ for any $P\in \mathcal{T}$. 
Replacing $(X,\Delta,\boldsymbol{\rm M})/Z$ with $\bigl(X,(1-t)\Delta+tB,(1-t)\boldsymbol{\rm M}\bigr)/Z$, we may assume that $(X,\Delta,\boldsymbol{\rm M})/Z$ is generalized klt. 
By the perturbation of coefficients, we can find $\Delta'$ such that $(X,\Delta')$ is klt and $0<a(P,X,\Delta')<1$  for any $P\in \mathcal{T}$, where $a(\,\cdot\,,X,\Delta')$ is the log discrepancy. 
Then the lemma follows from \cite[Lemma 2.16]{hashizumehu}. 
\end{proof}

\begin{lem}[cf.~{\cite[Lemma 2.6]{has-finite}}] \label{lem--discre-relation} Let $(X,\Delta,\boldsymbol{\rm M})$ and $(X',\Delta',\boldsymbol{\rm M}')$ be 
generalized dlt pairs with a birational map $\phi \colon X\dashrightarrow X'$ by which $\boldsymbol{\rm M}=\boldsymbol{\rm M}'$. Let $S$ and $S'$ be generalized lc centers of $(X,\Delta,\boldsymbol{\rm M})$ and $(X',\Delta',\boldsymbol{\rm M}')$ respectively such that $\phi$ is an isomorphism near the generic point of $S$ and the restriction $\phi|_{S}$ defines a birational map $\phi|_{S}\colon S\dashrightarrow S'$. Suppose that $K_{X}+\Delta+\boldsymbol{\rm M}_{X}$ is pseudo-effective. Suppose in addition that  
\begin{itemize} 
\item 
$a(D',X',\Delta'+\boldsymbol{\rm M}'_{X'})\leq a(D',X,\Delta+\boldsymbol{\rm M}_{X})$ for every prime divisor $D'$ on $X'$, and 
\item 
$\sigma_{P}(K_{X}+\Delta+\boldsymbol{\rm M}_{X})=0$  
 for every prime divisor $P$ over $X$ such that $c_{X}(P)\cap S \neq \emptyset$ and $a(P,X,\Delta+\boldsymbol{\rm M}_{X})< 1$, where $\sigma_{P}(\;\cdot \;)$ is as in Definition \ref{defn--asy-van-ord}. 
\end{itemize} Let $(S,\Delta_{S},\boldsymbol{\rm N})$ and $(S',\Delta_{S'}, \boldsymbol{\rm N})$ be generalized dlt pairs 
which are constructed by applying divisorial adjunctions for generalized pairs to $(X,\Delta,\boldsymbol{\rm M})$ and $(X',\Delta',\boldsymbol{\rm M}')$ repeatedly. 
Then 
$$a(Q,S',\Delta_{S'}+\boldsymbol{\rm N}_{S'})\leq a(Q,S,\Delta_{S}+\boldsymbol{\rm N}_{S})$$
 for all prime divisors $Q$ on $S'$. \end{lem}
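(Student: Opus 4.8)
The plan is to reduce the statement to an already known comparison of log discrepancies for usual dlt pairs, namely \cite[Lemma 2.6]{has-finite}, by carefully tracking how the nef b-divisor $\boldsymbol{\rm M}$ interacts with divisorial adjunction and with asymptotic vanishing orders. First I would fix a common log resolution $g\colon W \to X$ and $g'\colon W \to X'$ realizing $\phi$, chosen so that $\boldsymbol{\rm M}$ descends to $W$ (possible since $\boldsymbol{\rm M}=\boldsymbol{\rm M}'$ by $\phi$ and both are b-Cartier). Writing $K_W+\Delta_W+\boldsymbol{\rm M}_W = g^*(K_X+\Delta+\boldsymbol{\rm M}_X)$ and similarly with $g'$, the hypothesis $a(D',X',\Delta'+\boldsymbol{\rm M}'_{X'})\le a(D',X,\Delta+\boldsymbol{\rm M}_X)$ for divisors $D'$ on $X'$ translates, together with pseudo-effectivity of $K_X+\Delta+\boldsymbol{\rm M}_X$, into an inequality $\Delta_W \ge \Delta'_W$ over the relevant locus after absorbing the common nef part $\boldsymbol{\rm M}_W$; the key point is that since $\boldsymbol{\rm M}_W$ is the \emph{same} divisor on both sides, the generalized log discrepancies differ from the ordinary log discrepancies of the pairs $(W,\Delta_W)$, $(W,\Delta'_W)$ by the identical quantity $\mathrm{coeff}(\boldsymbol{\rm M}_W)$, so all comparisons reduce to the non-generalized setting.

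Next I would handle the adjunction. By construction of divisorial adjunction for generalized pairs (Definition/Remark preceding, following \cite{bz}), restricting $K_W+\tilde S+\Delta_W^{\flat}+\boldsymbol{\rm M}_W$ to the strict transform $\tilde S$ of $S$ computes $(S,\Delta_S,\boldsymbol{\rm N})$, and the b-divisor $\boldsymbol{\rm N}$ on $S$ is nothing but $\overline{\boldsymbol{\rm M}_W|_{\tilde S}}$; crucially the same is true on the $X'$ side with the \emph{same} $\boldsymbol{\rm N}$ (this is exactly the meaning of "$\boldsymbol{\rm N}$" being shared in the statement, and it follows from $\boldsymbol{\rm M}=\boldsymbol{\rm M}'$ by $\phi$ restricted to a resolution of $\tilde S \dashrightarrow \tilde S'$). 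Therefore, for any prime divisor $Q$ over $S$, $a(Q,S,\Delta_S+\boldsymbol{\rm N}_S)$ equals the ordinary log discrepancy $a(Q, S, \Delta_S^{\mathrm{ord}})$ shifted by $\mathrm{coeff}_Q(\boldsymbol{\rm N}_{\tilde S_Q})$ for a model $\tilde S_Q$ realizing $Q$, and likewise for $(S',\Delta_{S'}+\boldsymbol{\rm N}_{S'})$ with the identical shift. So the desired inequality $a(Q,S',\Delta_{S'}+\boldsymbol{\rm N}_{S'})\le a(Q,S,\Delta_S+\boldsymbol{\rm N}_S)$ is equivalent to the same inequality for the ordinary log discrepancies of the dlt pairs $(S,\Delta_S^{\mathrm{ord}})$ and $(S',\Delta_{S'}^{\mathrm{ord}})$.

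To invoke \cite[Lemma 2.6]{has-finite} I must verify its hypotheses for these ordinary pairs: that $K_S+\Delta_S^{\mathrm{ord}}$ is pseudo-effective, and that for every prime divisor $P$ over $X$ with ordinary log discrepancy $<1$ whose center meets $S$, the asymptotic vanishing order along $P$ vanishes. The first is immediate from adjunction since $K_X+\Delta+\boldsymbol{\rm M}_X$ restricted to $S^\nu$ is $\mathbb{R}$-linearly $K_S+\Delta_S+\boldsymbol{\rm N}_S$ which is pseudo-effective, and $\boldsymbol{\rm N}_S$ is a limit of effective classes plus nef, so $K_S+\Delta_S^{\mathrm{ord}}$ is pseudo-effective too — here I would cite the basic properties of $\sigma_P$ and $N_\sigma$ recalled via \cite[Remark 2.3]{has-finite}. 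The second hypothesis is where I expect the real work: I need to pass from the \emph{generalized} asymptotic vanishing order $\sigma_P(K_X+\Delta+\boldsymbol{\rm M}_X)=0$ (which is the translation of the second bullet in our statement) to the statement that the ordinary $\sigma$ of the restricted boundary on $W$ and then on $\tilde S$ vanishes. The mechanism is: $\sigma_P(K_X+\Delta+\boldsymbol{\rm M}_X)=0$ together with nefness of $\boldsymbol{\rm M}_W$ forces $N_\sigma(g^*(K_X+\Delta+\boldsymbol{\rm M}_X))$ to have zero coefficient along $P$, hence the negative part of the Nakayama--Zariski decomposition of $K_W+\Delta_W+\boldsymbol{\rm M}_W$ is supported away from such $P$; then restricting to $\tilde S$ and using that $\boldsymbol{\rm M}_W|_{\tilde S}$ is still nef, the restriction of the negative part dominates $N_\sigma$ of the restriction, giving the vanishing needed on $S$. \textbf{This restriction-of-Nakayama--Zariski-negative-part step is the main obstacle}, since $\sigma_P$ does not restrict well in general; I would control it exactly as in the proof of \cite[Lemma 2.6]{has-finite}, using that the relevant divisors $P$ have center meeting $S$ and log discrepancy $<1$, so after the extraction/dlt-modification built into the construction of $(S,\Delta_S,\boldsymbol{\rm N})$ they appear on models dominating $\tilde S$, where the inequality $N_\sigma(D|_{\tilde S}) \le N_\sigma(D)|_{\tilde S}$ for the pseudo-effective $D=K_W+\Delta_W+\boldsymbol{\rm M}_W$ can be applied. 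Once all hypotheses are checked, \cite[Lemma 2.6]{has-finite} applied to $(S,\Delta_S^{\mathrm{ord}})\dashrightarrow (S',\Delta_{S'}^{\mathrm{ord}})$ yields the conclusion, and undoing the uniform shift by $\boldsymbol{\rm N}$ gives the stated inequality on generalized log discrepancies.
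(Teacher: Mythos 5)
Your proposal contains a genuine gap, and the route it takes is not the one that works. The paper's argument never applies the usual-pairs lemma \cite[Lemma 2.6]{has-finite} one dimension down to $(S,\Delta_S)\dashrightarrow (S',\Delta_{S'})$; instead it works entirely on a common resolution $f\colon\bar X\to X$, $f'\colon\bar X\to X'$ of $\phi$ (on which $\boldsymbol{\rm M}$ need not even descend), writes $f^*(K_X+\Delta+\boldsymbol{\rm M}_X)=f'^*(K_{X'}+\Delta'+\boldsymbol{\rm M}'_{X'})+G_+-G_-$, and splits $G_+=G_0+G_1$ with $G_1|_{\bar S}=0$ and every component $E$ of $G_0$ meeting $\bar S$. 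For such $E$ one computes $\sigma_E(K_X+\Delta+\boldsymbol{\rm M}_X)>0$ directly from the properties of asymptotic vanishing orders, so the contrapositive of the second hypothesis forces $a(E,X,\Delta+\boldsymbol{\rm M}_X)\geq 1$, hence $a(E,X',\Delta'+\boldsymbol{\rm M}'_{X'})>1$, which makes $E|_{\bar S}$ exceptional over $S'$. Restricting the crepant identity to $\bar S$ then gives the inequality for every prime divisor $Q$ on $S'$ in one line. The hypothesis $\sigma_P(K_X+\Delta+\boldsymbol{\rm M}_X)=0$ is only ever used for divisors over $X$; nothing about asymptotic vanishing orders on $S$ is needed.

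By contrast, your plan requires verifying the $\sigma_Q=0$ hypothesis of \cite[Lemma 2.6]{has-finite} for divisors $Q$ over $S$, and the step you yourself flag as the main obstacle --- passing from $\sigma_P(K_X+\Delta+\boldsymbol{\rm M}_X)=0$ on $X$ to vanishing of asymptotic orders for the restricted divisor on $\tilde S$ via an inequality of the form $N_\sigma(D|_{\tilde S})\leq N_\sigma(D)|_{\tilde S}$ --- does not hold in general: restriction of Nakayama--Zariski negative parts to subvarieties is not well behaved, and this is precisely what the paper's direct argument is designed to avoid. A second, smaller problem: your claimed reduction "generalized log discrepancies $=$ ordinary log discrepancies shifted by the identical quantity $\mathrm{coeff}(\boldsymbol{\rm M}_W)$" is not a shift by a common quantity. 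What is true is that generalized log discrepancies equal ordinary log discrepancies of the crepant sub-pairs $(W,\Delta_W)$ and $(W,\Delta'_W)$; but these are sub-pairs, not dlt pairs, $\bar S$ is not an lc center of them in the sense required, and the corrections $g^*\boldsymbol{\rm M}_X-\boldsymbol{\rm M}_W$ and $g'^*\boldsymbol{\rm M}'_{X'}-\boldsymbol{\rm M}_W$ are different divisors, so the cited usual-pairs lemma cannot be invoked after this reduction. The correct fix is to abandon the dimension-reduction strategy and adapt the proof of \cite[Lemma 2.6]{has-finite} verbatim to generalized discrepancies, as above.
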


\begin{proof}
We closely follow \cite[Proof of Lemma 2.6]{has-finite}. 

Taking an appropriate common log resolution $f \colon Y \to X$ and $f'\colon Y \to X'$ of the birational map $(X,\Delta)\dashrightarrow (X',\Delta')$,  we may find a subvariety $T$ of $Y$ which is birational to $S$ and $S'$ such that the induced morphisms $T \to S$ and $T \to S'$ form a common resolution of $\phi|_{S}$. 
Note that $\boldsymbol{\rm M}$ does not necessarily descend to $Y$. 
We may write
$$f^{*}(K_{X}+\Delta+\boldsymbol{\rm M}_{X})=f'^{*}(K_{X'}+\Delta'+\boldsymbol{\rm M}'_{X'})+G_{+}-G_{-}$$
such that $G_{+}\geq0$ and $G_{-}\geq0$ have no common components. 
Then $G_{+}$ is $f'$-exceptional by the first condition of Lemma \ref{lem--discre-relation}. 
Since $K_{X}+\Delta+\boldsymbol{\rm M}_{X}$ is pseudo-effective, we see that $K_{X'}+\Delta'+\boldsymbol{\rm M}'_{X'}$ is pseudo-effective. 
We have ${\rm Supp}G_{+} \not\supset T$ and ${\rm Supp}G_{-}\not\supset T$ since $S$ and $S'$ are generalized lc centers of $(X,\Delta,\boldsymbol{\rm M})$ and $(X',\Delta',\boldsymbol{\rm M}')$ respectively. 

We can write 
$$G_{+}=G_{0}+G_{1},$$
 where all components of $G_{0}$ intersect $T$ and $G_{1}|_{T} = 0$. 
Pick any component $E$ of $G_{0}$. 
Then ${\rm ord}_{E}(G_{-})=0$. 
We have
\begin{equation*}
\begin{split}
&\sigma_{E}(K_{X}+\Delta+\boldsymbol{\rm M}_{X})\\
=&\sigma_{E}\bigl(f^{*}(K_{X}+\Delta+\boldsymbol{\rm M}_{X})\bigr)+{\rm ord}_{E}(G_{-}) \qquad \;\;\;\, ({\rm ord}_{E}(G_{-})=0)  \\
\geq &\sigma_{E}\bigl(f^{*}(K_{X}+\Delta+\boldsymbol{\rm M}_{X})+G_{-}\bigr) \qquad\qquad\quad\;\;\, (\text{\cite[Remark 2.3 (1)]{has-finite}})\\
=&\sigma_{E}\bigl(f'^{*}(K_{X'}+\Delta'+\boldsymbol{\rm M}'_{X'})+G_{+}\bigr) \\
=& \sigma_{E}\bigl(f'^{*}(K_{X'}+\Delta'+\boldsymbol{\rm M}'_{X'})\bigr)+{\rm ord}_{E}(G_{+}) \qquad (\text{\cite[Remark 2.3 (3)]{has-finite}}),
\end{split}
\end{equation*}
therefore $\sigma_{P}(K_{X}+\Delta+\boldsymbol{\rm M}_{X})>0$. 
By the second condition of Lemma \ref{lem--discre-relation}, we see that 
$a(E,X,\Delta+\boldsymbol{\rm M}_{X})\geq 1$, hence we have 
$$a(E,X',\Delta'+\boldsymbol{\rm M}'_{X'})>a(E,X,\Delta+\boldsymbol{\rm M}_{X})\geq 1.$$ 
By the same argument as in \cite[Proof of Lemma 2.5]{has-finite}, we see that $E|_{T}$ is exceptional over $S'$. 
Therefore, $G_{0}|_{T}$ is exceptional over $S'$. 
This implies that 
$${\rm coeff}_{Q}(G_{0}|_{T})=0$$
 for every prime divisor $Q$ on $S'$. 
Since we have $G_{1}|_{T}=0$ and
$$f|_{T}^{*}\bigl((K_{X}+\Delta+\boldsymbol{\rm M}_{X})|_{S}\bigr)-f'|_{T}^{*}\bigl((K_{X'}+\Delta'+\boldsymbol{\rm M}'_{X'})|_{S'}\bigr)=(G_{0}+G_{1})|_{T}-G_{-}|_{T},$$
 the inequality 
 $$a(Q,S',\Delta_{S'}+\boldsymbol{\rm N}_{S'})- a(Q,S,\Delta_{S}+\boldsymbol{\rm N}_{S})\leq 0$$ holds for every prime divisor $Q$ on $S'$. Therefore, Lemma \ref{lem--discre-relation} holds. 
\end{proof}

\begin{lem}[{cf.~\cite[Lemma 5.3]{hmx-boundgentype}, \cite[Lemma 2.25]{has-finite}}]\label{lem--minmodel-zariskidecom}
Let $(X,\Delta,\boldsymbol{\rm M})$ be a generalized lc pair such that $\boldsymbol{\rm M}$ is a finite $\mathbb{R}_{>0}$-linear combination of b-nef $\mathbb{Q}$-b-Cartier $\mathbb{Q}$-b-divisors. 
Let $(Y,\Gamma,\boldsymbol{\rm M})$ be a generalized lc pair with a projective birational morphism $f\colon Y\to X$. 
Suppose that $K_{X}+\Delta+\boldsymbol{\rm M}_{X}$ is pseudo-effective and all prime divisors $D$ on $Y$ satisfy
$$0\leq a(D,Y,\Gamma+\boldsymbol{\rm M}_{Y})-a(D,X,\Delta+\boldsymbol{\rm M}_{X})\leq \sigma_{D}(K_{X}+\Delta+\boldsymbol{\rm M}_{X}).$$  
Then $(X,\Delta,\boldsymbol{\rm M})$ has a minimal model (resp.~a good minimal model) if and only if $(Y,\Gamma,\boldsymbol{\rm M})$ has a minimal model (resp.~a good minimal model).  
\end{lem}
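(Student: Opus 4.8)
The plan is to follow, nearly verbatim, the proof of \cite[Lemma 2.25]{has-finite} (the version for usual pairs, which elaborates \cite[Lemma 5.3]{hmx-boundgentype}) and translate it to generalized pairs. All the tools needed are available here: existence of $\mathbb{Q}$-factorial generalized dlt models (Lemma \ref{lem--extraction-gpair}), invariance of the existence of a minimal model (resp.\ good minimal model) under passing to a generalized log birational model through a morphism and under running steps of an MMP (Lemma \ref{lem--equiv-log-birat} and the two lemmas following it), termination of a $(K_{X}+\Delta+\boldsymbol{\rm M}_{X})$-MMP with scaling of an ample divisor once a minimal model is known to exist, and the calculus of asymptotic vanishing orders and of the negative part $N_{\sigma}$ recalled from \cite[Remark 2.3]{has-finite}; these hold because $\boldsymbol{\rm M}$ is a finite $\mathbb{R}_{>0}$-combination of b-nef $\mathbb{Q}$-b-Cartier b-divisors.

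First I would reduce to the case $X=Y$. Writing $K_{Y}+\Delta_{Y}+\boldsymbol{\rm M}_{Y}=f^{*}(K_{X}+\Delta+\boldsymbol{\rm M}_{X})$ with $\Delta_{Y}$ the crepant pullback, one has $a(D,X,\Delta+\boldsymbol{\rm M}_{X})=a(D,Y,\Delta_{Y}+\boldsymbol{\rm M}_{Y})$ and $\sigma_{D}(K_{X}+\Delta+\boldsymbol{\rm M}_{X})=\sigma_{D}(K_{Y}+\Delta_{Y}+\boldsymbol{\rm M}_{Y})$ for every prime divisor $D$ over $X$, so the hypothesis translates into: $E:=\Delta_{Y}-\Gamma$ is effective and $\mathrm{coeff}_{D}(E)\le \sigma_{D}(K_{Y}+\Delta_{Y}+\boldsymbol{\rm M}_{Y})$ for all $D$. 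In particular $\Delta_{Y}\ge \Gamma\ge 0$ and $(Y,\Delta_{Y},\boldsymbol{\rm M})$ is a generalized lc pair crepant over $(X,\Delta,\boldsymbol{\rm M})$, so these two pairs have a (good) minimal model simultaneously; replacing $(X,\Delta,\boldsymbol{\rm M})$ by $(Y,\Delta_{Y},\boldsymbol{\rm M})$ and then by a $\mathbb{Q}$-factorial generalized dlt model, we may assume $X=Y$ is $\mathbb{Q}$-factorial and $\Gamma=\Delta-E$ with $E\ge 0$ and $\mathrm{coeff}_{D}(E)\le \sigma_{D}(K_{X}+\Delta+\boldsymbol{\rm M}_{X})$ for every prime divisor $D$. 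By \cite[Remark 2.3]{has-finite} this gives $N_{\sigma}(K_{X}+\Gamma+\boldsymbol{\rm M}_{X})=N_{\sigma}(K_{X}+\Delta+\boldsymbol{\rm M}_{X})-E$; thus $\Supp E\subseteq \Supp N_{\sigma}(K_{X}+\Delta+\boldsymbol{\rm M}_{X})$ and the positive parts of $K_{X}+\Delta+\boldsymbol{\rm M}_{X}$ and $K_{X}+\Gamma+\boldsymbol{\rm M}_{X}$ coincide.

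Now I would prove the two implications. If $(X,\Delta,\boldsymbol{\rm M})$ has a (good) minimal model, run a $(K_{X}+\Delta+\boldsymbol{\rm M}_{X})$-MMP with scaling of an ample divisor; it terminates with $\phi\colon X\dashrightarrow X'$, and by the usual analysis this $\phi$ contracts exactly $\Supp N_{\sigma}(K_{X}+\Delta+\boldsymbol{\rm M}_{X})$ (with $a(D,X',\Delta'+\boldsymbol{\rm M}_{X'})=a(D,X,\Delta+\boldsymbol{\rm M}_{X})+\sigma_{D}(K_{X}+\Delta+\boldsymbol{\rm M}_{X})$). Since $\Supp E$ lies in this set, $\phi_{*}E=0$, so $K_{X'}+\phi_{*}\Gamma+\boldsymbol{\rm M}_{X'}=K_{X'}+\phi_{*}\Delta+\boldsymbol{\rm M}_{X'}$, which is nef (semi-ample in the good case); after extracting over $X'$ by a suitable crepant modification (cf.\ Lemma \ref{lem--extraction-gpair}) the finitely many divisors $D$ with $\mathrm{coeff}_{D}(E)=\sigma_{D}(K_{X}+\Delta+\boldsymbol{\rm M}_{X})$, one obtains a (good) minimal model of $(X,\Gamma,\boldsymbol{\rm M})$. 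Conversely, if $(X,\Gamma,\boldsymbol{\rm M})$ has a (good) minimal model, run a $(K_{X}+\Gamma+\boldsymbol{\rm M}_{X})$-MMP with scaling, reaching $\psi\colon X\dashrightarrow X'$ with $K_{X'}+\psi_{*}\Gamma+\boldsymbol{\rm M}_{X'}$ nef (semi-ample); using $N_{\sigma}(K_{X}+\Delta+\boldsymbol{\rm M}_{X})=N_{\sigma}(K_{X}+\Gamma+\boldsymbol{\rm M}_{X})+E$ and that $\psi$ contracts $\Supp N_{\sigma}(K_{X}+\Gamma+\boldsymbol{\rm M}_{X})$, one checks that the Nakayama--Zariski decomposition of $K_{X'}+\psi_{*}\Delta+\boldsymbol{\rm M}_{X'}$ has $\mathbb{R}$-Cartier negative part $\psi_{*}E$ and nef (semi-ample) positive part $K_{X'}+\psi_{*}\Gamma+\boldsymbol{\rm M}_{X'}$; the criterion that a generalized lc pair whose log canonical divisor admits such a decomposition has a (good) minimal model --- the generalized counterpart of the relevant results in \cite{has-finite} --- then applies to $(X',\psi_{*}\Delta,\boldsymbol{\rm M})$, so $(X,\Delta,\boldsymbol{\rm M})$ has a (good) minimal model as well.

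The main obstacle is making the last two paragraphs rigorous in the generalized category: one must verify that the $N_{\sigma}$-calculus of \cite[Remark 2.3]{has-finite} (birational invariance of $\sigma_{P}$, and the identity $N_{\sigma}(D-G)=N_{\sigma}(D)-G$ for $0\le G\le N_{\sigma}(D)$) is unchanged, that an MMP with scaling towards a minimal model of a generalized lc pair with NQC nef part contracts precisely $\Supp N_{\sigma}$ while raising generalized log discrepancies by $\sigma$, and --- the genuinely delicate point --- that the existence of a (good) minimal model is detected by a Nakayama--Zariski decomposition with nef (semi-ample) positive part; it is here that the termination of the generalized lc MMP with scaling in the presence of a minimal model, together with the assumption that $\boldsymbol{\rm M}$ is a finite $\mathbb{R}_{>0}$-combination of $\mathbb{Q}$-b-Cartier b-nef b-divisors, are essential.
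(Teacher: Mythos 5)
Your proposal is correct and follows essentially the same route as the paper: the paper likewise first reduces to the case $X=Y$ with $(X,0)$ $\mathbb{Q}$-factorial klt --- though it does so by passing to generalized log birational models on a common log resolution $W$ and explicitly re-verifying the inequality $0\leq a(P,W,\Gamma_{W}+\boldsymbol{\rm M}_{W})-a(P,W,\Delta_{W}+\boldsymbol{\rm M}_{W})\leq \sigma_{P}(K_{W}+\Delta_{W}+\boldsymbol{\rm M}_{W})$ via the case analysis on exceptionality over $X$ and $Y$, rather than via crepant pullback and a dlt model --- and then, exactly as you do, defers the two implications to the argument of \cite[Lemma 5.3]{hmx-boundgentype} carried out for generalized pairs, justified by the length of extremal rays and the termination result of \cite{hanli}. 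The "genuinely delicate point" you flag (that the MMP with scaling contracts precisely $\Supp N_{\sigma}$ while raising discrepancies by $\sigma$, and the Zariski-decomposition criterion for direction two) is precisely what the paper absorbs into the citation of \cite{hmx-boundgentype} together with \cite[Proposition 3.13, Theorem 4.1]{hanli}, so your level of detail matches the paper's.
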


\begin{proof}
Let $g \colon W \to Y$ be a log resolution of $(Y,\Gamma)$ such that $f\circ g\colon W \to X$ is a log resolution of $(X,\Delta)$. 
Let $(W, \Delta_{W},\boldsymbol{\rm M})$ and $(W, \Gamma_{W},\boldsymbol{\rm M})$ be generalized log birational models of $(X,\Delta,\boldsymbol{\rm M})$ and $(Y,\Gamma,\boldsymbol{\rm M})$ as in Definition \ref{defn--models}, respectively. 
We may write 
$$K_{W}+\Delta_{W}+\boldsymbol{\rm M}_{W}=g^{*}f^{*}(K_{X}+\Delta+\boldsymbol{\rm M}_{X})+E_{W}$$
 for some $E_{W}\geq 0$ which is exceptional over $X$. 
 
By \cite[Remark 2.3 (3)]{has-finite}, all prime divisors $P$ on $W$ satisfy
$$\sigma_{P}(K_{W}+\Delta_{W}+\boldsymbol{\rm M}_{W})=\sigma_{P}(K_{X}+\Delta+\boldsymbol{\rm M}_{X})+{\rm coeff}_{P}(E_{W}).$$
If $P$ is not exceptional over $X$, then we have
$${\rm coeff}_{P}(E_{W})=0, \quad {\rm coeff}_{P}(\Delta_{W})={\rm coeff}_{f(g(P))}(\Delta),\quad {\rm and} \quad {\rm coeff}_{P}(\Gamma_{W})={\rm coeff}_{g(P)}(\Gamma).$$ 
If $P$ is exceptional over $X$ but not exceptional over $Y$, then we have
$${\rm coeff}_{P}(E_{W})=a(P,X,\Delta+\boldsymbol{\rm M}_{X}), \quad {\rm coeff}_{P}(\Delta_{W})=1, \quad {\rm and} \quad {\rm coeff}_{P}(\Gamma_{W})={\rm coeff}_{g(P)}(\Gamma).$$ 
If $P$ is exceptional over $Y$, then we have
$${\rm coeff}_{P}(E_{W})=a(P,X,\Delta+\boldsymbol{\rm M}_{X}), \quad {\rm coeff}_{P}(\Delta_{W})=1, \quad {\rm and} \quad {\rm coeff}_{P}(\Gamma_{W})=1.$$ 
In any case, by simple computations using the hypothesis about the relations between generalized log discrepancies, we see that 
$$0\leq a(P,W,\Gamma_{W}+\boldsymbol{\rm M}_{W})-a(P,W,\Delta_{W}+\boldsymbol{\rm M}_{W})\leq \sigma_{P}(K_{W}+\Delta_{W}+\boldsymbol{\rm M}_{W}).$$
Thus, we see that $(W, \Delta_{W},\boldsymbol{\rm M})$ and $(W, \Gamma_{W},\boldsymbol{\rm M})$ satisfy the hypothesis of Lemma \ref{lem--minmodel-zariskidecom}. 
By Lemma \ref{lem--equiv-log-birat}, we may replace $(X,\Delta,\boldsymbol{\rm M})$ and $(Y,\Gamma,\boldsymbol{\rm M})$ with $(W, \Delta_{W},\boldsymbol{\rm M})$ and $(W, \Gamma_{W},\boldsymbol{\rm M})$ respectively.
By the replacement, we may assume that $X=Y$ and $(X,0)$ is a $\mathbb{Q}$-factorial klt pair. 

We may carry out \cite[Proof of Lemma 5.3]{hmx-boundgentype} in the framework of generalized pairs. 
It is because we may use the argument of the length of extremal rays 
\cite[Proposition 3.17]{hanli} and the result of the termination of MMP \cite[Theorem 4.1]{hanli}. 
By the same argument as in \cite[Proof of Lemma 5.3]{hmx-boundgentype}, we see that Lemma \ref{lem--minmodel-zariskidecom} holds. 
\end{proof}

\begin{lem}[cf.~{\cite[Lemma 2.15]{has-mmp}}]\label{lem--equiv-model-birat}
Let $(X,\Delta,\boldsymbol{\rm M})$ be a generalized lc pair such that $\boldsymbol{\rm M}$ is a finite $\mathbb{R}_{>0}$-linear combination of b-nef $\mathbb{Q}$-b-Cartier $\mathbb{Q}$-b-divisors. 
Let $(Y,\Gamma,\boldsymbol{\rm M})$ be a generalized lc pair with a projective birational morphism $f\colon Y\to X$. 
Suppose that we can write 
$$K_{Y}+\Gamma+\boldsymbol{\rm M}_{Y}=f^{*}(K_{X}+\Delta+\boldsymbol{\rm M}_{X})+E$$
such that $E$ is effective and $f$-exceptional. 
Then $(X,\Delta,\boldsymbol{\rm M})$ has a minimal model (resp.~a good minimal model) if and only if $(Y,\Gamma,\boldsymbol{\rm M})$ has a minimal model (resp.~a good minimal model).  
\end{lem}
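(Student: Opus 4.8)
The plan is to reduce to the situation already handled by Lemma \ref{lem--minmodel-zariskidecom}. The key observation is that the relation $K_{Y}+\Gamma+\boldsymbol{\rm M}_{Y}=f^{*}(K_{X}+\Delta+\boldsymbol{\rm M}_{X})+E$ with $E\geq 0$ and $f$-exceptional controls all generalized log discrepancies. First I would assume $K_{X}+\Delta+\boldsymbol{\rm M}_{X}$ is pseudo-effective, since otherwise neither side has a minimal model by definition (a minimal model requires $K_{X'}+\Delta'+\boldsymbol{\rm M}'_{X'}$ to be nef, hence pseudo-effective, and pseudo-effectivity is preserved under birational pullback and pushforward). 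Granting pseudo-effectivity, I want to check that $(Y,\Gamma,\boldsymbol{\rm M})$ and $(X,\Delta,\boldsymbol{\rm M})$ satisfy the hypotheses of Lemma \ref{lem--minmodel-zariskidecom}, namely that every prime divisor $D$ on $Y$ satisfies
$$0\leq a(D,Y,\Gamma+\boldsymbol{\rm M}_{Y})-a(D,X,\Delta+\boldsymbol{\rm M}_{X})\leq \sigma_{D}(K_{X}+\Delta+\boldsymbol{\rm M}_{X}).$$

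For the left inequality: from $K_{Y}+\Gamma+\boldsymbol{\rm M}_{Y}=f^{*}(K_{X}+\Delta+\boldsymbol{\rm M}_{X})+E$ with $E\geq 0$, pulling back further to a common log resolution and comparing coefficients shows $a(D,Y,\Gamma+\boldsymbol{\rm M}_{Y})={\rm coeff}_{D}(E)+a(D,X,\Delta+\boldsymbol{\rm M}_{X})$ for $D$ on $Y$ (using that $\boldsymbol{\rm M}$ is fixed, so the nef-part contributions cancel), giving $a(D,Y,\Gamma+\boldsymbol{\rm M}_{Y})-a(D,X,\Delta+\boldsymbol{\rm M}_{X})={\rm coeff}_{D}(E)\geq 0$. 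For the right inequality: if $D$ is $f$-exceptional, then ${\rm coeff}_{D}(E)\leq\sigma_{D}(f^{*}(K_{X}+\Delta+\boldsymbol{\rm M}_{X})+E)=\sigma_{D}(K_{X}+\Delta+\boldsymbol{\rm M}_{X})+{\rm coeff}_{D}(E)$ is automatic, but one needs ${\rm coeff}_{D}(E)\leq\sigma_{D}(K_{X}+\Delta+\boldsymbol{\rm M}_{X})$ directly; this follows because $E\geq 0$ is $f$-exceptional and $f^{*}(K_{X}+\Delta+\boldsymbol{\rm M}_{X})+E\sim_{\mathbb{R}}K_{Y}+\Gamma+\boldsymbol{\rm M}_{Y}$, so by the very definition of asymptotic vanishing order (on $Y$, taking the infimum over effective divisors $\mathbb{R}$-linearly equivalent to the pullback plus an ample perturbation) one gets $\sigma_{D}(K_{X}+\Delta+\boldsymbol{\rm M}_{X})=\sigma_{D}\bigl(f^{*}(K_{X}+\Delta+\boldsymbol{\rm M}_{X})\bigr)\geq{\rm coeff}_{D}(E)$, using that $f^{*}(K_{X}+\Delta+\boldsymbol{\rm M}_{X})-E\sim_{\mathbb{R}} K_Y+\Gamma+\boldsymbol{\rm M}_Y - 2E$ is not the right comparison — rather, one writes $f^{*}(K_{X}+\Delta+\boldsymbol{\rm M}_{X})\sim_{\mathbb{R}}(K_{Y}+\Gamma+\boldsymbol{\rm M}_{Y})-E$ and notes $E$ is in the stable base locus contribution. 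This is the standard fact (see \cite[Remark 2.3]{has-finite}) that if $P$ is exceptional over $X$ then $\sigma_{P}$ of the pullback equals $\sigma_{P}$ computed on $X$ (which is $0$ for non-exceptional directions) plus nothing, and $E$ being $f$-exceptional and effective forces ${\rm coeff}_{P}(E)\leq\sigma_{P}$; if $D$ is not $f$-exceptional then ${\rm coeff}_{D}(E)=0$ and the inequality is trivial.

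Once the hypotheses of Lemma \ref{lem--minmodel-zariskidecom} are verified, that lemma gives the equivalence directly: $(X,\Delta,\boldsymbol{\rm M})$ has a minimal model (resp. a good minimal model) if and only if $(Y,\Gamma,\boldsymbol{\rm M})$ does. I expect the main obstacle to be the bookkeeping in the right-hand inequality ${\rm coeff}_{D}(E)\leq\sigma_{D}(K_{X}+\Delta+\boldsymbol{\rm M}_{X})$: one must be careful that $\sigma_{D}$ is computed with respect to the appropriate variety and that the $f$-exceptional effective divisor $E$ really lies below the negative part of the Nakayama–Zariski decomposition of $f^{*}(K_{X}+\Delta+\boldsymbol{\rm M}_{X})$, which is exactly the content of the cited basic properties in \cite[Remark 2.3]{has-finite}; modulo that, everything reduces formally to Lemma \ref{lem--minmodel-zariskidecom}.
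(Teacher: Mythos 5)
Your reduction to Lemma \ref{lem--minmodel-zariskidecom} fails because of a sign error in the discrepancy comparison. Writing $K_{Y}+\Delta_{Y}+\boldsymbol{\rm M}_{Y}=f^{*}(K_{X}+\Delta+\boldsymbol{\rm M}_{X})$ for the crepant pullback, the hypothesis gives $\Gamma=\Delta_{Y}+E$, and adding the effective divisor $E$ to the boundary \emph{decreases} generalized log discrepancies: for a prime divisor $D$ on $Y$ one has
$$a(D,Y,\Gamma+\boldsymbol{\rm M}_{Y})-a(D,X,\Delta+\boldsymbol{\rm M}_{X})=-\,{\rm coeff}_{D}(E)\leq 0,$$
not $+{\rm coeff}_{D}(E)\geq 0$ as you claim. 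Hence the left-hand inequality $0\leq a(D,Y,\Gamma+\boldsymbol{\rm M}_{Y})-a(D,X,\Delta+\boldsymbol{\rm M}_{X})$ required by Lemma \ref{lem--minmodel-zariskidecom} fails whenever $E\neq 0$; that lemma is designed for the opposite situation, in which one \emph{subtracts} from the crepant pullback a divisor bounded by the negative part of the Nakayama--Zariski decomposition. Your attempted right-hand bound ${\rm coeff}_{D}(E)\leq \sigma_{D}(K_{X}+\Delta+\boldsymbol{\rm M}_{X})$ is also false in general: $E$ is an arbitrary effective $f$-exceptional divisor constrained only by $(Y,\Gamma,\boldsymbol{\rm M})$ being generalized lc, and if for instance $K_{X}+\Delta+\boldsymbol{\rm M}_{X}$ is nef then $\sigma_{D}=0$ for every $D$ while $E$ can still be nonzero.

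The missing idea is to eliminate $E$ \emph{before} invoking Lemma \ref{lem--minmodel-zariskidecom}, which is what the paper does. After replacing $(Y,\Gamma,\boldsymbol{\rm M})$ by a $\mathbb{Q}$-factorial generalized dlt model, one runs a $(K_{Y}+\Gamma+\boldsymbol{\rm M}_{Y})$-MMP over $X$: since $K_{Y}+\Gamma+\boldsymbol{\rm M}_{Y}\equiv_{X}E$ with $E\geq0$ exceptional over $X$, the negativity lemma for very exceptional divisors shows that this MMP contracts $E$ and ends with a crepant $\mathbb{Q}$-factorial generalized dlt model of $(X,\Delta,\boldsymbol{\rm M})$. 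Steps of an MMP over $X$ do not affect whether $(Y,\Gamma,\boldsymbol{\rm M})$ has a (good) minimal model, so one may assume $E=0$; only then does Lemma \ref{lem--minmodel-zariskidecom} apply, with both sides of the discrepancy inequality equal to zero.
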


\begin{proof}
Replacing $(Y,\Gamma,\boldsymbol{\rm M})$ by a $\mathbb{Q}$-factorial generalized dlt model, we may assume that $(Y,\Gamma,\boldsymbol{\rm M})$ is a $\mathbb{Q}$-factorial generalized dlt pair. 
Running a $(K_{Y}+\Gamma+\boldsymbol{\rm M}_{Y})$-MMP over $X$ and replacing 
$(Y,\Gamma,\boldsymbol{\rm M})$ by a crepant generalized dlt model of $(X,\Delta,\boldsymbol{\rm M})$, we may assume $E=0$. 
Then Lemma \ref{lem--equiv-model-birat} follows from Lemma \ref{lem--minmodel-zariskidecom}. 
\end{proof}

\begin{lem}[cf.~{\cite[Lemma 2.22]{has-finite}}]\label{lem--mmpscaling-nonneflocus}
Let $(X,\Delta,\boldsymbol{\rm M})/Z$ be a generalized lc pair such that $Z$ is quasi-projective.
Let $S$ be a subvariety of $X$. 
We denote the morphism $X \to Z$ by $\pi$.
Let
$$(X,\Delta,\boldsymbol{\rm M})=:(X_{0},\Delta_{0},\boldsymbol{\rm M}) \dashrightarrow (X_{1},\Delta_{1},\boldsymbol{\rm M}) \dashrightarrow\cdots \dashrightarrow (X_{i},\Delta_{i},\boldsymbol{\rm M})\dashrightarrow \cdots$$ 
be a sequence of steps of a $(K_{X}+\Delta+\boldsymbol{\rm M}_{X})$-MMP over $Z$ with scaling of some $\mathbb{R}$-divisor $A\geq0$. 
We define 
$$\lambda_{i}={\rm inf}\set{\mu \in \mathbb{R}_{\geq0} \!|\! K_{X_{i}}+\Delta_{i}+\boldsymbol{\rm M}_{X_{i}}+\mu A_{i}\text{\rm \, is nef over }Z}.$$ 
Suppose that each step of the $(K_{X}+\Delta+\boldsymbol{\rm M}_{X})$-MMP is an isomorphism on a neighborhood of $S$ and ${\rm lim}_{i\to\infty}\lambda_{i}=0$.  
Then, for any $\pi$-ample $\mathbb{R}$-divisor $H$ on $X$ and any closed point $x\in S$, there exists $E\geq 0$ such that $E\sim_{\mathbb{R},Z}K_{X}+\Delta+\boldsymbol{\rm M}_{X}+H$ and ${\rm Supp}E \not\ni x$. 
In particular, if $Z$ is a point, then $\sigma_{P}(K_{X}+\Delta+\boldsymbol{\rm M}_{X})=0$ for every prime divisor $P$ over $X$ such that $c_{X}(P)$ intersects $S$. 
\end{lem}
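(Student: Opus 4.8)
The plan is to follow the strategy of \cite[Lemma 2.22]{has-finite}, adapting it to the generalized pair setting; the only substantial point is to ensure the MMP-with-scaling technology (length of extremal rays, the behaviour of the nef threshold) is available for generalized lc pairs, which it is by \cite[Proposition 3.13]{hanli}. First I would fix the $\pi$-ample $\mathbb{R}$-divisor $H$ and the closed point $x \in S$. For each index $i$, let $\phi_i \colon X \dashrightarrow X_i$ denote the composed birational map; by hypothesis each $\phi_i$ is an isomorphism on an open neighborhood $U$ of $S$ (we may shrink $U$ so that it is a common open set for all $i$, using that the MMP is isomorphic near $S$ at every step), so $\phi_i$ carries $x$ to a well-defined point $x_i \in X_i$. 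The key observation is that, since $K_{X_i}+\Delta_i+\boldsymbol{\rm M}_{X_i}+\lambda_i A_i$ is nef over $Z$ and $\lambda_i \to 0$, and $H$ pushes forward to a $\pi_i$-ample divisor $H_i$ on $X_i$ (more precisely, a perturbation of $H$ by a small multiple of $A$ stays ample, and we absorb the difference), the divisor $K_{X_i}+\Delta_i+\boldsymbol{\rm M}_{X_i}+H_i$ is ample over $Z$ for $i \gg 0$.

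Next I would produce, for $i \gg 0$, an effective divisor $E_i \sim_{\mathbb{R},Z} K_{X_i}+\Delta_i+\boldsymbol{\rm M}_{X_i}+H_i$ whose support avoids $x_i$: this is immediate from ampleness over $Z$, because an ample-over-$Z$ $\mathbb{R}$-divisor is $\mathbb{R}$-linearly equivalent over $Z$ to an effective divisor missing any prescribed point (take a general member of a suitable multiple, or perturb within the ample cone and use that $\mathbb{R}$-linear systems of ample divisors separate points). Pulling $E_i$ back along $\phi_i^{-1}$, which is an isomorphism near $x_i$, gives $E \geq 0$ on $X$ with $E \sim_{\mathbb{R},Z} K_X+\Delta+\boldsymbol{\rm M}_X+H'$ for a slightly perturbed ample $H'$; running the argument with $H' \leq H$ (or re-scaling $H$ at the start) yields the statement for $H$ itself. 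Here one must check that $\phi_i$ being a sequence of steps of a $(K_X+\Delta+\boldsymbol{\rm M}_X)$-MMP over $Z$ means $K_{X_i}+\Delta_i+\boldsymbol{\rm M}_{X_i} = \phi_{i*}(K_X+\Delta+\boldsymbol{\rm M}_X)$ and that such steps are birational contractions, so no divisor is extracted and the push-pull comparison of linear equivalence classes is clean.

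For the final assertion, assume $Z$ is a point, so the MMP is absolute and $H$ is simply an ample $\mathbb{R}$-divisor. Given a prime divisor $P$ over $X$ with $c_X(P)$ meeting $S$, we have by the above, for every ample $H$, an effective $E_H \sim_{\mathbb{R}} K_X+\Delta+\boldsymbol{\rm M}_X+H$ with $x \notin \operatorname{Supp}E_H$ for a point $x \in c_X(P) \cap S$; in particular $\operatorname{coeff}_P$ of the pullback of $E_H$ to a model where $P$ appears can be made arbitrarily close to contributing nothing forced by $P$, and since $\sigma_P(K_X+\Delta+\boldsymbol{\rm M}_X) = \sup\{\sigma_P(K_X+\Delta+\boldsymbol{\rm M}_X + H) \mid H \text{ ample}\}$ by Definition \ref{defn--asy-van-ord}, it suffices to see $\sigma_P(K_X+\Delta+\boldsymbol{\rm M}_X+H) = 0$. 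That holds because $E_H$ is an effective representative not containing the center $c_X(P)$ (a divisor through which $P$ passes), so on a log resolution $\operatorname{coeff}_P$ of the strict transform of $E_H$ vanishes, forcing the infimum defining $\sigma_P$ to be $0$. I expect the main obstacle to be purely bookkeeping: verifying that the nef threshold $\lambda_i$ can be inserted into an ample-over-$Z$ statement uniformly, i.e. that $K_{X_i}+\Delta_i+\boldsymbol{\rm M}_{X_i}+H_i$ is genuinely ample over $Z$ for $i \gg 0$ rather than merely nef, which requires combining $\lambda_i \to 0$ with a fixed positive ``room'' coming from the ampleness of $H$ — and doing so while tracking that $H_i := \phi_{i*}H$ remains relatively ample across flips (true since flips are isomorphisms in codimension one and $H$ can be chosen general, or one argues on a common resolution).
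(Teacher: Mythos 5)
The paper itself does not write out an argument: it simply observes that the proof of \cite[Lemma 2.22]{has-finite} goes through verbatim in the generalized setting (the only point to check being that no abundance or existence-of-flips input is used). Your overall shape — use $\lambda_i\to 0$ to transfer to a model $X_i$ where $K_{X_i}+\Delta_i+\boldsymbol{\rm M}_{X_i}+\lambda_iA_i$ is nef over $Z$, produce an effective representative missing the image point $x_i$, and transfer back through the isomorphism near $S$ — is the right one, and your deduction of $\sigma_P=0$ from the first assertion via $\sigma_P(D)=\sup_H\sigma_P(D+H)$ is correct. But the load-bearing step of your proof is false as stated: the pushforward $H_i=\phi_{i*}H$ of a $\pi$-ample divisor under steps of an MMP is in general \emph{not} ample over $Z$, and your justification (``flips are isomorphisms in codimension one and $H$ can be chosen general'') does not repair this — being small has nothing to do with preserving ampleness; indeed the defining feature of a flip is precisely that relative (anti-)ampleness flips sign. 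Consequently the claim that $K_{X_i}+\Delta_i+\boldsymbol{\rm M}_{X_i}+H_i$ is ample over $Z$ for $i\gg0$, from which you extract $E_i$, has no basis. A second, related gap: ``pulling $E_i$ back along $\phi_i^{-1}$'' is only a strict transform, and strict transform does not preserve $\mathbb{R}$-linear equivalence once divisorial contractions occur in the MMP; the discrepancy is exactly the error divisor one must control.

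The correct route (the one in \cite[Lemma 2.22]{has-finite}) is the alternative you mention only in passing: write $K_X+\Delta+\boldsymbol{\rm M}_X+H=(K_X+\Delta+\boldsymbol{\rm M}_X+\lambda_iA)+(H-\lambda_iA)$ with $i$ chosen so that $H-\lambda_iA$ is still $\pi$-ample, take a common resolution $p\colon W\to X$, $q\colon W\to X_i$ that is an isomorphism over the neighborhood $U$ of $S$, and use the negativity lemma to get $p^*(K_X+\Delta+\boldsymbol{\rm M}_X+\lambda_iA)=q^*N_i+F$ with $N_i:=K_{X_i}+\Delta_i+\boldsymbol{\rm M}_{X_i}+\lambda_iA_i$ nef over $Z$, $F\ge0$ and $F|_{p^{-1}(U)}=0$. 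Since $N_i$ is only nef, one must borrow a little ampleness from $H-\lambda_iA$ on $W$ (writing $p^*$ of an ample divisor as an ample divisor plus an effective $p$-exceptional divisor supported off $p^{-1}(U)$) so that $q^*N_i$ plus that ample piece admits an effective $\mathbb{R}_{,Z}$-linear representative missing the single point $p^{-1}(x)$; pushing forward by $p$ and adding an effective representative of the remaining ample part of $H$ on $X$ that misses $x$ gives the desired $E$. Your proposal needs this replacement of its central mechanism before it constitutes a proof.
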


\begin{proof}
The argument in \cite[Proof of Lemma 2.22]{has-finite} works with no changes. 
Note that \cite[Proof of Lemma 2.22]{has-finite} does not use any result about the abundance conjecture or the existence of flips for lc pairs. 
\end{proof}

\begin{lem}[cf.~{\cite[Lemma 2.26]{has-finite}}]\label{lem--bir-relation} Let $(X,\Delta,\boldsymbol{\rm M})$ and $(X',\Delta',\boldsymbol{\rm M}')$ be generalized lc pairs with a birational map $X\dashrightarrow X'$ such that $\boldsymbol{\rm M}$ is a finite $\mathbb{R}_{>0}$-linear combination of b-nef $\mathbb{Q}$-b-Cartier $\mathbb{Q}$-b-divisors and $\boldsymbol{\rm M}=\boldsymbol{\rm M}'$ by $X\dashrightarrow X'$. 
 Suppose in addition that \begin{itemize} 
 \item $a(P,X,\Delta+\boldsymbol{\rm M}_{X})\leq a(P,X',\Delta'+\boldsymbol{\rm M}'_{X'})$ for all prime divisors $P$ on $X$, and \item $a(P',X',\Delta'+\boldsymbol{\rm M}'_{X'})\leq a(P',X,\Delta+\boldsymbol{\rm M}_{X})$ for all prime divisors $P'$ on $X'$. \end{itemize} Then $K_{X}+\Delta+\boldsymbol{\rm M}_{X}$ is abundant if and only if $K_{X'}+\Delta'+\boldsymbol{\rm M}'_{X'}$ is abundant. 
Furthermore, $(X,\Delta,\boldsymbol{\rm M})$ has a minimal model (resp.~a good minimal model) if and only if $(X',\Delta',\boldsymbol{\rm M}')$ has a  minimal model (resp.~a good minimal model). \end{lem}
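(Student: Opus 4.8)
The plan is to adapt \cite[Proof of Lemma 2.26]{has-finite} to generalized pairs; all the needed ingredients are now available, namely the basic properties of asymptotic vanishing orders (\cite[Remark 2.3]{has-finite}), the length of extremal rays and termination of \cite{hanli}, and Lemmas \ref{lem--minmodel-zariskidecom}, \ref{lem--equiv-log-birat}, \ref{lem--equiv-model-birat}. Write $L=K_{X}+\Delta+\boldsymbol{\rm M}_{X}$ and $L'=K_{X'}+\Delta'+\boldsymbol{\rm M}'_{X'}$, and take a common log resolution $f\colon W\to X$ and $f'\colon W\to X'$ of $(X,\Delta)$, $(X',\Delta')$ and of the birational map such that $\boldsymbol{\rm M}$ descends to $W$ (so that $\boldsymbol{\rm M}_{W}=\boldsymbol{\rm M}'_{W}$ as $\boldsymbol{\rm M}=\boldsymbol{\rm M}'$ by the map). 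Comparing the two crepant pull-backs on $W$ and cancelling the common term $\boldsymbol{\rm M}_{W}$,
\begin{equation*}
f^{*}L-f'^{*}L'=\sum_{D}\bigl(a(D,X',\Delta'+\boldsymbol{\rm M}'_{X'})-a(D,X,\Delta+\boldsymbol{\rm M}_{X})\bigr)D .
\end{equation*}
By the first hypothesis the negative part $G_{-}$ of the right-hand side is $f$-exceptional, and by the second hypothesis the positive part $G_{+}$ is $f'$-exceptional; thus $f^{*}L+G_{-}=f'^{*}L'+G_{+}$ with $G_{\pm}\geq 0$. Pushing this equality forward by $f$ and by $f'$ shows that $L$ is pseudo-effective if and only if $L'$ is, and if neither is, all three assertions are trivial, so we assume from now on that $L$ and $L'$ are pseudo-effective.

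For the abundance statement, recall that adding to a pull-back an effective divisor exceptional over the base changes neither the invariant Iitaka dimension nor the numerical dimension (\cite[Remark 2.14]{has-finite}, \cite[V, 2.7 Proposition]{nakayama}). Hence $\kappa_{\iota}(X,L)=\kappa_{\iota}(W,f^{*}L+G_{-})=\kappa_{\iota}(W,f'^{*}L'+G_{+})=\kappa_{\iota}(X',L')$, and likewise $\kappa_{\sigma}(X,L)=\kappa_{\sigma}(X',L')$, so $L$ is abundant if and only if $L'$ is.

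For the minimal model statement, define an $\mathbb{R}$-divisor $\Gamma_{W}$ on $W$ by ${\rm coeff}_{D}(\Gamma_{W})=1-a(D,X,\Delta+\boldsymbol{\rm M}_{X})$ for prime divisors $D$ lying on $X$, by ${\rm coeff}_{D}(\Gamma_{W})=1-a(D,X',\Delta'+\boldsymbol{\rm M}'_{X'})$ for prime divisors $D$ lying on $X'$ but exceptional over $X$, and by ${\rm coeff}_{D}(\Gamma_{W})=1$ for $D$ exceptional over both; the hypotheses force the first two prescriptions to agree on divisors lying on both $X$ and $X'$, and since prime divisors of $X$ or of $X'$ have generalized log discrepancy at most $1$, the coefficients of $\Gamma_{W}$ lie in $[0,1]$, so $(W,\Gamma_{W},\boldsymbol{\rm M})$ is generalized dlt. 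Let $(W,\Delta_{W},\boldsymbol{\rm M})$ and $(W,\Delta'_{W},\boldsymbol{\rm M})$ be the generalized log birational models on $W$ of $(X,\Delta,\boldsymbol{\rm M})$ and $(X',\Delta',\boldsymbol{\rm M}')$ as in Definition \ref{defn--models}; by Lemma \ref{lem--equiv-log-birat} it suffices to prove that $(W,\Delta_{W},\boldsymbol{\rm M})$, $(W,\Delta'_{W},\boldsymbol{\rm M})$ and $(W,\Gamma_{W},\boldsymbol{\rm M})$ have the same minimal-model and good-minimal-model status. I would compare $(W,\Delta_{W},\boldsymbol{\rm M})$ with $(W,\Gamma_{W},\boldsymbol{\rm M})$ using Lemma \ref{lem--minmodel-zariskidecom} for the identity morphism with $(W,\Delta_{W},\boldsymbol{\rm M})$ as the base: on divisors lying on $X$ the two pairs are crepant, while for $D$ exceptional over $X$, writing $K_{W}+\Delta_{W}+\boldsymbol{\rm M}_{W}=f^{*}L+E_{W}$ with $E_{W}\geq 0$ $f$-exceptional and ${\rm coeff}_{D}(E_{W})=a(D,X,\Delta+\boldsymbol{\rm M}_{X})$, the inequality $0\leq a(D,W,\Gamma_{W}+\boldsymbol{\rm M}_{W})-a(D,W,\Delta_{W}+\boldsymbol{\rm M}_{W})\leq\sigma_{D}(K_{W}+\Delta_{W}+\boldsymbol{\rm M}_{W})$ follows from ${\rm coeff}_{D}(\Delta_{W})=1\geq{\rm coeff}_{D}(\Gamma_{W})$ for the left part, and from $\sigma_{D}(f^{*}L+E_{W})\geq{\rm coeff}_{D}(E_{W})$ together with the second hypothesis $a(D,X',\Delta'+\boldsymbol{\rm M}'_{X'})\leq a(D,X,\Delta+\boldsymbol{\rm M}_{X})$ (valid for $D$ lying on $X'$) for the right part. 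Since $\Gamma_{W}$ is symmetric in $X$ and $X'$, the same argument with $X$ and $X'$ interchanged compares $(W,\Gamma_{W},\boldsymbol{\rm M})$ with $(W,\Delta'_{W},\boldsymbol{\rm M})$; chaining the three equivalences gives the claim.

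The main obstacle is the last paragraph: one must choose $\Gamma_{W}$ so that $(W,\Gamma_{W},\boldsymbol{\rm M})$ is a genuine generalized dlt pair and, \emph{simultaneously}, both comparisons satisfy the hypotheses of Lemma \ref{lem--minmodel-zariskidecom}. The crucial input is the rigidity estimate $\sigma_{D}(f^{*}L+E_{W})\geq{\rm coeff}_{D}(E_{W})$ for $f$-exceptional $D$, which uses the pseudo-effectivity of $L$ and Nakayama's theory of the negative part (\cite[Remark 2.3]{has-finite}); verifying that prime divisors of $X$ and of $X'$ have generalized log discrepancy $\leq 1$ (so that $\Gamma_{W}\geq 0$) and that $E_{W}$ is effective also relies on $f^{*}\boldsymbol{\rm M}_{X}-\boldsymbol{\rm M}_{W}$ and $f'^{*}\boldsymbol{\rm M}'_{X'}-\boldsymbol{\rm M}_{W}$ being exceptional. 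Once these points are settled, the rest of the argument is formal.
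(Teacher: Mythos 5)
Your proposal is correct and follows essentially the same route as the paper's proof: a common log resolution on which the two discrepancy hypotheses force the crepant difference to split into an $f$-exceptional and an $f'$-exceptional effective part, combined with the auxiliary equivalence lemmas of this subsection. The only (harmless) difference is in packaging: the paper takes the single boundary $\Gamma=-\sum_{D}\min\{a(D,X,\Delta+\boldsymbol{\rm M}_{X})-1,\,a(D,X',\Delta'+\boldsymbol{\rm M}'_{X'})-1,\,0\}D$ and applies Lemma \ref{lem--equiv-model-birat} twice, whereas you route the minimal-model step through Lemma \ref{lem--minmodel-zariskidecom} for the identity of $W$; note that your $\Gamma_{W}$ also satisfies $K_{W}+\Gamma_{W}+\boldsymbol{\rm M}_{W}=f^{*}(K_{X}+\Delta+\boldsymbol{\rm M}_{X})+E=f'^{*}(K_{X'}+\Delta'+\boldsymbol{\rm M}'_{X'})+E'$ with $E,E'\geq0$ exceptional over the respective bases, so the shorter path via Lemma \ref{lem--equiv-model-birat} was already available to you.
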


\begin{proof}
We closely follow \cite[Proof of Lemma 2.26]{has-finite}. 
Let $f\colon Y\to X$ and $f'\colon Y \to X'$ be a common log resolution of $(X,\Delta)\dashrightarrow (X',\Delta')$ such that $\boldsymbol{\rm M}$ descends to $Y$. 
We define an $\mathbb{R}$-divisor $\Gamma$ on $Y$ by 
$$\Gamma:=-\sum_{D}{\rm min}\{a(D,X,\Delta+\boldsymbol{\rm M}_{X})-1,a(D,X',\Delta'+\boldsymbol{\rm M}'_{X'})-1,0\}D,$$
 where $D$ runs over prime divisors on $Y$. 
Then $\Gamma$ is an effective snc $\mathbb{R}$-divisor, $(Y,\Gamma, \boldsymbol{\rm M})$ is generalized lc, and there exist an $f$-exceptional $\mathbb{R}$-divisor $E\geq 0$ and  an $f'$-exceptional $\mathbb{R}$-divisor $E'\geq 0$ such that 
$$E+f^{*}(K_{X}+\Delta+\boldsymbol{\rm M}_{X})=K_{Y}+\Gamma+\boldsymbol{\rm M}_{Y}=f'^{*}(K_{X'}+\Delta'+\boldsymbol{\rm M}'_{X'})+E'.$$
Lemma \ref{lem--bir-relation} follows from the relation and Lemma \ref{lem--equiv-model-birat}. 
\end{proof}

\begin{lem}[cf.~{\cite[Proposition 3.3]{has-mmp}}]\label{lem--mmp-abundant-fibration}
Let $(X,\Delta,\boldsymbol{\rm M})$ be a generalized lc pair such that $\boldsymbol{\rm M}$ is a finite $\mathbb{R}_{>0}$-linear combination of b-nef $\mathbb{Q}$-b-Cartier $\mathbb{Q}$-b-divisors. 
Let $\pi \colon X \to Z$ be a contraction to a normal projective variety $Z$. 
Suppose that 
\begin{itemize}
\item
$\kappa_{\iota}(X/Z,K_{X}+\Delta+\boldsymbol{\rm M}_{X})=\kappa_{\sigma}(X/Z,K_{X}+\Delta+\boldsymbol{\rm M}_{X})=0$, 
\item
any generalized lc center of $(X,\Delta,\boldsymbol{\rm M})$ dominates $Z$, and
\item
$\kappa_{\sigma}(X,K_{X}+\Delta+\boldsymbol{\rm M}_{X})={\rm dim}Z$. 
\end{itemize}
Then $(X,\Delta,\boldsymbol{\rm M})$ has a good minimal model.
\end{lem}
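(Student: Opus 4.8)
The plan is to reduce to a situation where the base $Z$ carries a generalized lc structure whose log canonical divisor is big, and then apply the minimal model theory for generalized lc pairs of log general type. First I would take a log resolution $g\colon Y\to X$ of $(X,\Delta)$ such that $\boldsymbol{\rm M}$ descends to $Y$ and the composition $Y\to X\to Z$ together with any necessary modification of $Z$ becomes a morphism with $Z$ smooth; after this reduction, using Lemma \ref{lem--equiv-model-birat} (via a crepant pullback) it suffices to prove the statement for a $\mathbb{Q}$-factorial generalized dlt pair $(X,\Delta,\boldsymbol{\rm M})$ mapping to a smooth $Z$. The key point is that the three hypotheses say exactly that $\pi$ is, up to birational modification, the Iitaka fibration of $K_{X}+\Delta+\boldsymbol{\rm M}_{X}$: the relative invariant Iitaka dimension and relative numerical dimension both vanish, so on a general fiber $F$ we have $\kappa_{\iota}(F,(K_{X}+\Delta+\boldsymbol{\rm M}_{X})|_{F})=\kappa_{\sigma}(F,(K_{X}+\Delta+\boldsymbol{\rm M}_{X})|_{F})=0$, and $\kappa_{\sigma}(X,K_{X}+\Delta+\boldsymbol{\rm M}_{X})=\dim Z$ forces the numerical dimension to be "captured" by the base.

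Next I would run the canonical bundle formula (Ambro's canonical bundle formula in the generalized setting, as in \cite{fg-bundle} and its generalizations), which applies precisely because all generalized lc centers dominate $Z$ and the relative Iitaka/numerical dimensions are zero on the generic fiber, to produce a generalized lc pair $(Z,\Delta_{Z},\boldsymbol{\rm N})$ with $K_{X}+\Delta+\boldsymbol{\rm M}_{X}\sim_{\mathbb{R}}\pi^{*}(K_{Z}+\Delta_{Z}+\boldsymbol{\rm N}_{Z})$, where $\boldsymbol{\rm N}$ is again a finite $\mathbb{R}_{>0}$-linear combination of b-nef $\mathbb{Q}$-b-Cartier b-divisors. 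The condition $\kappa_{\sigma}(X,K_{X}+\Delta+\boldsymbol{\rm M}_{X})=\dim Z$ then translates into $\kappa_{\sigma}(Z,K_{Z}+\Delta_{Z}+\boldsymbol{\rm N}_{Z})=\dim Z$, i.e. $K_{Z}+\Delta_{Z}+\boldsymbol{\rm N}_{Z}$ is big; combined with $\kappa_{\iota}(X/Z,\cdot)=0$ (hence $\kappa_{\iota}(X,K_{X}+\Delta+\boldsymbol{\rm M}_{X})=\kappa_{\iota}(Z,K_{Z}+\Delta_{Z}+\boldsymbol{\rm N}_{Z})$ and in particular $K_{Z}+\Delta_{Z}+\boldsymbol{\rm N}_{Z}$ is effective), we get a generalized lc pair on $Z$ whose log canonical divisor is big. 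By the existence of good minimal models for generalized lc pairs of log general type (which is part of the minimal model theory cited in the introduction, e.g. via \cite{bz} and the termination results of \cite{hanli}), $(Z,\Delta_{Z},\boldsymbol{\rm N})$ has a good minimal model $Z\dashrightarrow Z'$.

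Finally I would pull the good minimal model on $Z$ back to $X$: let $Z\dashrightarrow Z'$ be the good minimal model and form the induced birational map, then run a $(K_{X}+\Delta+\boldsymbol{\rm M}_{X})$-MMP over $Z$ to make $X\to Z$ relatively a minimal model (here the relative abundance over $Z$ is trivial since the relative invariant Iitaka dimension is $0$, so after finitely many steps $K_{X}+\Delta+\boldsymbol{\rm M}_{X}$ becomes relatively semi-ample, indeed relatively $\mathbb{R}$-trivial up to the structure morphism). Then the resulting generalized lc pair $(X',\Delta',\boldsymbol{\rm M}')$ satisfies $K_{X'}+\Delta'+\boldsymbol{\rm M}'_{X'}\sim_{\mathbb{R}}\pi'^{*}(K_{Z'}+\Delta_{Z'}+\boldsymbol{\rm N}_{Z'})$ with the right-hand side semi-ample, so $K_{X'}+\Delta'+\boldsymbol{\rm M}'_{X'}$ is semi-ample; checking the discrepancy inequalities of Definition \ref{defn--models} (using Lemma \ref{lem--mmpscaling-nonneflocus} and Lemma \ref{lem--bir-relation} to control the asymptotic vanishing orders and the behaviour of generalized lc centers under the MMP) shows that $(X',\Delta',\boldsymbol{\rm M}')$ is in fact a good minimal model of $(X,\Delta,\boldsymbol{\rm M})$.

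The main obstacle I anticipate is the compatibility of the canonical bundle formula with the generalized-pair structure and with the numerical/Iitaka dimension bookkeeping: one must verify that the nef part $\boldsymbol{\rm N}$ produced on $Z$ is still a finite $\mathbb{R}_{>0}$-linear combination of b-nef $\mathbb{Q}$-b-Cartier b-divisors (so that all the cited generalized MMP results apply), and that $\kappa_{\sigma}$ and $\kappa_{\iota}$ genuinely descend along $\pi$ in the way claimed — this uses that every generalized lc center dominates $Z$ so that the formula is "of lc-type" with a well-behaved discriminant part, and it is the place where subadditivity-type results in the spirit of \cite{fujino-subadd-correct} and Lemma \ref{lem--iitakafib} have to be invoked carefully. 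Once the descent is established, the remainder is a fairly standard pull-back-and-run-MMP argument.
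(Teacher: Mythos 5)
Your overall strategy is the one the paper uses: reduce to a relatively trivial situation, apply the generalized canonical bundle formula to get a big generalized pair on $Z$, take its good minimal model, and pull back. Two steps, however, are out of order or genuinely glossed over. First, the canonical bundle formula cannot be applied to the original $\pi\colon X\to Z$ on the strength of $\kappa_{\iota}(X/Z,\cdot)=\kappa_{\sigma}(X/Z,\cdot)=0$ alone; one needs $K_{X}+\Delta+\boldsymbol{\rm M}_{X}\sim_{\mathbb{R},Z}0$. In the paper this is achieved \emph{before} the formula is invoked, by running a $(K_{X}+\Delta+\boldsymbol{\rm M}_{X})$-MMP over $Z$ (after weak semistable reduction to make $\pi$ equidimensional) and applying the negativity lemma for very exceptional divisors to contract the relative negative part. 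You place the relative MMP at the end of your argument, after the formula has already been used, so as written the formula has no valid input. Also note that the paper gets a generalized \emph{klt} structure $(Z,\Delta_{Z},\boldsymbol{\rm N})$ on the base (precisely because every generalized lc center dominates $Z$), which is what lets one quote \cite{bchm} for the good minimal model of the big pair on $Z$; for a merely generalized lc big pair the citation would be more delicate.

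Second, the final descent is where the real work lies and your appeal to Lemma \ref{lem--mmpscaling-nonneflocus} and Lemma \ref{lem--bir-relation} does not do it. Since $(Z,\Delta_{Z},\boldsymbol{\rm N})$ is generalized klt, $Z\dashrightarrow Z'$ is only a birational contraction, so after passing to a model $\tilde{X}$ on which $\tilde{\pi}\colon\tilde{X}\to Z'$ is a morphism one has $K_{\tilde{X}}+\tilde{\Delta}+\boldsymbol{\rm M}_{\tilde{X}}\sim_{\mathbb{R}}\tilde{\pi}^{*}(K_{Z'}+\Delta_{Z'}+\boldsymbol{\rm N}_{Z'})+\tilde{F}$ with $\tilde{F}\geq 0$ not obviously zero. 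The paper disposes of $\tilde{F}$ by running an MMP over $Z'$ and observing that, because $Z'\dashrightarrow Z$ is an isomorphism over an open set $U'$ with ${\rm codim}_{Z'}(Z'\setminus U')\geq 2$, the surviving part of $\tilde{F}$ is very exceptional over $Z'$, so the negativity lemma for very exceptional divisors (\cite[Section 3]{birkar-flip}) forces it to be contracted; only then is the pushforward of $K_{\tilde{X}}+\tilde{\Delta}+\boldsymbol{\rm M}_{\tilde{X}}$ an honest pullback of the semi-ample divisor on $Z'$. Without this argument your claim that the resulting model is a \emph{good} minimal model does not follow. You should restructure the proof as: semistable reduction, relative MMP to reach $\sim_{\mathbb{R},Z}0$, canonical bundle formula, good minimal model of the klt big base, and then the very-exceptional-divisor argument over $Z'$.
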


\begin{proof}
The argument in \cite[Proof of Proposition 3.3]{has-mmp} works with no changes because we can use the generalized canonical bundle formula \cite{hanliu} instead of the canonical bundle formula \cite{fg-bundle}. 
So we only outline the proof. 

By taking a log resolution of $(X,\Delta)$ and applying weak semistable reduction (\cite{ak}), we may assume that $(X,0)$ is $\mathbb{Q}$-factorial klt and all fibers of $\pi$ have the same dimensions. 

We run a $(K_{X}+\Delta+\boldsymbol{\rm M}_{X})$-MMP over $Z$ with scaling of an ample divisor. 
Applying the negativity lemma for very exceptional divisors (\cite[Section 3]{birkar-flip}) and replacing $(X,\Delta,\boldsymbol{\rm M})$, we may assume $K_{X}+\Delta+\boldsymbol{\rm M}_{X}\sim_{\mathbb{R},Z}0$ (but we lose the property of being equi-dimensional of $X \to Z$). 
We used the first condition of Lemma \ref{lem--mmp-abundant-fibration} for the reduction. 

By generalized canonical bundle formula \cite[Theorem 1.2]{hanliu} and the second condition of Lemma \ref{lem--mmp-abundant-fibration}, there is a generalized klt pair $(Z,\Delta_{Z},\boldsymbol{\rm N})$ such that 
$$K_{X}+\Delta+\boldsymbol{\rm M}_{X}\sim_{\mathbb{R}}\pi^{*}(K_{Z}+\Delta_{Z}+\boldsymbol{\rm N}_{Z}).$$ 
By the third condition of Lemma \ref{lem--mmp-abundant-fibration}, we see that $K_{Z}+\Delta_{Z}+\boldsymbol{\rm N}_{Z}$ is big, so $(Z,\Delta_{Z},\boldsymbol{\rm N})$ has a good minimal model $(Z', \Delta_{Z'}, \boldsymbol{\rm N})$ by \cite{bchm}. 
Since $(Z,\Delta_{Z},\boldsymbol{\rm N})$ is generalized klt, the birational map $Z \dashrightarrow Z'$ is a birational contraction. 
Hence we can find an open subset $U' \subset Z'$ such that ${\rm codim}_{Z'}(Z'\setminus U')\geq 2$ and $Z'\dashrightarrow Z$ is an isomorphism on $U'$.

Let $f\colon Y \to X$ be a log resolution of $(X,\Delta)$ such that $\boldsymbol{\rm M}$ descends to $Y$ and the map $\pi_{Y} \colon Y\dashrightarrow Z'$ is a morphism. 
Let $(Y,\Gamma,\boldsymbol{\rm M})$ be a generalized log birational model of $(X,\Delta,\boldsymbol{\rm M})$ as in Definition \ref{defn--models}. 
Then 
$$K_{Y}+\Gamma+\boldsymbol{\rm M}_{Y}\sim_{\mathbb{R}} \pi_{Y}^{*}(K_{Z'}+\Delta_{Z'}+\boldsymbol{\rm N}_{Z'})+E+F$$ 
for some $E \geq 0$ and $F\geq0$ such that $E$ is exceptional over $X$ and $\pi_{Y}({\rm Supp}F) \subset Z'\setminus U'$.
Then $(Y,\Gamma,\boldsymbol{\rm M})$ has a good minimal model over $U'$. 

Running a $(K_{Y}+\Gamma+\boldsymbol{\rm M}_{Y})$-MMP over $Z'$ with scaling of an ample divisor, we get a birational contraction $\phi \colon Y \dashrightarrow Y'$ over $Z'$ with the morphism $\pi' \colon Y' \to Z'$ such that $$\phi_{*}(K_{Y}+\Gamma+\boldsymbol{\rm M}_{Y}) \sim_{\mathbb{R}}\pi'^{*}(K_{Z'}+\Delta_{Z'}+\boldsymbol{\rm N}_{Z'})+\phi_{*}E+\phi_{*}F$$ is the limit of movable divisors over $Z'$ and $\phi_{*}E|_{\pi'^{-1}(U')}=0$. 
Since ${\rm codim}_{Z'}(Z'\setminus U')\geq 2$, applying the negativity lemma for very exceptional divisors (\cite[Section 3]{birkar-flip}) to $\phi_{*}E+\phi_{*}F$, we have $\phi_{*}E+\phi_{*}F=0$. 
Thus we have
$$\phi_{*}(K_{Y}+\Gamma+\boldsymbol{\rm M}_{Y})\sim_{\mathbb{R}}\pi'^{*}(K_{Z'}+\Delta_{Z'}+\boldsymbol{\rm N}_{Z'}),$$ 
and the right hand side is semi-ample. 
From the fact, we see that $(Y,\Gamma,\boldsymbol{\rm M})$ has a good minimal model. 
So $(X,\Delta,\boldsymbol{\rm M})$ has a good minimal model. 
\end{proof}

\subsection{Generalized abundance}

In this subsection, we study the property of being log abundant for special generalized lc pairs under MMP. 

\begin{thm}\label{thm--logbig-boundary}
Let $(X,\Delta)$ be a projective lc pair, and let $\pi \colon X\to Z$ be a projective surjective morphism to a normal projective variety $Z$. 
Suppose that there is an effective $\mathbb{R}$-Cartier divisor $C$ on $X$ such that
\begin{itemize}
\item 
the pair $(X,\Delta+tC)$ is lc for some $t>0$, and  
\item 
$K_{X}+\Delta+C \sim_{\mathbb{R},Z}0$. 
\end{itemize}
Let $A_{Z}$ be a big and semi-ample $\mathbb{R}$-divisor on $Z$ such that the pullback of $A_{Z}$ to $\pi(S)^{\nu}$ is big for any lc center $S$ of $(X,\Delta)$, where $\pi(S)^{\nu}$ is the normalization of $\pi(S)$. 
Then, $K_{X}+\Delta+\pi^{*}A_{Z}$ is abundant. 
\end{thm}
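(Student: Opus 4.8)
The plan is to reduce the statement to an application of the canonical bundle formula combined with abundance for big boundaries, by exploiting the hypothesis $K_X+\Delta+C\sim_{\mathbb{R},Z}0$. First I would run a $(K_X+\Delta+tC)$-MMP over $Z$; more precisely, after passing to a $\mathbb{Q}$-factorial dlt model of $(X,\Delta+tC)$ and using the relation $K_X+\Delta+C\sim_{\mathbb{R},Z}0$ together with the negativity lemma for very exceptional divisors (as in the proof of Lemma \ref{lem--mmp-abundant-fibration}), one arranges that $(X,\Delta)$ admits a crepant birational model on which $K_X+\Delta+C$ becomes $\sim_{\mathbb{R},Z}0$ after possibly contracting components of $C$; this keeps $(X,\Delta)$ lc and does not change $K_X+\Delta+\pi^*A_Z$ up to the relations recorded in Lemma \ref{lem--bir-relation} and Lemma \ref{lem--equiv-model-birat}. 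So I may assume $K_X+\Delta\sim_{\mathbb{R},Z}0$ outright, i.e. $\pi\colon(X,\Delta)\to Z$ is an lc-trivial fibration.

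Next, I would apply the canonical bundle formula for lc-trivial fibrations (Ambro, Fujino--Gongyo; the generalized version in \cite{hanliu} if needed) to produce a generalized lc pair $(Z,\Delta_Z,\boldsymbol{\rm N})$ with
$$K_X+\Delta\sim_{\mathbb{R}}\pi^*(K_Z+\Delta_Z+\boldsymbol{\rm N}_Z).$$
Consequently $K_X+\Delta+\pi^*A_Z\sim_{\mathbb{R}}\pi^*(K_Z+\Delta_Z+\boldsymbol{\rm N}_Z+A_Z)$. Because the invariant Iitaka dimension and numerical dimension of a pullback of an $\mathbb{R}$-Cartier divisor along a contraction agree with those downstairs (see the cited properties in \cite[Remark 2.14]{has-finite} and \cite[V, 2.7 Proposition]{nakayama}), it suffices to show that $K_Z+\Delta_Z+\boldsymbol{\rm N}_Z+A_Z$ is abundant on $Z$. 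Here the crucial input is the hypothesis on $A_Z$: since $A_Z$ is big and semi-ample on $Z$ and, by the structure of the canonical bundle formula, the generalized lc centers of $(Z,\Delta_Z,\boldsymbol{\rm N})$ are exactly the images of lc centers of $(X,\Delta)$, the divisor $A_Z$ is log big with respect to $(Z,\Delta_Z,\boldsymbol{\rm N})$. Then $K_Z+\Delta_Z+\boldsymbol{\rm N}_Z+A_Z$ has $K_Z+\Delta_Z+\boldsymbol{\rm N}_Z$ plus something big, hence is big; a big $\mathbb{R}$-Cartier divisor is automatically abundant (its invariant Iitaka dimension and numerical dimension both equal $\dim Z$). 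That finishes the argument.

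The main obstacle I anticipate is the first reduction step: turning $K_X+\Delta+C\sim_{\mathbb{R},Z}0$ with $(X,\Delta+tC)$ merely lc into a genuine lc-trivial structure $K_X+\Delta\sim_{\mathbb{R},Z}0$ on an allowable birational model, while controlling the lc centers and their images in $Z$ so that the log bigness hypothesis on $A_Z$ is preserved. One must check that the MMP over $Z$ one runs only contracts divisors $D$ with $a(D,X,\Delta)>0$, so that the model stays crepant to $(X,\Delta)$ over $Z$ in the sense needed, and that no new lc center appears whose image in $Z$ is not already the image of an lc center of the original pair; this is where the negativity lemma for very exceptional divisors and a careful bookkeeping of discrepancies enter. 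A secondary technical point is ensuring the canonical bundle formula applies in the $\mathbb{R}$-coefficient (and possibly generalized) setting and that $\boldsymbol{\rm N}$ is genuinely b-nef so that $(Z,\Delta_Z,\boldsymbol{\rm N})$ is a bona fide generalized lc pair; both are by now standard but should be cited carefully. Once past these, the rest is formal manipulation of Iitaka and numerical dimensions under pullback along a contraction.
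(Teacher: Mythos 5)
Your proposal does not follow the paper's route (the paper argues by induction on $\dim X$, via adjunction to a component $S$ of $\lfloor\Delta\rfloor$ and an MMP to a Mori fiber space on which $S$ dominates the base, following \cite[Theorem 5.4]{hashizumehu}), and it has a fatal gap at its final step. You claim that $K_Z+\Delta_Z+\boldsymbol{\rm N}_Z+A_Z$ is big because it is ``$K_Z+\Delta_Z+\boldsymbol{\rm N}_Z$ plus something big.'' The sum of a big divisor and a divisor that is not pseudo-effective need not be big. Concretely, take $Z=\mathbb{P}^1\times\mathbb{P}^1$ with rulings $H_1,H_2$, let $E$ be an elliptic curve, $X=Z\times E$, $\Delta=C=0$, and $A_Z=2H_1+3H_2$ (ample, and the hypotheses on lc centers are vacuous since $(X,0)$ is klt). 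The canonical bundle formula gives $K_Z+\Delta_Z+\boldsymbol{\rm N}_Z=K_Z=-2H_1-2H_2$, and $K_Z+A_Z=H_2$ is not big. The theorem still holds here ($H_2$ is semi-ample), but your argument does not prove it: what you actually need is that $K_Z+\Delta_Z+\boldsymbol{\rm N}_Z+A_Z$ is \emph{abundant}, and that is a statement of exactly the same shape as Theorem \ref{thm--logbig-boundary} itself (a generalized lc pair plus a big semi-ample, log big divisor). Since $\pi$ may be generically finite, $\dim Z$ need not drop, so there is no induction on the base to appeal to; this is precisely why the paper inducts on $\dim X$ through the boundary instead.

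The reduction step also has an unfilled hole, as you anticipated. You may assume $K_X+\Delta+\pi^*A_Z$ is pseudo-effective (otherwise abundance is vacuous), and restricting to a general fiber $F$ forces $C|_F=0$, i.e.\ $C$ is vertical over $Z$; without this observation the plan is dead on arrival, since a horizontal $C$ cannot be contracted over $Z$. Even granting verticality, you cannot apply the canonical bundle formula to $(X,\Delta+C)\to Z$ because $(X,\Delta+C)$ is only assumed lc after scaling $C$ by some $t>0$, possibly small; and arranging $K_X+\Delta\sim_{\mathbb{R},Z}0$ on a birational model requires contracting the non-pullback part of $C$, whereas the very-exceptional-divisor argument of \cite[Theorem 3.5]{birkar-flip} (used in Lemma \ref{lem--mmp-abundant-fibration}) contracts an \emph{effective} very exceptional divisor $D$ with $K_X+B\sim_{\mathbb{R},Z}D$; here the relevant relation is $K_X+\Delta\sim_{\mathbb{R},Z}-C$, with the wrong sign. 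Each of these points might be repairable with work, but the bigness claim in the previous paragraph is where the proof genuinely breaks.
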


We will use Lemma \ref{lem--abund-birat} below to prove Theorem \ref{thm--logbig-boundary}. 

\begin{lem}\label{lem--abund-birat}
Assume Theorem \ref{thm--logbig-boundary} for all projective lc pairs of dimension at most $n-1$. 
Let $(X,\Delta)$ be a projective lc pair, let $\pi\colon X\to Z$ be a morphism, and let $C$ and $A_{Z}$ be $\mathbb{R}$-Cartier divisors as in Theorem \ref{thm--logbig-boundary} such that ${\rm dim}X\leq n- 1$. 
Let $f\colon Y \to X$ be a log resolution of $(X,\Delta)$, and let $\Gamma \geq 0$ be an $\mathbb{R}$-divisor on $Y$ such that $(Y,\Gamma)$ is an lc pair and the effective part of the divisor $K_{Y}+\Gamma-f^{*}(K_{X}+\Delta)$ is $f$-exceptional. 
Then, $K_{Y}+\Gamma+f^{*}\pi^{*}A_{Z}$ is abundant. 
\end{lem}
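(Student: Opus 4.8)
The plan is to reduce the statement about the birational model $(\tilde X,\tilde\Delta)$ to the original pair $(X,\Delta)$ by a standard crepant/birational invariance argument for abundance. First I would observe that the hypothesis on $\tilde\Delta$ means we may write $K_{\tilde X}+\tilde\Delta=f^{*}(K_{X}+\Delta)+E-E'$, where $E\geq 0$ is $f$-exceptional (the effective part, by assumption) and $E'\geq 0$ has no common component with $E$. Pulling back $\pi^{*}A_{Z}$ is harmless since it is a pullback from $X$, so
\[
K_{\tilde X}+\tilde\Delta+f^{*}\pi^{*}A_{Z}=f^{*}(K_{X}+\Delta+\pi^{*}A_{Z})+E-E'.
\]
The aim is to replace $(\tilde X,\tilde\Delta)$ by a pair in which the discrepancy correction is an effective $f$-exceptional divisor, i.e. $E'=0$, after which Lemma~\ref{lem--bir-relation} (applied with trivial nef part $\boldsymbol{\rm M}=0$, using the birational invariance of abundance) lets us transfer abundance of $K_{\tilde X}+\tilde\Delta+f^{*}\pi^{*}A_{Z}$ to and from $K_{X}+\Delta+\pi^{*}A_{Z}$.

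The key steps, in order, are: (1) pass to a common log resolution and set up a generalized-pair-free version of Lemma~\ref{lem--bir-relation}, noting that $K_{X}+\Delta+\pi^{*}A_{Z}$ is pseudo-effective (indeed, $A_{Z}$ is big and semi-ample and $K_{X}+\Delta\sim_{\mathbb R,Z}-C$, so $K_{X}+\Delta+\pi^{*}A_{Z}\sim_{\mathbb R,Z}-C+\pi^{*}A_{Z}$; more to the point, one checks pseudo-effectivity directly from the hypotheses or simply notes abundance is only being compared, not asserted unconditionally, across the birational map). (2) Apply Theorem~\ref{thm--logbig-boundary} in dimension $\leq n-1$ to $(X,\Delta)$, $\pi$, $C$, $A_{Z}$ themselves — this is legitimate because the inductive hypothesis is exactly ``Theorem~\ref{thm--logbig-boundary} for all projective lc pairs of dimension at most $n-1$'' and $\dim X\leq n-1$ — to conclude $K_{X}+\Delta+\pi^{*}A_{Z}$ is abundant. (3) Transfer abundance along $f$: since both $(X,\Delta)$ and $(\tilde X,\tilde\Delta)$ are lc, and the discrepancies satisfy $a(P,\tilde X,\tilde\Delta+f^{*}\pi^{*}A_{Z})\leq a(P,X,\Delta+\pi^{*}A_{Z})$ for prime divisors $P$ on $\tilde X$ and $a(P,X,\cdot)\le a(P,\tilde X,\cdot)$ for prime divisors $P$ on $X$ (these follow from $E$ being effective $f$-exceptional once we arrange $E'=0$ by running a $(K_{\tilde X}+\tilde\Delta)$-MMP over $X$, or directly from the definition of $\tilde\Delta$ after suitably enlarging $\tilde\Delta$ along $\mathrm{Supp}\,E'$), Lemma~\ref{lem--bir-relation} gives that $K_{\tilde X}+\tilde\Delta+f^{*}\pi^{*}A_{Z}$ is abundant if and only if $K_{X}+\Delta+\pi^{*}A_{Z}$ is, which is what we want.

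The main obstacle will be step (3): making the discrepancy inequalities of Lemma~\ref{lem--bir-relation} hold on the nose when $E'\neq 0$, i.e. when $\tilde\Delta$ has genuinely smaller discrepancies than the crepant pullback along some non-exceptional directions. The cleanest fix is to first enlarge $\tilde\Delta$ to $\tilde\Delta''$ by adding back $E'$ (and any additional snc boundary needed to keep things lc), so that $K_{\tilde X}+\tilde\Delta''=f^{*}(K_{X}+\Delta)+E''$ with $E''\geq 0$ and $E''$ $f$-exceptional; then $(\tilde X,\tilde\Delta'')$ is a crepant birational model over $X$ and $(K_{\tilde X}+\tilde\Delta''+f^{*}\pi^{*}A_{Z})$ is abundant by Lemma~\ref{lem--bir-relation} (or directly Lemma~\ref{lem--equiv-model-birat}) applied to $f$. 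Finally, comparing $(\tilde X,\tilde\Delta)$ with $(\tilde X,\tilde\Delta'')$ — two pairs on the same variety with $\tilde\Delta\le\tilde\Delta''$ differing by the effective exceptional divisor $E'$ — one more application of the birational invariance of abundance (Lemma~\ref{lem--bir-relation} with the identity birational map, whose hypotheses hold because adding an effective exceptional divisor only raises non-exceptional discrepancies while the exceptional behavior is controlled) yields abundance of $K_{\tilde X}+\tilde\Delta+f^{*}\pi^{*}A_{Z}$. I would be careful that every invocation of Lemma~\ref{lem--bir-relation} has its pseudo-effectivity hypothesis verified, since that is the one place the inductive structure could silently break.
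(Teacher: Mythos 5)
Your steps (1)--(2) are fine, and the crepant-plus-effective-exceptional transfer from $(X,\Delta)$ to $(\tilde X,\tilde\Delta'')$ with $\tilde\Delta''=\tilde\Delta+E'$ is also fine. The gap is entirely in your last step, the comparison of $(\tilde X,\tilde\Delta+\tilde A)$ with $(\tilde X,\tilde\Delta''+\tilde A)$. First, $E'$ is \emph{not} exceptional: the hypothesis only forces the \emph{effective} part $E$ of $K_{\tilde X}+\tilde\Delta-f^{*}(K_X+\Delta)$ to be $f$-exceptional, while $E'$ may well contain strict transforms of components of $\Delta$ (e.g.\ $\tilde\Delta$ may simply drop part of $f_{*}^{-1}\Delta$); this is exactly the case the lemma is designed for. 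Second, Lemma \ref{lem--bir-relation} does not apply to the identity map between these two pairs: whichever of the two you take as the source, one of the two required discrepancy inequalities fails on every component of $E'$, since $a(P,\tilde X,\tilde\Delta''+\tilde A)=a(P,\tilde X,\tilde\Delta+\tilde A)-{\rm coeff}_P(E')$. And there is no general principle to fall back on: abundance of $D$ does not imply abundance of $D-E'$ for $E'\geq 0$ (take $N$ pseudo-effective non-abundant and $D\geq N$ big). Your parenthetical alternative of ``running a $(K_{\tilde X}+\tilde\Delta)$-MMP over $X$ to arrange $E'=0$'' also cannot work as stated, because an MMP over $X$ can only contract $f$-exceptional divisors and therefore cannot remove the non-exceptional components of $E'$.

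The paper's proof avoids this comparison altogether. It runs a $(K_{\tilde X}+\tilde\Delta+\tilde A)$-MMP over $X$, which by the very-exceptional-divisor argument contracts the effective exceptional part $\tilde M=E$, arriving at $f'\colon \tilde X'\to X$ with $K_{\tilde X'}+\tilde\Delta'+\tilde N'=f'^{*}(K_X+\Delta)$ and $\tilde N'\geq 0$ the transform of $E'$. The key move is then to \emph{re-verify the hypotheses of Theorem \ref{thm--logbig-boundary} for the new pair}: $(\tilde X',\tilde\Delta')\to Z$ together with the new auxiliary divisor $f'^{*}C+\tilde N'$ satisfies $K_{\tilde X'}+\tilde\Delta'+(f'^{*}C+\tilde N')\sim_{\mathbb R,Z}0$ and the lc condition for small $t'$, and its lc centers map to images of lc centers of $(X,\Delta)$, so the bigness condition on $A_Z$ persists. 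The induction hypothesis applied to this new datum (not to $(X,\Delta)$) gives abundance of $K_{\tilde X'}+\tilde\Delta'+\tilde A'$, and abundance is preserved along the MMP. In short: the deficit $\tilde N'$ is absorbed into the divisor $C$ rather than compared away by discrepancies. Without this (or an equivalent) idea, your argument does not close.
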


\begin{proof}
The argument in \cite[Proof of Lemma 5.2]{hashizumehu} or \cite[Proof of Lemma 3.6]{has-nonvan-gpair} works with no changes. 
\end{proof}

\begin{proof}[Proof of Theorem \ref{thm--logbig-boundary}]
The argument of \cite[Proof of Theorem 5.4]{hashizumehu} works with no changes. 
We only outline the proof. 
We prove Theorem \ref{thm--logbig-boundary} by induction on ${\rm dim}X$. 

We put $A= \pi^{*}A_{Z}$. 
We may assume that $K_{X}+\Delta+A$ is pseudo-effective and $\pi$ is a contraction. 
By replacing $A$ with a general one, we may also assume that $(X,\Delta+A)$ is lc, $\Delta$ and $A$ have no common components, and all lc centers of $(X,\Delta+A)$ are lc centers of $(X,\Delta)$. 
By taking a dlt blow-up of $(X,\Delta+A)$, we may assume that $(X,\Delta+A)$ is $\mathbb{Q}$-factorial dlt. 

If $(X,\Delta+A)$ is a klt pair, then we follow \cite[Proof of Lemma 5.3]{hashizumehu}, and we see that $K_{X}+\Delta+A$ is abundant. 
More generally, if $K_{X}+\Delta-\epsilon \lfloor \Delta \rfloor+A$ is pseudo-effective for some $\epsilon >0$, then the argument in \cite[Step 1 in the proof of Theorem 5.4]{hashizumehu} enables us to reduce to the klt case, hence we see that $K_{X}+\Delta+A$ is abundant. 
From these facts, we may assume that $K_{X}+\Delta-\epsilon \lfloor \Delta \rfloor+A$ is not pseudo-effective for every $\epsilon>0$. 
Then there is a component $S$ of $\lfloor \Delta\rfloor$ such that $K_{X}+\Delta-\epsilon S+A$ is not pseudo-effective for every $\epsilon>0$. 

For all $\epsilon'>0$, we run a $(K_{X}+\Delta-\epsilon' S+A)$-MMP and get a birational contraction $X\dashrightarrow X'$ to a Mori fiber space $X'\to \overline{Z}$. 
Let $\Delta'$, $S'$ and $A'$ be the birational transforms of $\Delta$, $S$ and $A$ on $X'$, respectively. 
By taking $\epsilon'>0$ sufficiently small with the aid of the ACC for lc thresholds (\cite[Theorem 1.1]{hmx-acc}) and the ACC for numerically trivial pairs (\cite[Theorem 1.5]{hmx-acc}), we see that $(X',\Delta'+A')$ is lc and we have the relation $K_{X'}+\Delta'+A'\sim_{\mathbb{R},\overline{Z}}0$.

Let $\phi \colon Y \to X$ and $ Y \to X'$ be a common log resolution of the birational map $(X,\Delta)\dashrightarrow (X',\Delta')$. 
Putting $A_{Y}=\phi^{*}A$, we can write 
\begin{equation*}\tag{$\dagger$}\label{proof--thm--logbig-boundary-dagger}
K_{Y}+\Gamma+A_{Y}=\phi^{*}(K_{X}+\Delta+A)+E,
\end{equation*}
where $\Gamma\geq0$ and $E\geq0$ have no common components. 
We run a $(K_{Y}+\Gamma+A_{Y})$-MMP over $\overline{Z}$ with scaling of an ample divisor. 
As in \cite[Step 3 in the proof of Theorem 5.4]{hashizumehu}, an appropriate $(K_{Y}+\Gamma+A_{Y})$-MMP terminates with a good minimal model $(Y',\Gamma'+A_{Y'})$ over $\overline{Z}$. 
Let $Y'\to Z'$ be the contraction over $\overline{Z}$ induced by $K_{Y'}+\Gamma'+A_{Y'}$. 
Then the induced morphism $Z' \to \overline{Z}$ is birational. 
We have the following diagram.
\begin{equation*}
\xymatrix@R=15pt
{
Y\ar[d]_{\phi}\ar@{-->}[r]&Y'\ar[d]\\
X\ar[d]_{\pi}&Z'\\
Z
}
\end{equation*}

We put $T=\phi^{-1}_{*}S$, and let $T'$ be the birational transform of $T$ on $Y'$. 
By construction, $T'$ dominates $Z'$. 
We define projective dlt pairs 
$$(S,\Delta_{S}), \quad (T,\Gamma_{T}),\quad {\rm and} \quad (T',\Gamma_{T'})$$ with the divisorial adjunctions $K_{S}+\Delta_{S}=(K_{X}+\Delta)|_{S}$, $K_{T}+\Gamma_{T}=(K_{Y}+\Gamma)|_{T}$, and $K_{T'}+\Gamma_{T'}=(K_{Y'}+\Gamma')|_{T'}$, respectively. 
We put 
$$A_{S}=A|_{S}, \quad A_{T}=A_{Y}|_{T}, \quad {\rm and} \quad A_{T'}=A_{Y'}|_{T'}.$$ 

By construction, it is sufficient to prove that $K_{Y'}+\Gamma'+A_{Y'}$ is abundant. 
Since we have $K_{Y'}+\Gamma'+A_{Y'}\sim_{\mathbb{R},Z'}0$ and $T'$ dominates $Z'$, it is sufficient to prove that 
$$K_{T'}+\Gamma_{T'}+A_{T'} \sim_{\mathbb{R}}(K_{Y'}+\Gamma'+A_{Y'})|_{T'}$$
 is abundant. 
We take a common log resolution $\tau\colon T'' \to T$ and $\tau'\colon T''\to T'$ of the birational map $(T,\Gamma_{T}) \dashrightarrow (T',\Gamma_{T'})$. 
Replacing $A$ with a general member of $|A|_{\mathbb{R}}$, we may assume that
\begin{itemize}
\item $A_{T}\geq0$ and $A_{T'}\geq0$, 
\item
$\tau^{*}A_{T}\leq \tau_{*}'^{-1}A_{T'}$ (see \cite[Lemma 2.1]{hashizumehu}), and 
\item
$\tau'\colon T''\to T'$ is a log resolution of $(T',\Gamma_{T'}+A_{T'})$ (see  \cite[Lemma 2.2]{hashizumehu}). 
\end{itemize}

Let $\pi_{S}\colon S\to Z$ be the restriction of $\pi\colon X\to Z$ to $S$, and let $\phi_{T} \colon T \to S$ be the birational morphism induced by $\phi \colon Y \to X$. 
Now we have the following diagram.
\begin{equation*}
\xymatrix@R=15pt{
&
T''\ar[dl]_{\tau}\ar[dr]^{\tau'}&\\
T \ar[d]_{\phi_{T}}\ar@{-->}[rr]&&T'\ar[d]\\
S\ar[d]_{\pi_{S}}&&Z' \\
Z&&
}
\end{equation*}
By restricting (\ref{proof--thm--logbig-boundary-dagger}) to $T$, we get 
\begin{equation*}
K_{T}+\Gamma_{T}+A_{T}=\phi_{T}^{*}(K_{S}+\Delta_{S}+A_{S})+E|_{T}
\end{equation*}
such that $\Gamma_{T}$ and $E|_{T}$ are effective. 
By \cite[Lemma 2.4]{hashizumehu}, we see that  $\Gamma_{T}+A_{T}$ and $E|_{T}$ have no common components and 
$E|_{T}$ is $\phi_{T}$-exceptional. 
We put $A_{T''}=\tau^{*}A_{T}$. 
Then $A_{T''} \leq \tau'^{-1}_{*}A_{T'}$. 
We have
\begin{equation*}
K_{T''}+\Psi_{T''}+A_{T''}=\tau'^{*}(K_{T'}+\Gamma_{T'}+A_{T'})+E_{T''}
\end{equation*}
for some $\Psi_{T''}\geq0$ and $E_{T''}\geq0$ such that 
$(T'', \Psi_{T''}+A_{T''})$ is a log smooth lc pair and $\Psi_{T''}+A_{T''}$ and $E_{T''}$ have no common components.

We may write
\begin{equation*}
K_{T''}+\Psi_{T''}+A_{T''}=\tau^{*}\phi_{T}^{*}(K_{S}+\Delta_{S}+A_{S})+M-N,
\end{equation*}
with $M\geq0$ and $N\geq0$ having no common components. 
Pick any component $Q$ of $M$. 
If $Q$ is not exceptional over $S$, then we have $a(Q, T'', \Psi_{T''}+A_{T''}) <a(Q,S, \Delta_{S}+A_{S})\leq 1$.  
We apply \cite[Step 6 in the proof of Theorem 5.4]{hashizumehu}, and we get the following contradiction
\begin{equation*} 
\begin{split}
a(Q,S, \Delta_{S}+A_{S})=&{\rm min}\{1, a(Q,S, \Delta_{S}+A_{S}) \} \qquad \;\;\, (a(Q,S, \Delta_{S}+A_{S})\leq1)\\ 
=&{\rm min}\{1, a(Q, T, \Gamma_{T}+A_{T}) \} \qquad \;\;\,(\text{$E|_{T}$ is $\phi_{T}$-exceptional})\\
\leq &{\rm min}\{1, a(Q, T', \Gamma_{T'}+A_{T'}) \} \qquad (\text{\cite[Lemma 4.2.10]{fujino-sp-ter}})\\
=&a(Q, T'', \Psi_{T''}+A_{T''}) \qquad \qquad  \quad (\text{\cite[Lemma 2.3]{hashizumehu}})\\
<&a(Q,S, \Delta_{S}+A_{S}).
\end{split} 
\end{equation*}
From this, we see that $M$ is exceptional over $S$. 

By applying the induction hypothesis of Theorem \ref{thm--logbig-boundary} to $(S,\Delta_{S}) \to \pi(S)^{\nu}$, $C|_{S}$, and $A_{Z}|_{\pi(S)^{\nu}}$, we see that $K_{S}+\Delta_{S}+A_{S}$ is abundant. 
By applying Lemma \ref{lem--abund-birat} to $(S,\Delta_{S}) \to \pi(S)^{\nu}$ and $(T'',\Psi_{T''}) \to S$, we see that $K_{T''}+\Psi_{T''}+A_{T''}$ is abundant. 
Then $K_{T'}+\Gamma_{T'}+A_{T'}$ is abundant, hence $K_{X}+\Delta+A$ is abundant. 
\end{proof}

\begin{cor}[{cf.~\cite[Theorem 4.1]{has-nonvan-gpair}}]\label{cor--abundant-mmp}
Let $(X,\Delta)$ be a projective lc pair and $M\geq 0$ a semi-ample $\mathbb{R}$-divisor on $X$ such that $M$ is log big with respect to $(X,\Delta)$ and $(X,\Delta+M)$ is an lc pair whose lc centers coincide with those of $(X,\Delta)$. Then, for any sequence of steps of a $(K_{X}+\Delta+M)$-MMP starting the lc pair $(X,\Delta+M)$
$$(X,\Delta+M)\dashrightarrow (X',\Delta'+M'),$$ the divisor $K_{X'}+\Delta'+M'$ is log abundant with respect to $(X',\Delta'+M')$.   
\end{cor}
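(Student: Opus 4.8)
The plan is to reduce, by birational invariance along the MMP, to the statement that $K_{X}+\Delta+M$ itself is log abundant with respect to $(X,\Delta+M)$, and then to deduce that from Theorem~\ref{thm--logbig-boundary}. Throughout I would regard $M$ as the trace on $X$ of the b-divisor $\boldsymbol{\rm M}:=\overline{M}$: since $M$ is semi-ample we may write $M\sim_{\mathbb R}\pi^{*}A_{Z}$ for the contraction $\pi\colon X\to Z$ defined by $M$ and an ample $\mathbb R$-divisor $A_{Z}$ on $Z$, so that $\boldsymbol{\rm M}$ is b-nef and is a finite $\mathbb R_{>0}$-linear combination of b-nef $\mathbb Q$-b-Cartier b-divisors, and $(X,\Delta,\boldsymbol{\rm M})$ is a generalized lc pair with $K_{X}+\Delta+\boldsymbol{\rm M}_{X}=K_{X}+\Delta+M$ whose generalized lc centers are exactly the lc centers of $(X,\Delta)$ (because $\boldsymbol{\rm M}$ descends to $M\ge 0$ on $X$).

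For the reduction, the given $(K_{X}+\Delta+M)$-MMP is a sequence of steps of the generalized $(K_{X}+\Delta+\boldsymbol{\rm M}_{X})$-MMP; along it $\boldsymbol{\rm M}$ is unchanged, the generalized log discrepancies do not decrease on the source and do not increase on the target, pseudo-effectivity of $K_{X}+\Delta+M$ is preserved, and every lc center of $(X',\Delta'+M')$ is the image of an lc center of $(X,\Delta)$ (because an lc pair has no divisor of discrepancy below $-1$). By Lemma~\ref{lem--bir-relation}, abundance of $K_{X}+\Delta+M$ is equivalent to abundance of $K_{X'}+\Delta'+M'$. For the lc centers I would, at each step, invoke Lemma~\ref{lem--mmpscaling-nonneflocus} to get $\sigma_{P}(K_{X}+\Delta+M)=0$ for every prime divisor $P$ with $a(P,X,\Delta+M)<1$ meeting a non-contracted lc center, then apply Lemma~\ref{lem--discre-relation} to compare the generalized dlt pairs produced by repeated adjunction onto the lc centers, and then Lemma~\ref{lem--bir-relation} on those centers; this transfers abundance of each restriction $(K_{X}+\Delta+M)|_{S^{\nu}}$ to the corresponding restriction on $X'$. (Contracted lc centers are compared through a common resolution, exactly as in the proof of Lemma~\ref{lem--discre-relation}.) Hence it suffices to show $K_{X}+\Delta+M$ is log abundant with respect to $(X,\Delta+M)$.

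To prove this, first take the ``top'' center $S=X$. Since $M$ is big, the contraction $\pi\colon X\to Z$ is birational and $A_{Z}$ is big and semi-ample on $Z$, and the log bigness of $M$ says precisely that the pullback of $A_{Z}$ to the normalization of $\pi(S)$ is big for every lc center $S$ of $(X,\Delta)$. Using that $(X,\Delta+M)$ is lc with the same lc centers as $(X,\Delta)$ and that $M$ is semi-ample, I would — after a suitable $\mathbb Q$-factorial dlt modification and replacing $M$ by a general member of $|\pi^{*}A_{Z}|_{\mathbb R}$ — put the data into the shape required by Theorem~\ref{thm--logbig-boundary}, i.e.\ produce an effective $\mathbb R$-divisor $C$ with $(X,\Delta+tC)$ lc for some $t>0$ and $K_{X}+\Delta+C\sim_{\mathbb R,Z}0$, and conclude that $K_{X}+\Delta+M\sim_{\mathbb R}K_{X}+\Delta+\pi^{*}A_{Z}$ is abundant (this also covers the case where $K_{X}+\Delta+M$ is not pseudo-effective, where everything is $-\infty$). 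For a proper lc center $S$, I would pass to a dlt blow-up of $(X,\Delta)$ and use divisorial adjunction together with Ambro's canonical bundle formula, as in the proof of Lemma~\ref{lem--lcpair-neflogbig}, to write $(K_{X}+\Delta)|_{S^{\nu}}=K_{S^{\nu}}+\Delta_{S^{\nu}}+\boldsymbol{\rm N}_{S^{\nu}}$ for a generalized lc pair $(S^{\nu},\Delta_{S^{\nu}},\boldsymbol{\rm N})$; then $(K_{X}+\Delta+M)|_{S^{\nu}}=K_{S^{\nu}}+\Delta_{S^{\nu}}+\boldsymbol{\rm N}_{S^{\nu}}+M|_{S^{\nu}}$ with $M|_{S^{\nu}}$ semi-ample and, again by log bigness, big on the normalization of every generalized lc center. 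Running the same argument on $S^{\nu}$, inductively on dimension and in the framework of generalized pairs (so that the nef part $\boldsymbol{\rm N}$ coming from the canonical bundle formula is carried along, or is reduced away), gives that $(K_{X}+\Delta+M)|_{S^{\nu}}$ is abundant. Combining the two cases yields that $K_{X}+\Delta+M$ is log abundant with respect to $(X,\Delta+M)$, and the first paragraph then gives the same on $X'$.

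The step I expect to require the most care is everything concerning the lc centers. Two points are delicate: in the reduction along the MMP, the lc centers that are contracted must be compared on a common resolution rather than by strict transform — the book-keeping that Lemma~\ref{lem--discre-relation} is designed for, but which needs the correct vanishing-order input from Lemma~\ref{lem--mmpscaling-nonneflocus}; and applying Theorem~\ref{thm--logbig-boundary} on a proper lc center forces one to run its proof with the extra nef b-divisor $\boldsymbol{\rm N}$ present, so one really needs the generalized-pair form of Theorem~\ref{thm--logbig-boundary} (hence of Lemma~\ref{lem--abund-birat}), together with the verification that a semi-ample log big $M$ restricts to a divisor that is still semi-ample and ``log big'' for the induced generalized structure on the center. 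Manufacturing the auxiliary boundary $C$ required by Theorem~\ref{thm--logbig-boundary} out of the semi-ampleness of $M$ and the log canonicity of $(X,\Delta+M)$ is, I expect, the main technical hurdle.
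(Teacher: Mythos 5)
You have correctly identified Theorem \ref{thm--logbig-boundary} as the engine, but both supporting steps, as you have set them up, contain gaps. First, the transfer along the MMP. Lemma \ref{lem--discre-relation} has as a hypothesis that $\sigma_{P}(K_{X}+\Delta+M)=0$ for every prime divisor $P$ over $X$ with $a(P,X,\Delta+M)<1$ whose center meets $S$; this is not part of the data of the corollary, and Lemma \ref{lem--mmpscaling-nonneflocus} only produces it for an infinite MMP with scaling with $\lambda_{i}\to 0$ whose steps are isomorphisms near $S$, whereas the corollary concerns an arbitrary finite sequence of steps. Likewise Lemma \ref{lem--bir-relation} needs discrepancy inequalities in both directions on the centers, which an MMP does not provide. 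The paper avoids the transfer problem altogether: the comparison is packaged into Lemma \ref{lem--abund-birat}, whose only requirement is the one-sided condition that the effective part of $K_{T}+\Psi_{T}-\tau^{*}(K_{S}+\Delta_{S})$ be exceptional over $S$ on a common resolution $T$ of the corresponding centers $S$ and $S'$ (verified by the $\min\{1,a\}$ negativity computation as in Step 6 of the proof of Theorem \ref{thm--logbig-boundary}); its conclusion is abundance of $K_{T}+\Psi_{T}+A_{T}$, which dominates $\tau'^{*}\bigl((K_{X'}+\Delta'+M')|_{S'^{\nu}}\bigr)$ by an effective divisor, so abundance descends to $S'$. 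One proves abundance directly on the output, rather than proving it on the input and transporting it.

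Second, the application of Theorem \ref{thm--logbig-boundary}. You take $Z$ to be the base of the contraction defined by $M$, which forces you to manufacture $C\geq 0$ with $K_{X}+\Delta+C\sim_{\mathbb{R},Z}0$; as you note this is the ``main technical hurdle,'' and with that choice of $Z$ it is in general insoluble, since nothing in the hypotheses makes $K_{X}+\Delta$ anti-effective over that $Z$. The intended application is with $Z=X$, $\pi=\mathrm{id}$, $C=0$ and $A_{Z}=M$: then $K_{X}+\Delta+C\sim_{\mathbb{R},Z}0$ is vacuous, $A_{Z}$ is big and semi-ample, and the requirement that $A_{Z}$ pull back to a big divisor on $\pi(S)^{\nu}=S^{\nu}$ is exactly the log bigness of $M$; the conclusion is that $K_{X}+\Delta+M$ is abundant, and the same statement applied to the ordinary dlt-adjunction pairs on the lc centers gives log abundance. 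In particular the detour through Ambro's canonical bundle formula and a generalized-pair version of Theorem \ref{thm--logbig-boundary} is both unnecessary and unavailable: the paper proves Theorem \ref{thm--logbig-boundary} only for usual lc pairs, and its induction on lc centers uses divisorial adjunction on a dlt model, under which $M|_{S^{\nu}}$ is again semi-ample and log big, so the induction closes without ever introducing a nef part.
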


\begin{proof}
We can apply Lemma \ref{lem--abund-birat}, and the argument in \cite[Proof of Theorem 4.1]{has-nonvan-gpair} works with no changes. 
\end{proof}

\subsection{Minimal model program}

In this subsection, we study the termination of MMP for generalized lc pairs that preserves the property of being log abundant. 

\begin{thm}[{cf.~\cite[Theorem 3.4]{has-finite}}]\label{thm--mmp-induction-2}
Let $(X,\Delta, \boldsymbol{\rm M})$ be a generalized dlt pair such that $\boldsymbol{\rm M}$ is a finite $\mathbb{R}_{>0}$-linear combination of b-nef $\mathbb{Q}$-b-Cartier $\mathbb{Q}$-b-divisors. 
Suppose that
\begin{itemize}
\item
$K_{X}+\Delta+\boldsymbol{\rm M}_{X}$ is pseudo-effective and abundant, 
\item for any generalized lc center $S$ of $(X,\Delta, \boldsymbol{\rm M})$, the restriction $(K_{X}+\Delta+\boldsymbol{\rm M}_{X})|_{S}$ is nef, and
\item
for any prime divisor $P$ over $X$ such that $c_{X}(P)$ intersects a generalized lc center of $(X,\Delta, \boldsymbol{\rm M})$ and $a(P,X,\Delta+\boldsymbol{\rm M}_{X})< 1$, we have $\sigma_{P}(K_{X}+\Delta+\boldsymbol{\rm M}_{X})=0$. 
\end{itemize}
Then $(X,\Delta, \boldsymbol{\rm M})$ has a minimal model. 
\end{thm}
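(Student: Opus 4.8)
The plan is to run the argument of \cite[Theorem 3.4]{has-finite} inside the category of generalized pairs, using the lemmas of this subsection as the generalized counterparts of the tools employed there, and to argue by induction on $\dim X$ (the case $\dim X=0$ being trivial). First I would pass to a convenient birational model: using Lemma \ref{lem--extraction-gpair}, Lemma \ref{lem--equiv-model-birat} and Lemma \ref{lem--bir-relation}, replace $(X,\Delta,\boldsymbol{\rm M})$ by a $\mathbb{Q}$-factorial generalized dlt pair that is log smooth and to which $\boldsymbol{\rm M}$ descends. One must check that the three hypotheses survive the replacement: abundance of $K_{X}+\Delta+\boldsymbol{\rm M}_{X}$ is preserved by Lemma \ref{lem--bir-relation}, pseudo-effectivity is obvious, the $\sigma_{P}$-condition persists since the modification is crepant and $\sigma_{P}$ is a birational invariant, and the nefness of $K_{X}+\Delta+\boldsymbol{\rm M}_{X}$ along generalized lc centers either persists or, after a further dlt-type blow-up, is replaced by the weaker statement that those restrictions have good minimal models, which is all that is used afterwards.

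Next I would reduce the statement to Lemma \ref{lem--mmp-abundant-fibration}. Since $K_{X}+\Delta+\boldsymbol{\rm M}_{X}$ is pseudo-effective and abundant, there is $D\geq 0$ with $D\sim_{\mathbb{R}}K_{X}+\Delta+\boldsymbol{\rm M}_{X}$; let $X\dashrightarrow V$ be the Iitaka fibration of $D$. Resolving and transporting the hypotheses via Lemma \ref{lem--bir-relation}, I may assume $\pi\colon X\to V$ is a contraction to a normal projective variety. On a general fiber $F$ one has $\kappa(F,D|_{F})=0$, so $\kappa_{\iota}(X/V,K_{X}+\Delta+\boldsymbol{\rm M}_{X})=0$, while Lemma \ref{lem--iitakafib} gives $\kappa_{\sigma}(X/V,K_{X}+\Delta+\boldsymbol{\rm M}_{X})=0$; moreover $\kappa_{\sigma}(X,K_{X}+\Delta+\boldsymbol{\rm M}_{X})=\kappa_{\iota}(X,K_{X}+\Delta+\boldsymbol{\rm M}_{X})=\kappa(X,D)=\dim V$ by abundance. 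Hence two of the three hypotheses of Lemma \ref{lem--mmp-abundant-fibration} hold automatically, and it remains only to arrange that every generalized lc center of $(X,\Delta,\boldsymbol{\rm M})$ dominates $V$.

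To dispose of a generalized lc center $S$ that does not dominate $V$, I would restrict to it: by divisorial adjunction and Remark \ref{rem--coeff-adj} one obtains a generalized dlt pair $(S^{\nu},\Delta_{S^{\nu}},\boldsymbol{\rm N})$, and using the hypothesis on restrictions together with Lemma \ref{lem--discre-relation}, whose second assumption is exactly our $\sigma_{P}$-hypothesis, one checks that this lower-dimensional pair again satisfies the hypotheses of the theorem and that $(K_{X}+\Delta+\boldsymbol{\rm M}_{X})|_{S^{\nu}}$, being nef, is already a minimal model on $S^{\nu}$, hence abundant; by the induction hypothesis it has a minimal model. Now run a $(K_{X}+\Delta+\boldsymbol{\rm M}_{X})$-MMP with scaling of an ample divisor: since the induced MMPs on the generalized lc centers admit minimal models, special termination (adapted to generalized dlt pairs via \cite[Proposition 3.13, Theorem 4.1]{hanli}, cf.~\cite{fujino-sp-ter}) shows that after finitely many steps the MMP is an isomorphism near every generalized lc center, and Lemma \ref{lem--mmpscaling-nonneflocus} together with the $\sigma_{P}$-hypothesis controls the non-nef locus there. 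Iterating over the non-dominating centers — each step strictly lowering their number, while Lemma \ref{lem--bir-relation} keeps the hypotheses of Lemma \ref{lem--mmp-abundant-fibration} in force — I reach a model on which $\pi\colon X\to V$ is a contraction and all generalized lc centers dominate $V$. Then Lemma \ref{lem--mmp-abundant-fibration} yields a good minimal model, and Lemma \ref{lem--bir-relation} transports it back to the original pair.

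The main obstacle is the last step: transplanting the special-termination argument to generalized dlt pairs and, above all, verifying that the three hypotheses are stable under all the reductions and the MMP — in particular that the restriction of $K_{X}+\Delta+\boldsymbol{\rm M}_{X}$ to a non-dominating center is abundant, so that the induction hypothesis applies, and that $\sigma_{P}=0$ near generalized lc centers is preserved along the MMP. All the generalized-pair machinery needed (length of extremal rays, termination once a minimal model exists, the generalized canonical bundle formula, and divisorial adjunction with control of coefficients and b-Cartier indices) is already available, so once this bookkeeping is set up the adaptation should be essentially mechanical.
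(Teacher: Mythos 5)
Your proposal has two genuine gaps, both at the point where the real work of the paper's proof happens.

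First, the reduction to Lemma \ref{lem--mmp-abundant-fibration} via ``iterating over the non-dominating centers, each step strictly lowering their number'' does not work. Running a $(K_{X}+\Delta+\boldsymbol{\rm M}_{X})$-MMP (even one that eventually becomes an isomorphism near the generalized lc centers) does not make a vertical generalized lc center start dominating $V$, and it does not remove it; the non-dominating centers persist on every model, so you never ``reach a model on which all generalized lc centers dominate $V$.'' The paper's proof never tries to apply Lemma \ref{lem--mmp-abundant-fibration} to $(X,\Delta,\boldsymbol{\rm M})$ itself. Instead it decomposes $K_{X}+\Delta+\boldsymbol{\rm M}_{X}\sim_{\mathbb{R}}G+H$ with ${\rm Supp}\,G\subset{\rm Supp}\lfloor\Delta\rfloor$ containing all vertical centers, applies Lemma \ref{lem--mmp-abundant-fibration} to the perturbed pairs $(X,\Delta-tG,\boldsymbol{\rm M})$ (whose generalized lc centers \emph{do} all dominate $V$), and uses the resulting good minimal models of $(X,\Delta+tH,\boldsymbol{\rm M})\sim_{\mathbb{R}}(1+t)(K_{X}+\Delta-\tfrac{t}{1+t}G+\boldsymbol{\rm M}_{X})$ to build an MMP with scaling whose coefficients $t_{i}$ tend to $0$ and which occurs only inside ${\rm Supp}\lfloor\Delta\rfloor$. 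That perturbation is the idea your outline is missing.

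Second, you treat special termination as ``essentially mechanical,'' but its crux here is precisely the step you skip: producing minimal models for the adjoint generalized dlt pairs $(S_{m},\Delta_{S_{m}},\boldsymbol{\rm N})$ on the \emph{intermediate} models $X_{m}$ of the MMP, where $(K_{X_{m}}+\Delta_{m}+\boldsymbol{\rm M}_{X_{m}})|_{S_{m}}$ is no longer nef. Your plan to handle this by induction on $\dim X$ applied to the restricted pairs fails for two reasons: the restricted pairs on $X_{m}$ need not satisfy the hypotheses of the theorem, and your justification ``being nef, it is already a minimal model, hence abundant'' is false --- nefness does not imply abundance, and in any case the restriction to $S_{m}$ is not nef. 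The paper instead compares $(S_{m},\Delta_{S_{m}},\boldsymbol{\rm N})$ with the pair $(S,\Delta_{S},\boldsymbol{\rm N})$ on the \emph{original} $X$ (which is its own minimal model by the second hypothesis), via an auxiliary $\mathbb{Q}$-factorial model $T\to S_{m}$ extracting exactly the divisors where the generalized log discrepancies drop, the discrepancy inequalities of Lemma \ref{lem--discre-relation} (whose applicability uses the third hypothesis), Lemma \ref{lem--bir-relation} to get a minimal model of $(T,\Psi,\boldsymbol{\rm N})$, and finally the Zariski-decomposition criterion Lemma \ref{lem--minmodel-zariskidecom} to transfer it to $(S_{m},\Delta_{S_{m}},\boldsymbol{\rm N})$. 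Without this mechanism the termination argument does not close.
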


\begin{proof}
We closely follow \cite[Subsection 3.1]{has-finite}. 
We prove Theorem \ref{thm--mmp-induction-2} in several steps. 

\begin{step1}\label{step1-mmp-induction-2}
In this step, we replace $(X,\Delta, \boldsymbol{\rm M})$ with a crepant $\mathbb{Q}$-factorial generalized dlt model which has good properties. 
We follow \cite[Proof of Proposition 3.2]{has-finite}. 

Since $K_{X}+\Delta+\boldsymbol{\rm M}_{X}$ is pseudo-effective and abundant, there is an effective $\mathbb{R}$-divisor $D$ on $X$ such that $K_{X}+\Delta+\boldsymbol{\rm M}_{X}\sim_{\mathbb{R}}D$. 
We take the Iitaka fibration 
$X\dashrightarrow V$ associated to $D$. 
Then we have 
$${\rm dim}V=\kappa_{\sigma}(X, K_{X}+\Delta+\boldsymbol{\rm M}_{X}).$$ 

We take a log resolution $\overline{f}\colon \overline{X}\to X$ of $(X,\Delta)$ such that $\boldsymbol{\rm M}$ descends to $\overline{X}$ and the induced map $\overline{X}\dashrightarrow V$ is a morphism. 
Then we have
$$K_{\overline{X}}+\overline{\Delta}+\boldsymbol{\rm M}_{\overline{X}}=\overline{f}^{*}(K_{X}+\Delta+\boldsymbol{\rm M}_{X})+\overline{E}$$ 
for some $\overline{\Delta} \geq 0$ and $\overline{E}\geq0$ which have no common components. 
By construction and Lemma \ref{lem--iitakafib}, we have
\begin{enumerate}[(i)]
\item \label{proof-prop--creppantmmp-(i)}
$\kappa_{\sigma}(\overline{X},K_{\overline{X}}+\overline{\Delta}+\boldsymbol{\rm M}_{\overline{X}})={\rm dim}V$ and $\kappa_{\sigma}(\overline{X}/V,K_{\overline{X}}+\overline{\Delta}+\boldsymbol{\rm M}_{\overline{X}})=0$. 
\end{enumerate}
Moreover, $K_{\overline{X}}+\overline{\Delta}+\boldsymbol{\rm M}_{\overline{X}}$ is $\mathbb{R}$-linearly equivalent to the sum of an effective $\mathbb{R}$-divisor and the pullback of an ample divisor on $V$. 
Therefore, we can find an effective $\mathbb{R}$-divisor 
$$\overline{D}\sim_{\mathbb{R}}K_{\overline{X}}+\overline{\Delta}+\boldsymbol{\rm M}_{\overline{X}}$$
 such that ${\rm Supp}\overline{D}$ contains all generalized  lc centers of $(\overline{X},\overline{\Delta},\boldsymbol{\rm M})$ which are vertical over $V$. 
By applying \cite[Lemma 2.10]{has-trivial} to the morphism $(\overline{X},\overline{\Delta})\to V$ and replacing $(\overline{X},\overline{\Delta})$, we may assume that
\begin{enumerate}[(i)]\setcounter{enumi}{1}
\item \label{proof-prop--creppantmmp-(ii)}
$\overline{\Delta}=\overline{\Delta}'+\overline{\Delta}''$ such that $\overline{\Delta}'$ is effective, $\Delta''=0$ or $\overline{\Delta}''$ is reduced and vertical over $V$, and all generalized lc centers of $(\overline{X},\overline{\Delta}-s\overline{\Delta}'', \boldsymbol{\rm M})$ dominate $V$ for all $s \in (0,1]$. 
\end{enumerate}

By taking a log resolution of $(\overline{X},\overline{\Delta}+\overline{D})$ and replacing $(\overline{X},\overline{\Delta})$ and $\overline{D}$, we may assume that $(\overline{X},\overline{\Delta}+\overline{D})$ is log smooth.
Note that (\ref{proof-prop--creppantmmp-(ii)}) is preserved after this replacement. 
Since $ {\rm Supp}\overline{\Delta}''$ is a union of generalized lc centers of $(\overline{X},\overline{\Delta},\boldsymbol{\rm M})$ which are vertical over $V$, we see that ${\rm Supp}\overline{D}\supset {\rm Supp}\overline{\Delta}''$. 
By decomposing $\overline{D}$ appropriately, we obtain effective $\mathbb{R}$-divisors $\overline{G}$ and $\overline{H}$ such that 
\begin{enumerate}[(i)]\setcounter{enumi}{2}
\item \label{proof-prop--creppantmmp-(iii)}
$K_{\overline{X}}+\overline{\Delta}+\boldsymbol{\rm M}_{\overline{X}}\sim_{\mathbb{R}}\overline{G}+\overline{H}$, 
\item \label{proof-prop--creppantmmp-(iv)}
${\rm Supp}\overline{\Delta}''\subset {\rm Supp}\overline{G}\subset {\rm Supp}\lfloor \overline{\Delta} \rfloor$, and 
\item \label{proof-prop--creppantmmp-(v)}
no component of $\overline{H}$ is a component of $\lfloor \overline{\Delta} \rfloor$ and $(\overline{X},\overline{\Delta}+\overline{H})$ is log smooth.   
\end{enumerate}

We fix a real number $t_{0}\in(0,1)$ such that $\overline{\Delta}-t_{0}\overline{G}\geq 0$. 
We pick $t\in(0,t_{0}]$, and we consider $(\overline{X},\overline{\Delta}-t\overline{G}, \boldsymbol{\rm M})$.
Then the morphism 
$$(\overline{X},\overline{\Delta}-t\overline{G}, \boldsymbol{\rm M})\to V$$
 satisfies the following three conditions. 
\begin{itemize}
\item
Any generalized lc center of $(\overline{X},\overline{\Delta}-t\overline{G}, \boldsymbol{\rm M})$ dominates $V$, 
\item
$\kappa_{\sigma}(\overline{X},K_{\overline{X}}+\overline{\Delta}-t\overline{G}+\boldsymbol{\rm M}_{\overline{X}})={\rm dim}V$, and 
\item
$\kappa_{\iota}(\overline{X}/V,K_{\overline{X}}+\overline{\Delta}-t\overline{G}+\boldsymbol{\rm M}_{\overline{X}}) = \kappa_{\sigma}(\overline{X}/V,K_{\overline{X}}+\overline{\Delta}-t\overline{G}+\boldsymbol{\rm M}_{\overline{X}})=0$.
\end{itemize}
Indeed, the first condition follows from (\ref{proof-prop--creppantmmp-(iv)}) and (\ref{proof-prop--creppantmmp-(ii)}), and the other two conditions follow from (\ref{proof-prop--creppantmmp-(iii)}), (\ref{proof-prop--creppantmmp-(i)}), and \cite[Remark 2.8 (1)]{hashizumehu}. 
By Lemma \ref{lem--mmp-abundant-fibration}, we see that $(\overline{X},\overline{\Delta}-t\overline{G}, \boldsymbol{\rm M})$ has a good minimal model for all $t\in(0,t_{0}]$. 

Running a $(K_{\overline{X}}+\overline{\Delta}+\boldsymbol{\rm M}_{\overline{X}})$-MMP over $X$, we get a crepant generalized dlt model 
$$\widetilde{f}\colon(\widetilde{X},\widetilde{\Delta},\boldsymbol{\rm M})\to (X,\Delta, \boldsymbol{\rm M}).$$  
Let $\widetilde{G}$ (resp.~$\widetilde{H}$) be the birational transform of $\overline{G}$ (resp.~$\overline{H}$) on $\widetilde{X}$. 
Then 
$$K_{\widetilde{X}}+\widetilde{\Delta}+\boldsymbol{\rm M}_{\widetilde{X}}\sim_{\mathbb{R}}\widetilde{G}+\widetilde{H}$$
 by (\ref{proof-prop--creppantmmp-(iii)}). 
By (\ref{proof-prop--creppantmmp-(v)}) and replacing $t_{0}$, we may assume that $(\overline{X},\overline{\Delta}+t_{0}\overline{H}, \boldsymbol{\rm M})$ is a $\mathbb{Q}$-factorial generalized dlt pair, and we may further assume that $\overline{X}\dashrightarrow \widetilde{X}$ is a sequence of steps of a $(K_{\overline{X}}+\overline{\Delta}+t\overline{H}+\boldsymbol{\rm M}_{\overline{X}})$-MMP and a sequence of steps of a $(K_{\overline{X}}+\overline{\Delta}-t\overline{G}+\boldsymbol{\rm M}_{\overline{X}})$-MMP for all $t\in(0,t_{0}]$. 
Then it follows that $(\widetilde{X},\widetilde{\Delta}+t_{0}\widetilde{H},\boldsymbol{\rm M})$ is a $\mathbb{Q}$-factorial generalized dlt pair, and $(\widetilde{X},\widetilde{\Delta}-t\widetilde{G},\boldsymbol{\rm M})$ has a good minimal model for all $t\in(0,t_{0}]$ since $(\overline{X},\overline{\Delta}-t\overline{G}, \boldsymbol{\rm M})$ has a good minimal model. 

Applying \cite[Lemma 2.4]{has-finite} to $K_{\widetilde{X}}+\widetilde{\Delta}+\boldsymbol{\rm M}_{\widetilde{X}}$ and $\widetilde{H}$ and replacing $t_{0}$, we may assume that 
$${\rm Supp}N_{\sigma}(K_{\widetilde{X}}+\widetilde{\Delta}+t\widetilde{H}+\boldsymbol{\rm M}_{\widetilde{X}})$$
is independent of the choice of $t\in(0,t_{0}]$. 

Since $K_{\widetilde{X}}+\widetilde{\Delta}+\boldsymbol{\rm M}_{\widetilde{X}}=\widetilde{f}^{*}(K_{X}+\Delta+\boldsymbol{\rm M}_{X})$, it is easy to check that $(\widetilde{X},\widetilde{\Delta},\boldsymbol{\rm M})$ satisfies all the conditions of Theorem \ref{thm--mmp-induction-2}. 
Replacing $ (X,\Delta, \boldsymbol{\rm M})$ by $(\widetilde{X},\widetilde{\Delta},\boldsymbol{\rm M})$, we may assume that $X$ is $\mathbb{Q}$-factorial and there exist effective $\mathbb{R}$-divisors $G$ and $H$ on $X$ such that 
\begin{itemize}
\item 
$K_{X}+\Delta+\boldsymbol{\rm M}_{X}\sim_{\mathbb{R}}G+H$, 
\item 
${\rm Supp}G\subset {\rm Supp}\lfloor \Delta \rfloor$, and
\item 
there exists a real number $t_{0}>0$ such that the following properties hold for all $t\in(0,t_{0}]$:  
\begin{itemize}
\item 
$(X,\Delta+tH, \boldsymbol{\rm M})$ is generalized dlt and ${\rm Supp}N_{\sigma}(K_{X}+\Delta+tH+\boldsymbol{\rm M}_{X})$ does not depend on $t$, and 
\item 
$(X,\Delta-tG, \boldsymbol{\rm M})$ has a good minimal model. 
\end{itemize}
\end{itemize}
\end{step1}

\begin{step1}\label{step2-mmp-induction-2}
We follow \cite[Proof of Proposition 3.3]{has-finite}. 

Pick any real number $t \in (0,t_{0}]$. 
Since $(X,\Delta-\tfrac{t}{1+t}G, \boldsymbol{\rm M})$ has a good minimal model and 
$$K_{X}+\Delta+tH+\boldsymbol{\rm M}_{X}\sim_{\mathbb{R}} (1+t)\left(K_{X}+\Delta-\frac{t}{1+t}G+\boldsymbol{\rm M}_{X}\right),$$
there is a sequence of steps of a $(K_{X}+\Delta+tH+\boldsymbol{\rm M}_{X})$-MMP to a good minimal model
$$\phi_{t}\colon (X,\Delta+tH, \boldsymbol{\rm M}) \dashrightarrow (X_{t},\Delta_{t}+tH_{t}, \boldsymbol{\rm M}).$$
Because ${\rm Supp}N_{\sigma}(K_{X}+\Delta+tH+\boldsymbol{\rm M}_{X})$ is independent of $t \in (0,t_{0}]$, prime divisors contracted by $\phi_{t}$ are independent of $t \in  (0,t_{0}]$. 
Therefore, putting
$$X_{0}=X_{t_{0}}, \quad \Delta_{0}=\Delta_{t_{0}}, \quad {\rm and} \quad H_{0}=H_{t_{0}},$$ 
then $X_{0}$ and $X_{t}$ are isomorphic in codimension one for all $t \in (0,t_{0}]$.  
From the fact, we see that 
$(X_{t},\Delta_{t}+tH_{t}, \boldsymbol{\rm M})$ is a good minimal model of $(X_{0},\Delta_{0}+tH_{0},\boldsymbol{\rm M})$. 

By applying \cite[Proof of Lemma 2.14]{has-mmp} and \cite[Proof of Proposition 3.3]{has-finite}, we get a sequence of steps of a $(K_{X_{0}}+\Delta_{0}+\boldsymbol{\rm M}_{X_{0}})$-MMP with scaling of $t_{0}H_{0}$
$$(X_{0},\Delta_{0},\boldsymbol{\rm M}) \dashrightarrow (X_{1}, \Delta_{1},\boldsymbol{\rm M}) \dashrightarrow \cdots \dashrightarrow (X_{i}, \Delta_{i},\boldsymbol{\rm M}) \dashrightarrow \cdots$$
such that 
\begin{itemize}
\item
if we define $t_{i}={\rm inf}\set{\mu \in \mathbb{R}_{\geq0}|\text{$K_{X_{i}}+\Delta_{i}+\boldsymbol{\rm M}_{X_{i}}+\mu H_{i}$ is nef}}$ for each $i$, then ${\rm lim}_{i \to \infty}t_{i}=0$,
\item
for all $t \in [t_{i},t_{i-1}] \cap \mathbb{R}_{>0}$, the generalized pair $(X_{i}, \Delta_{i}+t H_{i},\boldsymbol{\rm M})$ is a good minimal model of both $(X, \Delta+t H,\boldsymbol{\rm M})$ and $(X_{0}, \Delta_{0}+t H_{0},\boldsymbol{\rm M})$, and
\item
the MMP occurs only in ${\rm Supp}\lfloor \Delta_{0}\rfloor$. 
\end{itemize}
By the argument as in \cite[Step 1 in the proof of Theorem 3.4]{has-finite}, it is sufficient to prove the termination of the $(K_{X_{0}}+\Delta_{0}+\boldsymbol{\rm M}_{X_{0}})$-MMP. 
\end{step1}

\begin{step1}\label{step3-mmp-induction-2}
We follow \cite[Step 1 in the proof of Theorem 3.4]{has-finite}. 

For every $i$ and generalized lc center $S_{i}$ of $(X_{i},\Delta_{i}, \boldsymbol{\rm M})$, we define a generalized dlt pair 
$(S_{i}, \Delta_{S_{i}}, \boldsymbol{\rm N})$ by applying divisorial adjunction for generalized pairs repeatedly. 
We put $H_{S_{i}}=H_{i}|_{S_{i}}$. 
Similarly, for every generalized lc center $S$ of $(X,\Delta, \boldsymbol{\rm M})$, we define a generalized dlt pair $(S, \Delta_{S}, \boldsymbol{\rm N})$ by applying divisorial adjunction for generalized pairs repeatedly. 
We put $H_{S}=H|_{S}$. 

There exists $m_{0}\gg 0 $ such that for every $i \geq m_{0}$, the birational map $X_{m_{0}}\dashrightarrow X_{i}$ is an isomorphism on a neighborhood of the generic points of all generalized lc centers of $(X_{m_{0}},\Delta_{m_{0}},\boldsymbol{\rm M})$. 

We will prove the termination of the $(K_{X_{0}}+\Delta_{0}+\boldsymbol{\rm M}_{X_{0}})$-MMP by applying the special termination for MMP for generalized dlt pairs (cf.~\cite[Subsection 4.2]{hanli}). 
More precisely, by induction on $l \in \mathbb{Z}_{\geq 0}$, we will prove  that the MMP terminates on a neighborhood of all $l$-dimensional generalized lc centers of $(X_{m_{0}},\Delta_{m_{0}},\boldsymbol{\rm M})$ for all $l$. 
Pick $l \in \mathbb{Z}_{\geq 0}$. 
By the induction hypothesis and the standard argument of the special termination, we can find $m\gg m_{0}$ such that for every $i \geq m$ and every generalized lc center $S_{m}$ of $(X_{m},\Delta_{m}, \boldsymbol{\rm M})$ with ${\rm dim}S_{m} \leq l$, the induced birational map $S_{m} \dashrightarrow S_{i}$ to the corresponding generalized lc center $S_{i}$ of $(X_{i},\Delta_{i}, \boldsymbol{\rm M})$ is small and the birational transform of $\Delta_{S_{m}}$ (resp.~$H_{S_{m}}$, $\boldsymbol{\rm N}_{S_{m}}$) to $S_{i}$ is equal to $\Delta_{S_{i}}$ (resp.~$H_{S_{i}}$, $\boldsymbol{\rm N}_{S_{i}}$). 

To prove the special termination, it is sufficient to prove the existence of a minimal model of $(S_{m}, \Delta_{S_{m}},\boldsymbol{\rm N})$ for every generalized lc center $S_{m}$ of $(X_{m},\Delta_{m},\boldsymbol{\rm M})$. 
\end{step1}

\begin{step1}\label{step4-mmp-induction-2}
We follow \cite[Step 2 in the proof of Theorem 3.4]{has-finite}. 

From now on, we fix a generalized lc center $S_{m}$ of $(X_{m},\Delta_{m},\boldsymbol{\rm M})$. 
Unless otherwise stated, we assume that all integers $i$ appearing in the rest of the proof satisfy $i \geq m$.  
For all $i$, let $S_{i}$ be the generalized lc center of $(X_{i},\Delta_{i}, \boldsymbol{\rm M})$ such that $X_{m}\dashrightarrow X_{i}$ induces a birational map $S_{m} \dashrightarrow S_{i}$.  
By construction of the birational map $X\dashrightarrow X_{i}$, we can find a generalized lc center $S$ of $(X,\Delta, \boldsymbol{\rm M})$ such that $X\dashrightarrow X_{i}$ induces a birational map $S\dashrightarrow S_{i}$. 
Using the $S$, we will define the following variety, divisor and inequalities. 

\begin{enumerate}[(a)]
\item \label{proof-thm--ind-1-step3-(a)}
A birational morphism $\psi\colon T\to S_{m}$ from a projective $\mathbb{Q}$-factorial variety $T$ such that for every prime divisor $\bar{D}$ on $S$, if we have $a(\bar{D},S_{m}, \Delta_{S_{m}}+\boldsymbol{\rm N}_{S_{m}})<a(\bar{D},S,\Delta_{S}+\boldsymbol{\rm N}_{S})$ 
then $\bar{D}$ is a $\psi$-exceptional divisor on $T$, 
\item \label{proof-thm--ind-1-step3-(b)}
an effective $\mathbb{R}$-divisor $\Psi$ on $T$ defined by $\Psi=-\sum_{\substack {D}}\bigl(a(D,S,\Delta_{S}+\boldsymbol{\rm N}_{S})-1\bigr) D$, where $D$ runs over all prime divisors on $T$, 
\item \label{proof-thm--ind-1-step3-(c)}
the inequality $a(Q,S,(\Delta_{S}+t_{i}H_{S})+\boldsymbol{\rm N}_{S})\leq a(Q,S_{i},(\Delta_{S_{i}}+t_{i}H_{S_{i}})+\boldsymbol{\rm N}_{S_{i}})$ for all $i$ and all prime divisors $Q$ over $S$, and 
\item \label{proof-thm--ind-1-step3-(d)}
the inequality $a(Q',S_{m},\Delta_{S_{m}}+\boldsymbol{\rm N}_{S_{m}})\leq a(Q',T,\Psi+\boldsymbol{\rm N}_{T})$ for all prime divisors $Q'$ over $S_{m}$, in particular, the generalized pair $(T,\Psi, \boldsymbol{\rm N})$ is generalized lc. 
\end{enumerate}

The generalized pair $(X_{i}, \Delta_{i}+t_{i} H_{i},\boldsymbol{\rm M})$ is a good minimal model of $(X, \Delta+t_{i} H, \boldsymbol{\rm M})$, hence the negativity lemma (see also \cite[Proof of Lemma 4.2.10]{fujino-sp-ter}) implies the inequality
\begin{equation*}
\begin{split}
 a(Q,S,(\Delta_{S}+t_{i}H_{S})+\boldsymbol{\rm N}_{S}) \leq a(Q,S_{i},(\Delta_{S_{i}}+t_{i}H_{S_{i}})+\boldsymbol{\rm N}_{S_{i}})  
\end{split}
\end{equation*}
for all prime divisors $Q$ over $S$. 
We have (\ref{proof-thm--ind-1-step3-(c)}). 

In this paragraph, we prove that the equality
\begin{equation*}\tag{$\star$}\label{proof-thm--ind-1-(starstar)}
a(\widetilde{D},S_{m},\Delta_{S_{m}}+\boldsymbol{\rm N}_{S_{m}})= a(\widetilde{D},S,\Delta_{S}+\boldsymbol{\rm N}_{S})
\end{equation*} 
holds for all prime divisors $\widetilde{D}$ on $S_{m}$. 
Note that $\widetilde{D}$ is a prime divisor on $S_{i}$ for every $i$ since the birational map $S_{m}\dashrightarrow S_{i}$ is small. 
For every $i$, 
we may apply Lemma \ref{lem--discre-relation} to the birational map $(X,\Delta,\boldsymbol{\rm M})\dashrightarrow(X_{i},\Delta_{i},\boldsymbol{\rm M})$ and the generalized pairs $(S,\Delta_{S},\boldsymbol{\rm N})$ and $(S_{i},\Delta_{S_{i}},\boldsymbol{\rm N})$ because $(X,\Delta,\boldsymbol{\rm M})\dashrightarrow(X_{i},\Delta_{i},\boldsymbol{\rm M})$ obviously satisfies the first condition of Lemma \ref{lem--discre-relation} and the second condition of Lemma \ref{lem--discre-relation} follows from the third condition of Theorem \ref{thm--mmp-induction-2}. 
Thus, we have
$$a(\widetilde{D},S_{i},\Delta_{S_{i}}+\boldsymbol{\rm N}_{S_{i}}) \leq a(\widetilde{D},S,\Delta_{S}+\boldsymbol{\rm N}_{S}),$$ and therefore we have 
$$a(\widetilde{D},S_{i},(\Delta_{S_{i}}+t_{i}H_{S_{i}})+\boldsymbol{\rm N}_{S_{i}}) \leq a(\widetilde{D},S,\Delta_{S}+\boldsymbol{\rm N}_{S})$$
 for all prime divisors $\widetilde{D}$ on $S_{m}$.  
By this inequality and (\ref{proof-thm--ind-1-step3-(c)}), we have
\begin{equation*}a(\widetilde{D},S,(\Delta_{S}+t_{i}H_{S})+\boldsymbol{\rm N}_{S})\leq a(\widetilde{D},S_{i},(\Delta_{S_{i}}+t_{i}H_{S_{i}})+\boldsymbol{\rm N}_{S_{i}})  \leq a(\widetilde{D},S,\Delta_{S}+\boldsymbol{\rm N}_{S}).\end{equation*} 
We have $a(\widetilde{D},S_{i},(\Delta_{S_{i}}+t_{i}H_{S_{i}})+\boldsymbol{\rm N}_{S_{i}})=a(\widetilde{D},S_{m},(\Delta_{S_{m}}+t_{i}H_{S_{m}})+\boldsymbol{\rm N}_{S_{m}})$ for all $i$ since $\widetilde{D}$ appears as a prime divisor on $S_{m}$ and $S_{i}$. 
Since ${\rm lim}_{i \to \infty}t_{i}=0$, we get (\ref{proof-thm--ind-1-(starstar)})  for all prime divisors $\widetilde{D}$ on $S_{m}$  by taking the limit $i\to \infty$. 

We put
\begin{equation*}
\mathcal{C}=\Set{ \bar{D} | \begin{array}{l}\!\!\text{$\bar{D}$ is a prime divisor on $S$ such that}\\ \text{$a(\bar{D},S_{m}, \Delta_{S_{m}}+\boldsymbol{\rm N}_{S_{m}})<a(\bar{D},S,\Delta_{S}+\boldsymbol{\rm N}_{S})$} \end{array}\!\!}. \end{equation*} 
By (\ref{proof-thm--ind-1-(starstar)}), all elements of $\mathcal{C}$ are exceptional over $S_{m}$. 
By a basic property of generalized log discrepancies and (\ref{proof-thm--ind-1-step3-(c)}), we have $$a(\bar{D},S, (\Delta_{S}+t_{m}H_{S})+\boldsymbol{\rm N}_{S}) \leq a(\bar{D},S_{m}, \Delta_{S_{m}}+\boldsymbol{\rm N}_{S_{m}}).$$ 
This implies
$$a(\bar{D},S,(\Delta_{S}+t_{m}H_{S})+\boldsymbol{\rm N}_{S}) \leq a(\bar{D},S_{m}, \Delta_{S_{m}}+\boldsymbol{\rm N}_{S_{m}})< a(\bar{D},S,\Delta_{S}+\boldsymbol{\rm N}_{S})$$
for all $\bar{D}\in \mathcal{C}$. 
Since every element of $\mathcal{C}$ is a prime divisor on $S$, we see that all elements of $\mathcal{C}$ are components of $H_{S}$. 
Thus, $ \mathcal{C}$ is a finite set, and $a(\bar{D},S_{m}, \Delta_{S_{m}}+\boldsymbol{\rm N}_{S_{m}})<1$ for all $\bar{D}\in \mathcal{C}$ because
$a(\bar{D},S,\Delta_{S}+\boldsymbol{\rm N}_{S})\leq 1$. 
From these facts, for every $\bar{D}\in \mathcal{C}$ we get the relation
\begin{equation*}
\begin{split}
0\leq &a(\bar{D},S,(\Delta_{S}+t_{0}H_{S})+\boldsymbol{\rm N}_{S})\\
<&a(\bar{D},S,(\Delta_{S}+t_{m}H_{S})+\boldsymbol{\rm N}_{S}) \qquad\;\; (\text{$\bar{D}$ is a component of $H_{S}$ and $m\gg 0$})\\
\leq &a(\bar{D},S_{m},(\Delta_{S_{m}}+t_{m}H_{S_{m}})+\boldsymbol{\rm N}_{S_{m}}) \qquad (\text{the inequality (\ref{proof-thm--ind-1-step3-(c)})})\\
\leq & a(\bar{D},S_{m}, \Delta_{S_{m}}+\boldsymbol{\rm N}_{S_{m}}) \\
<& a(\bar{D},S,\Delta_{S}+\boldsymbol{\rm N}_{S}) \leq 1\qquad\qquad\qquad\;\;\; (\text{definition of $\mathcal{C}$}).
\end{split}
\end{equation*} 
In this way, we have $0<a(\bar{D},S_{m}, \Delta_{S_{m}}+\boldsymbol{\rm N}_{S_{m}})<1$.  
By Lemma \ref{lem--extraction-gpair}, there exists a projective birational morphism $\psi\colon T\to S_{m}$ such that $T$ is $\mathbb{Q}$-factorial and $\psi^{-1}$ exactly extracts elements of $\mathcal{C}$. 
We have constructed the desired birational morphism as in (\ref{proof-thm--ind-1-step3-(a)}). 

Let $D$ be a prime divisor on $T$. 
When $D$ is $\psi$-exceptional, by the definitions of $\mathcal{C}$ and $\psi$ we have 
$$a(D,S_{m}, \Delta_{S_{m}}+\boldsymbol{\rm N}_{S_{m}})<a(D,S,\Delta_{S}+\boldsymbol{\rm N}_{S})\leq 1.$$ 
When $D$ is not $\psi$-exceptional, from (\ref{proof-thm--ind-1-(starstar)}) we see that 
$$a(D,S,\Delta_{S}+\boldsymbol{\rm N}_{S})= a(D,S_{m},\Delta_{S_{m}}+\boldsymbol{\rm N}_{S_{m}})\leq 1.$$  
Thus, the relation
\begin{equation*}\tag{$\star$$\star$}\label{proof-thm--ind-1-(starstarstar)}
a(D,S_{m}, \Delta_{S_{m}}+\boldsymbol{\rm N}_{S_{m}})\leq a(D,S,\Delta_{S}+\boldsymbol{\rm N}_{S})\leq 1.
\end{equation*} 
holds for all prime divisors $D$ on $T$. 
Note that only finitely many prime divisors $D$ on $T$ satisfy $a(D,S,\Delta_{S}+\boldsymbol{\rm N}_{S})<1$. 
Therefore, we may define an $\mathbb{R}$-divisor $\Psi\geq0$ on $T$ by
$$\Psi=-\sum_{\substack {D}}\bigl(a(D,S,\Delta_{S}+\boldsymbol{\rm N}_{S})-1\bigr) D,$$
where $D$ runs over all prime divisors on $T$. 
This is the divisor stated in (\ref{proof-thm--ind-1-step3-(b)}). 

Finally, we prove (\ref{proof-thm--ind-1-step3-(d)}). 
Since $T$ is $\mathbb{Q}$-factorial, $K_{T}+\Psi+\boldsymbol{\rm N}_{T}$ is $\mathbb{R}$-Cartier. 
By (\ref{proof-thm--ind-1-(starstarstar)}), we obtain 
$$K_{T}+\Psi +\boldsymbol{\rm N}_{T}\leq \psi^{*}(K_{S_{m}}+\Delta_{S_{m}}+\boldsymbol{\rm N}_{S_{m}}).$$ 
From this, we have
\begin{equation*}
0 \leq a(Q',S_{m},\Delta_{S_{m}}+\boldsymbol{\rm N}_{S_{m}})\leq a(Q',T,\Psi+\boldsymbol{\rm N}_{T}) 
\end{equation*}
for any prime divisor $Q'$ over $S_{m}$. 
This shows (\ref{proof-thm--ind-1-step3-(d)}). 
\end{step1}

\begin{step1}\label{step5-mmp-induction-2}
The goal of this step is to show that $(T, \Psi,\boldsymbol{\rm N})$ has a minimal model. 
We follow \cite[Step 3 in the proof of Theorem 3.4]{has-finite}. 

By the second condition of Theorem \ref{thm--mmp-induction-2}, the generalized pair $(S,\Delta_{S},\boldsymbol{\rm N})$ is a minimal model of $(S,\Delta_{S},\boldsymbol{\rm N})$ itself. 
By applying \cite[Step 3 in the proof of Theorem 3.4]{has-finite} with minor changes, we see that the following property holds.
\begin{itemize}
\item Let $\widetilde{Q}$ be a prime divisor over $S$. Then the following two statements hold:
\begin{itemize} 
\item 
If $\widetilde{Q}$ is a divisor on $S$, then $a(\widetilde{Q},S,\Delta_{S}+\boldsymbol{\rm N}_{S})\leq a(\widetilde{Q},T,\Psi+\boldsymbol{\rm N}_{T})$, and 
\item 
if $\widetilde{Q}$ is a divisor on $T$, then $a(\widetilde{Q},T,\Psi+\boldsymbol{\rm N}_{T})\leq a(\widetilde{Q},S,\Delta_{S}+\boldsymbol{\rm N}_{S})$. 
\end{itemize} 
\end{itemize}
By applying Lemma \ref{lem--bir-relation} to the map $(T, \Psi,\boldsymbol{\rm N})\dashrightarrow (S,\Delta_{S},\boldsymbol{\rm N})$, we see that $(T, \Psi,\boldsymbol{\rm N})$ has a minimal model. 
\end{step1}

\begin{step1}\label{step6-mmp-induction-2}
We follow \cite[Step 4 in the proof of Theorem 3.4]{has-finite}. 
With this step we complete the proof of the special termination of the $(K_{X_{0}}+\Delta_{0}+\boldsymbol{\rm M}_{X_{0}})$-MMP in Step \ref{step3-mmp-induction-2}. 

We recall that $S_{m}\dashrightarrow S_{i}$ is small, $\Delta_{S_{i}}+t_{i}H_{S_{i}}$ is equal to the birational transform of $\Delta_{S_{m}}+t_{i}H_{S_{m}}$ on $S_{i}$, and the divisor 
$$K_{S_{i}}+\Delta_{S_{i}}+t_{i}H_{S_{i}}+\boldsymbol{\rm N}_{S_{i}} \sim_{\mathbb{R}} (K_{X_{i}}+\Delta_{i}+t_{i}H_{i}+\boldsymbol{\rm M}_{X_{i}})|_{S_{i}}$$
 is nef. 
So $(S_{i},\Delta_{S_{i}}+t_{i}H_{S_{i}}, \boldsymbol{\rm N})$ is a weak generalized lc model of $(S_{m},\Delta_{S_{m}}+t_{i}H_{S_{m}}, \boldsymbol{\rm N})$ for every $i$. 

We pick an arbitrary prime divisor $D$ on $T$. 
By using the generalized pair analogue of \cite[Remark 2.9 (1)]{has-finite}, we see that 
\begin{equation*}\begin{split}
&\sigma_{D}(K_{S_{m}}+\Delta_{S_{m}}+t_{i}H_{S_{m}}+\boldsymbol{\rm N}_{S_{m}})\\
=&a(D,S_{i},(\Delta_{S_{i}}+t_{i}H_{S_{i}})+\boldsymbol{\rm N}_{S_{i}})-a(D, S_{m},( \Delta_{S_{m}}+t_{i}H_{S_{m}})+\boldsymbol{\rm N}_{S_{m}}).\end{split}\end{equation*}
From the relation, we have
\begin{equation*}\begin{split}
&\sigma_{D}(K_{S_{m}}+\Delta_{S_{m}}+t_{i}H_{S_{m}}+\boldsymbol{\rm N}_{S_{m}})\\
=&a(D,S_{i},(\Delta_{S_{i}}+t_{i}H_{S_{i}})+\boldsymbol{\rm N}_{S_{i}})-a(D, S_{m}, (\Delta_{S_{m}}+t_{i}H_{S_{m}})+\boldsymbol{\rm N}_{S_{m}})\\
\geq &a(D,S,(\Delta_{S}+t_{i}H_{S})+\boldsymbol{\rm N}_{S})-a(D, S_{m}, (\Delta_{S_{m}}+t_{i}H_{S_{m}})+\boldsymbol{\rm N}_{S_{m}}) \qquad (\text{(\ref{proof-thm--ind-1-step3-(c)}) in Step \ref{step4-mmp-induction-2}}).
\end{split}\end{equation*}
By (\ref{proof-thm--ind-1-step3-(b)}) in Step \ref{step4-mmp-induction-2}, the equality $a(D,S,\Delta_{S}+\boldsymbol{\rm N}_{S})=a(D,T,\Psi+\boldsymbol{\rm N}_{T})$ holds. 
From these relations and the fact that ${\rm lim}_{i \to \infty}t_{i}=0$, we obtain  
\begin{equation*}
\begin{split}
&\sigma_{D}(K_{S_{m}}+\Delta_{S_{m}}+\boldsymbol{\rm N}_{S_{m}})\\
=&\underset{i\to \infty}{\rm lim} \sigma_{D}(K_{S_{m}}+\Delta_{S_{m}}+t_{i}H_{S_{m}}+\boldsymbol{\rm N}_{S_{m}}) \qquad \qquad \qquad (\text{\cite[Remark 2.3 (2)]{has-finite}})\\
\geq &\underset{i\to \infty}{\rm lim}\bigl(a(D,S,(\Delta_{S}+t_{i}H_{S})+\boldsymbol{\rm N}_{S})-a(D, S_{m}, (\Delta_{S_{m}}+t_{i}H_{S_{m}})+\boldsymbol{\rm N}_{S_{m}})\bigr)\\
=&a(D,S,\Delta_{S}+\boldsymbol{\rm N}_{S})-a(D, S_{m}, \Delta_{S_{m}}+\boldsymbol{\rm N}_{S_{m}})\\
=&a(D,T,\Psi+\boldsymbol{\rm N}_{T})-a(D, S_{m}, \Delta_{S_{m}}+\boldsymbol{\rm N}_{S_{m}})
\end{split}
\end{equation*}
for all prime divisors $D$ on $T$. 
By this relation and (\ref{proof-thm--ind-1-step3-(d)}) in Step \ref{step4-mmp-induction-2},  we obtain 
 \begin{equation*}
\begin{split}
&0 \leq  a(D,T,\Psi+\boldsymbol{\rm N}_{T}) - a(D, S_{m}, \Delta_{S_{m}}+\boldsymbol{\rm N}_{S_{m}}) \leq \sigma_{D}(K_{S_{m}}+\Delta_{S_{m}}+\boldsymbol{\rm N}_{S_{m}}).
\end{split}
\end{equation*}
Therefore, we may apply Lemma \ref{lem--minmodel-zariskidecom} to $(S_{m},\Delta_{S_{m}},\boldsymbol{\rm N})$ and $(T, \Psi,\boldsymbol{\rm N})$. 
Because $(T, \Psi,\boldsymbol{\rm N})$ has a minimal model, $(S_{m},\Delta_{S_{m}},\boldsymbol{\rm N})$ has a  minimal model. 

By arguments in the last paragraph of Step \ref{step3-mmp-induction-2}, we complete the special termination. 
\end{step1}
Since the $(K_{X_{0}}+\Delta_{0}+\boldsymbol{\rm M}_{X_{0}})$-MMP occurs only in ${\rm Supp}\lfloor \Delta_{0}\rfloor$ (see Step \ref{step2-mmp-induction-2}), we see that the $(K_{X_{0}}+\Delta_{0}+\boldsymbol{\rm M}_{X_{0}})$-MMP terminates, so $(X,\Delta,\boldsymbol{\rm M})$ has a  minimal model. 
\end{proof}

\begin{thm}[cf.~{\cite[Theorem 3.5]{has-finite}}]\label{thm--mmp-main-1}
Let $(X,\Delta, \boldsymbol{\rm M})$ be a generalized lc pair such that $\boldsymbol{\rm M}$ is a finite $\mathbb{R}_{>0}$-linear combination of b-nef $\mathbb{Q}$-b-Cartier $\mathbb{Q}$-b-divisors. 
Let $A$ be an $\mathbb{R}$-divisor on $X$ such that $K_{X}+\Delta+\boldsymbol{\rm M}_{X}+A$ is nef. 

Then, no infinite sequence of steps of a $(K_{X}+\Delta+\boldsymbol{\rm M}_{X})$-MMP with scaling of $A$
$$(X,\Delta, \boldsymbol{\rm M})=:(X_{0},\Delta_{0},\boldsymbol{\rm M}) \dashrightarrow (X_{1}, \Delta_{1},\boldsymbol{\rm M}) \dashrightarrow \cdots \dashrightarrow (X_{i}, \Delta_{i},\boldsymbol{\rm M}) \dashrightarrow \cdots$$
satisfies the following properties. 
 \begin{itemize}
\item
If we define $\lambda_{i}={\rm inf}\set{\mu \in \mathbb{R}_{\geq0}|\text{$K_{X_{i}}+\Delta_{i}+\boldsymbol{\rm M}_{X_{i}}+\mu A_{i}$ is nef}}$ for each $i\geq 0$ then $\underset{i \to \infty}{\rm lim}\lambda_{i}=0$, and
\item
there are infinitely many $i$ such that $K_{X_{i}}+\Delta_{i}+\boldsymbol{\rm M}_{X_{i}}$ is log abundant with respect to $(X_{i},\Delta_{i}, \boldsymbol{\rm M})$.  
\end{itemize}
\end{thm}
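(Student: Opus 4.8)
The plan is to argue by contradiction, following the strategy of \cite[Proof of Theorem 3.5]{has-finite} and reducing everything to Theorem \ref{thm--mmp-induction-2}. Suppose there is an infinite sequence of steps of a $(K_{X}+\Delta+\boldsymbol{\rm M}_{X})$-MMP with scaling of $A$ for which $\lim_{i\to\infty}\lambda_{i}=0$ and $K_{X_{i}}+\Delta_{i}+\boldsymbol{\rm M}_{X_{i}}$ is log abundant with respect to $(X_{i},\Delta_{i},\boldsymbol{\rm M})$ for infinitely many $i$. First I would make a few standard reductions. Replacing the sequence by a tail starting at an index where log abundance holds, we may assume $K_{X_{0}}+\Delta_{0}+\boldsymbol{\rm M}_{X_{0}}$ is log abundant; the tail is still an infinite $(K+\Delta+\boldsymbol{\rm M})$-MMP with scaling and still has $\lim_{i\to\infty}\lambda_{i}=0$. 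We may also assume $K_{X_{0}}+\Delta_{0}+\boldsymbol{\rm M}_{X_{0}}$ is pseudo-effective: if not, non-pseudo-effectivity persists along the MMP and, by known termination results in the non-pseudo-effective case, the MMP with scaling terminates with a Mori fiber space after finitely many steps, contradicting infiniteness. Finally, passing to a $\mathbb{Q}$-factorial generalized dlt model of $(X_{0},\Delta_{0},\boldsymbol{\rm M})$ and lifting the MMP to it with scaling of the pullback of $A$ --- which preserves pseudo-effectivity, abundance, the log abundance of the restrictions to the generalized lc centers, and $\lim_{i\to\infty}\lambda_{i}=0$, by Lemma \ref{lem--equiv-model-birat}, Lemma \ref{lem--bir-relation} and \cite[Remark 2.14]{has-finite} applied both to the pair and to the adjunction pairs --- we may assume that $(X_{0},\Delta_{0},\boldsymbol{\rm M})$ is a $\mathbb{Q}$-factorial generalized dlt pair, so that in particular $(X_{0},0)$ is klt.

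Now the goal is to verify that $(X_{0},\Delta_{0},\boldsymbol{\rm M})$ satisfies the three hypotheses of Theorem \ref{thm--mmp-induction-2}; it will then have a minimal model, which, together with the existence of the infinite MMP with scaling, contradicts \cite[Theorem 4.1]{hanli} (recalled at the start of this section), applied on a tail on which $\lambda_{i}$ is small enough that the scaling divisor may be replaced by an ample one. The first hypothesis, pseudo-effectivity and abundance of $K_{X_{0}}+\Delta_{0}+\boldsymbol{\rm M}_{X_{0}}$, is immediate from log abundance. For the remaining two hypotheses I would first arrange, by replacing the sequence by a further tail, that every step of the MMP is an isomorphism on a neighborhood of every generalized lc center of $(X_{0},\Delta_{0},\boldsymbol{\rm M})$; granting this, the third hypothesis (that $\sigma_{P}(K_{X_{0}}+\Delta_{0}+\boldsymbol{\rm M}_{X_{0}})=0$ for every prime divisor $P$ over $X_{0}$ whose center meets a generalized lc center) follows from Lemma \ref{lem--mmpscaling-nonneflocus} applied with $S$ the union of the generalized lc centers and with $\lim_{i\to\infty}\lambda_{i}=0$, while the second hypothesis (nefness of $(K_{X_{0}}+\Delta_{0}+\boldsymbol{\rm M}_{X_{0}})|_{S}$ for each generalized lc center $S$) follows because, once the MMP is an isomorphism near $S$, the divisors $(K_{X_{i}}+\Delta_{i}+\boldsymbol{\rm M}_{X_{i}})|_{S}$ and $A_{i}|_{S}$ are independent of $i$, so $(K_{X_{0}}+\Delta_{0}+\boldsymbol{\rm M}_{X_{0}})|_{S}$ is the limit of the nef divisors $(K_{X_{0}}+\Delta_{0}+\boldsymbol{\rm M}_{X_{0}})|_{S}+\lambda_{i}A_{i}|_{S}$ and hence nef.

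Everything therefore comes down to the special-termination statement that for $i\gg 0$ the MMP is an isomorphism near every generalized lc center, and this is where I would run the induction on $\dim X$ (Theorems \ref{thm--mmp-induction-2} and \ref{thm--mmp-main-1} being proved together by induction on dimension). For a generalized lc center $S$, divisorial adjunction yields a generalized dlt pair $(S,\Delta_{S},\boldsymbol{\rm N})$ with $K_{S}+\Delta_{S}+\boldsymbol{\rm N}_{S}\sim_{\mathbb{R}}(K_{X_{0}}+\Delta_{0}+\boldsymbol{\rm M}_{X_{0}})|_{S}$ whose generalized lc centers are generalized lc centers of $(X_{0},\Delta_{0},\boldsymbol{\rm M})$; hence the restriction is log abundant with respect to $(S,\Delta_{S},\boldsymbol{\rm N})$ at the infinitely many stages where log abundance holds upstairs, and the steps of the MMP induce, through the discrepancy comparisons of Lemma \ref{lem--discre-relation}, steps of an MMP on $S$ whose nef thresholds still tend to $0$. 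By the inductive hypothesis in dimension $\dim S<\dim X$ this induced sequence cannot be infinite, so it is eventually an isomorphism near $S$; feeding this into the special-termination machinery for generalized dlt pairs (cf.~\cite[Theorem 4.5]{hanli}) exactly as in the proof of Theorem \ref{thm--mmp-induction-2} gives the desired statement. The hard part will be precisely this special-termination bookkeeping: one must keep track, along the MMP and through the several reindexings and the passage to a dlt model, of which adjunction pairs $(S_{i},\Delta_{S_{i}},\boldsymbol{\rm N})$ are birational models of one another and of the discrepancy inequalities relating them, essentially reproducing the combinatorics of \cite[Subsection 3.1]{has-finite} in the generalized-pair setting; a secondary but routine technical point is the reduction to an effective (resp.~ample) scaling divisor needed to invoke Lemma \ref{lem--mmpscaling-nonneflocus} (resp.~the termination result).
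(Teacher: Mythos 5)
Your proposal is correct and takes essentially the same route as the paper: the paper's proof of this theorem simply invokes the argument of \cite[Proof of Theorem 3.5]{has-finite}, substituting Lemma \ref{lem--mmpscaling-nonneflocus}, the special termination for generalized dlt pairs (\cite[Theorem 4.5]{hanli}), and Theorem \ref{thm--mmp-induction-2} for their counterparts there, which is precisely the contradiction-plus-reduction-plus-special-termination scheme you lay out. The bookkeeping you identify as the hard part is exactly what the paper delegates to \cite{has-finite}.
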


\begin{proof}
The argument of \cite[Proof of Theorem 3.5]{has-finite} works without any change by using Lemma \ref{lem--mmpscaling-nonneflocus}, the special termination for generalized dlt pairs (\cite[Subsection 4.2]{hanli}), and Theorem \ref{thm--mmp-induction-2} instead of \cite[Lemma 2.22]{has-finite}, \cite{fujino-sp-ter}, and \cite[Theorem 3.5]{has-finite} respectively. 
\end{proof}

\begin{thm}\label{thm--mmp-neflogbig}
Let $(X,\Delta)$ be a projective lc pair, and let $M$ be a finite $\mathbb{R}_{>0}$-linear combination of nef $\mathbb{Q}$-divisors on $X$.  Suppose that $K_{X}+\Delta+M$ is pseudo-effective and $M$ is log big with respect to $(X,\Delta)$. 
We put $\boldsymbol{\rm M}=\overline{M}$. 
Then there is a sequence of steps of a $(K_{X}+\Delta+M)$-MMP starting with the generalized lc pair $(X,\Delta, \boldsymbol{\rm M})$
$$(X,\Delta, \boldsymbol{\rm M}) \dashrightarrow \cdots \dashrightarrow (X_{i},\Delta_{i},\boldsymbol{\rm M}) \dashrightarrow \cdots$$
that terminates with a good minimal model. 
\end{thm}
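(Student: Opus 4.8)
The plan is to deduce the statement from Theorem~\ref{thm--mmp-main-1}: we will run a $(K_{X}+\Delta+\boldsymbol{\rm M}_{X})$-MMP with scaling of an ample divisor and show that it cannot be infinite, so it terminates with a minimal model, which then turns out to be good. First I would reduce to the case where $(X,\Delta,\boldsymbol{\rm M})$ is $\mathbb{Q}$-factorial generalized dlt: take a log resolution $f\colon Y\to X$ of $(X,\Delta)$ to which $\boldsymbol{\rm M}$ descends, choose $(Y,\Gamma,\boldsymbol{\rm M})$ crepant over $(X,\Delta,\boldsymbol{\rm M})$, and run a $(K_{Y}+\Gamma+\boldsymbol{\rm M}_{Y})$-MMP over $X$ to reach a $\mathbb{Q}$-factorial generalized dlt model; by the lemmas recalled at the beginning of this section, passing to such a model preserves the existence of a (good) minimal model, and by a standard argument the resulting MMP is transported back to $(X,\Delta,\boldsymbol{\rm M})$. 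After this reduction, the lemma on components of $\lfloor\Delta\rfloor$ shows that $(X,0)$ is $\mathbb{Q}$-factorial klt, so the MMP with scaling of an ample divisor makes sense and the termination criterion of \cite[Theorem~4.1]{hanli} is available. We then fix an $\mathbb{R}_{>0}$-combination $A$ of ample $\mathbb{Q}$-Cartier divisors with $K_{X}+\Delta+\boldsymbol{\rm M}_{X}+A$ nef.

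The crux is to prove that $K_{X}+\Delta+\boldsymbol{\rm M}_{X}$ is log abundant with respect to $(X,\Delta,\boldsymbol{\rm M})$. Since $\boldsymbol{\rm M}=\overline{M}$ descends to $X$, the generalized lc centers of $(X,\Delta,\boldsymbol{\rm M})$ coincide with the lc centers of $(X,\Delta)$, and by divisorial adjunction the restriction of $K_{X}+\Delta+\boldsymbol{\rm M}_{X}$ to the normalization $S^{\nu}$ of such a center is $\mathbb{R}$-linearly equivalent to $K_{S^{\nu}}+\Delta_{S^{\nu}}+M|_{S^{\nu}}$ with $M|_{S^{\nu}}$ again nef and log big. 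One therefore argues by induction on $\dim X$, imitating the proof of Theorem~\ref{thm--logbig-boundary} and Corollary~\ref{cor--abundant-mmp}: if $K_{X}+\Delta-\epsilon\lfloor\Delta\rfloor+\boldsymbol{\rm M}_{X}$ is pseudo-effective for some $\epsilon>0$, one reduces to the klt case and concludes with Lemma~\ref{lem--mmp-abundant-fibration} applied to the Iitaka fibration; otherwise one picks a component $S$ of $\lfloor\Delta\rfloor$ with $K_{X}+\Delta-\epsilon S+\boldsymbol{\rm M}_{X}$ never pseudo-effective, runs a $(K_{X}+\Delta-\epsilon' S+\boldsymbol{\rm M}_{X})$-MMP for $0<\epsilon'\ll 1$ (using the ACC results of \cite{hmx-acc}) to a Mori fiber space $X'\to\bar{Z}$ with $K_{X'}+\Delta'+\boldsymbol{\rm M}_{X'}\sim_{\mathbb{R},\bar{Z}}0$, restricts to the birational transform $S'$ of $S$, which dominates $\bar{Z}$, and applies the inductive hypothesis together with Lemma~\ref{lem--abund-birat} to control the intervening birational modifications. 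I expect this paragraph to be the main obstacle: everything hinges on turning the merely nef divisor $M$ into a usable log-abundant input, and this is exactly what the abundance results of Subsection~\ref{sec3} (through Corollary~\ref{cor--abundant-mmp}) were set up to supply.

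With log abundance of $K_{X}+\Delta+\boldsymbol{\rm M}_{X}$ in hand, I would run a $(K_{X}+\Delta+\boldsymbol{\rm M}_{X})$-MMP with scaling of $A$. Because $K_{X}+\Delta+\boldsymbol{\rm M}_{X}$ is pseudo-effective, the scaling numbers satisfy $\lambda_{i}\to 0$, so Lemma~\ref{lem--mmpscaling-nonneflocus} gives $\sigma_{P}(K_{X}+\Delta+\boldsymbol{\rm M}_{X})=0$ for every prime divisor $P$ over $X$ whose center meets a locus on which the MMP is an isomorphism. Using this, Lemma~\ref{lem--discre-relation} and Lemma~\ref{lem--bir-relation} (applied to each birational map $X\dashrightarrow X_{i}$, and to the induced birational maps of the corresponding generalized lc centers with their generalized dlt structures from divisorial adjunction) show that $K_{X_{i}}+\Delta_{i}+\boldsymbol{\rm M}_{X_{i}}$ stays log abundant with respect to $(X_{i},\Delta_{i},\boldsymbol{\rm M})$ for every $i$. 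Hence Theorem~\ref{thm--mmp-main-1} forbids the MMP from being infinite, and it terminates with a minimal model $(X',\Delta',\boldsymbol{\rm M})$; there $K_{X'}+\Delta'+\boldsymbol{\rm M}_{X'}$ is nef and log abundant, hence semi-ample by the abundance theorem in this setting (\cite{fujino-eff-basepointfree}, cf.\ also \cite{hashizumehu}), so $(X',\Delta',\boldsymbol{\rm M})$ is a good minimal model and, unwinding the dlt reduction, we obtain the desired $(K_{X}+\Delta+M)$-MMP starting from $(X,\Delta,\boldsymbol{\rm M})$.
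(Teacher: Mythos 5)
The central step of your proposal --- proving directly that $K_{X}+\Delta+\boldsymbol{\rm M}_{X}$ is log abundant by ``imitating the proof of Theorem \ref{thm--logbig-boundary} and Corollary \ref{cor--abundant-mmp}'' --- has a genuine gap, and you correctly sensed that it is the main obstacle. Both of those results require the auxiliary divisor to be \emph{effective and semi-ample} ($A=\pi^{*}A_{Z}$ with $A_{Z}$ big and semi-ample in Theorem \ref{thm--logbig-boundary}, $M\geq 0$ semi-ample in Corollary \ref{cor--abundant-mmp}): their proofs repeatedly replace this divisor by a general member of its $\mathbb{R}$-linear system in order to control discrepancies and common components under the intervening birational maps (for instance the step arranging $\tau^{*}A_{\bar{S}}\leq \tau'^{-1}_{*}A_{\bar{S}'}$ and that $T\to\bar{S}'$ is a log resolution of $(\bar{S}',\Delta_{\bar{S}'}+A_{\bar{S}'})$). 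For a merely nef $M$ this is impossible --- $|M|_{\mathbb{R}}$ may be empty --- and this is exactly the obstruction that forces one to carry $M$ as a b-divisor in the first place. So the induction does not transfer, and log abundance of $K_{X}+\Delta+\boldsymbol{\rm M}_{X}$ cannot be established this way at this stage of the argument.

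The paper circumvents this with a rescaling trick that your proposal is missing. Write $M=\sum_{j}r_{j}M^{(j)}$ with $M^{(j)}$ nef Cartier and fix $l>2$ with $lr_{j}>2\cdot{\rm dim}X$; by the length of extremal rays, a suitable $(K_{X}+\Delta+lM)$-MMP with scaling is simultaneously a $(K_{X}+\Delta+M)$-MMP and keeps each $M^{(j)}_{i}$ nef and Cartier. Lemma \ref{lem--lcpair-neflogbig} then shows that $K_{X_{i}}+\Delta_{i}+l\boldsymbol{\rm M}_{X_{i}}$ is \emph{log big} with respect to $(X_{i},\Delta_{i})$, hence trivially log abundant, so Theorem \ref{thm--mmp-main-1} terminates this MMP with $K_{X}+\Delta+lM$ nef; by \cite{fujino-abund-logbig} and the Shokurov polytope it is then semi-ample, and one chooses an honest effective semi-ample log big $B\sim_{\mathbb{R}}K_{X}+\Delta+lM$ and rewrites $l(K_{X}+\Delta+M)\sim_{\mathbb{R}}(l-1)(K_{X}+\Delta+\tfrac{1}{l-1}B)$. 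Only at this point do Corollary \ref{cor--abundant-mmp} and \cite[Corollary 3.9]{has-finite} apply, yielding termination with a good minimal model. In short: rather than proving log abundance of $K_{X}+\Delta+M$ itself, one arranges log bigness of $K_{X}+\Delta+lM$ and then trades the nef part for an effective semi-ample divisor before invoking the log-abundance machinery.
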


\begin{proof}
We first reduce the theorem to the case where $K_{X}+\Delta+lM$ is nef for some positive real number $l >2$. 
We may write $M=\sum_{j} r_{j}M^{(j)}$, where $M^{(j)}$ are nef Cartier divisors on $X$ and $r_{j}>0$. 
We fix a real number $l>2$ such that $lr_{j}> 2\cdot {\rm dim}X$ for all $j$. 
Pick an ample $\mathbb{R}$-divisor $A$ on $X$ such that $K_{X}+\Delta+lM+A$ is nef. 
Since $M$ is nef, by \cite[Theorem 1.5]{hashizumehu}, the generalized lc pair $(X,\Delta, l\boldsymbol{\rm M}+t\overline{A})$ has a good minimal model for every real number $t>0$. 
Applying the argument in \cite[Proof of Lemma 2.14]{has-mmp} (see \cite[Proof of Theorem 1.7]{hashizumehu}) with the aid of the length of
extremal rays (\cite[Section 18]{fujino-fund} or \cite[Theorem 4.6.2]{fujino-book}), we may construct a $(K_{X}+\Delta+lM)$-MMP with scaling of $A$
$$(X,\Delta,l\boldsymbol{\rm M}) \dashrightarrow \cdots \dashrightarrow (X_{i},\Delta_{i},l\boldsymbol{\rm M}) \dashrightarrow \cdots$$
such that if we define 
$$\lambda_{i}={\rm inf}\set{\mu \in \mathbb{R}_{\geq0}|\text{$K_{X_{i}}+\Delta_{i}+l\boldsymbol{\rm M}_{X_{i}}+\mu A_{i}$ is nef}}$$
 for each $i$, then ${\rm lim}_{i \to \infty}\lambda_{i}=0$. 
By the choice of $l$ and the length of extremal rays (see \cite[Section 18]{fujino-fund} or \cite[Theorem 4.6.2]{fujino-book}), the birational transforms $M^{(j)}_{i}$ of $M^{(j)}$ on $X_{i}$ are numerically trivial with respect to the extremal contraction of the MMP. 
This fact implies the following properties:
\begin{itemize}
\item
$M^{(j)}_{i}$ is nef and Cartier for every $i$ and $j$, and 
\item
the sequence of birational maps $X \dashrightarrow \cdots \dashrightarrow X_{i} \dashrightarrow \cdots$ is a sequence of steps of a $(K_{X}+\Delta+M)$-MMP. 
\end{itemize}
By the first property, $\boldsymbol{\rm M}_{X_{i}}=\sum_{j} r_{j}M^{(j)}_{i}$ is a finite $\mathbb{R}_{>0}$-linear combination of nef Cartier divisors on $X_{i}$, and the log bigness of $M$ shows that $\boldsymbol{\rm M}_{X_{i}}$ is log big with respect to $(X_{i},\Delta_{i})$ for every $i$. 
By Lemma \ref{lem--lcpair-neflogbig}, we see that $K_{X_{i}}+\Delta_{i}+l\boldsymbol{\rm M}_{X_{i}}$ is log big with respect to $(X_{i},\Delta_{i})$ for every $i$. 
By Theorem \ref{thm--mmp-main-1}, the $(K_{X}+\Delta+lM)$-MMP terminates with a minimal model $(X_{m},\Delta_{m},l\boldsymbol{\rm M})$. 
Then the birational map 
$$X \dashrightarrow X_{m}$$
 is a sequence of steps of a $(K_{X}+\Delta+M)$-MMP. 
To prove Theorem \ref{thm--mmp-neflogbig}, it is sufficient to prove the existence of a $(K_{X_{m}}+\Delta_{m}+\boldsymbol{\rm M}_{X_{m}})$-MMP that terminates with a good minimal model. 
In this way, replacing $(X,\Delta,\boldsymbol{\rm M})$ with $(X_{m},\Delta_{m},\boldsymbol{\rm M})$, we may assume that $K_{X}+\Delta+lM$ is nef for some $l >2$. 

Replacing $l$ by $2l$, we may assume that $K_{X}+\Delta+lM$ is log big with respect to $(X,\Delta)$. 
By the argument of Shokurov polytope (cf.~\cite[Subsection 3.3]{hanli}) and \cite{fujino-abund-logbig}, we see that $K_{X}+\Delta+lM$ is semi-ample. 
We fix an effective $\mathbb{R}$-divisor 
$$B\sim_{\mathbb{R}}K_{X}+\Delta+lM$$
 such that $(X,\Delta+B)$ is lc. 
Then 
$$l(K_{X}+\Delta+M)=(l-1)(K_{X}+\Delta)+(K_{X}+\Delta+lM)\sim_{\mathbb{R}}(l-1)\left(K_{X}+\Delta+\frac{1}{l-1}B\right).$$

By Corollary \ref{cor--abundant-mmp}, $K_{X}+\Delta+\tfrac{1}{l-1}B$ is log abundant with respect to $(X,\Delta+\tfrac{1}{l-1}B)$ and every $(K_{X}+\Delta+\tfrac{1}{l-1}B)$-MMP preserves the property of being log abundant. 
Then the argument in \cite[Proof of Corollary 3.9]{has-finite} implies the existence of a sequence of steps of a $(K_{X}+\Delta+\tfrac{1}{l-1}B)$-MMP that terminates with a good minimal model. 

Since all $(K_{X}+\Delta+\tfrac{1}{l-1}B)$-MMP are $(K_{X}+\Delta+M)$-MMP, there exists a sequence of steps of a $(K_{X}+\Delta+M)$-MMP that terminates with a good minimal model. 
In this way, we see that Theorem \ref{thm--mmp-neflogbig} holds. 
\end{proof}

\begin{proof}[Proof of Theorem \ref{thm--mmp-neflogbig-main-intro}]
If $K_{X}+\Delta+M$ is pseudo-effective, then Theorem \ref{thm--mmp-neflogbig-main-intro} follows from Theorem \ref{thm--mmp-neflogbig}.
If $K_{X}+\Delta+M$ is not pseudo-effective, then take an ample $\mathbb{R}$-divisor $A$ on $X$ and a real number $\epsilon>0$ such that $K_{X}+\Delta+M+A$ is nef and $K_{X}+\Delta+M+\epsilon A$ is not pseudo-effective. 
Since $M+\epsilon A$ is ample, by \cite[Theorem 1.7]{hashizumehu} there is $X\dashrightarrow X'$ a sequence of steps of a $(K_{X}+\Delta+\epsilon A+M)$-MMP with scaling of $A$ that terminates with a Mori fiber space. 
Then $X\dashrightarrow X'$ is a sequence of steps of a $(K_{X}+\Delta+M)$-MMP with scaling of $A$ terminating with a Mori fiber space. 
\end{proof}

The following result was mentioned in \cite{has-nonvan-gpair}. 

\begin{thm}\label{thm--mmp-polarized-gpair}
Let $(X,B,\boldsymbol{\rm M})$ be a generalized lc pair such that $\boldsymbol{\rm M}$ is a finite $\mathbb{R}_{>0}$-linear combination of b-nef $\mathbb{Q}$-b-Cartier $\mathbb{Q}$-b-divisors. 
Let $A$ be an effective ample $\mathbb{R}$-divisor on $X$ such that $(X,B+A, \boldsymbol{\rm M})$ is a generalized lc pair and generalized lc centers of $(X,B+A, \boldsymbol{\rm M})$ are generalized lc centers of $(X,B, \boldsymbol{\rm M})$. 
Let $(Y,\Gamma,\boldsymbol{\rm M})$ be a $\mathbb{Q}$-factorial generalized dlt model of $(X,B+A, \boldsymbol{\rm M})$. 
Then, every $(K_{Y}+\Gamma+\boldsymbol{\rm M}_{Y})$-MMP with scaling of an ample divisor
$$(Y,\Gamma,\boldsymbol{\rm M})=:(Y_{0},\Gamma_{0},\boldsymbol{\rm M}) \dashrightarrow \cdots \dashrightarrow (Y_{i},\Gamma_{i},\boldsymbol{\rm M}) \dashrightarrow \cdots$$
terminates with a minimal model. 
\end{thm}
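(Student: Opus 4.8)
The plan is to reduce, via \cite[Theorem~4.1]{hanli}, to the existence of a minimal model of $(Y,\Gamma,\boldsymbol{\rm M})$, and then to produce such a minimal model by passing to a birational model on which $\boldsymbol{\rm M}$ descends and turning the problem into one about an ordinary lc pair polarized by an ample divisor, where \cite{hashizumehu} (equivalently Theorem~\ref{thm--mmp-neflogbig-main-intro}) applies. First I would record the reductions. By the lemma following Definition~\ref{defn--gen-pair}, $(Y,B')$ is a klt pair for some $B'\geq 0$; since decreasing the boundary does not decrease log discrepancies, $(Y,0)$ is a $\mathbb{Q}$-factorial klt pair. Hence by \cite[Theorem~4.1]{hanli} it is enough to show that $(Y,\Gamma,\boldsymbol{\rm M})$ has a minimal model. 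If $K_{Y}+\Gamma+\boldsymbol{\rm M}_{Y}$ is not pseudo-effective, then, after moving a small multiple of the ample polarization contained in $\Gamma$ into the nef b-divisor and arguing as in the proof of Theorem~\ref{thm--mmp-neflogbig-main-intro} (using \cite[Theorem~1.7]{hashizumehu}), any $(K_{Y}+\Gamma+\boldsymbol{\rm M}_{Y})$-MMP with scaling of an ample divisor terminates with a Mori fiber space; so from now on I assume $K_{Y}+\Gamma+\boldsymbol{\rm M}_{Y}$ is pseudo-effective.

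Next I would pass to a model where $\boldsymbol{\rm M}$ descends. Let $g\colon Y\to X$ be the structure morphism of the generalized dlt model, so that $K_{Y}+\Gamma+\boldsymbol{\rm M}_{Y}=g^{*}(K_{X}+B+A+\boldsymbol{\rm M}_{X})$ and $\Gamma$ contains the birational transform of $A$. Take a log resolution $h\colon \widetilde{Y}\to Y$ of $(Y,\Gamma)$ to which $\boldsymbol{\rm M}$ descends, and let $(\widetilde{Y},\widetilde{\Gamma},\boldsymbol{\rm M})$ be a generalized log birational model of $(Y,\Gamma,\boldsymbol{\rm M})$ with $K_{\widetilde{Y}}+\widetilde{\Gamma}+\boldsymbol{\rm M}_{\widetilde{Y}}=h^{*}(K_{Y}+\Gamma+\boldsymbol{\rm M}_{Y})+\widetilde{E}$ for an effective $h$-exceptional $\widetilde{E}$. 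By Lemma~\ref{lem--equiv-model-birat} it suffices to show that $(\widetilde{Y},\widetilde{\Gamma},\boldsymbol{\rm M})$ has a minimal model. Now $\boldsymbol{\rm M}_{\widetilde{Y}}$ is an honest nef $\mathbb{R}$-divisor, and $(g\circ h)^{*}A$ is semi-ample and big. Using that $\widetilde{\Gamma}$ contains the birational transform of $A$, a standard perturbation then gives $K_{\widetilde{Y}}+\widetilde{\Gamma}+\boldsymbol{\rm M}_{\widetilde{Y}}\sim_{\mathbb{R}}K_{\widetilde{Y}}+\Lambda+\boldsymbol{\rm M}_{\widetilde{Y}}+P$, where $(\widetilde{Y},\Lambda)$ is a log smooth lc pair, $P\geq 0$ is a general semi-ample big $\mathbb{R}$-divisor having no component in common with $\Lambda$, and the lc centers of $(\widetilde{Y},\Lambda+P)$ coincide with those of $(\widetilde{Y},\Lambda)$.

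Then I would convert this into a polarized ordinary lc pair. Since $\boldsymbol{\rm M}_{\widetilde{Y}}+P$ is nef and big, Kodaira's lemma yields $\boldsymbol{\rm M}_{\widetilde{Y}}+P\sim_{\mathbb{R}}H+G$ with $H$ ample and $G\geq 0$; choosing $H$ general and then $0<\delta\ll 1$, one gets $\boldsymbol{\rm M}_{\widetilde{Y}}+P\sim_{\mathbb{R}}A''+\delta G$ where $A'':=\delta H+(1-\delta)(\boldsymbol{\rm M}_{\widetilde{Y}}+P)$ is ample and $(\widetilde{Y},\Lambda+\delta G)$ is lc with the same lc centers as $(\widetilde{Y},\Lambda)$. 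Thus $K_{\widetilde{Y}}+\widetilde{\Gamma}+\boldsymbol{\rm M}_{\widetilde{Y}}\sim_{\mathbb{R}}K_{\widetilde{Y}}+(\Lambda+\delta G)+A''$. By the minimal model theory for projective lc pairs polarized by an ample $\mathbb{R}$-divisor (\cite{hashizumehu}; cf.\ Theorem~\ref{thm--mmp-neflogbig-main-intro}), there is a finite sequence $\phi\colon \widetilde{Y}\dashrightarrow \widetilde{Y}'$ of steps of a $(K_{\widetilde{Y}}+\Lambda+\delta G+A'')$-MMP with $\phi_{*}(K_{\widetilde{Y}}+\Lambda+\delta G+A'')$ semi-ample, the Mori fiber space alternative being excluded because $K_{Y}+\Gamma+\boldsymbol{\rm M}_{Y}$ is pseudo-effective. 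Since $K_{\widetilde{Y}}+\Lambda+\delta G+A''$ is $\mathbb{R}$-linearly equivalent to $K_{\widetilde{Y}}+\widetilde{\Gamma}+\boldsymbol{\rm M}_{\widetilde{Y}}$, the map $\phi$ is simultaneously a sequence of steps of a $(K_{\widetilde{Y}}+\widetilde{\Gamma}+\boldsymbol{\rm M}_{\widetilde{Y}})$-MMP and $\phi_{*}(K_{\widetilde{Y}}+\widetilde{\Gamma}+\boldsymbol{\rm M}_{\widetilde{Y}})$ is semi-ample, so $(\widetilde{Y}',\phi_{*}\widetilde{\Gamma},\boldsymbol{\rm M})$ is a good minimal model of $(\widetilde{Y},\widetilde{\Gamma},\boldsymbol{\rm M})$. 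By Lemma~\ref{lem--equiv-model-birat}, $(Y,\Gamma,\boldsymbol{\rm M})$ has a minimal model, and by \cite[Theorem~4.1]{hanli} every $(K_{Y}+\Gamma+\boldsymbol{\rm M}_{Y})$-MMP with scaling of an ample divisor terminates with a minimal model.

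The main obstacle I anticipate is the bookkeeping linking the last two paragraphs: one must verify that replacing the birational transform of $A$ by a general semi-ample big divisor $P$ and then absorbing $\delta G$ into the boundary does not change the (generalized) lc centers — this is precisely where the hypothesis that the generalized lc centers of $(X,B+A,\boldsymbol{\rm M})$ coincide with those of $(X,B,\boldsymbol{\rm M})$ is used — and, consequently, that the $(K_{\widetilde{Y}}+\Lambda+\delta G+A'')$-MMP produced by Theorem~\ref{thm--mmp-neflogbig-main-intro}/\cite{hashizumehu} is a legitimate $(K_{\widetilde{Y}}+\widetilde{\Gamma}+\boldsymbol{\rm M}_{\widetilde{Y}})$-MMP in the sense of generalized pairs, so that no separate existence of flips for generalized lc pairs is required.
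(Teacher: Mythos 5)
Your reduction to the existence of a minimal model via \cite[Theorem 4.1]{hanli} and Lemma \ref{lem--equiv-model-birat} is sound, but the construction of that minimal model breaks down at the Kodaira-lemma step in your third paragraph, and this is the crux of the whole statement. Once you pass to the log resolution $\widetilde{Y}$ on which $\boldsymbol{\rm M}$ descends, the polarization is no longer ample: $(g\circ h)^{*}A$ is only semi-ample and big, so $\boldsymbol{\rm M}_{\widetilde{Y}}+P$ is only nef and big. Writing $\boldsymbol{\rm M}_{\widetilde{Y}}+P\sim_{\mathbb{R}}H+G$ with $H$ ample and $G\geq 0$, you cannot arrange $(\widetilde{Y},\Lambda+\delta G)$ to be lc. Indeed, if $S$ is an lc center of $(\widetilde{Y},\Lambda)$ on which $(\boldsymbol{\rm M}_{\widetilde{Y}}+P)|_{S}$ is not big --- and such centers occur in general, e.g.\ exceptional divisors of $g\circ h$ lying over zero-dimensional generalized lc centers of $(X,B,\boldsymbol{\rm M})$, on which $P=(g\circ h)^{*}A$ is numerically trivial and $\boldsymbol{\rm M}_{\widetilde{Y}}$ may be as well --- then $(\boldsymbol{\rm M}_{\widetilde{Y}}+P-\delta H)|_{S}$ is not pseudo-effective, so \emph{every} effective divisor in its $\mathbb{R}$-linear equivalence class contains $S$ in its support, and $(\widetilde{Y},\Lambda+\delta G)$ fails to be lc for every $\delta>0$. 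No choice of ``general'' $H$ or $G$ avoids this. This is precisely the obstruction separating ``ample'' (or ``log big'') from ``nef and big'' for non-klt pairs (compare Example \ref{exam-eff-nonvan-counter}), and it is why the theorem does not reduce to the ample case of \cite{hashizumehu} after resolving the nef part.

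For comparison, the paper's proof is entirely different: it quotes \cite[Theorem 1.3]{has-nonvan-gpair} to conclude that $K_{Y_{i}}+\Gamma_{i}+\boldsymbol{\rm M}_{Y_{i}}$ is log abundant with respect to $(Y_{i},\Gamma_{i},\boldsymbol{\rm M})$ at every step of the MMP, and then applies the termination criterion of Theorem \ref{thm--mmp-main-1} directly, without first constructing a minimal model. To repair your argument you would need exactly such a log-abundance input (the actual main theorem of \cite{hashizumehu} concerns log abundant lc pairs, not merely ample polarizations), which your perturbation does not supply. A secondary issue: in your first paragraph, $\Gamma$ contains only the strict transform of $A$, which is not ample on $Y$ (and $g^{*}A$ is merely nef and big), so the non-pseudo-effective case cannot be dispatched by \cite[Theorem 1.7]{hashizumehu} as you indicate; in the paper this case simply does not arise, because the non-vanishing theorem of \cite{has-nonvan-gpair} shows $K_{Y}+\Gamma+\boldsymbol{\rm M}_{Y}$ is $\mathbb{R}$-linearly equivalent to an effective divisor.
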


\begin{proof}
By \cite[Theorem 1.3]{has-nonvan-gpair}, we see that $K_{Y_{i}}+\Gamma_{i}+\boldsymbol{\rm M}_{Y_{i}}$ is log abundant with respect to $(Y_{i},\Gamma_{i},\boldsymbol{\rm M})$ for all $i$. 
Hence the theorem follows from Theorem \ref{thm--mmp-main-1}. 
\end{proof}

\section{Effectivity of Iitaka fibration}\label{sec4}

In this section, we prove Theorem \ref{thm--eff-iitaka-intro} and Theorem \ref{thm--base-iitaka-intro}.

\begin{thm}\label{thm--eff-dlt-trivial}
Let $d$ and $p$ be positive integers, and let $\Phi \subset \mathbb{Q}$ be a DCC set.  
Then, there exists a positive integer $m$, depending only on $d$, $p$, and $\Phi$, satisfying the following. 
Let $(X,\Delta,\boldsymbol{\rm M})$ be a generalized dlt pair such that
\begin{itemize}
\item
${\rm dim}X=d$, 
\item
the coefficients of $\Delta$ belong to $\Phi$, 
\item
$p \boldsymbol{\rm M}$ is b-Cartier, 
\item
$K_{X}+\Delta+\boldsymbol{\rm M}_{X}\equiv 0$, and
\item
there is a log resolution $f\colon \tilde{X}\to X$ of $(X,\Delta)$ such that 
\begin{itemize}
\item
$f$ is an isomorphism over an open subset $U \subset X$ containing all the generic points of generalized lc centers of $(X,\Delta,\boldsymbol{\rm M})$, 
\item
$\boldsymbol{\rm M}$ descends to $\tilde{X}$, and 
\item
writing
$K_{\tilde{X}}+\tilde{\Delta}+\boldsymbol{\rm M}_{\tilde{X}}=f^{*}(K_{X}+\Delta+\boldsymbol{\rm M}_{X})+\tilde{E}$,
where $\tilde{\Delta}\geq0$ and $\tilde{E}\geq0$ have no common components, then $\boldsymbol{\rm M}_{\tilde{X}}$ is log big with respect to $(\tilde{X},\tilde{\Delta})$. 
\end{itemize}
\end{itemize}
Then $H^{0}(X,\mathcal{O}_{X}(\lfloor m (K_{X}+\Delta+\boldsymbol{\rm M}_{X})\rfloor))\neq 0$. 
In particular, $m (K_{X}+\Delta+\boldsymbol{\rm M}_{X}) \sim 0$. 
\end{thm}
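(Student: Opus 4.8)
The plan is to argue by induction on $d=\dim X$, after first cutting down the coefficient set. Since $K_{X}+\Delta+\boldsymbol{\rm M}_{X}\equiv 0$ and $p\boldsymbol{\rm M}$ is b-Cartier, the global ACC for generalized polarized pairs (Birkar--Zhang \cite{bz}) forces the coefficients of $\Delta$ to lie in a \emph{finite} subset $\Phi_{0}\subset\Phi$ depending only on $d$, $p$, $\Phi$. So I would replace $\Phi$ by $\Phi_{0}$ and, after enlarging $p$, assume $p\Delta$ is a Weil divisor; every integer and set produced below will then depend only on $d$, $p$, $\Phi$. The base of the induction ($d$ small) is elementary since a numerically trivial divisor on a curve or point which is forced, by the log-bigness hypothesis together with $K_{X}+\Delta+\boldsymbol{\rm M}_{X}\equiv 0$, to live on $\mathbb{P}^{1}$, is linearly trivial after clearing bounded denominators.

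For the non-klt case $\lfloor\Delta\rfloor\neq 0$, the idea is to restrict to an lc center and use Lemma \ref{lem--extension-lc-center}. Pick a component $S$ of $\lfloor\Delta\rfloor$ with normalization $S^{\nu}$ and let $(S^{\nu},\Delta_{S^{\nu}},\boldsymbol{\rm N})$ be the generalized dlt pair given by divisorial adjunction. Then $K_{S^{\nu}}+\Delta_{S^{\nu}}+\boldsymbol{\rm N}_{S^{\nu}}\equiv 0$, $\dim S^{\nu}=d-1$, by Remark \ref{rem--coeff-adj} the coefficients of $\Delta_{S^{\nu}}$ lie in a controlled (hyperstandard) set and $p'\boldsymbol{\rm N}$ is b-Cartier for a controlled $p'$, and restricting $f$ and $\boldsymbol{\rm M}_{\tilde{X}}$ to the strict transform $\tilde{S}=f^{-1}_{*}S$ shows that the log-bigness of $\boldsymbol{\rm M}_{\tilde{X}}$ with respect to $(\tilde{X},\tilde{\Delta})$ descends to the required log-bigness on a log resolution of $(S^{\nu},\Delta_{S^{\nu}})$. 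Thus the inductive hypothesis applies and gives $m'$ with $H^{0}(S^{\nu},\mathcal{O}_{S^{\nu}}(\lfloor m'(K_{S^{\nu}}+\Delta_{S^{\nu}}+\boldsymbol{\rm N}_{S^{\nu}})\rfloor))\neq 0$, hence the same holds for any multiple $m$ of $m'$. Since $K_{X}+\Delta+\boldsymbol{\rm M}_{X}$ is nef and $f$ meets the log-bigness hypothesis, Lemma \ref{lem--extension-lc-center} (with base a point) makes the restriction $H^{0}(X,\mathcal{O}_{X}(m(K_{X}+\Delta+\boldsymbol{\rm M}_{X})))\to H^{0}(S^{\nu},\mathcal{O}_{S^{\nu}}(\lfloor m(K_{S^{\nu}}+\Delta_{S^{\nu}}+\boldsymbol{\rm N}_{S^{\nu}})\rfloor))$ surjective for $p\mid m$; taking $m$ a suitable multiple of $pm'$ yields a nonzero section on $X$.

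For the klt case $\lfloor\Delta\rfloor=0$, note $\boldsymbol{\rm M}_{X}$ is nef (as $\boldsymbol{\rm M}=\overline{\boldsymbol{\rm M}_{\tilde{X}}}$) and big (bigness of $\boldsymbol{\rm M}_{\tilde{X}}$ descends), so $-(K_{X}+\Delta)\equiv\boldsymbol{\rm M}_{X}$ is nef and big and $(X,\Delta)$ is a klt pair of Fano type; in particular $X$ is $\mathbb{Q}$-factorial and a Mori dream space, so $\boldsymbol{\rm M}_{X}$ is semi-ample. Then I would conclude $m(K_{X}+\Delta+\boldsymbol{\rm M}_{X})\sim 0$ for a bounded $m$ either by absorbing $\boldsymbol{\rm M}_{X}$ into the boundary and invoking the effective non-vanishing for klt log Calabi--Yau pairs with big boundary (Hacon--Xu \cite{haconxu-cy-bound}), or directly by the boundedness of complements for generalized klt log Calabi--Yau pairs over Fano type varieties. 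Either way, combining both cases and choosing $m$ divisible enough that $m\Delta$ and $m\boldsymbol{\rm M}_{X}$ are Weil, we get $\lfloor m(K_{X}+\Delta+\boldsymbol{\rm M}_{X})\rfloor=m(K_{X}+\Delta+\boldsymbol{\rm M}_{X})$; a nonzero section realizes it as an effective divisor which, being numerically trivial, is $0$, so $m(K_{X}+\Delta+\boldsymbol{\rm M}_{X})\sim 0$.

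The hard part will be the klt case: it genuinely needs the (generalized) theory of complements / boundedness of log Calabi--Yau pairs to produce the bounded $m$, and one must check that the Fano-type reduction and coefficient control are compatible with that machinery. A secondary but nontrivial point is the descent, under divisorial adjunction to an lc center, of the precise hypothesis that $\boldsymbol{\rm M}$ descends to a nef and log big divisor on a log resolution — this is what allows Lemma \ref{lem--extension-lc-center} to be applied at each stage of the induction.
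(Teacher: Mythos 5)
Your proposal is correct and follows essentially the same route as the paper: reduce to a finite coefficient set via the global ACC of Birkar--Zhang, handle the generalized klt case by the boundedness of complements for generalized pairs (the paper cites \cite[Theorem 1.10]{birkar-compl} exactly as in your second option), and in the non-klt case restrict to a component of $\lfloor\Delta\rfloor$ using Lemma \ref{lem--extension-lc-center} and Remark \ref{rem--coeff-adj}, then induct on dimension. The only cosmetic differences are your side remarks in the klt case ($\mathbb{Q}$-factoriality and semi-ampleness of $\boldsymbol{\rm M}_{X}$ are neither needed nor immediate, since $\boldsymbol{\rm M}$ only descends to $\tilde{X}$), which do not affect the argument.
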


\begin{proof}
We prove the theorem by induction on the dimension of $X$. 

By the global ACC \cite[Theorem 1.6]{bz}, we can find a finite set $\Phi' \subset \Phi$, depending only on $d$, $p$, and $\Phi$, such that the coefficients of $\Delta$ belong to $\Phi'$. 
By \cite[Theorem 1.10]{birkar-compl}, there exists a positive integer $m'$, depending only on $d$, $p$, and $\Phi'$, such that if $(X,\Delta,\boldsymbol{\rm M})$ is a generalized klt pair then $H^{0}(X,\mathcal{O}_{X}(\lfloor m' (K_{X}+\Delta+\boldsymbol{\rm M}_{X})\rfloor))\neq 0$. 
Let $s$ be the minimum positive integer such that $sa \in \mathbb{Z}$ for every $a \in \Phi'$. 
If $(X,\Delta,\boldsymbol{\rm M})$ is not generalized klt, then we may find a component $S$ of $\lfloor \Delta \rfloor$. 
Since $sp \boldsymbol{\rm M}_{\tilde{X}}$ is Cartier and $spa \in \mathbb{Z}$ for every $a \in \Phi'$, by the projective case of Lemma \ref{lem--extension-lc-center} the morphism
\begin{equation*}
\begin{split}
H^{0}(X,\mathcal{O}_{X}(\lfloor lsp(K_{X}+\Delta+\boldsymbol{\rm M}_{X})\rfloor)) \longrightarrow H^{0}(S,\mathcal{O}_{S}(\lfloor lsp(K_{S}+\Delta_{S}+\boldsymbol{\rm N}_{S})\rfloor))
\end{split}
\end{equation*}
is surjective for every positive integer $l$, where  the divisor $K_{S}+\Delta_{S}+\boldsymbol{\rm N}_{S}$ is the generalized log canonical divisor of the generalized pair $(S,\Delta_{S},\boldsymbol{\rm N})$ defined with divisorial adjunction for $(X,\Delta,\boldsymbol{\rm M})$. 
By Remark \ref{rem--coeff-adj}, there exists a DCC set $\Omega \subset \mathbb{Q}$,  depending only on $d$, $p$, and $\Phi'$, such that all the coefficients of $\Delta_{S}$ belong to $\Omega$. 
By the induction hypothesis of Theorem \ref{thm--eff-dlt-trivial}, there is a positive integer $m''$, depending only on $d-1$, $p$, and $\Omega$, such that $H^{0}(S,\mathcal{O}_{S}(\lfloor m'' (K_{S}+\Delta_{S}+\boldsymbol{\rm N}_{S})\rfloor))\neq 0$. 
Then $H^{0}(X,\mathcal{O}_{X}(\lfloor m''sp(K_{X}+\Delta+\boldsymbol{\rm M}_{X})\rfloor)) \neq 0$ by the above surjection. 
Hence $m:=m'm''sp$ is the desired positive integer.
\end{proof}

We will use the following result about the generalized canonical bundle formula by Filipazzi--Moraga \cite{filipazzimoraga}.

\begin{lem}[cf.~{\cite[Theorem 1.5]{filipazzimoraga}}, see also {\cite[Proposition 6.3]{birkar-compl}}]\label{lem--klt-canbundleformula}
Let $d$ and $p$ be positive integers. 
Let $\mathfrak{R} \subset [0,1]$ be a finite set of rational numbers and $\Phi(\mathfrak{R})$ the hyperstandard set associated to $\mathfrak{R}$.  
Then, there exist positive integers $n$ and $q$ and a DCC set $\Omega \subset \mathbb{Q}$, depending only on $d$, $p$, and $\mathfrak{R}$, satisfying the following. 
Let $\pi \colon X \to Z$ be a contraction of normal projective varieties and let $(X,\Delta,\boldsymbol{\rm M})$ be a generalized klt pair such that
\begin{itemize}
\item
${\rm dim}X=d$, 
\item
the coefficients of $\Delta$ belong to $\Phi(\mathfrak{R})$, 
\item 
$p\boldsymbol{\rm M}$ is b-Cartier, 
\item
$K_{X}+\Delta+\boldsymbol{\rm M}_{X}\sim_{\mathbb{Q}, Z}0$, and
\item
$X$ is of Fano type over an open subset $U$ of $Z$,  
\end{itemize} 
Then $n(K_{X}+\Delta+\boldsymbol{\rm M}_{X})\sim \pi^{*}D$ for some $\mathbb{Q}$-Cartier divisor $D$ on $Z$ and there exists a generalized lc pair $(Z,\Delta_{Z},\boldsymbol{\rm N})$ constructed with generalized canonical bundle formula such that 
\begin{itemize}
\item
$K_{Z}+\Delta_{Z}+\boldsymbol{\rm N}_{Z}=\tfrac{1}{n}D$, 
\item
the coefficients of $\Delta_{Z}$ belong to $\Omega$, and 
\item
$q \boldsymbol{\rm N}$ is b-Cartier. 
\end{itemize}
\end{lem}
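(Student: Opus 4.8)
The plan is to deduce the statement from the effective generalized canonical bundle formula of Filipazzi--Moraga \cite[Theorem 1.5]{filipazzimoraga} (see also \cite[Proposition 6.3]{birkar-compl}), after recalling the construction of the data on $Z$ and isolating the quantities that must be controlled uniformly. By the generalized canonical bundle formula (cf.~\cite{hanliu} and \cite{filipazzimoraga}), the relation $K_{X}+\Delta+\boldsymbol{\rm M}_{X}\sim_{\mathbb{Q},Z}0$ yields a generalized lc pair $(Z,\Delta_{Z},\boldsymbol{\rm N})$, with $\boldsymbol{\rm N}$ b-nef, such that $K_{X}+\Delta+\boldsymbol{\rm M}_{X}\sim_{\mathbb{Q}}\pi^{*}(K_{Z}+\Delta_{Z}+\boldsymbol{\rm N}_{Z})$; the discriminant part $\Delta_{Z}$ is determined by generalized log canonical thresholds over the codimension-one points of $Z$, and $\boldsymbol{\rm N}$ is the moduli b-divisor. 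It then remains to bound, uniformly in $d$, $p$, and $\mathfrak{R}$: (a) the coefficients of $\Delta_{Z}$; (b) the b-Cartier index of $\boldsymbol{\rm N}$; and (c) an integer $n$ with $n(K_{X}+\Delta+\boldsymbol{\rm M}_{X})\sim\pi^{*}D$, $D$ being $\mathbb{Q}$-Cartier on $Z$.

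For (a), the coefficient in $\Delta_{Z}$ of a prime divisor $P$ on $Z$ equals $1-t_{P}$, where $t_{P}$ is the generalized log canonical threshold of $\pi^{*}P$ with respect to $(X,\Delta,\boldsymbol{\rm M})$ at the generic point of $P$. Since the coefficients of $\Delta$ lie in the hyperstandard set $\Phi(\mathfrak{R})$ and $p\boldsymbol{\rm M}$ is b-Cartier, the ACC for generalized log canonical thresholds together with the global ACC for generalized pairs \cite[Theorem 1.6]{bz}, in the refined hyperstandard form of \cite[Lemma 3.3 and Section 6]{birkar-compl}, shows that each $t_{P}$ lies in $\Phi(\mathfrak{S})$ for a finite set $\mathfrak{S}\subset[0,1]$ depending only on $d$, $p$, and $\mathfrak{R}$; hence the coefficients of $\Delta_{Z}$ belong to a DCC set $\Omega\subset\mathbb{Q}$ depending only on $d$, $p$, and $\mathfrak{R}$.

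For (b) and (c), let $F$ be a sufficiently general fiber of $\pi$ over $U$, with the induced generalized klt log Calabi--Yau structure $(F,\Delta_{F},\boldsymbol{\rm M}_{F})$; by hypothesis $F$ is of Fano type. Using the global ACC to put the coefficients on $F$ in a fixed finite set and then applying the boundedness of complements for generalized pairs on Fano type varieties \cite[Theorem 1.10]{birkar-compl}, we obtain a positive integer $n_{0}=n_{0}(d,p,\mathfrak{R})$ with $n_{0}(K_{F}+\Delta_{F}+\boldsymbol{\rm M}_{F})\sim 0$. Feeding this fixed-index triviality on the general fiber, together with the b-semiampleness of the moduli part of an lc-trivial fibration with log big (hence log abundant) nef part (as in \cite{floris-lazic}), into \cite[Theorem 1.5]{filipazzimoraga} produces positive integers $n$ and $q$, depending only on $d$, $p$, and $\mathfrak{R}$, such that $q\boldsymbol{\rm N}$ is b-Cartier and $n(K_{X}+\Delta+\boldsymbol{\rm M}_{X})\sim\pi^{*}D$ with $D=n(K_{Z}+\Delta_{Z}+\boldsymbol{\rm N}_{Z})$ $\mathbb{Q}$-Cartier, so $K_{Z}+\Delta_{Z}+\boldsymbol{\rm N}_{Z}=\tfrac{1}{n}D$. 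Replacing $n$ by a fixed multiple, we may also assume $q\mid n$ and that $n$ clears the denominators of the relevant elements of $\Omega$. The main obstacle is precisely (b)--(c), namely the \emph{effective} control of the moduli b-divisor and of $n$ in terms of $d$, $p$, $\mathfrak{R}$: this is the substance of the cited theorems of Filipazzi--Moraga and Birkar, which we do not reprove here.
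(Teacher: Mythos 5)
Your proposal is correct and matches the paper's treatment: the paper gives no independent proof of this lemma, simply deferring it to Filipazzi--Moraga \cite[Theorem 1.5]{filipazzimoraga} and Birkar \cite[Proposition 6.3]{birkar-compl} (with a remark that the existence of $n$ is proved in \cite[Proof of Lemma 5.4]{filipazzimoraga}, reducing to $n$-complements \cite[Theorem 1.10]{birkar-compl} when ${\rm dim}\,Z=0$), exactly as you do. Your added sketch of how the discriminant coefficients are controlled by the ACC for generalized lc thresholds and how the fiberwise complement feeds into the effective control of $n$ and $q$ is consistent with those references; the aside about b-semiampleness via \cite{floris-lazic} is not needed but harmless.
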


\begin{rem}
The existence of $n$ satisfying $n(K_{X}+\Delta+\boldsymbol{\rm M}_{X})\sim n \pi^{*}(K_{Z}+\Delta_{Z}+\boldsymbol{\rm N}_{Z})$ in Lemma \ref{lem--klt-canbundleformula} was proved in \cite[Proof of Theorem 1.5]{filipazzimoraga} (see \cite[Proof of Lemma 5.4]{filipazzimoraga} or \cite[Proof of Proposition 6.3]{birkar-compl}). 
If ${\rm dim}Z=0$, then the existence of $n$ is equivalent to the existence of $n$-complements, which was proved by Birkar \cite[Theorem 1.10]{birkar-compl}. 
\end{rem}

\begin{thm}\label{thm--eff-vertical}
Let $d$ and $p$ be positive integers, and let $\Phi \subset \mathbb{Q}$ be a DCC set.  
Then, there exist positive integers $n$ and $q$ and a DCC set $\Omega \subset \mathbb{Q}$, depending only on $d$, $p$, and $\Phi$, satisfying the following. 
Let $\pi \colon X \to Z$ be a contraction of normal projective varieties and let $(X,\Delta,\boldsymbol{\rm M})$ be a generalized lc pair such that
\begin{itemize}
\item
${\rm dim}X=d$, 
\item
the coefficients of $\Delta$ belong to $\Phi$, 
\item 
$\boldsymbol{\rm M}$ is b-big$/Z$ and $p\boldsymbol{\rm M}$ is b-Cartier, 
\item
$K_{X}+\Delta+\boldsymbol{\rm M}_{X}\sim_{\mathbb{Q}, Z}0$, and
\item
all generalized lc centers of $(X,\Delta,\boldsymbol{\rm M})$ are vertical over $Z$. 
\end{itemize} 
Then $n(K_{X}+\Delta+\boldsymbol{\rm M}_{X})\sim \pi^{*}D$ for some $\mathbb{Q}$-Cartier divisor $D$ on $Z$ and there exists a generalized lc pair $(Z,\Delta_{Z},\boldsymbol{\rm N})$ constructed with generalized canonical bundle formula such that 
\begin{itemize}
\item
$K_{Z}+\Delta_{Z}+\boldsymbol{\rm N}_{Z}=\tfrac{1}{n}D$, 
\item
the coefficients of $\Delta_{Z}$ belong to $\Omega$, and 
\item
$q \boldsymbol{\rm N}$ is b-Cartier. 
\end{itemize}
\end{thm}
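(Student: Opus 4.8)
The plan is to reduce Theorem \ref{thm--eff-vertical} to Lemma \ref{lem--klt-canbundleformula} by replacing $(X,\Delta,\boldsymbol{\rm M})$ with a better model over $Z$. First I would take a $\mathbb{Q}$-factorial generalized dlt model $g\colon (Y,\Gamma,\boldsymbol{\rm M})\to(X,\Delta,\boldsymbol{\rm M})$, so that $K_Y+\Gamma+\boldsymbol{\rm M}_Y=g^*(K_X+\Delta+\boldsymbol{\rm M}_X)\sim_{\mathbb{Q},Z}0$ and all generalized lc centers of $(Y,\Gamma,\boldsymbol{\rm M})$ are still vertical over $Z$. Writing $\lfloor\Gamma\rfloor=\sum_j S_j$, the key point is that since each $S_j$ is vertical, one can run a $(K_Y+\Gamma+\boldsymbol{\rm M}_Y)$-MMP over $Z$ — equivalently, since that divisor is $\sim_{\mathbb{Q},Z}0$, use the negativity lemma for very exceptional divisors (\cite{birkar-flip}, exactly as in the proof of Lemma \ref{lem--mmp-abundant-fibration}) — to contract the components of $\lfloor\Gamma\rfloor$ that dominate nothing useful; more precisely the idea is to pick a general $0\le\Theta\sim_{\mathbb{Q},Z}\epsilon(K_Y+\Gamma+\boldsymbol{\rm M}_Y)$-type perturbation, or to directly run a $(K_Y+\Gamma-\delta\lfloor\Gamma\rfloor+\boldsymbol{\rm M}_Y)$-MMP over $Z$ for $0<\delta\ll1$. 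Because $K_Y+\Gamma+\boldsymbol{\rm M}_Y\sim_{\mathbb{Q},Z}0$, this MMP is a $(-\lfloor\Gamma\rfloor)$-MMP over $Z$, and since every component of $\lfloor\Gamma\rfloor$ is vertical over $Z$, each such component is very exceptional over $Z$ in the relevant sense, so the MMP contracts all of $\lfloor\Gamma\rfloor$ and terminates by the argument of \cite[Theorem 3.5]{birkar-flip}. After this step we obtain $(X',\Delta',\boldsymbol{\rm M})$ with $K_{X'}+\Delta'+\boldsymbol{\rm M}_{X'}\sim_{\mathbb{Q},Z}0$, $\lfloor\Delta'\rfloor=0$, i.e. a generalized klt pair, and the pushforward of the b-big$/Z$ b-divisor is still b-big$/Z$ with $p\boldsymbol{\rm M}$ b-Cartier.

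Next I would address the hypothesis "$X$ is of Fano type over an open subset $U$ of $Z$" needed to invoke Lemma \ref{lem--klt-canbundleformula}: the b-bigness of $\boldsymbol{\rm M}$ over $Z$ gives, after descending $\boldsymbol{\rm M}$ to a resolution $\tilde Y$ and writing $\boldsymbol{\rm M}_{\tilde Y}\sim_{\mathbb{Q},Z}\tilde A+\tilde B$ with $\tilde A$ ample$/Z$ and $\tilde B\ge0$, a way to rewrite $K_{X'}+\Delta'+\boldsymbol{\rm M}_{X'}$ as $K_{X'}+\Delta''$ (absorbing a small multiple of the ample part and a general member of a mobile$/Z$ part) so that, over the generic point of $Z$ and over a suitable open $U\subset Z$, the total space is of Fano type over $U$ — indeed $-K_{X'}$ is $\mathbb{Q}$-linearly equivalent over $U$ to $\Delta'+\boldsymbol{\rm M}_{X'}$ which carries a $U$-ample part, so $(X',\Theta)$ is klt with $-(K_{X'}+\Theta)$ ample$/U$ for suitable $\Theta$. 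This is the standard trick (cf.\ \cite[Proposition 6.3]{birkar-compl}) and the coefficient set one feeds into Lemma \ref{lem--klt-canbundleformula} can be taken to be a hyperstandard set $\Phi(\mathfrak{R})$ with $\mathfrak{R}$ finite depending only on $d,p,\Phi$, after first applying the global ACC \cite[Theorem 1.6]{bz} to conclude that the coefficients of $\Delta$ lie in a fixed finite subset of $\Phi$.

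Finally I would apply Lemma \ref{lem--klt-canbundleformula} to $(X',\Delta',\boldsymbol{\rm M})\to Z$ (or its Fano-type repackaging) to get $n,q,\Omega$ depending only on $d,p,\mathfrak{R}$, hence only on $d,p,\Phi$, together with a generalized lc pair $(Z,\Delta_Z,\boldsymbol{\rm N})$ such that $n(K_{X'}+\Delta'+\boldsymbol{\rm M}_{X'})\sim\pi'^*D$ with $K_Z+\Delta_Z+\boldsymbol{\rm M}_Z=\tfrac1n D$, coefficients of $\Delta_Z$ in $\Omega$, and $q\boldsymbol{\rm N}$ b-Cartier; pulling back along the birational contraction $Y\to X$ and the original model $g$ shows $n(K_X+\Delta+\boldsymbol{\rm M}_X)\sim\pi^*D$, which is the assertion. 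The main obstacle I expect is the reduction to the generalized klt case: I must make sure that contracting all of $\lfloor\Delta\rfloor$ over $Z$ by an MMP genuinely works — i.e.\ that verticality over $Z$ really places each reduced component in the scope of the negativity-for-very-exceptional-divisors machinery and that the MMP terminates — and that the resulting klt pair still has the descended nef-and-big part with controlled Cartier index, so that the hypotheses of Lemma \ref{lem--klt-canbundleformula} are met with constants depending only on $d$, $p$, $\Phi$.
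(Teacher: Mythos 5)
Your reduction to the generalized klt case has a genuine gap. You propose to contract all of $\lfloor\Gamma\rfloor$ by running a $(-\lfloor\Gamma\rfloor)$-MMP over $Z$, on the grounds that verticality of each component makes $\lfloor\Gamma\rfloor$ very exceptional over $Z$. That is false: in Birkar's sense a vertical divisor $D$ is very exceptional over $Z$ only if over the generic point of every prime divisor $P\subset Z$ some component of $\pi^{*}P$ is \emph{not} contained in ${\rm Supp}\,D$. A component $S_{j}$ of $\lfloor\Gamma\rfloor$ that is the whole fibre over the generic point of a prime divisor $P$ of $Z$ (e.g.\ $S_{j}={\rm Supp}\,\pi^{*}P$) violates this, and such $S_{j}$ cannot be contracted by any MMP over $Z$ — indeed $-S_{j}\sim_{\mathbb{Q},Z}0$ near the generic point of $P$, so the MMP does nothing there. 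This is exactly the situation the theorem is designed to handle: vertical lc centers lying over divisors of $Z$ are what produce coefficient-one components of the discriminant $\Delta_{Z}$, which is why the conclusion only asserts that $(Z,\Delta_{Z},\boldsymbol{\rm N})$ is generalized lc rather than klt. Relatedly, your proposal does not explain how $\Omega$ is obtained; if you could really contract all of $\lfloor\Gamma\rfloor$, the discriminant fed out of Lemma \ref{lem--klt-canbundleformula} would not record the lc thresholds of the original pair over divisors of $Z$.

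The paper's route avoids this entirely: it first produces $\Omega$ directly from the ACC for generalized lc thresholds (\cite[Theorem 1.5]{bz}) applied fibrewise over prime divisors of $Z$, and then, for $n$ and $q$, it does not contract the vertical boundary but simply \emph{discards} it — after passing to a $\mathbb{Q}$-factorial generalized dlt model, the hypothesis that all generalized lc centers are vertical forces $\lfloor\Delta^{h}\rfloor=0$ for the horizontal part $\Delta^{h}$, so $(X,\Delta^{h},\boldsymbol{\rm M})$ is already generalized klt; one runs the klt MMP of \cite{bchm} over $Z$ for this pair, lands in the Fano-type setting of Lemma \ref{lem--klt-canbundleformula}, and transports $n,q$ back because $\Delta-\Delta^{h}$ is vertical and only the behaviour over an open subset of $Z$ matters for these integers. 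Your Fano-type step and the use of the global ACC to get a finite coefficient set are in line with the paper, but the MMP step as you state it would fail, and replacing it by the horizontal-part truncation is the missing idea.
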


\begin{proof}
We first show the existence of $\Omega$. 
Let $\mathcal{J}(d,\Phi,p) \subset \mathbb{R}$ be the set of the generalized lc thresholds of effective Cartier divisors with respect to generalized lc pairs $(Y,\Gamma, \boldsymbol{\rm L})/W$ such that ${\rm dim}Y=d$, the coefficients of $\Gamma$ belong to $\Phi$, and $p\boldsymbol{\rm L}$ is b-Cartier.  
Then we have $\mathcal{J}(d,\Phi,p) \subset \mathbb{Q}$ since $\Phi \subset \mathbb{Q}$, and $\mathcal{J}(d,\Phi,p)$ is an ACC set by \cite[Theorem 1.5]{bz}. 
Let $P$ be a prime divisor on $Z$. 
Then there is an open subset $U \subset Z$ containing the generic point of $P$ such that the generalized lc threshold of $\pi^{*}P$ with respect to $(X,\Delta,\boldsymbol{\rm M})$ over the generic point of $P$ is equal to the generalized lc threshold of $\pi^{*}P|_{\pi^{-1}(U)}$ with respect to $(\pi^{-1}(U),\Delta|_{\pi^{-1}(U)},\boldsymbol{\rm M}|_{\pi^{-1}(U)})/U$. 
From this and the construction of the discriminant part of generalized canonical bundle formula, we see that the set
$$\Omega:=\set{1-t|t\in \mathcal{J}(d,\Phi,p)}$$
is the desired DCC set. 

From now on, we prove the existence of $n$ and $q$ of Theorem \ref{thm--eff-vertical}. 
Replacing $(X,\Delta,\boldsymbol{\rm M})$ by a $\mathbb{Q}$-factorial generalized dlt model, we may assume that $(X,0)$ is a $\mathbb{Q}$-factorial klt pair.  
Let $\Delta^{h}$ be the horizontal part of $\Delta$. 
Restricting $(X,\Delta,\boldsymbol{\rm M})$ to the general fiber of $\pi$ and applying \cite[Theorem 1.6]{bz}, we can find a finite set $\Phi' \subset \Phi$, which depends only on $d$, $p$, and $\Phi$, such that the coefficients of $\Delta^{h}$ belong to $\Phi'$. 

Since $\boldsymbol{\rm M}$ is b-big$/Z$ and $(X,\Delta^{h},\boldsymbol{\rm M} )$ is generalized klt $(X,\Delta^{h},\boldsymbol{\rm M} )$ has a good minimal model $(X',\Delta'^{h},\boldsymbol{\rm M})$ over $Z$ by \cite{bchm}. 
Let $\pi' \colon X' \to Z'$ be the contraction over $Z$ induced by $K_{X'}+\Delta'^{h}+\boldsymbol{\rm M}_{X'}$. 
Then $Z'\to Z$ is birational. 
Since $\boldsymbol{\rm M}$ is b-big$/Z$ and $(X',\Delta'^{h},\boldsymbol{\rm M})$ is generalized klt, it follows that $X'$ is of Fano type over $Z'$. 
By applying  Lemma \ref{lem--klt-canbundleformula} to $(X',\Delta'^{h},\boldsymbol{\rm M})\to Z'$, we get a generalized lc pair $(Z',\Delta_{Z'},\boldsymbol{\rm N}')$ on $Z'$ such that 
$$n(K_{X'}+\Delta'^{h}+\boldsymbol{\rm M}_{X'})\sim n\pi'^{*}(K_{Z'}+\Delta_{Z'}+\boldsymbol{\rm N}'_{Z'})$$
 and $q \boldsymbol{\rm N}'$ is b-Cartier, where $n$ and $q$ depend only on $d$, $\Phi'$, and $p$. 
 
By construction, there are open subsets $V \subset Z$ and $V'\subset Z'$ such that the restriction of $(X,\Delta,\boldsymbol{\rm M})\to Z$ over $V$ coincides with the restriction of $(X',\Delta'^{h},\boldsymbol{\rm M})\to Z'$ over $V'$. 
By an argument similar to \cite[3.4 (2)]{birkar-compl}, we see that the moduli part of the generalized canonical bundle formula for $\pi \colon (X,\Delta,\boldsymbol{\rm M}) \to Z$ depends only on the generic fiber of $\pi$. 
This implies that $\boldsymbol{\rm N}'$ is the moduli part of the generalized canonical bundle formula for $(X,\Delta,\boldsymbol{\rm M}) \to Z$. 
Thus, the integer $q$ satisfy the condition of Theorem \ref{thm--eff-vertical}. 

We can easily check that $n$ also satisfies the condition of Theorem \ref{thm--eff-vertical}. 
Thus, $n$ and $q$ are the desired positive integers. 
\end{proof}

\begin{thm}\label{thm--eff-dlt-general}
Let $d$ and $p$ be positive integers, and let $\Phi \subset \mathbb{Q}$ be a DCC set.  
Then, there exist positive integers $n$ and $q$ and a DCC set $\Omega \subset \mathbb{Q}$, depending only on $d$, $p$, and $\Phi$, satisfying the following. 
Let $\pi \colon X \to Z$ be a contraction of normal projective varieties and let $(X,\Delta,\boldsymbol{\rm M})$ be a generalized dlt pair such that
\begin{itemize}
\item
${\rm dim}X=d$, 
\item
the coefficients of $\Delta$ belong to $\Phi$, 
\item
$p \boldsymbol{\rm M}$ is b-Cartier, 
\item
$K_{X}+\Delta+\boldsymbol{\rm M}_{X}\sim_{\mathbb{Q},Z}0$, and
\item
there is a log resolution $f\colon \tilde{X}\to X$ of $(X,\Delta)$ such that 
\begin{itemize}
\item
$f$ is an isomorphism over an open subset $U \subset X$ containing all the generic points of generalized lc centers of $(X,\Delta,\boldsymbol{\rm M})$, 
\item
$\boldsymbol{\rm M}$ descends to $\tilde{X}$, and 
\item
writing
$K_{\tilde{X}}+\tilde{\Delta}+\boldsymbol{\rm M}_{\tilde{X}}=f^{*}(K_{X}+\Delta+\boldsymbol{\rm M}_{X})+\tilde{E}$,
where $\tilde{\Delta}\geq0$ and $\tilde{E}\geq0$ have no common components, then $\boldsymbol{\rm M}_{\tilde{X}}$ is log big with respect to $(\tilde{X},\tilde{\Delta})$. 
\end{itemize}
\end{itemize}
Then $n(K_{X}+\Delta+\boldsymbol{\rm M}_{X})\sim \pi^{*}D$ for some $\mathbb{Q}$-Cartier divisor $D$ on $Z$, and there exists a generalized lc pair $(Z,\Delta_{Z},\boldsymbol{\rm N})$ such that 
\begin{itemize}
\item
$K_{Z}+\Delta_{Z}+\boldsymbol{\rm N}_{Z} = \frac{1}{n}D$, 
\item
the coefficients of $\Delta_{Z}$ belong to $\Omega$, and 
\item
$q \boldsymbol{\rm N}$ is b-Cartier. 
\end{itemize}
\end{thm}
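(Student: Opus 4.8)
I would argue by induction on $d={\rm dim}X$, reducing to the two special cases already established. When ${\rm dim}Z=0$ the statement is exactly Theorem \ref{thm--eff-dlt-trivial} (with $D$ a non-negative integer and $(Z,\Delta_{Z},\boldsymbol{\rm N})$ trivial), so I may assume ${\rm dim}Z\geq 1$. Since $\boldsymbol{\rm M}$ descends to $\tilde{X}$ and $\boldsymbol{\rm M}_{\tilde{X}}$ is big (being log big with respect to $(\tilde{X},\tilde{\Delta})$), and bigness is inherited by restriction to a general fibre of $\pi\circ f$, the b-divisor $\boldsymbol{\rm M}$ is b-big$/Z$. Hence, if no generalized lc center of $(X,\Delta,\boldsymbol{\rm M})$ dominates $Z$, Theorem \ref{thm--eff-vertical} applies verbatim and we are done in this case.

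\textbf{The case of a horizontal generalized lc center.} Suppose some generalized lc center dominates $Z$. As $(X,\Delta,\boldsymbol{\rm M})$ is generalized dlt, every generalized lc center is a component of an intersection of components of $\lfloor\Delta\rfloor$, so there is a component $S$ of $\lfloor\Delta\rfloor$ with $\pi(S)=Z$; recall $S$ is normal and ${\rm dim}S=d-1$. By divisorial adjunction for generalized pairs I obtain a generalized dlt pair $(S,\Delta_{S},\boldsymbol{\rm N})$ with $K_{S}+\Delta_{S}+\boldsymbol{\rm N}_{S}=(K_{X}+\Delta+\boldsymbol{\rm M}_{X})|_{S}$; by Remark \ref{rem--coeff-adj} the coefficients of $\Delta_{S}$ lie in a DCC set depending only on $\Phi$ and $p$, and $p\boldsymbol{\rm N}$ is b-Cartier. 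Taking for the log resolution of $(S,\Delta_{S})$ the stratum $\tilde{S}=f^{-1}_{*}S\subset\tilde{X}$, the moduli b-divisor of $(S,\Delta_{S},\boldsymbol{\rm N})$ descends there to $\boldsymbol{\rm M}_{\tilde{X}}|_{\tilde{S}}$, and since $\tilde{S}$ and all strata of $\tilde{X}$ through it are lc centers of $(\tilde{X},\tilde{\Delta})$, the log bigness of $\boldsymbol{\rm M}_{\tilde{X}}$ with respect to $(\tilde{X},\tilde{\Delta})$ descends to the corresponding log bigness for $(S,\Delta_{S},\boldsymbol{\rm N})$. Thus the hypotheses of Theorem \ref{thm--eff-dlt-general} hold for $(S,\Delta_{S},\boldsymbol{\rm N})$ over the Stein factorization $S\to Z_{S}$ of $\pi|_{S}$, and since ${\rm dim}S<d$ the induction hypothesis applies; it provides $n'$, $q'$ and $\Omega'$, depending only on $d$, $p$, $\Phi$, such that $n'(K_{S}+\Delta_{S}+\boldsymbol{\rm N}_{S})$ is linearly equivalent to the pullback of a $\mathbb{Q}$-Cartier divisor on $Z_{S}$ and $Z_{S}$ carries a generalized lc structure with the stated bounds.

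\textbf{Transfer to $Z$, and the discriminant.} It remains to push this information down to $Z$. On the one hand, Lemma \ref{lem--extension-lc-center}, applied to $\pi$ and $S$, shows that the morphism $\pi_{*}\mathcal{O}_{X}(lp(K_{X}+\Delta+\boldsymbol{\rm M}_{X}))\to(\pi|_{S})_{*}\mathcal{O}_{S}(\lfloor lp(K_{S}+\Delta_{S}+\boldsymbol{\rm N}_{S})\rfloor)$ is surjective for every $l$, so sections (equivalently, relative complements) on $S$ over $Z_{S}$ lift to $X$ over $Z$. On the other hand, the discriminant and moduli b-divisors of the generalized canonical bundle formula for $(X,\Delta,\boldsymbol{\rm M})\to Z$ are compatible, via pullback, with those on $Z_{S}$ obtained by first restricting to $S$ and then running the canonical bundle formula over $Z_{S}$. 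Combining these two facts transfers the bounds from $Z_{S}$ to $Z$ and yields the required $n$, $q$ and generalized lc pair $(Z,\Delta_{Z},\boldsymbol{\rm N})$. Finally, as in the proof of Theorem \ref{thm--eff-vertical}, the coefficients of the discriminant part of the canonical bundle formula for $(X,\Delta,\boldsymbol{\rm M})\to Z$ lie in the DCC set $\{1-t\mid t\in\mathcal{J}(d,\Phi,p)\}$, where $\mathcal{J}(d,\Phi,p)$ is the ACC set of generalized lc thresholds from \cite[Theorem 1.5]{bz}, so $\Omega$ may be taken to be the union of $\Omega'$ and this set.

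\textbf{The main obstacle.} The crux is the transfer step: controlling the moduli part $\boldsymbol{\rm N}$ over $Z$, equivalently bounding uniformly the local index of $K_{X}+\Delta+\boldsymbol{\rm M}_{X}$ over the points of $Z$. The inductive information lives on $Z_{S}$, which is only generically finite over $Z$, so one needs the compatibility of generalized divisorial adjunction with the generalized canonical bundle formula under this base change, together with the effective canonical bundle formula of Birkar \cite{birkar-compl} and Filipazzi--Moraga \cite{filipazzimoraga} (through Lemma \ref{lem--klt-canbundleformula}), which supplies the bounds once all generalized lc centers have been peeled off and the base of the resulting fibration is of Fano type.
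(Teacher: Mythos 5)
Your overall induction (base case via Theorem \ref{thm--eff-dlt-trivial}, vertical case via Theorem \ref{thm--eff-vertical}, horizontal case via adjunction to a component $S$ of $\lfloor\Delta\rfloor$ dominating $Z$) is the same as the paper's, and your verification that $(S,\Delta_S,\boldsymbol{\rm N})$ inherits the hypotheses is fine. But there is a genuine gap exactly where you flag ``the main obstacle'': you apply the induction hypothesis to $S\to Z_S$, the Stein factorization of $\pi|_S$, and then leave the descent from $Z_S$ to $Z$ unproved, gesturing at a ``compatibility of generalized divisorial adjunction with the generalized canonical bundle formula under this base change.'' No such base-change argument is supplied, and without one you do not obtain a generalized lc structure on $Z$ itself, nor the uniform $n$ with $n(K_X+\Delta+\boldsymbol{\rm M}_X)\sim\pi^*D$ (which you also never derive in the horizontal case; the paper gets it by applying Theorem \ref{thm--eff-dlt-trivial} to the general fibre of $\pi$ together with $K_X+\Delta+\boldsymbol{\rm M}_X\sim_{\mathbb{Q},Z}0$).

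The point you miss is that the Stein factorization is unnecessary: Lemma \ref{lem--extension-lc-center}, which you invoke only to lift sections, in fact shows (after twisting by $mD$ and untwisting) that $\pi_*\mathcal{O}_X\otimes\mathcal{O}_Z(mD)\to\pi_{S*}\mathcal{O}_S\otimes\mathcal{O}_Z(mD)$ is surjective, whence $\pi_{S*}\mathcal{O}_S=\mathcal{O}_Z$ and $\pi|_S$ is already a contraction onto $Z$. So $Z_S=Z$, the induction hypothesis applied to $(S,\Delta_S,\boldsymbol{\rm L})\to Z$ directly produces $n'$, $q'$, $\Omega$, a divisor $D'$ on $Z$ and a generalized lc pair $(Z,\Delta'_Z,\boldsymbol{\rm N}')$, and the only remaining work is to replace the moduli part by $\boldsymbol{\rm N}=\boldsymbol{\rm N}'-\tfrac{1}{n'}\boldsymbol{\rm D}'+\tfrac{1}{n}\boldsymbol{\rm D}$ so that $K_Z+\Delta_Z+\boldsymbol{\rm N}_Z=\tfrac{1}{n}D$, and to check that $nn'q'\boldsymbol{\rm N}$ is b-Cartier using $\pi_S^*(\tfrac{1}{n}D)\sim_{\mathbb{Q}}\pi_S^*(\tfrac{1}{n'}D')$ together with \cite[Proposition 5.3]{filipazzimoraga}. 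Your alternative suggestion of taking $\Delta_Z$ to be the discriminant of $(X,\Delta,\boldsymbol{\rm M})\to Z$ would still leave you to bound the index of the corresponding moduli part, which is precisely the step the argument above handles.
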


\begin{proof}
We prove the theorem by induction on the dimension of $X$. 

If all generalized lc centers of $(X,\Delta,\boldsymbol{\rm M})$ are vertical over $Z$, then Theorem \ref{thm--eff-dlt-general} directly follows from Theorem \ref{thm--eff-vertical}. 
Hence we may assume that there is a component $S$ of $\lfloor \Delta \rfloor$ dominating $Z$. 

Applying Theorem \ref{thm--eff-dlt-trivial} to the general fiber of $\pi$, we can find a positive integer $n$, depending only on $d$, $p$, and $\Phi$, such that $n(K_{X}+\Delta+\boldsymbol{\rm M}_{X}) \sim \pi^{*}D$ for some $\mathbb{Q}$-Cartier divisor $D$ on $Z$. 

Let $m$ be a positive integer, which may depend on $(X,\Delta,\boldsymbol{\rm M})$, such that $m\Delta$ is a Weil divisor and both $m \boldsymbol{\rm M}_{\tilde{X}}$ and $mD$ are Cartier. 
Then Lemma \ref{lem--extension-lc-center} implies that the morphism
$\pi_{*}\mathcal{O}_{X}\otimes \mathcal{O}_{Z}(mD) \to \pi_{S*}\mathcal{O}_{S}\otimes \mathcal{O}_{Z}(mD)$ is surjective, where $\pi_{S}=\pi|_{S}$. 
Thus $\pi_{S}$ is a contraction. 

Let $(S,\Delta_{S},\boldsymbol{\rm L})$ be a generalized dlt pair constructed with divisorial adjunction for generalized pairs. 
Then we have 
$$n(K_{S}+\Delta_{S}+\boldsymbol{\rm L}_{S}) \sim \pi_{S}^{*}D.$$ 
By Remark \ref{rem--coeff-adj}, there is a DCC set $\Psi \subset \mathbb{Q}$,  depending only on $d$, $p$, and $\Phi$, such that the coefficients of $\Delta_{S}$ belong to $\Psi$. 
We apply the induction hypothesis of Theorem \ref{thm--eff-dlt-general} to $\pi_{S} \colon (S,\Delta_{S},\boldsymbol{\rm L}) \to Z$. We can find positive integers $n'$ and $q'$ and a DCC set $\Omega \subset \mathbb{Q}$, depending only on $d$, $p$, and $\Psi$, such that 
$$n'(K_{S}+\Delta_{S}+\boldsymbol{\rm L}_{S})\sim \pi_{S}^{*}D'$$
 for some $\mathbb{Q}$-Cartier divisor $D'$ on $Z$ and there exists a generalized lc pair $(Z,\Delta'_{Z},\boldsymbol{\rm N}')$ such that 
\begin{itemize}
\item
$K_{Z}+\Delta'_{Z}+\boldsymbol{\rm N}'_{Z} = \frac{1}{n'}D'$, 
\item
the coefficients of $\Delta'_{Z}$ belong to $\Omega$, and 
\item
$q' \boldsymbol{\rm N}'$ is b-Cartier. 
\end{itemize}

Let $\boldsymbol{\rm D}$ and $\boldsymbol{\rm D}'$ be the b-divisors defined by $\boldsymbol{\rm D}=\overline{D}$ and $\boldsymbol{\rm D}'=\overline{D'}$, respectively. 
We define a generalized lc pair $(Z,\Delta_{Z},\boldsymbol{\rm N})$ by putting 
$$\Delta_{Z}=\Delta'_{Z} \quad {\rm and} \quad \boldsymbol{\rm N}=\boldsymbol{\rm N}'-\frac{1}{n'}\boldsymbol{\rm D}'+\frac{1}{n}\boldsymbol{\rm D}.$$ 
We will prove that $nn'q' \boldsymbol{\rm N}'$ is b-Cartier. 
Let $\overline{Z} \to Z$ be a resolution of $Z$ such that $\boldsymbol{\rm N}'$  descends to $\overline{Z}$. 
Let $\overline{S} \to S$ be a resolution of $S$ such that the induced map $\pi_{\overline{S}}\colon \overline{S} \dashrightarrow \overline{Z}$ is a morphism. 
By the relation
$$nn' \pi_{S}^{*}\left(\frac{1}{n}D\right) \sim nn'(K_{S}+\Delta_{S}+\boldsymbol{\rm L}_{S}) \sim nn'\pi_{S}^{*}\left(\frac{1}{n'}D'\right)$$
and the definitions of $\boldsymbol{\rm N}'$ and $\boldsymbol{\rm N}$, it follows that 
$nn'q'\pi_{\overline{S}}^{*}\boldsymbol{\rm N}_{\overline{Z}} \sim nn'q'\pi_{\overline{S}}^{*}\boldsymbol{\rm N}'_{\overline{Z}}$. 
Since $nn'q' \boldsymbol{\rm N}'_{\overline{Z}}$ is Cartier, by \cite[Proposition 5.3]{filipazzimoraga} we see that $nn'q'\boldsymbol{\rm N}_{\overline{Z}}$ is Cartier. 

We put $q=nn'q'$. 
The generalized lc pair $(Z,\Delta_{Z},\boldsymbol{\rm N})$ satisfies $K_{Z}+\Delta_{Z}+\boldsymbol{\rm N}_{Z} = \frac{1}{n}D$, the coefficients of $\Delta_{Z}$ belong to $\Omega$, and $q \boldsymbol{\rm N}$ is b-Cartier. 
From the above discussion, the integers $n$ and $q$ and the DCC set $\Omega$ satisfy the condition of Theorem \ref{thm--eff-dlt-general}. 
\end{proof}

We are ready to prove Theorem \ref{thm--eff-iitaka-intro} and Theorem \ref{thm--base-iitaka-intro}.

\begin{proof}[Proof of Theorem \ref{thm--base-iitaka-intro}]
Let $\phi \colon X \dashrightarrow X'$ be finite steps of a $(K_{X}+\Delta+M)$-MMP such that $\phi_{*}(K_{X}+\Delta+M)$ is semi-ample, and let $\pi \colon X' \to Z$ be the contraction induced by $\phi_{*}(K_{X}+\Delta+M)$. 
We put $\Delta'=\phi_{*}\Delta$ and $M'=\phi_{*}M$. 
We consider the generalized dlt pair $(X',\Delta', \overline{M})$. 

Let $U' \subset X'$ be an open subset such that $(U',\Delta'|_{U'})$ is log smooth, $U'$ contains all the generic points of generalized lc centers of $(X',\Delta', \overline{M})$, and $X' \dashrightarrow X$ is an isomorphism on $U'$. 
Since $M$ is nef and log big with respect to $(X,\Delta)$, by taking an appropriate resolution of the graph of $\phi$, we get a common log resolution $f \colon \tilde{X} \to X$ and $f' \colon \tilde{X} \to X'$ of $(X,\Delta)\dashrightarrow (X',\Delta')$ such that $f'$ and $U'$ satisfy the final condition of Theorem \ref{thm--eff-dlt-general}. 

Since $K_{X}+\Delta+M$ is not big, we have ${\rm dim}Z < {\rm dim}X'$. 
Let $F$ be the general fiber of $\pi$, and put $\tilde{F}=f'^{-1}(F)$. 
We recall the hypothesis that $M=\sum_{i}\mu_{i}M_{i}$ and $M_{i}$ are log big with respect to $(X,\Delta)$. 
Since ${\rm dim}F>0$, we see that $(f^{*}M_{i})|_{\tilde{F}}$ is not numerically trivial for all $i$. 
Applying the global ACC \cite[Theorem 1.6]{bz} to the restriction of $(X',\Delta', \overline{M})$ to $F$, we see that all $\mu_{i}$ belong to a finite set $\Phi'\subset \Phi$ depending only on $d$ and $\Phi$. 
Therefore, there exists a positive integer $p$, depending only on $d$ and $\Phi$, such that $p\mu_{i}$ are integers. 
Then $p\overline{M}$ is b-Cartier. 

Obviously, the coefficients of $\Delta'$ belong to $\Phi$, hence we can apply Theorem \ref{thm--eff-dlt-general} to $(X',\Delta',\overline{M}) \to Z$. 
By Theorem \ref{thm--eff-dlt-general}, there exist positive integers $n$ and $q$ and a DCC set $\Omega \subset \mathbb{Q}$, depending only on $d$, $p$, and $\Phi$, such that 
$$n(K_{X'}+\Delta'+M')\sim \pi^{*}D$$
for some $\mathbb{Q}$-Cartier divisor $D$ on $Z$ and there is a generalized lc pair $(Z,\Delta_{Z},\boldsymbol{\rm N})$ such that 
\begin{itemize}
\item
$K_{Z}+\Delta_{Z}+\boldsymbol{\rm N}_{Z} = \frac{1}{n}D$, 
\item
the coefficients of $\Delta_{Z}$ belong to $\Omega$, and 
\item
$q \boldsymbol{\rm N}$ is b-Cartier. 
\end{itemize}
Then $n$, $q$, and $\Omega$ depend only on $d$ and $\Phi$ because $p$ depends only on $d$ and $\Phi$. 
Since we have 
\begin{equation*}
\begin{split}
H^{0}(X, \mathcal{O}_{X}(\lfloor ln (K_{X}+\Delta+M)\rfloor)) & \simeq H^{0}(X', \mathcal{O}_{X'}(\lfloor ln (K_{X'}+\Delta'+M')\rfloor)) \\
& \simeq H^{0}(X', \mathcal{O}_{X'}(\lfloor l \pi^{*}D\rfloor)) \\
& \simeq H^{0}(Z, \mathcal{O}_{Z}(\lfloor ln (K_{Z}+\Delta_{Z}+\boldsymbol{\rm N}_{Z})\rfloor)) 
\end{split}
\end{equation*}
for every positive integer $l$, we get the desired $n$, $p$, and $\Omega$. 
\end{proof}

\begin{proof}[Proof of Theorem \ref{thm--eff-iitaka-intro}]
The theorem follows from Theorem \ref{thm--base-iitaka-intro} and \cite[Theorem 1.3]{bz}. 
\end{proof}

\begin{exam}\label{exam--unbounded}
We fix positive integers $d$ and $p$. 
We consider the category $\mathcal{C}$ whose objects are generalized dlt pairs $(X,\Delta,\overline{M})$ such that ${\rm dim}X=d$, $\Delta$ is a Weil divisor, $pM$ is Cartier, $M$ is nef and log big with respect to $(X,\Delta)$, and $K_{X}+\Delta+M\sim_{\mathbb{Q}}0$. 
We show that even if $d=2$ and $p=1$, some set 
 $$\mathcal{D} \subset \{X\,|\,\text{$(X,\Delta,\overline{M})\in \mathcal{C}$ for some $\Delta$ and $M$}\}$$ 
is unbounded. 

We put $V=\mathbb{P}^{1}$. 
We fix a very ample Cartier divisor $H_{V}$ such that $\mathcal{O}_{V}(H_{V})=\mathcal{O}_{V}(1)$.  
We construct a $\mathbb{P}^{1}$-bundle
$$X_{n}:=\mathbb{P}_{V}(\mathcal{O}_{V}\oplus \mathcal{O}_{V}(-nH_{V}))\overset{f}{\longrightarrow} V.$$ 
Let $\Delta_{n}$ be the unique section corresponding to $\mathcal{O}_{X_{n}}(1)$.  Define $M_{n}:=\Delta_{n}+(n+2)f^{*}H_{V}$. 
By construction, we have
$K_{X_{n}}+\Delta_{n} +M_{n}\sim 0.$ 
We can easily check that $(X_{n},\Delta_{n})$ is a dlt pair and $M_{n}$ is an ample Cartier divisor. 
We also see that $(-K_{X_{n}}\cdot A_{n})\geq n$ for every $n$ and every ample Cartier divisor $A_{n}$ on $X_{n}$. 
Indeed, we may write $A_{n}\sim a_{n}\Delta_{n}+b_{n}f^{*}H_{V}$ for some $a_{n}\geq 1$ and $b_{n}\geq 1$ (cf.~\cite[III, Exercise 12.5]{hartshorne}). 
Then
$$(-K_{X_{n}}\cdot A_{n})= (\Delta_{n}+M_{n})\cdot A_{n}> (nf^{*}H_{V}\cdot A_{n}) \geq  (nf^{*}H_{V}\cdot a_{n}\Delta_{n})\geq n.$$
This fact shows that the set $\{X_{n}\}_{n\geq 1}$ is unbounded. 
\end{exam}

\section{Boundedness results}\label{sec5}

In this section, we prove some boundedness results. 
Before we start the main parts of this section, 
we recall the notion of the Iitaka volume for $\mathbb{Q}$-Cartier divisors and 
we prove Theorem \ref{thm--dcc-iitakavol-intro}. 

\begin{defn}[Iitaka volume, {\cite[Definition 1.1]{li-bounded}}]\label{defn--iitakavol}Let $X$ be a normal projective variety, and let $D$ be a $\mathbb{Q}$-Cartier divisor on $X$ such that the Iitaka dimension $\kappa(X,D)$ is nonnegative. Then the {\em Iitaka volume} of $D$, denoted by ${\rm Ivol(D)}$, is defined by $${\rm Ivol(D)}:=\underset{m\to \infty}{\rm lim\,sup}\frac{{\rm dim}H^{0}(X,\mathcal{O}_{X}(\lfloor mD \rfloor))}{m^{\kappa(X,D)} \slash \kappa(X,D)!}.$$\end{defn}  

\begin{proof}[Proof of Theorem \ref{thm--dcc-iitakavol-intro}]
The theorem follows from Theorem \ref{thm--base-iitaka-intro} and \cite[Theorem 1.3]{birkar-nefpart}. 
With notations as in Theorem \ref{thm--base-iitaka-intro}, the divisor $K_{Z}+\Delta_{Z}+\boldsymbol{\rm N}_{Z}$ is ample and $\mathbb{Q}$-Cartier. 
Therefore, 
${\rm Ivol}(K_{X}+\Delta+M)=(K_{Z}+\Delta_{Z}+\boldsymbol{\rm N}_{Z})^{{\rm dim}Z} \in \mathbb{Q}_{>0}$. 
\end{proof}

From now on, we prove some boundedness results, and we prove Theorems \ref{thm--compl-dlt-intro} and Theorem \ref{thm--base-boundedness-intro}.

\subsection{Boundedness of complements}\label{subsec5.1}

In this subsection, we prove Theorem \ref{thm--compl-dlt-intro}. 

\begin{lem}\label{lem--hyperstandardset}
Let $\mathfrak{R}\subset [0,1]$ be a finite set of rational numbers, and let $\Phi(\mathfrak{R})$ be the hyperstandard set associated to $\mathfrak{R}$. 
Let $q$ be a positive integer such that $qa \in \mathbb{Z}$ for every $a\in \mathfrak{R}$. 
Then $b-\{-qmb\}\geq 0$ for every $b\in \Phi(\mathfrak{R})$ and every positive integer $m$. 
\end{lem}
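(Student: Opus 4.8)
The plan is to unwind the definitions. Write $b = 1 - a/r$ with $a \in \mathfrak{R}$ and $r \in \mathbb{Z}_{>0}$, and let $m$ be a positive integer. Since $q a \in \mathbb{Z}$, the number $q m a$ is an integer, so $q m b = q m - q m a / r$ and hence $-q m b = -q m + q m a / r$. Because $-qm$ is an integer, we get $\{-qmb\} = \{qma/r\}$. Thus the claim $b - \{-qmb\} \geq 0$ becomes $1 - a/r - \{qma/r\} \geq 0$, i.e. $\{qma/r\} \leq 1 - a/r$.

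The key observation is that $\{qma/r\}$ is a fraction with denominator dividing $r$: indeed $qma$ is an integer, so $\{qma/r\} = c/r$ for some integer $c$ with $0 \leq c \leq r-1$. Hence $\{qma/r\} \leq (r-1)/r = 1 - 1/r$. Therefore it suffices to check that $1 - 1/r \leq 1 - a/r$, which is equivalent to $a \leq 1$; and this holds since $\mathfrak{R} \subset [0,1]$. Combining, $\{-qmb\} = \{qma/r\} \leq 1 - 1/r \leq 1 - a/r = b$, which is exactly $b - \{-qmb\} \geq 0$.

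I would present this as two short displayed computations (the reduction $\{-qmb\} = \{qma/r\}$, then the chain of inequalities $\{qma/r\} \leq 1 - 1/r \leq 1 - a/r = b$), with a sentence noting that the first uses $qa \in \mathbb{Z}$ and the integrality of $qm$, and the second uses that $qma/r$ has denominator dividing $r$ together with $a \leq 1$. There is essentially no obstacle here; the only point requiring a word of care is the step $\{qma/r\} \le (r-1)/r$, which needs $qma/r$ to not be an integer is \emph{not} needed — even if it is an integer, $\{qma/r\} = 0 \le (r-1)/r$, so the bound holds unconditionally. Everything else is a routine manipulation of floor and fractional-part functions.
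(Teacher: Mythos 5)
Your proof is correct and follows essentially the same route as the paper's: write $b=1-a/r$, reduce $\{-qmb\}$ to $\{qma/r\}$ using $qma\in\mathbb{Z}$, bound this by $(r-1)/r$ since the denominator divides $r$, and conclude via $a\leq 1$. No gaps.
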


\begin{proof}
We may write $b=1-\frac{a}{r}$ for some $a\in \mathfrak{R}$ and $r \in \mathbb{Z}_{>0}$. 
Then 
$$b-\{-qmb\}=1-\frac{a}{r}-\left\{ -qm\left(1-\frac{a}{r}\right)\right\}=1-\frac{a}{r}-\left\{ \frac{qma}{r}\right\}.$$
Since $qa \in \mathbb{Z}$ and $m \in \mathbb{Z}$, we have $\{\frac{qma}{r}\} \leq \frac{r-1}{r}$. 
Since $a\leq 1$, we have
$$1-\frac{a}{r}-\left\{ \frac{qma}{r}\right\}\geq 1- \frac{1}{r}-\frac{r-1}{r}\geq 0.$$
Therefore $b-\{-qmb\}\geq 0$. 
\end{proof}

\begin{lem}\label{lem--gen-member}
Let $(X,\Delta)$ be a sub-pair such that $X$ is projective, and let $D$ be a Cartier divisor on $X$. 
Let $n$ be a positive integer, and let $V_{1},\cdots ,V_{l}$ be subvarieties of $X$. 
Suppose that  for each $1 \leq i \leq l$, there is $D_{i}\in |D|$ such that $(X,\Delta+\frac{1}{n}D_{i})$ is sub-lc on a neighborhood of $V_{i}$. 
Then, for every general member $D' \in |D|$ the sub-pair $(X,\Delta+\frac{1}{n}D')$ is sub-lc on a neighborhood of $\bigcup _{i=1}^{l}V_{i}$. 
\end{lem}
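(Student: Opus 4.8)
The plan is to reduce to the single-divisor case and then use a standard Bertini-type argument on a log resolution. First I would take a log resolution $g\colon Y\to X$ of $(X,\Delta)$ that simultaneously resolves each $D_i$ and the linear system $|D|$, so that $|g^{*}D|$ decomposes as $|\mathcal{M}|+F$ with $\mathcal{M}$ the moving part (base point free) and $F$ the fixed part. Writing $K_Y+\Delta_Y=g^{*}(K_X+\Delta)+E$ for the induced sub-boundary, the condition that $(X,\Delta+\frac1n D_i)$ is sub-lc near $V_i$ translates, after pulling back, into a coefficient bound on $\Delta_Y+\frac1n g^{*}D_i$ along every prime divisor on $Y$ whose center meets $g^{-1}(V_i)$ (or $V_i$ itself). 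The key point is that since $D_i\in|D|$ we have $g^{*}D_i=\mathcal{M}_i+F$ with $\mathcal{M}_i$ a general-enough member of the base point free part; a general $D'\in|D|$ pulls back to $g^{*}D'=\mathcal{M}'+F$ with $\mathcal{M}'$ a general member of $|\mathcal{M}|$, which by Bertini is smooth, irreducible (or a general reduced divisor) and shares no component with $\Delta_Y$, $F$, or the exceptional locus, and meets every stratum of the relevant snc divisors transversally.

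Next I would carry out the local sub-lc check on $Y$. Fix a prime divisor $P$ on $Y$ with $c_Y(P)\cap g^{-1}\!\big(\bigcup_i V_i\big)\neq\emptyset$, say $c_Y(P)$ meets $g^{-1}(V_j)$. Since $(X,\Delta+\frac1n D_j)$ is sub-lc near $V_j$, we get $\operatorname{coeff}_P(\Delta_Y-E)+\frac1n\operatorname{coeff}_P(F)+\frac1n\operatorname{coeff}_P(\mathcal{M}_j)\le 1$. For a general $D'$, $\operatorname{coeff}_P(\mathcal{M}')=0$ unless $P$ is a component of the general member, which does not happen for $P$ in the fixed part of the picture; and where $\mathcal{M}'$ does contribute — i.e. along its own prime components — generality guarantees those components are new, reduced, and transverse, so the coefficient it adds is at most $\frac1n\le 1$ and the snc condition near $\bigcup V_i$ is preserved, keeping us sub-lc. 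Pushing this down via $g$ (sub-lc is insensitive to crepant blow-ups, and $g$ is an isomorphism over the generic points of the relevant valuations that are divisorial on $X$) gives that $(X,\Delta+\frac1n D')$ is sub-lc on a neighborhood of $\bigcup_{i=1}^l V_i$. Finally, "general member" in the statement can be taken to mean: outside the (countable, but in fact this is over $\mathbb{C}$ so a finite) union of the bad loci in $|D|$ coming from the Bertini conditions at the finitely many strata of the resolution over the $V_i$; intersecting these finitely many nonempty opens gives a nonempty open set of good $D'$.

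The main obstacle I expect is bookkeeping the two sources of possible failure cleanly: (i) the fixed part $F$ and the exceptional discrepancy $E$ are the same for all members, so the sub-lc inequality is uniform in that part and is already guaranteed by the hypothesis for each $V_i$; (ii) the moving part $\mathcal{M}'$ is the only thing that varies, and one must argue that a general $\mathcal{M}'$ cannot create a new non-sub-lc center over $\bigcup V_i$ — this is exactly Bertini plus the fact that adding $\frac1n$ times a reduced divisor transverse to an snc configuration keeps it snc (hence sub-lc) locally. One subtlety worth stating carefully is that $\Delta$ is only a sub-pair, so coefficients of $\Delta_Y$ can be negative; this only helps the inequality, but one should phrase the argument in terms of log discrepancies $a(P,X,\Delta+\frac1n D')\ge 0$ rather than coefficient bounds to avoid sign confusion. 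No deep input is needed beyond Bertini's theorem and the birational invariance of sub-log-canonicity; the lemma is essentially a generality statement, and the proof is short once the resolution is set up.
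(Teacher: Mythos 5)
Your proposal is correct and follows essentially the same route as the paper: pass to a resolution of the linear system $|D|$, observe that the fixed part $F$ is common to all members so the hypothesis already forces $(X,\Delta+\tfrac{1}{n}F)$ to be sub-lc near every $V_i$, and then note that a general member of the base point free moving part contributes only reduced, transverse components with coefficient $\tfrac{1}{n}\leq 1$ by Bertini. The paper's own proof is just a terser version of this; your extra bookkeeping (working with log discrepancies to handle the negative coefficients of the sub-boundary) is a reasonable way to make the last step explicit.
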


\begin{proof}
We take a resolution $f \colon Y \to X$ of the linear system $|D|$, i.e., $f^{*}|D|=|M|+F$ where $M$ is a base point free Cartier divisor and $F$ is the fixed divisor. 
Replacing $X$ by $Y$ and replacing $\Delta$, $D$, and $V_{1},\cdots ,V_{l}$ accordingly, we may assume that the movable part of $|D|$ is base point free. 
By the hypothesis, $F$ satisfies the property that $(X,\Delta+\frac{1}{n}F)$ is sub-lc on a neighborhood of $\bigcup _{i=1}^{l}V_{i}$. 
Thus, Lemma \ref{lem--gen-member} holds true. 
\end{proof}

The following theorem is the main result of this subsection.

\begin{thm}\label{thm--compl-dlt}
Let $d$ be a positive integer. 
Let $\mathfrak{R} \subset [0,1]$ be a finite set of rational numbers and $\Phi(\mathfrak{R})$ the hyperstandard set associated to $\mathfrak{R}$.  
Then there exists a positive integer $n$, depending only on $d$ and $\mathfrak{R}$, satisfying the following. 
Let $(X,\Delta)$ be a projective dlt pair such that 
\begin{itemize}
\item
${\rm dim}X=d$,
\item
the coefficients of $\Delta$ belong to $\Phi(\mathfrak{R})$, and
\item
$-(K_{X}+\Delta)$ is nef and log big.
\end{itemize}
Then there is an effective $\mathbb{Q}$-divisor $\Xi$ such that $(X,\Delta+\Xi)$ is lc and $n(K_{X}+\Delta+\Xi)\sim0$. 
\end{thm}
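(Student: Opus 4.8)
The plan is to reduce the statement to a boundedness of complements result for a fibration-type situation and then apply the effective canonical bundle formula machinery developed in Section~\ref{sec4}. First I would run a $(K_X+\Delta)$-MMP, or rather work with $-(K_X+\Delta)$ which is nef and log big: by Theorem~\ref{thm--mmp-neflogbig-main-intro} applied to $(X,\Delta,\boldsymbol{0})$ (or more precisely by considering $-(K_X+\Delta)$ as the polarizing divisor in the sense of Lemma~\ref{lem--lcpair-neflogbig}), I can find a model on which the relevant divisor becomes semi-ample. Concretely, since $-(K_X+\Delta)$ is nef and log big, for a small $\epsilon>0$ the divisor $K_X+\Delta+\epsilon(-(K_X+\Delta)) = (1-\epsilon)(K_X+\Delta)$ together with the observation that $-(K_X+\Delta)$ log big forces $-(K_X+\Delta)$ to be semi-ample (by the base point free theorem of Fujino \cite{fujino-eff-basepointfree}, since $-(K_X+\Delta)$ is nef and, after a dlt model, behaves like the $K_{X}+\Delta+M$ situation with $M=-(K_X+\Delta)$ playing no role — here I should be careful and instead directly invoke that $-(K_X+\Delta)$ nef and log big implies semi-ampleness via the abundance statement in \cite{fujino-fund-slc}). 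Let $\psi\colon X\to Z$ be the contraction defined by $-(K_X+\Delta)$.

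Next I would split into cases according to $\dim Z$. If $\dim Z=0$, then $K_X+\Delta\sim_{\mathbb Q}0$ and we are looking for a bounded complement of a dlt Calabi--Yau pair, which follows from Birkar's theorem on boundedness of complements \cite[Theorem 1.10]{birkar-compl} together with the global ACC \cite[Theorem 1.6]{bz} to pin down the coefficients in a finite set. If $\dim Z>0$, then $-(K_X+\Delta)\sim_{\mathbb Q,Z}0$ and $-(K_X+\Delta)|_{\psi^{-1}(z)}$ is ample on general fibers; since $-(K_X+\Delta)$ is log big, its restriction to the general fiber $F$ is ample and big on all lc strata of $(F,\Delta|_F)$, so $(F,\Delta|_F)$ is a Fano-type dlt pair with controlled coefficients, and Birkar's complement theorem gives a bounded $n$ with an $n$-complement $\Xi_F$ on the general fiber such that $n(K_F+\Delta|_F+\Xi_F)\sim 0$. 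The task is then to spread this fiberwise complement out to a global complement; this is where the restriction/extension morphism of Lemma~\ref{lem--extension-lc-center} (applied to $-(K_X+\Delta)$, which is nef and log big over $Z$, exactly as required) together with the surjectivity of sections becomes the central tool, letting me lift sections of a multiple of $-(K_X+\Delta)$ on a general fiber to sections on $X$ and thereby construct $\Xi$.

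For the construction of $\Xi$ over $Z$ I would use the structure of the generalized canonical bundle formula as in Theorem~\ref{thm--eff-vertical} and Theorem~\ref{thm--eff-dlt-general}: the divisor $-(K_X+\Delta)$ restricted over $Z$ descends to a generalized lc pair $(Z,\Delta_Z,\boldsymbol{\rm N})$ on $Z$ with coefficients of $\Delta_Z$ in a DCC (even finite, by global ACC) set and $q\boldsymbol{\rm N}$ b-Cartier, and the hypothesis that $-(K_X+\Delta)$ is \emph{log big} (not just big) is exactly what guarantees we are in the setting of Theorem~\ref{thm--eff-dlt-general}. Then $K_Z+\Delta_Z+\boldsymbol{\rm N}_Z$ is ample (being the descent of an ample class on the image), so one finds a bounded complement on $Z$ — a boundedness of complements for generalized lc pairs polarized by an ample divisor, which again follows from \cite{birkar-compl} in the generalized setting. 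Pulling this complement back to $X$ via $\psi$ and adding the vertical correction coming from $K_X+\Delta - \psi^*(K_Z+\Delta_Z+\boldsymbol{\rm N}_Z)$ yields the desired $\Xi$; the lc-ness of $(X,\Delta+\Xi)$ is checked using Lemma~\ref{lem--gen-member} for the general choice of the complementary divisor and Lemma~\ref{lem--hyperstandardset} to control round-downs in the hyperstandard coefficients.

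The main obstacle I expect is the passage from a fiberwise complement to a global one while keeping the lc property \emph{everywhere on $X$}, not just over the general point of $Z$ — in particular controlling the behaviour over the non-general (and possibly small or vertical) lc centers of $(X,\Delta)$ that map into proper subsets of $Z$. This is precisely the reason the log bigness of $-(K_X+\Delta)$, rather than mere bigness, is essential: it forces the restriction of $-(K_X+\Delta)$ to every lc center to remain big, so that the surjectivity in Lemma~\ref{lem--extension-lc-center} and the inductive structure of Theorem~\ref{thm--eff-dlt-general} can be run on each stratum. Organizing this induction on $\dim X$ (or on the dimension of lc strata), so that the integer $n$ stays uniform and depends only on $d$ and $\mathfrak R$, will be the technical heart of the argument, and it will mirror closely the inductive scheme already used in the proofs of Theorem~\ref{thm--eff-dlt-trivial} and Theorem~\ref{thm--eff-dlt-general}.
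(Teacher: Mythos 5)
There is a genuine gap, and it lies at the foundation of your strategy. You propose to contract $X$ via the semi-ample divisor $-(K_{X}+\Delta)$ to a base $Z$, divide into cases according to $\dim Z$, produce a complement on the general fiber, and spread it out over $Z$ using the canonical bundle formula machinery of Theorems \ref{thm--eff-vertical} and \ref{thm--eff-dlt-general}. But log bigness of $-(K_{X}+\Delta)$ with respect to $(X,\Delta)$ includes, by Definition \ref{defn--abund}, bigness on $X$ itself. Hence the contraction defined by $-(K_{X}+\Delta)$ is birational: $\dim Z=\dim X$ and the general fiber is a point. The cases ``$\dim Z=0$'' and ``$\dim Z>0$ with $-(K_{X}+\Delta)\sim_{\mathbb{Q},Z}0$ and positive-dimensional fibers on which $-(K_{X}+\Delta)$ is ample'' (which is in any case self-contradictory) do not occur, so there is no fibration over which to run Theorem \ref{thm--eff-dlt-general} — a theorem that moreover concerns $K_{X}+\Delta+\boldsymbol{\rm M}_{X}\sim_{\mathbb{Q},Z}0$, not $-(K_{X}+\Delta)$. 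The fiberwise-to-global spreading argument therefore has nothing to spread, and the ``technical heart'' you identify is an artifact of this misconception.

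The correct induction, and the one the paper runs, is on $\dim X$ via divisorial adjunction to the components $S$ of $\lfloor\Delta\rfloor$, in the spirit of Birkar's boundedness of complements. If $(X,\Delta)$ is klt the statement is Birkar's theorem. Otherwise one fixes a log resolution $f\colon\tilde X\to X$ which is an isomorphism over the generic points of the lc centers, proves that the restriction map $H^{0}(\tilde X,\mathcal{O}_{\tilde X}(\lfloor -mp\tilde D\rfloor))\to H^{0}(\tilde S,\mathcal{O}_{\tilde S}(\lfloor -mp\tilde D\rfloor|_{\tilde S}))$ is surjective using the Kodaira-type vanishing theorem for nef and log big divisors — this is where the log bigness, Lemma \ref{lem--hyperstandardset}, and the construction of Lemma \ref{lem--extension-lc-center} genuinely enter; you correctly sensed these are the central tools but applied them to the wrong morphism — then obtains a bounded complement $\Delta_{S}^{+}$ on $(S,\Delta_{S})$ by the induction hypothesis, lifts $n(\Delta_{S}^{+}-\Delta_{S})$ to a member of $|{-n(K_{X}+\Delta)}|$ via the surjectivity, and checks lc-ness near $S$ by Kawakita's inversion of adjunction. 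Lemma \ref{lem--gen-member} handles all components of $\lfloor\Delta\rfloor$ simultaneously, and the final passage from ``lc near $\lfloor\Delta\rfloor$'' to ``lc on all of $X$'' — a globalization step your proposal does not address at all — is the connectedness principle for non-klt loci.
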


\begin{proof}
We prove the theorem by induction on the dimension of $X$. 

If $(X,\Delta)$ is klt, then the existence of $n$ is proved by Birkar \cite{birkar-compl}. 
So we may assume that $(X,\Delta)$ is not klt. 
Let $p$ be the minimum positive integer such that $pa \in \mathbb{Z}$ for all $a \in \mathfrak{R}$. 

Since $(X,\Delta)$ is dlt, there is a log resolution $f \colon \tilde{X} \to X$ of $(X,\Delta)$ such that $f$ is an isomorphism over an open subset $U \subset X$ containing all the generic points of lc centers of $(X,\Delta)$. 
We put $\tilde{D}=f^{*}(K_{X}+\Delta)$. We put $\tilde{S}=f_{*}^{-1}S$ for any component $S$ of $\lfloor \Delta \rfloor$. 
We prove Theorem \ref{thm--compl-dlt} in two steps. 

\begin{step2}\label{step1--compl-dlt}
In this step. we prove that the morphsim 
$$H^{0}(\tilde{X},\mathcal{O}_{\tilde{X}}(\lfloor -mp \tilde{D} \rfloor))\longrightarrow H^{0}(\tilde{S},\mathcal{O}_{\tilde{S}}(\lfloor -mp \tilde{D} \rfloor|_{\tilde{S}}))$$
defined with Definition \ref{defn--rest-mor} is surjective for every component $S$ of $\lfloor \Delta \rfloor$ and every positive integer $m$. 
We closely follow the proof of Lemma \ref{lem--extension-lc-center}.  

We can write
$$K_{\tilde{X}}+\tilde{\Delta}=f^{*}(K_{X}+\Delta)+\tilde{E},$$
where $\tilde{\Delta}\geq0$ and $\tilde{E}\geq0$ have no common components.  Then $-\tilde{D}$ is nef and log big with respect to $(\tilde{X},\tilde{\Delta})$, and we can write 
$$K_{\tilde{X}}+\tilde{\Delta}-(mp+1)\tilde{D}=-mp \tilde{D}+\tilde{E}.$$
We can write $-mp\tilde{D}=\lfloor -mp\tilde{D} \rfloor + \{-mp f_{*}^{-1}\Delta\} +\tilde{E}'$ with an effective $f$-exceptional $\mathbb{Q}$-divisor $\tilde{E}'$. 
We put 
$$\tilde{\Delta}'=\tilde{\Delta}-\{-mp f_{*}^{-1}\Delta\}-\tilde{S}.$$ 
By Lemma \ref{lem--hyperstandardset}, we have $\tilde{\Delta}'\geq0$ and
$$K_{\tilde{X}}+\tilde{S}+\tilde{\Delta}'-(mp+1)\tilde{D}=\lfloor -mp\tilde{D} \rfloor +\tilde{E}+ \tilde{E}'.$$

We define a Weil divisor $\tilde{F}$ on $\tilde{X}$ so that 
\begin{equation*}
{\rm coeff}_{P}(\tilde{F})=\left\{  \begin{array}{l}{0 \qquad ({\rm coeff}_{P}(\tilde{\Delta}'-\{(\tilde{E}+\tilde{E}')\})\geq 0)} \\ {1 \qquad  ({\rm coeff}_{P}(\tilde{\Delta}'-\{(\tilde{E}+\tilde{E}')\})< 0)} \end{array}\right. \end{equation*} 
for every prime divisor $P$. 
We put
$$\tilde{B}=\tilde{\Delta}'-\{(\tilde{E}+\tilde{E}')\}+\tilde{F} \qquad {\rm and} \qquad \tilde{G}=\lfloor( \tilde{E} + \tilde{E}')\rfloor +\tilde{F}.$$
Then we have
\begin{equation*}
K_{\tilde{X}}+\tilde{S}+\tilde{B} -(mp+1) \tilde{D}=\lfloor -mp \tilde{D} \rfloor  +\tilde{G}.
\end{equation*}
By the definition, we can check that $\tilde{B}$ is a boundary divisor, $\lfloor \tilde{B} \rfloor \leq \lfloor \tilde{\Delta}' \rfloor$, and $\tilde{G}$ is an effective $f$-exceptional Weil divisor. 
Then $-\tilde{D}$ is nef and log big with respect to $(\tilde{X},\tilde{S}+\tilde{B})$. 
By the Kodaira type vanishing theorem \cite[Lemma 1.5]{fujino-abund-logbig}, we see that 
$$H^{0}(\tilde{X},\mathcal{O}_{\tilde{X}}(\lfloor -mp \tilde{D} \rfloor+\tilde{G}))\longrightarrow H^{0}(\tilde{S},\mathcal{O}_{\tilde{S}}((\lfloor -mp \tilde{D} \rfloor +\tilde{G})|_{\tilde{S}}))$$
is surjective. 
Since $\tilde{G}$ is $f$-exceptional, there are natural isomorphisms
$$H^{0}(\tilde{X},\mathcal{O}_{\tilde{X}}(\lfloor -mp \tilde{D} \rfloor)) \simeq H^{0}(X,\mathcal{O}_{X}(\lfloor -mp (K_{X}+\Delta) \rfloor)) \simeq H^{0}(\tilde{X},\mathcal{O}_{\tilde{X}}(\lfloor -mp \tilde{D} \rfloor+\tilde{G})).$$
From the following diagram
 $$
\xymatrix{
H^{0}(\tilde{X},\mathcal{O}_{\tilde{X}}(\lfloor -mp \tilde{D} \rfloor+\tilde{G})) \ar@{->>}[r]&H^{0}(\tilde{S},\mathcal{O}_{\tilde{S}}((\lfloor -mp \tilde{D} \rfloor +\tilde{G})|_{\tilde{S}}))\\
H^{0}(\tilde{X},\mathcal{O}_{\tilde{X}}(\lfloor -mp \tilde{D} \rfloor)) \ar[u]^{\simeq}\ar[r]&H^{0}(\tilde{S},\mathcal{O}_{\tilde{S}}(\lfloor -mp \tilde{D} \rfloor|_{\tilde{S}})),\ar@{^{(}->}[u]
}
$$
the lower horizontal morphism is surjective. 
We finish this step. 
\end{step2}

For each component $S$ of $\lfloor \Delta \rfloor$, we define a dlt pair $(S,\Delta_{S})$ by divisorial adjunction $K_{S}+\Delta_{S}=(K_{X}+\Delta)|_{S}$. 
By Remark \ref{rem--coeff-adj}, there exists a finite set $\mathfrak{S}\subset [0,1]$ of rational numbers, depending only on $d$ and $\mathfrak{R}$, such that all the coefficients of $\Delta_{S}$ belong to the hyperstandard set associated to $\mathfrak{S}$. 
By the induction hypothesis of Theorem \ref{thm--compl-dlt}, there is an $n'$, depending only on $d-1$ and $\mathfrak{S}$, such that for every $(S,\Delta_{S})$ where $S$ is a component of $\lfloor \Delta \rfloor$, there is 
$$\Delta_{S}^{+} \geq \Delta_{S}$$
such that $(S,\Delta_{S}^{+})$ is lc and $n'(K_{S}+\Delta_{S}^{+})\sim 0$. 

\begin{step2}\label{step2--compl-dlt}
We put $n=n'p$. 
From now on, we prove that the $n$ satisfies the property of Theorem \ref{thm--compl-dlt}. 

Pick a component $S$ of $\lfloor \Delta \rfloor$, put $\tilde{S}=f_{*}^{-1}S$, and take a $\mathbb{Q}$-divisor $\Delta_{S}^{+} \geq \Delta_{S}$ on $S$ such that $(S,\Delta_{S}^{+})$ is lc and $n'(K_{S}+\Delta_{S}^{+})\sim 0$. 
We put 
$$C_{S}=n(\Delta^{+}_{S}-\Delta_{S}) \sim -n(K_{S}+\Delta_{S}).$$ 
We put $C_{\tilde{S}}=f_{\tilde{S}}^{*}C_{S}$, where $f_{\tilde{S}}=f|_{\tilde{S}} \colon \tilde{S} \to S$. 
Then $-n \tilde{D}|_{\tilde{S}}\sim C_{\tilde{S}}$. 
We put $\tilde{R}=\{ -n \tilde{D} \}$. 
Then 
$$-n \tilde{D}|_{\tilde{S}}=\lfloor -n \tilde{D} \rfloor|_{\tilde{S}}+\tilde{R}|_{\tilde{S}},$$ and $\tilde{R}|_{\tilde{S}}$ is well-defined as a $\mathbb{Q}$-divisor on $\tilde{S}$.  
We have $\lfloor \tilde{R}|_{\tilde{S}} \rfloor=0$ since ${\rm Supp}(\tilde{R}+\tilde{S})$ is snc. 
Therefore, we have 
$$\lfloor -n \tilde{D} \rfloor|_{\tilde{S}}=\lfloor -n \tilde{D}|_{\tilde{S}} \rfloor \quad  {\rm and} \quad \tilde{R}|_{\tilde{S}}=\{-n \tilde{D}|_{\tilde{S}}\}.$$ 
Therefore, we have
$$\lfloor -n \tilde{D} \rfloor|_{\tilde{S}} \sim \lfloor C_{\tilde{S}} \rfloor \qquad {\rm and} \qquad \tilde{R}|_{\tilde{S}}=\{C_{\tilde{S}}\}.$$
By Step \ref{step1--compl-dlt}, we can find an effective Weil divisor $\tilde{C}\sim \lfloor -n \tilde{D} \rfloor$ such that $\tilde{C}|_{\tilde{S}}=\lfloor C_{\tilde{S}} \rfloor$. 
We put $C=f_{*}(\tilde{C}+\tilde{R})$. 
Since $\tilde{C}+\tilde{R}\sim -n \tilde{D}$ and $\tilde{D}=f^{*}(K_{X}+\Delta)$, we can easily check $C\sim -n(K_{X}+\Delta)$ and $\tilde{C}+\tilde{R}=f^{*}C$. 
By definition of $C$, we see that 
$$C|_{S}=f_{\tilde{S}*}((\tilde{C}+\tilde{R})|_{\tilde{S}})=f_{\tilde{S}*}C_{\tilde{S}}=C_{S}=n(\Delta^{+}_{S}-\Delta_{S}).$$
From this and inversion of adjunction \cite{kawakita}, the pair $(X,\Delta+\frac{1}{n}C)$ is lc on a neighborhood of $S$. 
We define $\tilde{\Gamma}$ by $K_{\tilde{X}}+\tilde{\Gamma}=f^{*}(K_{X}+\Delta)$. 
Since $f^{*}C=\tilde{C}+\tilde{R}$, we see that the sub-pair $(\tilde{X},\tilde{\Gamma}+\frac{1}{n}(\tilde{C}+\tilde{R}))$ is sub-lc on a neighborhood of $f^{-1}(S)$. 

In this way,  for every component $S$ of $\lfloor \Delta \rfloor$, we may find $\tilde{C}\in|\lfloor -n \tilde{D} \rfloor|$ such that the sub-pair $(\tilde{X},\tilde{\Gamma}+\frac{1}{n}\tilde{R}+\frac{1}{n}\tilde{C})$ is sub-lc on a neighborhood of $f^{-1}(S)$.  
By Lemma \ref{lem--gen-member}, there is 
$$\tilde{M}\in|\lfloor -n \tilde{D} \rfloor|$$ such that $(\tilde{X},\tilde{\Gamma}+\frac{1}{n}\tilde{R}+\frac{1}{n}\tilde{M})$ is sub-lc on a neighborhood of $f^{-1}(\lfloor \Delta \rfloor)$. 
By construction, we have $\tilde{M}+\tilde{R}\sim-nf^{*}(K_{X}+\Delta)$. 
By putting 
$$M=f_{*}(\tilde{M}+\tilde{R}),$$ we have $M \sim -n(K_{X}+\Delta)$ and the pair $(X,\Delta+\frac{1}{n}M)$ is lc on a neighborhood of $\lfloor \Delta \rfloor$. 
\end{step2}

Finally, if $(X,\Delta+\frac{1}{n}M)$ is not lc, then there is $\epsilon \in (0, \frac{1}{n})$ such that the non-klt locus of $(X,\Delta+(\frac{1}{n}-\epsilon)M)$ has at least two connected components. 
Then we get a contradiction by the connectedness principle for non-klt locus \cite[17.4 Theorem]{kollar}. 
Thus, $(X,\Delta+\frac{1}{n}M)$ is lc, and therefore $\Xi:=\frac{1}{n}M$ is the desired divisor. 
\end{proof}

\begin{proof}[Proof of Theorem \ref{thm--compl-dlt-intro}]
We will freely use the notations in Theorem \ref{thm--compl-dlt-intro}. 
We may assume that $K_{X}+\Delta+M$ is not big. 
By the argument similar to the proof of Theorem \ref{thm--base-iitaka-intro}, the global ACC \cite[Theorem 1.6]{bz} implies the existence of a positive integer $m$, depending only on $d$ and $\Phi$, such that $m\Delta'_{F}$ is a Weil divisor and $mM$ is Cartier. 
By Theorem \ref{thm--eff-dlt-trivial}, we have 
$n_{1}(K_{F}+\Delta'_{F}+M'_{F})\sim 0$ for some positive integer $n_{1}$ which depends only on $d$ and $m$. 
If there exists $n_{2} \in \mathbb{Z}_{>0}$ depending only on $d$ and $m$ such that $(F,\Delta'_{F}+\frac{1}{n_{2}}B'_{F})$ is lc for some 
$B'_{F}\in |-n_{2}(K_{F}+\Delta'_{F})|,$
 then $n:=n_{1}n_{2}$ is the desired positive integer. 
In this way, we only need to prove the existence of the above $n_{2} \in \mathbb{Z}_{>0}$. 

We run a $(K_{X}+\Delta+3dmM)$-MMP with scaling of an ample divisor. 
By the length of extremal rays (\cite[Section 18]{fujino-fund} or \cite[Theorem 4.6.2]{fujino-book}) and the same argument as in the proof of Theorem \ref{thm--mmp-neflogbig}, we get a sequence of steps of a $(K_{X}+\Delta+3dmM)$-MMP to a good minimal model 
$$(X,\Delta,3dm\overline{M}) \dashrightarrow (\tilde{X},\tilde{\Delta},3dm\overline{M})$$ such that 
\begin{itemize}
\item
$X \dashrightarrow \tilde{X}$ is a sequence of steps of a $(K_{X}+\Delta+M)$-MMP, and
\item 
defining $\tilde{M}$ to be the birational transform of $M$ on $\tilde{X}$, then $m\tilde{M}$ is a nef Cartier divisor on $\tilde{X}$ that is log big with respect to $(\tilde{X},\tilde{\Delta})$. 
\end{itemize}
With the argument in \cite[Proof of Lemma 2.14]{has-mmp}, we may construct a sequence of steps of a $(K_{\tilde{X}}+\tilde{\Delta}+\tilde{M})$-MMP with scaling of $(3dm-1)\tilde{M}$ terminating with a good minimal model 
$$(\tilde{X},\tilde{\Delta},\overline{M}) \dashrightarrow (\tilde{X}',\tilde{\Delta}',\overline{M}).$$

Let $\tilde{M}'$ be the birational transform of $M$ on $\tilde{X}'$. 
By construction, $K_{\tilde{X}'}+\tilde{\Delta}'+(1+t)\tilde{M}'$ is nef for some $t>0$. 
Moreover, taking a common resolution of $\tilde{X} \dashrightarrow \tilde{X}'$ and using the negativity lemma, we see that $\tilde{M}'$ is log big with respect to $(\tilde{X}',\tilde{\Delta}')$. 
Let $\tilde{\pi}\colon \tilde{X}' \to Z$ be the contraction induced by $K_{\tilde{X}'}+\tilde{\Delta}'+\tilde{M}'$, and let $\tilde{F}$ be the general fiber of $\tilde{\pi}$. 
We put $\tilde{\Delta}'_{\tilde{F}}=\tilde{\Delta}'|_{\tilde{F}}$ and $\tilde{M}'_{\tilde{F}}=\tilde{M}'|_{\tilde{F}}$. 

Since $K_{\tilde{X}'}+\tilde{\Delta}'+\tilde{M}' \sim_{\mathbb{Q},Z}0$ and $K_{\tilde{X}'}+\tilde{\Delta}'+(1+t)\tilde{M}'$ is nef for some $t>0$, we see that the divisor 
$$\tilde{M}'\sim_{\mathbb{Q},Z}-(K_{\tilde{X}'}+\tilde{\Delta}')$$
 is nef and log big over $Z$ with respect to $(\tilde{X}',\tilde{\Delta}')$. 
By Theorem \ref{thm--compl-dlt}, there exists a positive integer $n_{2}$, depending only on $d$ and $m$, such that $(\tilde{F},\tilde{\Delta}'_{\tilde{F}}+\frac{1}{n_{2}}\tilde{B}'_{\tilde{F}})$ is an lc pair for some $\tilde{B}'_{\tilde{F}}\in |-n_{2}(K_{\tilde{F}}+\tilde{\Delta}'_{\tilde{F}})|$. 
Because $X\dashrightarrow X'$ and $X \dashrightarrow \tilde{X}'$ are sequences of steps of $(K_{X}+\Delta+M)$-MMP to good minimal models, the induced birational map $X'\dashrightarrow \tilde{X}'$ is isomorphic in codimension one, and therefore so is $F \dashrightarrow \tilde{F}$. 
Then $(F,\Delta'_{F}+\frac{1}{n_{2}}B'_{F})$ is lc for some $B'_{F}\in |-n_{2}(K_{F}+\Delta'_{F})|$. 
Hence $n:=n_{1}n_{2}$ satisfies the conditions of Theorem \ref{thm--compl-dlt-intro}. 
\end{proof}

The following example shows that the condition of the log bigness of the nef divisors in Theorem \ref{thm--eff-iitaka-intro} and Theorem \ref{thm--compl-dlt-intro} cannot be relaxed to the bigness.

\begin{exam}\label{exam-eff-nonvan-counter}Let $(X,\Delta)$ be a projective log smooth dlt pair such that $\Delta$ is a reduced divisor, $H^{1}(X,\mathcal{O}_{X})\neq 0$, and $-(K_{X}+\Delta)$ is nef and big. For example, take an elliptic curve $E$ with a very ample divisor $H$ and define $$X:=\mathbb{P}_{E}(\mathcal{O}_{E}\oplus \mathcal{O}_{E}(-H)).$$ For each $n \geq 1$, let $D_{n}$ be a Cartier divisor on $X$ such that $nD_{n} \sim 0$ and $lD_{n} \not\sim 0$ for all $0<l<n$. We define $L_{n}:=-(K_{X}+\Delta)+D_{n}$, and we consider the set $\{(X,\Delta,\overline{L_{n}})\}_{n\geq 1}$. If  the log bigness of $M_{i}$ in Theorem \ref{thm--eff-iitaka-intro} or Theorem \ref{thm--compl-dlt-intro} can be relaxed to the bigness, then there is a positive integer $m$ such that $mD_{n}=m(K_{X}+\Delta+L_{n})\sim 0$ for all $n$, which contradicts the definition of $D_{n}$. Therefore, the log bigness of $M_{i}$ in Theorem \ref{thm--eff-iitaka-intro} and Theorem \ref{thm--compl-dlt-intro} cannot be relaxed to the bigness. \end{exam}

\subsection{Effective finite generation}

In this subsection, we prove Theorem \ref{thm--base-boundedness-intro}. 

We will first prove the effective base point free theorem for special generalized dlt pairs (Theorem \ref{thm--eff-basepoint-free-gen-dlt}). 
For the proof, we need the base point free theorem for quasi-log schemes \cite[Theorem 6.5.1]{fujino-book} (see also \cite[Theorem 5.1]{ambro-quasi-log}). 

\begin{defn}[{Quasi-log scheme, \cite[Definition 6.2.2]{fujino-book}, see also \cite[Definition 4.1]{ambro-quasi-log}}]\label{x-def7.2}
A {\em{quasi-log scheme}} is a scheme $X$ endowed with an $\mathbb R$-Cartier divisor (or $\mathbb R$-line bundle) $\omega$ on $X$, a closed subscheme $X_{-\infty}\subsetneq X$, and a finite collection $\{C\}$ of reduced and irreducible subschemes of $X$ such that there is a proper morphism $f\colon (Y, B_Y)\to X$ from a globally embedded simple normal crossing pair satisfying the following properties: 
\begin{itemize}
\item $f^*\omega\sim_{\mathbb R}K_Y+B_Y$, 

\item the natural map $\mathcal O_X \to f_*\mathcal O_Y(\lceil -(B_Y^{<1})\rceil)$ induces an isomorphism 
$$
\mathcal I_{X_{-\infty}}\overset{\simeq}{\longrightarrow} f_*\mathcal O_Y(\lceil -(B_Y^{<1})\rceil-\lfloor B_Y^{>1}\rfloor),  
$$ 
where $\mathcal I_{X_{-\infty}}$ is the defining ideal sheaf of $X_{-\infty}$, and

\item the collection of reduced and irreducible subschemes 
$\{C\}$ coincides with the images 
of the strata of $(Y, B_Y)$ that are not included in $X_{-\infty}$. 
\end{itemize}

We simply write $[X, \omega]$ to denote the above data 
$$
\left(X, \omega, f\colon (Y, B_Y)\to X\right)
$$ 
if there is no risk of confusion. 
We sometimes use ${\rm Nqlc}(X, \omega)$ to denote $X_{-\infty}$. 
\end{defn}

\begin{thm}[{\cite[Theorem 6.5.1]{fujino-fund}}]\label{thm--quasi-log-trivial}
Let $[X, \omega]$ be a quasi-log scheme and $\pi \colon X \to S$ a projective morphism to a scheme $S$. 
Let $D$ be a $\pi$-nef Cartier divisor on $X$ such that $pD- \omega$ is $\pi$-ample for some positive integer $p$ and $\mathcal{O}_{X_{-\infty}}(mD|_{X_{-\infty}})$ is $\pi|_{X_{-\infty}}$-generated for all $m\gg 0$. 
Then there is a positive integer $m_{0}$ such that $\mathcal{O}_{X}(mD)$ is $\pi$-generated for every integer $m$ such that $m\geq  m_{0}$.
\end{thm}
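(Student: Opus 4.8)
The plan is to follow the X-method (the Kawamata--Shokurov--Reid basepoint-free technique) in the form developed for quasi-log schemes by Ambro and Fujino, so that the statement is essentially \cite[Theorem 6.5.1]{fujino-book}; here I only sketch the strategy. First I would reduce to the case where $S$ is affine, and hence (after shrinking) to the absolute situation in which ``$\pi$-nef'', ``$\pi$-ample'' and ``$\pi$-generated'' mean ``nef'', ``ample'' and ``globally generated'': this is harmless because quasi-log structures localize, and cohomology vanishing and global generation can be checked after a flat affine base change. The two structural inputs are the \emph{fundamental theorem of quasi-log schemes} (adjunction together with vanishing: if $W\subset X$ is a union of qlc strata, then $[W,\omega|_{W}]$ carries an induced quasi-log structure and $R^{i}\pi_{*}(\mathcal{I}_{W}\otimes\mathcal{O}_{X}(L))=0$ for $i>0$ whenever $L-\omega$ is $\pi$-ample, so sections on $W$ extend to $X$) and the \emph{non-vanishing theorem} for quasi-log schemes. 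Observe that since $D$ is nef and $pD-\omega$ is ample, $mD-\omega=(pD-\omega)+(m-p)D$ is ample for every $m\ge p$; this is what lets the vanishing theorem apply to all large multiples of $D$ uniformly.

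The first real step is to dispose of $X_{-\infty}={\rm Nqlc}(X,\omega)$. Applying the vanishing theorem to the nqlc ideal sheaf gives $H^{1}(X,\mathcal{I}_{X_{-\infty}}\otimes\mathcal{O}_{X}(mD))=0$ for every $m\ge p$, hence the restriction map $H^{0}(X,\mathcal{O}_{X}(mD))\to H^{0}(X_{-\infty},\mathcal{O}_{X_{-\infty}}(mD|_{X_{-\infty}}))$ is surjective for all such $m$. Combined with the hypothesis that $\mathcal{O}_{X_{-\infty}}(mD|_{X_{-\infty}})$ is generated for $m\gg0$, this shows $\mathcal{O}_{X}(mD)$ is generated at every point of $X_{-\infty}$ for $m\gg0$. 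So it remains to make $\mathcal{O}_{X}(mD)$ generated along $X\setminus X_{-\infty}$, i.e.\ to prove the assertion for the ``qlc part'' of $[X,\omega]$.

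Next I would run the X-method on $[X,\omega]$ by Noetherian induction on the finite set of qlc strata (and by induction on dimension). Fix a closed point $x\in X\setminus X_{-\infty}$. Using the non-vanishing theorem and the positivity of $D$, produce for a suitable $k$ an effective member of $|kD|$, pass from $\omega$ to a perturbation $\omega'=\omega+\varepsilon(\text{this member})$ with $0<\varepsilon\ll1$, and re-run adjunction so that $x$ now lies on a qlc stratum $C$ of $[X,\omega']$ of strictly smaller dimension (or inside a strictly smaller union of strata); one keeps $k'D-\omega'$ ample for appropriate $k'$ because $D$ is nef and the perturbation is small. By the inductive hypothesis applied to the quasi-log scheme $[C,\omega'|_{C}]$, $\mathcal{O}_{C}(mD|_{C})$ is generated near $x$ for $m\gg0$, and the extension theorem (vanishing for $\mathcal{I}_{C}$) lifts a generating section to $X$, so $x\notin {\rm Bs}\,|mD|$ for $m\gg0$. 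Running over the finitely many strata and using quasi-compactness yields that $\mathcal{O}_{X}(mD)$ is generated for some $m$; the passage to ``all $m\ge m_{0}$'' is then the standard Koll\'ar-type argument: the non-free loci ${\rm Bs}\,|mD|$ form a decreasing sequence which stabilizes, and basepoint-freeness of $|aD|$ and $|bD|$ propagates to $|(a+b)D|$.

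The main obstacle, and what makes the quasi-log formulation genuinely harder than the classical lc case, is the bookkeeping in the X-method when $X$ is reducible and non-normal and $X_{-\infty}$ is present: each perturbation from $\omega$ to $\omega'$ must be arranged so that no new component is created in ${\rm Nqlc}$, the coefficients on the globally embedded simple normal crossing model stay in a range where the vanishing theorem still applies, and the induced quasi-log structure on the new center $C$ remains compatible with adjunction. Establishing the non-vanishing theorem for quasi-log schemes, on which the entire argument rests, is itself the deepest ingredient; the rest is a careful but essentially routine adaptation of the proof of \cite[Theorem 3.3]{kollar-mori} to the quasi-log setting, as carried out in \cite{ambro-quasi-log} and \cite{fujino-book}.
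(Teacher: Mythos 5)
This statement is not proved in the paper at all: it is quoted as a known result (the base point free theorem for quasi-log schemes, \cite[Theorem 6.5.1]{fujino-book}, see also \cite[Theorem 5.1]{ambro-quasi-log}) and is used as a black box in Step 3 of the proof of Theorem \ref{thm--eff-basepoint-free-gen-dlt}, so there is no in-paper argument to compare yours against. That said, your sketch is a faithful outline of the proof in the cited sources: the reduction to $S$ affine, the observation that $mD-\omega$ remains $\pi$-ample for all $m\ge p$ because $D$ is $\pi$-nef, the use of the vanishing theorem applied to $\mathcal{I}_{X_{-\infty}}$ to lift generation from $X_{-\infty}$ to $X$, and the X-method with Noetherian induction on the finite set of qlc strata via adjunction and the extension property are exactly how Ambro and Fujino proceed. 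Two caveats. First, what you have is a strategy, not a proof: the two pillars you invoke --- the adjunction-plus-vanishing package for quasi-log schemes and the non-vanishing theorem --- are where essentially all of the difficulty lives, and you defer both entirely (as you acknowledge). Second, the final passage from ``$\mathcal{O}_X(mD)$ is generated for some $m$'' to ``for every $m\ge m_0$'' is stated too loosely: the base loci ${\rm Bs}\,|mD|$ are not decreasing in $m$ (only along multiples), so one must first establish freeness of $|q^{k}D|$ for two distinct primes $q$ and $k\gg0$, and then use the inclusion ${\rm Bs}\,|(a+b)D|\subset {\rm Bs}\,|aD|\cup{\rm Bs}\,|bD|$ together with the fact that every sufficiently large integer is a non-negative combination of two coprime ones. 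With those understood, the outline is correct and is the standard route.
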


\begin{thm}\label{thm--eff-basepoint-free-gen-dlt}
Let $d$ and $p$ be positive integers. 
Then there exists a positive integer $m$, depending only on $d$ and $p$, satisfying the following. 
Let $(X,\Delta,\boldsymbol{\rm M})$ be a generalized dlt pair such that
\begin{itemize}
\item
${\rm dim}X=d$, 
\item
$\boldsymbol{\rm M}$ is a $\mathbb{Q}$-b-divisor and $p(K_{X}+\Delta+\boldsymbol{\rm M}_{X})$ is nef and Cartier, and
\item
there is a log resolution $f\colon \tilde{X}\to X$ of $(X,\Delta)$ such that 
\begin{itemize}
\item
$f$ is an isomorphism over an open subset $U \subset X$ containing all the generic points of generalized lc centers of $(X,\Delta,\boldsymbol{\rm M})$, 
\item
$\boldsymbol{\rm M}$ descends to $\tilde{X}$, and 
\item
writing
$K_{\tilde{X}}+\tilde{\Delta}+\boldsymbol{\rm M}_{\tilde{X}}=f^{*}(K_{X}+\Delta+\boldsymbol{\rm M}_{X})+\tilde{E}$,
where $\tilde{\Delta}\geq0$ and $\tilde{E}\geq0$ have no common components, then $\boldsymbol{\rm M}_{\tilde{X}}$ is log big with respect to $(\tilde{X},\tilde{\Delta})$. 
\end{itemize}
\end{itemize}
Then $m(K_{X}+\Delta+\boldsymbol{\rm M}_{X})$ is base point free.
\end{thm}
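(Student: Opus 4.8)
The plan is to reduce the statement to an application of the base point free theorem for quasi-log schemes (Theorem \ref{thm--quasi-log-trivial}), with the effective control of $m$ coming from an effective base point free statement in the literature applied to an auxiliary klt-type situation, combined with the boundedness of the relevant numerical invariants. First I would fix notation: write $D=K_{\tilde{X}}+\tilde{\Delta}+\boldsymbol{\rm M}_{\tilde{X}}$ on $\tilde{X}$, and observe that by hypothesis $f^{*}\bigl(p(K_{X}+\Delta+\boldsymbol{\rm M}_{X})\bigr)=pD-p\tilde{E}$ with $p\tilde{E}$ an effective $f$-exceptional Cartier divisor, so $pD$ is nef and Cartier and $pD\sim_{\mathbb Z} f^{*}(p(K_X+\Delta+\boldsymbol{\rm M}_X))+p\tilde E$. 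The point of passing to $\tilde{X}$ is that $(\tilde{X},\tilde{\Delta})$ is log smooth and $\boldsymbol{\rm M}_{\tilde{X}}$ is nef and log big with respect to $(\tilde{X},\tilde{\Delta})$, which is exactly the kind of positivity needed to feed the quasi-log machinery; the exceptional divisor $\tilde E$ only helps, since adding it preserves the relevant cohomology groups (it is effective and exceptional) while keeping the log canonical structure.

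Next I would set up the quasi-log structure. Write $D = K_{\tilde X}+\tilde\Delta + \boldsymbol{\rm M}_{\tilde X}$. For $p'\gg p$ (but $p'$ depending only on $d$ and $p$ via boundedness, see below) one wants $p'D - D = (p'-1)D$ to be ``ample modulo the boundary'' in a suitable sense — more precisely, one decomposes $\boldsymbol{\rm M}_{\tilde X}$, using that it is nef and log big, into a semi-ample-type part plus a small ample perturbation, so that $(\tilde X, \tilde\Delta)$ together with a rescaling of $D$ forms a quasi-log scheme $[X,\omega]$ with $\omega$ a multiple of $K_{\tilde X}+\tilde\Delta+\boldsymbol{\rm M}_{\tilde X}$ minus an ample divisor, and with $X_{-\infty}=\emptyset$ (because $(\tilde X,\tilde\Delta)$ is a log smooth lc pair with $\lfloor\tilde\Delta^{>1}\rfloor=0$). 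Then Theorem \ref{thm--quasi-log-trivial} applied to $\pi = \mathrm{id}$ (or to the contraction $\tilde X \to Z$ induced by $p D$ after one checks $pD$ is semi-ample by \cite{fujino-eff-basepointfree} and the structure of Theorem \ref{thm--mmp-neflogbig-main-intro}) gives \emph{some} $m_0$ with $\mathcal O_{\tilde X}(m D)$ base point free for $m\ge m_0$. The issue is that this $m_0$ a priori depends on the pair.

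To make $m$ effective and depend only on $d$ and $p$, I would run an induction on the strata exactly as in the proofs of Theorem \ref{thm--eff-dlt-trivial} and Theorem \ref{thm--eff-dlt-general}: restrict to each generalized lc center $S$ of $(\tilde X,\tilde\Delta,\boldsymbol{\rm M})$ via the restriction morphism of Lemma \ref{lem--extension-lc-center} (which is surjective in precisely this log big situation), use Remark \ref{rem--coeff-adj} to control the coefficients and b-Cartier index of the adjoined data by $p$ and $d$ alone, and apply the inductive hypothesis on $\dim$ to get base point freeness along $\lfloor\tilde\Delta\rfloor$ with a uniform multiple. Off $\lfloor\tilde\Delta\rfloor$ one is in a generalized klt situation where one can invoke an effective base point free statement — the effective base point free theorem of Fujino \cite{fujino-eff-basepointfree} in the $\mathbb Q$-boundary case, or the boundedness of the relevant Cartier index and length of extremal rays — to get a uniform bound. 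Gluing the two bounds (and using a Noetherian/vanishing argument via \cite[Lemma 1.5]{fujino-abund-logbig} to propagate sections from the boundary to the whole space, again in the spirit of Step 1 of the proof of Theorem \ref{thm--compl-dlt}) yields a single $m$ depending only on $d$ and $p$.

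The main obstacle I anticipate is the effectivity of $m$ in the ``interior'' generalized klt part: making sure that the bound coming out of the quasi-log base point free theorem (or the klt effective base point free theorem) depends only on $d$ and $p$ and not on further invariants of $(X,\Delta,\boldsymbol{\rm M})$. This is where one genuinely needs the log bigness hypothesis — it is what forces the nef part $\boldsymbol{\rm M}_{\tilde X}$ (and its restrictions to all strata) to be big, so that a bounded perturbation makes everything ample and the available effective results apply with uniform constants. A secondary technical point is checking the hypothesis of Theorem \ref{thm--quasi-log-trivial} on $X_{-\infty}$, but since $(\tilde X,\tilde\Delta)$ is log smooth lc, $X_{-\infty}=\emptyset$ and that hypothesis is vacuous. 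I would structure the write-up as: (Step 1) reduce to $\tilde X$ and set up notation; (Step 2) handle the boundary $\lfloor\tilde\Delta\rfloor$ by induction plus divisorial adjunction and the surjectivity of Lemma \ref{lem--extension-lc-center}; (Step 3) handle the complement via an effective base point free input for the generalized klt case; (Step 4) glue using a vanishing/lifting argument to produce the uniform $m$.
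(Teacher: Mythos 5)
Your opening moves match the paper's: induction on dimension, the generalized klt case via Koll\'ar's effective base point free theorem (after absorbing the big nef part into an ample divisor plus a klt boundary), and the use of the surjectivity in Lemma \ref{lem--extension-lc-center} together with divisorial adjunction and Remark \ref{rem--coeff-adj} to get, by induction, a uniform $m'$ such that $m'(K_X+\Delta+\boldsymbol{\rm M}_X)$ is free on a neighborhood of $\lfloor\Delta\rfloor$. But there is a genuine gap in how you propose to finish, i.e.\ in your ``Step 3'' and ``Step 4''.

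First, the quasi-log structure cannot be set up with $X_{-\infty}=\emptyset$. The paper builds $[X,K_X+B]$ (on $X$, not on $\tilde X$) so that $K_X+B$ is an ample perturbation downward of $K_X+\Delta+\boldsymbol{\rm M}_X$ and the non-qlc locus is exactly $\lfloor\Delta\rfloor$; this is the whole point, because the hypothesis of Theorem \ref{thm--quasi-log-trivial} that the sheaf be generated on $X_{-\infty}$ is then precisely what the inductive boundary step supplies. If instead you keep all the lc strata as qlc strata (your $X_{-\infty}=\emptyset$ setup), the theorem still only returns a non-effective $m_0$ and you have gained nothing. Second, and more seriously, your plan to ``handle the complement via an effective base point free input for the generalized klt case'' and then ``glue'' does not make sense when $(X,\Delta,\boldsymbol{\rm M})$ is not generalized klt: the complement of $\lfloor\Delta\rfloor$ is an open subvariety, not a projective pair to which Koll\'ar's theorem applies, and base point freeness is not a property one can glue from an open cover. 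The paper's actual route is different: it uses the non-effective quasi-log theorem only to produce \emph{two consecutive} free multiples, concluding that $m'L\sim\pi^*D_Z$ for an ample Cartier divisor $D_Z$ on the image $Z$ of the semi-ample contraction; then it splits into the case where some component of $\lfloor\Delta\rfloor$ dominates $Z$ (where freeness of $m'L|_{S'}$ immediately forces $D_Z$ free) and the case where $\lfloor\Delta\rfloor$ is vertical over $Z$, which requires the substantial argument of Step 5 --- cutting with $d+1$ general members of $|lD_Z|$, computing a log canonical threshold over the base locus, and a Riemann--Roch/vanishing non-vanishing argument showing $\dim Z_{(j'+d+2)l}<\dim Z_l$ for some $0\le j'\le d$, whence $m=((2d+2)!)^d m'$. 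This last step is the real content of the effectivity and is absent from your proposal.
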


\begin{proof}
We prove it by induction on the dimension of $X$. 

If $(X,\Delta,\boldsymbol{\rm M})$ is generalized klt, then we may find a $\mathbb{Q}$-divisor $E \geq 0$ on $X$ such that  $\boldsymbol{\rm M}_{X}-E$ is ample and $(X,\Delta+E)$ is klt. 
Then Theorem \ref{thm--eff-basepoint-free-gen-dlt} follows from the effective base point free theorem for klt pairs \cite{kollar-eff-basepoint-free}. 
Thus, we may assume that $(X,\Delta,\boldsymbol{\rm M})$ is not generalized klt. 

\begin{step3}\label{step1--eff-basepoint-free-gen-dlt}
In this step, we define the structure of a quasi-log scheme on $X$. 

Let $A$ be an ample $\mathbb{Q}$-divisor on $X$. 
Since $\boldsymbol{\rm M}_{\tilde{X}}$ is nef and big, rescaling $A$ if necessary, we can find 
$$\tilde{R} := \tilde{R}_{1}+\tilde{R}_{2}\in |\boldsymbol{\rm M}_{\tilde{X}}-f^{*}A|_{\mathbb{Q}}$$
such that $\tilde{R}_{1}$ is ample and $\tilde{R}_{2} \geq 0$. 
We take a log resolution $g \colon  Y \to \tilde{X}$ of $(\tilde{X}, \tilde{\Delta}+\tilde{E}+\tilde{R})$, where $\tilde{\Delta}$ and $\tilde{E}$ are as in Theorem \ref{thm--eff-basepoint-free-gen-dlt}, such that $-E_{Y}$ is $g$-ample for a $g$-exceptional  $\mathbb{Q}$-divisor $E_{Y}\geq0$. 
Rescaling $E_{Y}$, we may assume that $-E_{Y}+g^{*}\tilde{R}_{1}$ is ample. 
We put $h=f \circ g \colon Y \to X$. 
Then, for every $0< t \ll 1$ we have
\begin{equation*}
\begin{split}
g^{*}\boldsymbol{\rm M}_{\tilde{X}}-th^{*}A\sim_{\mathbb{Q}}&(1-t)g^{*}\boldsymbol{\rm M}_{\tilde{X}}+t g^{*}\tilde{R}\\
=&(1-t)g^{*}\boldsymbol{\rm M}_{\tilde{X}}+t(g^{*}\tilde{R}_{1}-E_{Y})+tE_{Y}+tg^{*}\tilde{R}_{2}.
\end{split}
\end{equation*}
We define 
$\Delta_{Y}$ by $K_{Y}+\Delta_{Y}=g^{*}(K_{\tilde{X}}+\tilde{\Delta}-\tilde{E}).$ 
By choosing $t>0$ sufficiently small, we can find 
$$R_{Y}\sim_{\mathbb{Q}}g^{*}\boldsymbol{\rm M}_{\tilde{X}}-th^{*}A$$ such that
\begin{itemize} 
\item
$R_{Y}$ is the sum of an effective ample $\mathbb{Q}$-divisor and an effective $\mathbb{Q}$-divisor, 
\item
$(Y,\Delta_{Y}+R_{Y})$ is log smooth, and 
\item
$\lfloor (\Delta_{Y}+R_{Y})\rfloor = \lfloor \Delta_{Y}\rfloor$. 
\end{itemize}
By perturbation of the coefficients of $\Delta_{Y}+R_{Y}$, we may assume that no coefficient of $\Delta_{Y}+R_{Y}$ is one. 
We put 
$$B_{Y}=\Delta_{Y}+R_{Y} \quad {\rm and} \quad B=h_{*}B_{Y}.$$ 
Then $K_{Y}+B_{Y}\sim_{\mathbb{Q}}(K_{Y}+\Delta_{Y})+g^{*}\boldsymbol{\rm M}_{\tilde{X}}-th^{*}A$, and therefore we have
$$K_{Y}+B_{Y}\sim_{\mathbb{Q}} g^{*}(K_{\tilde{X}}+\tilde{\Delta}-\tilde{E})+g^{*}\boldsymbol{\rm M}_{\tilde{X}}-h^{*}(tA)=h^{*}(K_{X}+\Delta+\boldsymbol{\rm M}_{X}-tA).$$
From this, we have 
\begin{equation*}\tag{I}\label{proof--thm--eff-basepoint-free-gen-dlt--(I)}
K_{Y}+B_{Y}=h^{*}(K_{X}+B)\quad  {\rm and} \quad K_{X}+\Delta+\boldsymbol{\rm M}_{X}\sim_{\mathbb{Q}}K_{X}+B+tA.
\end{equation*} 

Recalling that no coefficient of $B_{Y}$ is one, we have 
$$\lceil(-B_{Y}^{<1})\rceil-\lfloor B_{Y}^{>1}\rfloor=-\lfloor B_{Y}\rfloor=-\lfloor \Delta_{Y}\rfloor$$
and the right hand side is the sum of $-g^{*}\lfloor \tilde{\Delta} \rfloor$ and an effective $h$-exceptional divisor. 
From this, we can check that
\begin{equation*}\tag{II}\label{proof--thm--eff-basepoint-free-gen-dlt--(II)}
h_{*}\mathcal{O}_{Y}(\lceil(-B_{Y}^{<1})\rceil-\lfloor B_{Y}^{>1}\rfloor)=\mathcal{O}_{X}(-\lfloor \Delta \rfloor).
\end{equation*} 

As in (\ref{proof--thm--eff-basepoint-free-gen-dlt--(I)}) and (\ref{proof--thm--eff-basepoint-free-gen-dlt--(II)}), we defined an effective $\mathbb{Q}$-divisor $B$ on $X$, a log resolution $h \colon Y \to X$ of $(X,B)$, and an $\mathbb{Q}$-divisor $B_{Y}$ defined with $K_{Y}+B_{Y}=h^{*}(K_{X}+B)$ such that 
\begin{itemize}
\item
$(K_{X}+\Delta+\boldsymbol{\rm M}_{X})-(K_{X}+B)$ is ample, and
\item
no coefficient of $B_{Y}$ is one and $h_{*}\mathcal{O}_{Y}(\lceil(-B_{Y}^{<1})\rceil-\lfloor B_{Y}^{>1}\rfloor)=\mathcal{O}_{X}(-\lfloor \Delta \rfloor)$. 
\end{itemize} 
\end{step3}

From now on, we put $L=K_{X}+\Delta+\boldsymbol{\rm M}_{X}$. 

\begin{step3}\label{step2--eff-basepoint-free-gen-dlt}
In this step, we prove the existence of a positive integer $m'$, depending only on $d$ and $p$, such that $\mathcal{O}_{X}(m'L)$ is generated by global sections on a neighborhood of $\lfloor \Delta \rfloor$. 

Pick any component $S$ of $\lfloor \Delta \rfloor$ and put $\tilde{S}=f^{-1}_{*}S$. 
Let $(S,\Delta_{S},\boldsymbol{\rm M}')$ be the generalized dlt pair defined with divisorial adjunction. 
We have 
$$L|_{S}\sim_{\mathbb{Q}}K_{S}+\Delta_{S}+\boldsymbol{\rm M}'_{S}.$$ 
We define $\boldsymbol{\rm M}''$ to be a $\mathbb{Q}$-b-divisor on $S$ such that  $\boldsymbol{\rm M}''_{S}=L|_{S}-(K_{S}+\Delta_{S}+\boldsymbol{\rm M}'_{S})$ and $\boldsymbol{\rm M}''$ descends to $S$. 
Then the generalized dlt pair 
$$(S,\Delta_{S},\boldsymbol{\rm M}'+\boldsymbol{\rm M}'')$$
 satisfies the condition that $p(K_{S}+\Delta_{S}+\boldsymbol{\rm M}'_{S}+\boldsymbol{\rm M}''_{S})=pL|_{S}$ is Cartier. 
It is easy to see that we may apply the induction hypothesis of Theorem \ref{thm--eff-basepoint-free-gen-dlt} to $(S,\Delta_{S},\boldsymbol{\rm M}'+\boldsymbol{\rm M}'')$. 
Since $K_{S}+\Delta_{S}+\boldsymbol{\rm M}'_{S}+\boldsymbol{\rm M}''_{S}=pL|_{S}$, applying the induction hypothesis of Theorem \ref{thm--eff-basepoint-free-gen-dlt}  to $(S,\Delta_{S},\boldsymbol{\rm M}'+\boldsymbol{\rm M}'')$, there exists a positive integer $m''$, depending only on $d$ and $p$, such that $m''L|_{S}$ is base point free. 

By our assumption of $\tilde{\Delta}$ and $\tilde{E}$, we see that $\tilde{\Delta}+\lceil \tilde{E}\rceil-\tilde{E}$ is a boundary $\mathbb{Q}$-divisor such that $\lfloor \tilde{\Delta}\rfloor =\lfloor (\tilde{\Delta}+\lceil \tilde{E}\rceil-\tilde{E}) \rfloor$
and 
$$K_{\tilde{X}}+\tilde{\Delta}+\lceil \tilde{E}\rceil-\tilde{E}+\boldsymbol{\rm M}_{\tilde{X}}+(m''p-1)f^{*}L=m''pf^{*}L+ \lceil\tilde{E}\rceil.$$
Then $\boldsymbol{\rm M}_{\tilde{X}}+(m''p-1)f^{*}L$ is nef and log big with respect to $(\tilde{X},\tilde{\Delta}+\lceil \tilde{E}\rceil-\tilde{E})$. 
From the fact and the Kodaira type vanishing theorem \cite[Lemma 1.5]{fujino-abund-logbig}, we have 
$$H^{1}(\tilde{X},\mathcal{O}_{\tilde{X}}(m''pf^{*}L+ \lceil\tilde{E}\rceil-\tilde{S}))=0.$$
By the same argument as in \cite{fujino-abund-logbig} or the proof of Lemma \ref{lem--extension-lc-center}, we obtain the diagram.
 $$
\xymatrix{
H^{0}(\tilde{X},\mathcal{O}_{\tilde{X}}(m''pf^{*}L+ \lceil\tilde{E}\rceil)) \ar@{->>}[r]&H^{0}\bigl(\tilde{S},\mathcal{O}_{\tilde{S}}\bigl((m''pf^{*}L+ \lceil\tilde{E}\rceil)|_{\tilde{S}}\bigr)\bigr)\\
H^{0}(X,\mathcal{O}_{X}(m''pL)) \ar[u]^{\simeq}\ar[r]&H^{0}(S,\mathcal{O}_{S}(m''pL|_{S}))\ar@{^{(}->}[u]
}
$$
Therefore, the base locus of $m''pL$ does not intersect $S$. 
Since $S$ is a component of $\lfloor \Delta \rfloor$ and $m''p$ does not depend on $S$,  we see that $m':=m''p$ is the desired positive integer.   
\end{step3}

\begin{step3}\label{step3--eff-basepoint-free-gen-dlt}
In this step, we construct a contraction $\pi \colon X \to Z$ induced by $m'L$ such that $m' L \sim \pi^{*}D_{Z}$ for some ample Cartier divisor $D_{Z}$ on $Z$. 

By Step \ref{step1--eff-basepoint-free-gen-dlt}, we get a quasi-log scheme $(X,K_{X}+B,h\colon (Y,B_{Y})\to X)$ such that 
$$m'L-(K_{X}+B)=(m'-1)L+(K_{X}+\Delta+\boldsymbol{\rm M}_{X})-(K_{X}+B)$$ is ample. 
By Step \ref{step2--eff-basepoint-free-gen-dlt} and $\lfloor \Delta \rfloor={\rm Nqlc}(X,K_{X}+B)$, we see that 
$$\mathcal{O}_{{\rm Nqlc}(X,K_{X}+B)}(m'L|_{{\rm Nqlc}(X,K_{X}+B)})$$
 is globally generated. 
Since $X$ is projective, applying Theorem \ref{thm--quasi-log-trivial} to $[X,K_{X}+B]$ and $m'L$, we get a contraction $\pi \colon X \to Z$ induced by $m'L$ and a positive integer $m_{0}$ such that both $m_{0}m'L$ and $(m_{0}+1)m'L$ are the pullbacks of Cartier divisors on $Z$. 
Therefore, $m'L \sim \pi^{*}D_{Z}$ for some Cartier divisor $D_{Z}$ on $Z$. 
By construction, $D_{Z}$ is ample. 
\end{step3}

\begin{step3}\label{step4--eff-basepoint-free-gen-dlt}
From this step, we prove the existence of $m$ in Theorem \ref{thm--eff-basepoint-free-gen-dlt}. 
 In this step, we treat the case when there is a component $S'$ of $\lfloor \Delta \rfloor$ dominating $Z$. 

By Lemma \ref{lem--extension-lc-center}, the restriction $\pi_{S'}:=\pi|_{S'} \colon S' \to Z$ is a contraction. 
By Step \ref{step3--eff-basepoint-free-gen-dlt}, we have $\pi_{S'*}\mathcal{O}_{S'}(m'L|_{S'})\simeq \mathcal{O}_{Z}(D_{Z})$. 
By Step \ref{step2--eff-basepoint-free-gen-dlt}, it follows that $m'L|_{S'}$ is base point free. 
Then $D_{Z}$ is base point free, and therefore $m'L$ is base point free. 
\end{step3}

\begin{step3}\label{step5--eff-basepoint-free-gen-dlt}
Finally, we treat the case when $\lfloor \Delta \rfloor$ is vertical over $Z$. 

We put $L'=m'L$. 
For each positive integer $l$, let $Z_{l}\subset Z$ be the base locus of $|lD_{Z}|$. 
By the relation $L'\sim \pi^{*}D_{Z}$, we see that  $\pi^{-1}(Z_{l})$ is disjoint from $\lfloor \Delta \rfloor$ for all $l$. 

In the rest of the proof, we closely follow \cite[Proof of Theorem 1.1]{fujino-eff-basepointfree}. 
We recall the property that the non-klt locus of $(X,B)$ and the non-lc locus of $(X,B)$ are the same and they coincide with $\lfloor B \rfloor=\lfloor \Delta \rfloor$ (see Step \ref{step1--eff-basepoint-free-gen-dlt}). 
We put 
$$A'_{l}=\frac{3}{2}lL'-(K_{X}+B)$$
 for each positive integer $l$.
By Step \ref{step1--eff-basepoint-free-gen-dlt}, it follows that $A'_{l}$ is ample for all $l$. 
Fix an arbitrary positive integer $l$, and suppose $Z_{l}\neq \emptyset$. 
Let $C_{0},C_{1},\cdots, C_{d}$ be general members of $|lD_{Z}|$, and we put 
$$C=\pi^{*}(\frac{1}{2}C_{0}+C_{1}+\cdots+C_{d}).$$ 
Then $C\sim_{\mathbb{Q}}(d+\frac{1}{2})l\pi^{*}D_{Z}$, and the non-lc locus of $(X,B+C)$ is the disjoint union of $\pi^{-1}(Z_{l})$ and $\lfloor B \rfloor$. 

Pick an arbitrary irreducible component $V$ of $\pi^{-1}(Z_{l})$ such that $\pi(V)$ has the maximal dimension. 
Then $V$ is disjoint from $\lfloor B \rfloor$. 
Let $\widetilde{h} \colon \widetilde{Y} \to X$ be a log resolution of $(X,B+C)$ such that $\widetilde{h}^{-1}(\pi^{-1}(\pi(V))$ is an snc divisor. 
We may write
$$K_{\widetilde{Y}}=\widetilde{h}^{*}(K_{X}+B)+\sum_{i}e_{i}\widetilde{E}_{i}$$
for some $e_{i} \in \mathbb{Q}$. 
Let $\gamma$ be the lc threshold of $C$ with respect to $(X,B)$ over the generic point of $\pi(V)$. 
It follows that $\gamma \in (0,1)\cap \mathbb{Q}$ because $(X,B)$ is klt on $X\setminus \lfloor B \rfloor$ and $\pi(V)$ is disjoint from $\pi(\lfloor B \rfloor)$. 
We may write $\gamma \widetilde{h}^{*}C=\sum_{i}\gamma_{i}\widetilde{E}_{i}$ for some $\gamma_{i}\in \mathbb{Q}_{\geq 0}$. 
Then we have 
\begin{equation*}
\begin{split}
\widetilde{h}^{*}\pi^{*}((d+2)lD_{Z})\sim_{\mathbb{Q}}&\frac{3}{2}\widetilde{h}^{*}(lL')+\widetilde{h}^{*}\pi^{*}((d+\frac{1}{2})lD_{Z}) \\
\sim_{\mathbb{Q}}&\widetilde{h}^{*}(K_{X}+B+A'_{l})+\gamma \widetilde{h}^{*}C+(1-\gamma)\widetilde{h}^{*}C\\
\sim_{\mathbb{Q}}&K_{\widetilde{Y}}+\sum_{i}(\gamma_{i}-e_{i})\widetilde{E}_{i}+\widetilde{h}^{*}A'_{l}+(1-\gamma)\widetilde{h}^{*}C.
\end{split}
\end{equation*}
We can write 
$$\sum_{i}\lfloor (\gamma_{i}-e_{i})\rfloor \widetilde{E}_{i}=\widetilde{F}+\widetilde{G}_{1}+\widetilde{G}_{2}-\widetilde{H}$$
where $\widetilde{F}$, $\widetilde{G}_{1}$, $\widetilde{G}_{2}$, and $\widetilde{H}$ are all effective and they have no common components with each other such that
\begin{itemize}
\item
the image of every component of $\widetilde{F}$ by $\pi \circ \widetilde{h}$ is $\pi(V)$, 
\item
the image of every component of $\widetilde{G}_{1}$ by $\pi \circ \widetilde{h}$ does not contain $\pi(V)$,
\item
the image of every component of $\widetilde{G}_{2}$ by $\pi \circ \widetilde{h}$ contains $\pi(V)$ but does not coincide with $\pi(V)$, and 
\item
$\widetilde{H}$ is $\widetilde{h}$-exceptional. 
\end{itemize} 
Note that $\widetilde{h}^{-1}_{*}\lfloor B \rfloor \subset {\rm Supp}\widetilde{G}_{1}$, $\widetilde{F}$ is a non-zero reduced divisor by the definition of $\gamma$, and 
$\widetilde{G}_{2}$ is reduced because every irreducible component of $(\pi \circ \widetilde{h})(\widetilde{G}_{2})$ intersects $\pi(V)$ and $(\widetilde{Y}, \sum_{i} (\gamma_{i}-e_{i}) \widetilde{E}_{i})$ is sub-lc over a neighborhood of the generic point of $\pi(V)$. 
By taking blow-ups along lc centers of $(\widetilde{Y},\widetilde{G}_{2})$ if necessary, we may assume that any lc center $T$ of $(\widetilde{Y},\widetilde{G}_{2})$ does not satisfy $(\pi \circ \widetilde{h})(T) \subset \pi(V)$. 

For each $j \in \mathbb{Z}_{\geq 0}$, we define 
$$\widetilde{N}_{j}:=\widetilde{h}^{*}\pi^{*}((j+d+2)lD_{Z})+\widetilde{H}-\widetilde{G}_{1},$$
and consider the exact sequence 
$$0 \longrightarrow \mathcal{O}_{\widetilde{Y}}(\widetilde{N}_{j}-\widetilde{F}) \longrightarrow \mathcal{O}_{\widetilde{Y}}(\widetilde{N}_{j})\longrightarrow \mathcal{O}_{\widetilde{F}}(\widetilde{N}_{j}|_{\widetilde{F}})\longrightarrow 0.$$
By construction, we have
\begin{equation*}\tag{$\spadesuit$}\label{proof--spade}
\widetilde{N}_{j}-\widetilde{F}\sim_{\mathbb{Q}}K_{\widetilde{Y}}+\sum_{i}\{\gamma_{i}-e_{i}\}\widetilde{E}_{i}+\widetilde{G}_{2}+\widetilde{h}^{*}A'_{l}+(1-\gamma)\widetilde{h}^{*}C+jl\widetilde{h}^{*}\pi^{*}D_{Z},
\end{equation*} and no lc center $T$ of the lc pair $\bigl(\widetilde{Y},\sum_{i}\{\gamma_{i}-e_{i}\}\widetilde{E}_{i}+\widetilde{G}_{2}\bigr)$ satisfies $(\pi \circ \widetilde{h})(T) \subset \pi(V)$. 
Here, the log canonicity of the pair $\bigl(\widetilde{Y},\sum_{i}\{\gamma_{i}-e_{i}\}\widetilde{E}_{i}+\widetilde{G}_{2}\bigr)$ follows from the facts that $\bigl(\widetilde{Y},{\rm Supp}(\sum_{i}\widetilde{E}_{i}+\widetilde{G}_{2})\bigr)$ is log smooth, $\widetilde{G}_{2}$ is reduced, and $\widetilde{G}_{2}$ and $\sum_{i}\{\gamma_{i}-e_{i}\}\widetilde{E}_{i}$ have no common components. 
By \cite[Theorem 6.3 (i)]{fujino-fund}, the connecting morphism
$$(\pi \circ \widetilde{h})_{*}\mathcal{O}_{\widetilde{F}}(\widetilde{N}_{j}|_{\widetilde{F}}) \longrightarrow R^{1}(\pi \circ \widetilde{h})_{*}\mathcal{O}_{\widetilde{Y}}(\widetilde{N}_{j}-\widetilde{F})$$
is a zero map. 
Thus, we get the following exact sequence
$$0 \longrightarrow (\pi \circ \widetilde{h})_{*}\mathcal{O}_{\widetilde{Y}}(\widetilde{N}_{j}-\widetilde{F}) \longrightarrow (\pi \circ \widetilde{h})_{*}\mathcal{O}_{\widetilde{Y}}(\widetilde{N}_{j})\longrightarrow (\pi \circ \widetilde{h})_{*}\mathcal{O}_{\widetilde{F}}(\widetilde{N}_{j}|_{\widetilde{F}})\longrightarrow 0.$$
Since $A'_{l}$ is ample, there is an $\widetilde{A}\sim_{\mathbb{Q}}\widetilde{h}^{*}A'_{l}$ such that $\bigl(\widetilde{Y}, \sum_{i}\{\gamma_{i}-e_{i}\}\widetilde{E}_{i}+\widetilde{G}_{2}+\widetilde{A}\bigr)$ is a log smooth lc pair. 
Because $(1-\gamma)\widetilde{h}^{*}C+jl\widetilde{h}^{*}\pi^{*}D_{Z}$ is the pullback of an ample $\mathbb{Q}$-divisor on $Z$, we have 
$$H^{1}(Z, (\pi \circ \widetilde{h})_{*}\mathcal{O}_{\widetilde{Y}}(\widetilde{N}_{j}-\widetilde{F}))=0$$ by (\ref{proof--spade}) and \cite[Theorem 6.3 (ii)]{fujino-fund}. 
From this, we get a surjective morphism
\begin{equation*}\tag{$\clubsuit$}\label{proof--club}
H^{0}(\widetilde{Y},\mathcal{O}_{\widetilde{Y}}(\widetilde{N}_{j}))\longrightarrow H^{0}(\widetilde{F},\mathcal{O}_{\widetilde{F}}(\widetilde{N}_{j}|_{\widetilde{F}})).
\end{equation*}
From the definitions of $\gamma$ and $\widetilde{F}$, it follows that no component of $\widetilde{F}$ is a component of $\sum_{i}\{\gamma_{i}-e_{i}\}\widetilde{E}_{i}$. 
By (\ref{proof--spade}) and the construction of $\widetilde{A}$, we have
$$\widetilde{N}_{j}|_{\widetilde{F}} \sim_{\mathbb{Q}} K_{\widetilde{F}}+\Bigl(\sum_{i}\{\gamma_{i}-e_{i}\}\widetilde{E}_{i}+\widetilde{G}_{2}+\widetilde{A}\Bigr)\Big{|}_{\widetilde{F}}+\Bigl((1-\gamma)\widetilde{h}^{*}C+jl\widetilde{h}^{*}\pi^{*}D_{Z}\Bigr)\Big{|}_{\widetilde{F}}$$
and $\bigl(\widetilde{F}, (\sum_{i}\{\gamma_{i}-e_{i}\}\widetilde{E}_{i}+\widetilde{G}_{2}+\widetilde{A})|_{\widetilde{F}}\bigr)$ is a simple normal crossing pair.  
By the vanishing theorem for simple normal crossing pairs \cite[Theorem 3.2 (b)]{fujino-eff-basepointfree}, we see that 
$$H^{q}(\pi(V),(\pi \circ \widetilde{h})_{*}\mathcal{O}_{\widetilde{F}}(\widetilde{N}_{j}|_{\widetilde{F}}))=0$$
 for all $q>0$. 
Thus, we have 
\begin{equation*}
\begin{split}{\rm dim}H^{0}(\widetilde{F},\mathcal{O}_{\widetilde{F}}(\widetilde{N}_{j}|_{\widetilde{F}}))=&{\rm dim}H^{0}\bigl(\pi(V),(\pi \circ \widetilde{h})_{*}\mathcal{O}_{\widetilde{F}}(\widetilde{N}_{j}|_{\widetilde{F}})\bigr)\\
=&\chi\bigl(\pi(V),(\pi \circ \widetilde{h})_{*}\mathcal{O}_{\widetilde{F}}(\widetilde{N}_{j}|_{\widetilde{F}})\bigr).\end{split}
\end{equation*} 
By definition of $\widetilde{N}_{j}$, we have 
\begin{equation*}
\begin{split}
(\pi \circ \widetilde{h})_{*}\mathcal{O}_{\widetilde{F}}(\widetilde{N}_{j}|_{\widetilde{F}})=(\pi \circ \widetilde{h})_{*}\mathcal{O}_{\widetilde{F}}(\widetilde{H}|_{\widetilde{F}}-\widetilde{G}_{1}|_{\widetilde{F}})\otimes \mathcal{O}_{\pi(V)}\bigl((j+d+2)lD_{Z}|_{\pi(V)}\bigr),
\end{split}
\end{equation*} 
and $(\pi \circ \widetilde{h})_{*}\mathcal{O}_{\widetilde{F}}(\widetilde{H}|_{\widetilde{F}}-\widetilde{G}_{1}|_{\widetilde{F}})$ is a non-zero sheaf because $\widetilde{G}_{1}|_{\widetilde{F}}$ is zero at the generic point of $\pi(V)$ (see the definition of $\widetilde{G}_{1}$). 
Therefore, ${\rm dim}H^{0}(\widetilde{F},\mathcal{O}_{\widetilde{F}}(\widetilde{N}_{j}|_{\widetilde{F}}))$ is a non-zero polynomial of degree at most $d$ with one variable $j$. 
Then there is $0\leq j' \leq d$ such that 
$$H^{0}(\widetilde{F},\mathcal{O}_{\widetilde{F}}(\widetilde{N}_{j'}|_{\widetilde{F}}))\neq0,$$
 so $H^{0}(\widetilde{Y},\mathcal{O}_{\widetilde{Y}}(\widetilde{N}_{j'}))$ has an element not vanishing on $\widetilde{F}$ by (\ref{proof--club}). 
 
Since $\widetilde{N}_{j'}=\widetilde{h}^{*}\pi^{*}((j'+d+2)lD_{Z})+\widetilde{H}-\widetilde{G}_{1}$ and ${\rm Supp}\widetilde{G}_{1} \not\supset \widetilde{F}$, we see that 
$$H^{0}(\widetilde{Y},\mathcal{O}_{\widetilde{Y}}(\widetilde{h}^{*}\pi^{*}((j'+d+2)lD_{Z})+\widetilde{H}))$$
 has an element not vanishing on $\widetilde{F}$. 
Since $\widetilde{H}$ is $\widetilde{h}$-exceptional, it follows that
$$H^{0}(X,\mathcal{O}_{X}(\pi^{*}((j'+d+2)lD_{Z})))$$ has an element not vanishing on $\widetilde{h}(\widetilde{F})$. 
This implies that the base locus of $(j'+d+2)lD_{Z}$ does not contain $\pi(V)$. 

Recall that $V$ is an arbitrary irreducible component of $\pi^{-1}(Z_{l})$ such that $\pi(V)$ has the maximal dimension. 
From the above argument, it follows that $Z_{(2d+2)!\cdot l}$ does not contain any irreducible component of $Z_{l}$ of the maximal dimension.
Thus, we have
$${\rm dim}Z_{(2d+2)!\cdot l}<{\rm dim}Z_{l}$$
for all $l \in \mathbb{Z}_{>0}$. 
Therefore, $mL$ is base point free where $m$ is defined to be $((2d+2)!)^{d}\cdot m'$. 
\end{step3}
We finish the proof of Theorem \ref{thm--eff-basepoint-free-gen-dlt}. 
\end{proof}

\begin{rem}
By arguing inductively, for fixed integers $d$ and $p$ and every generalized dlt pair $(X,\Delta,\boldsymbol{\rm M})$ as in Theorem \ref{thm--eff-basepoint-free-gen-dlt}, we can check that $((2d+2)!)^{d^2}p(K_{X}+\Delta+\boldsymbol{\rm M}_{X})$ is base point free. 
However, compared to \cite[Remark 1.2]{fujino-eff-basepointfree}, it looks far from optimal. 
\end{rem}

\begin{lem}\label{lem--bound-veryampleindex}
Let $d$ and $p$ be positive integers, and let $v$ be a positive real number. 
Then there exists a positive integer $m$, depending only on $d$, $p$, and $v$, satisfying the following. 
Let $(X,\Delta,\boldsymbol{\rm M})$ be a generalized lc pair such that
\begin{itemize}
\item
${\rm dim}X=d$, 
\item
$p\boldsymbol{\rm M}$ is b-Cartier,
\item
$p(K_{X}+\Delta+\boldsymbol{\rm M}_{X})$ is an ample and base point free Cartier divisor, and 
\item
${\rm vol}(X,K_{X}+\Delta+\boldsymbol{\rm M}_{X}) \leq v$. 
\end{itemize}
Then $m(K_{X}+\Delta+\boldsymbol{\rm M}_{X})$ is very ample, and putting $R_{l}=H^{0}(X,\mathcal{O}_{X}(\lfloor l(K_{X}+\Delta+\boldsymbol{\rm M}_{X})\rfloor))$ for every $l \in \mathbb{Z}_{\geq 0}$ then $\bigoplus_{l \in \mathbb{Z}_{\geq 0}} R_{lm}$ is generated by $R_{m}$ as a graded $\mathbb{C}$-algebra. 
\end{lem}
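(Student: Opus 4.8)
\textbf{Proof proposal for Lemma \ref{lem--bound-veryampleindex}.}

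The plan is to reduce the statement to the effective very ampleness statement for generalized lc pairs $(X,\Delta,\boldsymbol{\rm M})$ whose generalized log canonical divisor is ample and base point free, with bounded volume, via a standard Hilbert-polynomial / boundedness argument, and then to run the effective generation part as in the theory of adjoint rings on bounded families. First I would put $L:=K_{X}+\Delta+\boldsymbol{\rm M}_{X}$, so that $pL$ is an ample base point free Cartier divisor with $(pL)^{d}=p^{d}\,{\rm vol}(X,L)\le p^{d}v$. Since $pL$ defines a morphism $X\to \mathbb{P}^{N}$, the image is a variety of bounded degree and dimension, hence lies in a bounded family; more precisely, since $pL$ is base point free and $\chi(X,\mathcal{O}_X(kpL))$ is a polynomial in $k$ whose top coefficient $(pL)^{d}/d!$ is bounded, a Matsusaka-type argument (e.g.\ effective very ampleness of base point free divisors with bounded top self-intersection on varieties of bounded dimension, cf.\ the boundedness results used in \cite{hmx-boundgentype} and \cite{li-bounded}) gives a positive integer $m_{0}$, depending only on $d$, $p$, and $v$, such that $m_{0}pL$ is very ample and $\mathcal{O}_X(m_0 p L)$ has vanishing higher cohomology. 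Set $m_1=m_0 p$, so $m_1 L$ is very ample.

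Next I would address the generation of $\bigoplus_{l}R_{lm}$ by $R_{m}$. Once $m_1 L$ is very ample with all higher cohomology of its positive multiples vanishing, $X\hookrightarrow \mathbb{P}(R_{m_1}^{\vee})$ is an embedding whose Hilbert polynomial $l\mapsto \dim R_{l m_1}$ is determined by $d$ and the intersection numbers, which are bounded in terms of $d$, $p$, $v$. By the classical fact that a very ample line bundle becomes projectively normal after replacing it by a sufficiently high multiple — with the multiple bounded in terms of the Hilbert polynomial and the dimension (Mumford's bound on Castelnuovo--Mumford regularity, or uniform $m$-regularity for families of polarized varieties with fixed Hilbert polynomial) — there is an integer $e$, depending only on $d$, $p$, $v$, such that $\bigoplus_{l\ge 0}R_{l\cdot e m_1}$ is generated in degree one by $R_{e m_1}$. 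Taking $m:=e m_1$ then gives both conclusions: $m L=m_0 p e L$ is very ample (being a positive multiple of the very ample $m_1 L$), and $\bigoplus_l R_{lm}$ is generated by $R_m$.

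The main obstacle I expect is making the first reduction genuinely \emph{effective and uniform}: one needs that among all generalized lc pairs with the listed hypotheses, the resulting polarized varieties $(X, pL)$ form a bounded family, with a controlled Hilbert polynomial, so that $m_0$ and $e$ can be chosen uniformly. The ampleness and base point freeness of $pL$ plus the volume bound give the bound on $(pL)^{d}$, and since $pL$ is base point free the intermediate intersection numbers $(pL)^{k}\cdot H^{d-k}$ for an auxiliary very ample $H$ are not directly bounded, so the cleanest route is to invoke an effective Matsusaka theorem or effective non-vanishing for nef and big Cartier divisors of bounded top self-intersection on $d$-dimensional varieties; alternatively one can use that $K_X+\Delta+\boldsymbol{\rm M}_X$ being ample already forces $(X,\Delta,\boldsymbol{\rm M})$ into a bounded family by the boundedness results for generalized pairs with ample generalized log canonical divisor and bounded volume. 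Granting such a boundedness input (which is the natural place where earlier results of this paper and of \cite{bz}, \cite{li-bounded}, \cite{birkar-nefpart} enter), the remaining steps — uniform regularity of the Hilbert polynomial and the consequent projective normality after a bounded Veronese — are routine.
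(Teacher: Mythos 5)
Your proposal correctly isolates where the difficulty lies, but the step you defer to --- a ``Matsusaka-type argument'' producing a uniform $m_{0}$ with $m_{0}pL$ very ample from the data that $pL$ is ample, base point free, Cartier, and $(pL)^{d}\leq p^{d}v$ --- is not available, and this step is in fact the entire content of the lemma. An ample base point free Cartier divisor with bounded top self-intersection only exhibits $X$ as a finite cover, of bounded degree, of a variety of bounded degree in a bounded projective space; such covers do not form a bounded family (think of cyclic covers of a fixed base with branch divisor of unbounded degree), so no uniform very ampleness constant exists at this level of generality. What rescues the situation is precisely the adjoint structure $L=K_{X}+\Delta+\boldsymbol{\rm M}_{X}$: the hypotheses force $p\Delta$ to be a Weil divisor, so the coefficients lie in a finite set, and one can invoke effective birationality for generalized pairs (\cite[Theorem 1.3]{bz}) to get a uniform $p'$ with $|pp'L|$ birational, and then the birational boundedness statement \cite[Proposition 4.4]{birkar-compl}, which uses that $G-(K_{X}+\Delta)$ is pseudo-effective. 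Your alternative suggestion --- citing ``boundedness results for generalized pairs with ample generalized log canonical divisor and bounded volume'' --- begs the question, since that is essentially the statement being proved.

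The paper's actual proof argues by contradiction with a sequence $(X_{i},\Delta_{i},\boldsymbol{\rm M}^{i})$ whose constants $m_{i}$ diverge, uses the two inputs above to place the pairs in a single log smooth family $(V,D)\to T$ birationally, and then does genuine work to upgrade birational boundedness to boundedness: the maps $V_{t_{i}}\dashrightarrow X_{i}$ are morphisms because $G_{i}$ is ample, the divisors $\varphi_{i}^{*}G_{i}$ glue to a relative Cartier divisor $\tilde{D}$ on $V$ which is shown to be relatively base point free using density of $\{t_{i}\}$ and semicontinuity, and the relative contraction $\tau\colon V\to W$ recovers $W_{t_{i}}\simeq X_{i}$. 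Only then does the routine part of your argument (uniform Serre vanishing and generation in degree one on the flat family $W\to T$) apply and yield the contradiction. So the second half of your proposal is sound in spirit, but the first half conceals a gap that cannot be closed by Matsusaka-type or Castelnuovo--Mumford arguments alone; it requires the effective birationality and descent-to-a-family machinery.
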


\begin{proof}
If Lemma \ref{lem--bound-veryampleindex} does not hold, then there is a sequence of generalized lc pairs $\{(X_{i},\Delta_{i},\boldsymbol{\rm M}^{i})\}_{i\geq 1}$ as in Lemma \ref{lem--bound-veryampleindex} such that if $m_{i}$ is the smallest positive integer such that $m_{i}(K_{X_{i}}+\Delta_{i}+\boldsymbol{\rm M}^{i}_{X_{i}})$ is very ample and $H^{0}(X_{i},\mathcal{O}_{X_{i}}( m_{i}(K_{X_{i}}+\Delta_{i}+\boldsymbol{\rm M}^{i}_{X_{i}})))$ generates $\bigoplus_{l \in \mathbb{Z}_{\geq 0}}H^{0}(X_{i},\mathcal{O}_{X_{i}}(lm_{i}(K_{X_{i}}+\Delta_{i}+\boldsymbol{\rm M}^{i}_{X_{i}})))$, then ${\rm lim}_{i \to \infty}m_{i}=\infty$. 

Since $p\boldsymbol{\rm M}^{i}$ is b-Cartier and $p(K_{X_{i}}+\Delta_{i}+\boldsymbol{\rm M}^{i}_{X_{i}})$ is Cartier, $p\Delta_{i}$ is a Weil divisor for every $i$. 
By the effective birationality for generalized pairs \cite[Theorem 1.3]{bz}, there exists a positive integer $p'$, depending only on $d$ and $p$, such that $|pp'(K_{X_{i}}+\Delta_{i}+\boldsymbol{\rm M}^{i}_{X_{i}})|$ defines a birational morphism. 
Then we may find $G_{i} \in |pp'(K_{X_{i}}+\Delta_{i}+\boldsymbol{\rm M}^{i}_{X_{i}})|$. 
Note that 
\begin{itemize}
\item
$G_{i}$ is Cartier and base point free, 
\item
${\rm vol}(X_{i},G_{i})\leq p^{d}p'^{d}v$, and 
\item
$G_{i}-(K_{X_{i}}+\Delta_{i})$ is the pushdown of a pseudo-effective $\mathbb{Q}$-divisor. 
\end{itemize}
By \cite[Proposition 4.4]{birkar-compl} and replacing $\{(X_{i},\Delta_{i},\boldsymbol{\rm M}^{i})\}_{i\geq 1}$ with a subsequence, we can find a projective morphism $h\colon V \to T$ of varieties, a reduced divisor $D$ on $V$, and a positive real number $c$, depending only on $d$, $p$, and $v$, such that for every $i$, there is a closed point $t_{i} \in T$ with the fibers $V_{t_{i}}$ and $D_{t_{i}}$ satisfying the following. 
\begin{itemize}
\item
$(V_{t_{i}}, D_{t_{i}})$ is log smooth, 
\item
there is a birational map $\phi_{i} \colon X_{i} \dashrightarrow V_{t_{i}}$ such that if $E_{i}$ is the sum of $\phi_{i*}G_{i}$ and the reduced $\phi_{i}^{-1}$-exceptional divisor, then ${\rm Supp}E_{i}\subset D_{t_{i}}$, and
\item
if $f_{i} \colon Y \to X_{i}$ and $f'_{i} \colon Y \to V_{t_{i}}$ is a common resolution of $\phi_{i}$, then $f_{i}^{*}G_{i} \sim_{\mathbb{Q},V_{t_{i}}}0$ and all the coefficients of $f'_{i*}f_{i}^{*}G_{i}$ are at most $c$. 
\end{itemize}
Shrinking $T$ and replacing $\{(X_{i},\Delta_{i},\boldsymbol{\rm M}^{i})\}_{i\geq 1}$ with a subsequence if necessary, we may assume that the set $\{t_{i}\}_{i\geq 1}$ is dense in $T$. 

Put $D'_{i}=f'_{i*}f_{i}^{*}G_{i}$ for each $i$. 
Then $f_{i}^{*}G_{i}=f'^{*}_{i}D'_{i}$ by the third property. 
Since $G_{i}$ is ample, the birational map $\varphi_{i} := \phi_{i}^{-1}\colon V_{t_{i}} \dashrightarrow X_{i}$ is a morphism. 
Taking a log resolution of $(V,D)$ and shrinking $T$, we may assume that the morphism $h\colon (V,D) \to T$ is log smooth and $T$ is affine. 
Moreover, taking an appropriate \'etale base change of $h$, we may assume that the restriction of every component of $D$ to ${V_{t}}$ is irreducible for every closed point $t \in T$ with the fiber $V_{t}$. 

Since $\varphi^{*}_{i}G_{i}$ is an effective Cartier divisor on $V_{t_{i}}$ such that ${\rm Supp}\varphi^{*}_{i}G_{i}\subset {\rm Supp}D|_{V_{t_{i}}}$ and all the coefficients of $\varphi^{*}_{i}G_{i}$ is at most $c$, the coefficients of $\varphi^{*}_{i}G_{i}$ belong to a finite set depending only on $d$, $p$, and $v$. 
Therefore, replacing $\{(X_{i},\Delta_{i},\boldsymbol{\rm M}^{i})\}_{i\geq 1}$ by a subsequence, we may find a Cartier divisor $\tilde{D}$ on $V$ such that for every $i$, we have $\tilde{D}|_{V_{t_{i}}}=\varphi^{*}_{i}G_{i}$. 

We consider the morphism 
$$h^{*}h_{*}\mathcal{O}_{V}(\tilde{D}) \to \mathcal{O}_{V}(\tilde{D}).$$
Then there is an open set $U\subset T$ such that for any $t' \in U$, the morphism
$$h_{*}\mathcal{O}_{V}(\tilde{D})\otimes k(t')\to H^{0}(V_{t'}, \mathcal{O}_{V_{t'}}(\tilde{D}|_{V_{t'}}))$$ 
is an isomorphism, where $k(t')$ is the quotient field of $\mathcal{O}_{T,t'}$ (see  \cite[III, Theorem 12.8]{hartshorne} and \cite[III, Corollary 12.9]{hartshorne}). 
Since $\mathcal{O}_{V_{t_{i}}}(\varphi^{*}_{i}G_{i})$ is globally generated, the morphism 
$$h^{*}h_{*}\mathcal{O}_{V}(\tilde{D})\otimes k(t') \to  \mathcal{O}_{V}(\tilde{D})\otimes k(t')$$
is surjective for all $t' \in U \cap \{t_{i}\}_{i \geq 1}$. 
Since $\{t_{i}\}_{i \geq 1}$ is dense in $T$, we see that $U \cap \{t_{i}\}_{i \geq 1}$ is not empty. 
From this fact, shrinking $T$ and replacing $\{(X_{i},\Delta_{i},\boldsymbol{\rm M}^{i})\}_{i\geq 1}$, we may assume that $\tilde{D}$ is base point free over $T$. 

Let $\tau \colon V \to W$ be the contraction over $T$ induced by $\tilde{D}$. 
Then there is a Cartier divisor $H$, which is ample over $T$, such that $\tilde{D} \sim \tau^{*}H$. 
By shrinking $T$ and replacing $\{(X_{i},\Delta_{i},\boldsymbol{\rm M}^{i})\}_{i\geq 1}$ by a subsequence, we may assume that for every closed point $t \in T$ the fiber $W_{t}$ is normal. 
We consider the morphisms $\tau_{i}:=\tau|_{V_{t_{i}}}\colon V_{t_{i}} \to W_{t_{i}}$ and $\varphi_{i}\colon V_{t_{i}} \to X_{i}$ for each $i$. 
Since $\tau^{*}_{i}H|_{W_{t_{i}}}\sim \tilde{D}|_{V_{t_{i}}}=\varphi^{*}_{i}G_{i}$ and both $H|_{W_{t_{i}}}$ and $G_{i}$ are ample, we see that ${\rm dim}W_{t_{i}} ={\rm dim}X_{i}$. 
This shows that $\tau_{i} \colon V_{t_{i}} \to W_{t_{i}}$ is birational, hence $\tau_{i}$ is a contraction. 
Since $\tau^{*}_{i}H|_{W_{t_{i}}}\sim \varphi^{*}_{i}G_{i}$ and both $H|_{W_{t_{i}}}$ and $G_{i}$ are ample, we see that $W_{t_{i}} \simeq X_{i}$. 

By this discussion, we get 
\begin{itemize}
\item
a projective morphism $W \to T$ from a normal variety $W$ to an affine variety $T$, and 
\item
an ample Cartier divisor $H$ 
\end{itemize}
such that for every $i$, there exist a closed point $t_{i} \in T$ and an isomorphism $\psi_{i}\colon W_{t_{i}}\to  X_{i}$ such that $\psi^{*}_{i}G_{i} \sim H|_{W_{t_{i}}}$. 
Shrinking $T$ and replacing $\{(X_{i},\Delta_{i},\boldsymbol{\rm M}^{i})\}_{i\geq 1}$, we may assume that the morphism $W \to T$ is flat.

Let $m$ be a positive integer such that 
\begin{itemize}
\item
$mH$ is very ample over $T$, 
\item
$H^{0}(W, \mathcal{O}_{W}(mH))$ generates $\bigoplus_{l \in \mathbb{Z}_{\geq 0}}H^{0}(W, \mathcal{O}_{W}(lmH))$, and 
\item
$H^{q}(W, \mathcal{O}_{W}(lH))=0$ for all $l \geq m$ and $q>0$. 
\end{itemize}
Then $H^{0}(W, \mathcal{O}_{W}(lmH))\otimes k(t) \to H^{0}(W_{t}, \mathcal{O}_{W_{t}}(lmH|_{W_{t}}))$
 is surjective for every closed point $t \in T$. 
Therefore, for every closed point $t \in T$ and every $l \in \mathbb{Z}_{\geq0}$, we see that
$$H^{0}(W_{t}, \mathcal{O}_{W_{t}}(mH|_{W_{t}}))\otimes H^{0}(W_{t}, \mathcal{O}_{W_{t}}(l mH|_{W_{t}})) \longrightarrow H^{0}(W_{t}, \mathcal{O}_{W_{t}}((1+l)mH|_{W_{t}}))$$
is surjective. 
Therefore, we see that $mG_{i}$ is very ample and $H^{0}(X_{i},\mathcal{O}_{X_{i}}(mG_{i}))$ generates the graded ring $\bigoplus_{l \in \mathbb{Z}_{\geq 0}}H^{0}(X_{i},\mathcal{O}_{X_{i}}(mlG_{i}))$ for every $i$. 
This is a contradiction since we have $mG_{i}\sim mpp'(K_{X_{i}}+\Delta_{i}+\boldsymbol{\rm M}^{i}_{X_{i}})$ for all $i$. 
Thus, Lemma \ref{lem--bound-veryampleindex} holds. 
\end{proof}

\begin{proof}[Proof of Theorem \ref{thm--base-boundedness-intro}]
We will freely use the notations as in Theorem \ref{thm--base-boundedness-intro}, and we put $\Delta'=\phi_{*}\Delta$ and $M'=\phi_{*}M$. 
Since $M$ is nef and log big with respect to $(X,\Delta)$, we may apply Theorem \ref{thm--eff-basepoint-free-gen-dlt} to the generalized dlt pair $(X',\Delta',\overline{M})$. 
By Theorem \ref{thm--eff-basepoint-free-gen-dlt}, there exists $m'$, depending only on $d$ and $p$, such that $m'(K_{X'}+\Delta'+M')$ is base point free. 
Let $\pi \colon X' \to Z$ be the Stein factorization of the morphism defined with $|m'(K_{X'}+\Delta'+M')|$.
Then $Z$ is nothing but ${\boldsymbol{\rm Proj}}\bigoplus_{l \in \mathbb{Z}_{\geq 0}}R_{l}$ in Theorem \ref{thm--base-boundedness-intro}. 
There is an ample and base point free Cartier divisor $H_{Z}$ on $Z$ such that $$m'(K_{X'}+\Delta'+M') \sim \pi^{*}H_{Z}.$$ 

Since $pM'$ and $p(K_{X'}+\Delta'+M')$ are Weil divisors, $p\Delta'$ is a Weil divisor. 
Applying Theorem \ref{thm--eff-dlt-general} to $\pi\colon (X',\Delta',\overline{M})\to Z$, we can find positive integers $n$ and $q$, depending only on $d$ and $p$, such that $Z$ has the structure of a generalized lc pair $(Z,\Delta_{Z},\boldsymbol{\rm N})$ with
\begin{itemize}
\item
$n(K_{X'}+\Delta'+M') \sim n \pi^{*}(K_{Z}+\Delta_{Z}+\boldsymbol{\rm N}_{Z})$, and 
\item
$q \boldsymbol{\rm N}$ is b-Cartier.
\end{itemize}
By the first property, we have 
$$nm'\pi^{*}(K_{Z}+\Delta_{Z}+\boldsymbol{\rm N}_{Z})\sim nm'(K_{X'}+\Delta'+M') \sim n\pi^{*}H_{Z}.$$
By \cite[Proposition 5.3]{filipazzimoraga}, we see that $nm'(K_{Z}+\Delta_{Z}+\boldsymbol{\rm N}_{Z}) \sim nH_{Z}$. 
In particular, we see that $nm'(K_{Z}+\Delta_{Z}+\boldsymbol{\rm N}_{Z})$ is ample and base point free.  

Since we have 
$${\rm vol}(Z, K_{Z}+\Delta_{Z}+\boldsymbol{\rm N}_{Z})={\rm Ivol}(K_{X}+\Delta+M)\leq v,$$
 we may apply Lemma \ref{lem--bound-veryampleindex} to $(Z,\Delta_{Z},\boldsymbol{\rm N})$. 
There is a positive integer $m''$, depending only on $d$, $nm'$, and $v$, such that
\begin{itemize}
\item
$m''(K_{Z}+\Delta_{Z}+\boldsymbol{\rm N}_{Z})$ is very ample, and 
\item
putting $R'_{l}=H^{0}(Z, \mathcal{O}_{Z}(lm''(K_{Z}+\Delta_{Z}+\boldsymbol{\rm N}_{Z})))$, then $R'_{1}$ generates $\bigoplus_{l \in \mathbb{Z}_{\geq 0}}R'_{l}$. 
\end{itemize}

We define $m=nm'm''$. 
By construction, we can check that 
\begin{itemize}
\item
$m$ depends only on $d$, $p$, and $v$, 
\item
$m(K_{Z}+\Delta_{Z}+\boldsymbol{\rm N}_{Z})$ is very ample, and 
\item
$H^{0}(Z, \mathcal{O}_{Z}(m(K_{Z}+\Delta_{Z}+\boldsymbol{\rm N}_{Z})))$ generates $\bigoplus_{l \in \mathbb{Z}_{\geq 0}}H^{0}(Z, \mathcal{O}_{Z}(lm(K_{Z}+\Delta_{Z}+\boldsymbol{\rm N}_{Z})))$ as a graded $\mathbb{C}$-algebra. 
\end{itemize}
Now we have
\begin{equation*}
\begin{split}
H^{0}(X,\mathcal{O}_{X}(\lfloor lm(K_{X}+\Delta+M)\rfloor))& \simeq H^{0}(X',\mathcal{O}_{X'}( lm(K_{X'}+\Delta'+M')))\\
& \simeq H^{0}(Z, \mathcal{O}_{Z}(lm(K_{Z}+\Delta_{Z}+\boldsymbol{\rm N}_{Z})))
\end{split}
\end{equation*}
for every positive integer $l$. 
In this way, we see that $\bigoplus_{l \in \mathbb{Z}_{\geq 0}} R_{lm}$ 
 is generated by $R_{m}$ as a graded $\mathbb{C}$-algebra, where $R_{lm}=H^{0}(X,\mathcal{O}_{X}(\lfloor lm(K_{X}+\Delta+M)\rfloor))$. 
Furthermore, the inequality 
${\rm vol}(Z,m(K_{Z}+\Delta_{Z}+\boldsymbol{\rm N}_{Z})) \leq m^{d}v$ 
shows that $Z={\boldsymbol{\rm Proj}}\bigoplus_{l \in \mathbb{Z}_{\geq 0}}R_{l}$ belongs to a bounded family $\mathfrak{F}$ that depends only on $d$, $p$, and $v$. 
We finish the proof. 
\end{proof}

Finally, we introduce an application of Theorem \ref{thm--eff-basepoint-free-gen-dlt}. 

\begin{thm}\label{thm--eff-basepoint-free-gen-dlt-general}
Let $d$ and $p$ be positive integers. 
Then there exists a positive integer $m$, depending only on $d$ and $p$, satisfying the following. 
Let $(X,\Delta,\boldsymbol{\rm M})$ be a generalized dlt pair such that
\begin{itemize}
\item
${\rm dim}X=d$, 
\item
$\boldsymbol{\rm M}$ is a $\mathbb{Q}$-b-divisor and $p(K_{X}+\Delta+\boldsymbol{\rm M}_{X})$ is nef and Cartier, and
\item
$K_{X}+\Delta+\boldsymbol{\rm M}_{X}$ is log big with respect to $(X,\Delta,\boldsymbol{\rm M})$. 
\end{itemize}
Then $m(K_{X}+\Delta+\boldsymbol{\rm M}_{X})$ is base point free.
\end{thm}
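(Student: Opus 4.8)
The plan is to reduce Theorem \ref{thm--eff-basepoint-free-gen-dlt-general} to Theorem \ref{thm--eff-basepoint-free-gen-dlt} by passing to a suitable log resolution on which the nef part becomes log big in the strong sense required there. So first I would take a log resolution $f\colon \tilde X\to X$ of $(X,\Delta)$ such that $\boldsymbol{\rm M}$ descends to $\tilde X$, $f$ is an isomorphism over an open set $U\subset X$ containing the generic points of all generalized lc centers of $(X,\Delta,\boldsymbol{\rm M})$, and writing $K_{\tilde X}+\tilde\Delta+\boldsymbol{\rm M}_{\tilde X}=f^{*}(K_{X}+\Delta+\boldsymbol{\rm M}_{X})+\tilde E$ with $\tilde\Delta\ge 0$, $\tilde E\ge 0$ having no common components. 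The issue is that $\boldsymbol{\rm M}_{\tilde X}$ itself is only nef and big (not necessarily log big with respect to $(\tilde X,\tilde\Delta)$), whereas Theorem \ref{thm--eff-basepoint-free-gen-dlt} needs log bigness of the nef part on $\tilde X$. To fix this I would instead work with $K_{\tilde X}+\tilde\Delta+\boldsymbol{\rm M}'_{\tilde X}$ where $\boldsymbol{\rm M}'=\overline{\boldsymbol{\rm M}_{\tilde X}+f^{*}(K_{X}+\Delta+\boldsymbol{\rm M}_{X})}$: since $K_{X}+\Delta+\boldsymbol{\rm M}_{X}$ is nef and log big with respect to $(X,\Delta,\boldsymbol{\rm M})$, its pullback to $\tilde X$ restricted to any stratum of $(\tilde X,\tilde\Delta)$ lying over a generalized lc center is big, and $\boldsymbol{\rm M}_{\tilde X}$ is big on every stratum; hence $\boldsymbol{\rm M}_{\tilde X}+f^{*}(K_{X}+\Delta+\boldsymbol{\rm M}_{X})$ is nef and log big with respect to $(\tilde X,\tilde\Delta)$. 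Then $K_{\tilde X}+\tilde\Delta+\boldsymbol{\rm M}'_{\tilde X}=2f^{*}(K_{X}+\Delta+\boldsymbol{\rm M}_{X})+\tilde E$.

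Next I would recast this as a generalized dlt pair over $X$ and run MMP. Concretely, $(\tilde X,\tilde\Delta,\boldsymbol{\rm M}')$ is generalized dlt, $p'\boldsymbol{\rm M}'$ is b-Cartier for $p'=2p$ (the summand $f^{*}(K_{X}+\Delta+\boldsymbol{\rm M}_{X})$ contributes $p$, the summand $\boldsymbol{\rm M}_{\tilde X}$ contributes $p$), and $p'(K_{\tilde X}+\tilde\Delta+\boldsymbol{\rm M}'_{\tilde X})=2p(2f^{*}(K_{X}+\Delta+\boldsymbol{\rm M}_{X})+\tilde E)$; but $\tilde E$ is not nef so this is not yet of the required shape. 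The right move is: run a $(K_{\tilde X}+\tilde\Delta+\boldsymbol{\rm M}'_{\tilde X})$-MMP over $X$. Since $K_{\tilde X}+\tilde\Delta+\boldsymbol{\rm M}'_{\tilde X}\sim_{\mathbb R}2f^{*}(K_X+\Delta+\boldsymbol{\rm M}_X)+\tilde E$ and $\tilde E\ge 0$ is $f$-exceptional, this MMP contracts $\tilde E$ exactly (by the usual argument with very exceptional divisors, as in \cite[Theorem 3.5]{birkar-flip}, or by noting the MMP terminates by Theorem \ref{thm--mmp-neflogbig-main-intro} applied relatively) and terminates with a crepant model $g\colon (\hat X,\hat\Delta,\boldsymbol{\rm M}')\to X$ with $K_{\hat X}+\hat\Delta+\boldsymbol{\rm M}'_{\hat X}=2g^{*}(K_X+\Delta+\boldsymbol{\rm M}_X)$, which is nef and Cartier after multiplying by $p$, and $2p\boldsymbol{\rm M}'$ is b-Cartier. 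Moreover the log bigness of $\boldsymbol{\rm M}'$ is preserved on a high model, so after one more log resolution $h\colon \hat{\tilde X}\to\hat X$ the triple $(\hat{\tilde X},\hat{\tilde\Delta},\boldsymbol{\rm M}')$ satisfies all the hypotheses of Theorem \ref{thm--eff-basepoint-free-gen-dlt} with the integer $2p$ in place of $p$.

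Then Theorem \ref{thm--eff-basepoint-free-gen-dlt}, applied to $(\hat{\tilde X},\hat{\tilde\Delta},\boldsymbol{\rm M}')$ with dimension $d$ and index $2p$, yields $m_0$ depending only on $d$ and $p$ such that $m_0(K_{\hat{\tilde X}}+\hat{\tilde\Delta}+\boldsymbol{\rm M}'_{\hat{\tilde X}})$ is base point free. Since $K_{\hat{\tilde X}}+\hat{\tilde\Delta}+\boldsymbol{\rm M}'_{\hat{\tilde X}}=2h^{*}g^{*}(K_X+\Delta+\boldsymbol{\rm M}_X)+(\text{effective }(g\circ h)\text{-exceptional})$, pushing forward and using that exceptional divisors do not affect sections, we get $H^0(X,\mathcal O_X(\ell m_0\cdot 2(K_X+\Delta+\boldsymbol{\rm M}_X)))\simeq H^0(\hat{\tilde X},\mathcal O(\ell m_0(K_{\hat{\tilde X}}+\hat{\tilde\Delta}+\boldsymbol{\rm M}'_{\hat{\tilde X}})))$ for all $\ell$, and base point freeness on the top model together with the fact that $2m_0(K_X+\Delta+\boldsymbol{\rm M}_X)$ is already Cartier (after multiplying by $p$, so use $2pm_0$) descends to base point freeness of $m:=2pm_0(K_X+\Delta+\boldsymbol{\rm M}_X)$ on $X$ itself — here one uses that a nef Cartier divisor whose pullback to a resolution is base point free is itself base point free, since sections generate on the open locus where $g\circ h$ is an isomorphism and the base locus, being $g\circ h$-compatible, must then be empty. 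The main obstacle is the second paragraph: verifying cleanly that the relative MMP on $\tilde X$ contracts precisely $\tilde E$ and produces a model on which $\boldsymbol{\rm M}'$ still has the log bigness and b-Cartier index required by Theorem \ref{thm--eff-basepoint-free-gen-dlt}; this is where one must be careful that log bigness is preserved under the MMP steps (which follows from the length-of-extremal-rays argument and Corollary \ref{cor--abundant-mmp}-type reasoning, the nef part being trivial on each contracted ray) and that passing to yet another log resolution restores the "descends to $\tilde X$" and "isomorphism over $U$" conditions simultaneously.
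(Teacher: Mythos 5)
Your core idea is exactly the paper's: absorb $D:=K_{X}+\Delta+\boldsymbol{\rm M}_{X}$ into the nef part, observe that $\boldsymbol{\rm M}_{\tilde X}+f^{*}D$ is nef and log big with respect to $(\tilde X,\tilde\Delta)$, apply Theorem \ref{thm--eff-basepoint-free-gen-dlt}, and divide by $2$. However, your entire second paragraph is an unnecessary detour caused by a misreading of where the hypotheses of Theorem \ref{thm--eff-basepoint-free-gen-dlt} live. That theorem takes as input a generalized dlt pair on $X$ with $p(K_{X}+\Delta+\boldsymbol{\rm M}_{X})$ nef and Cartier \emph{on $X$}; the log resolution is only auxiliary data, and nothing is required to be nef on $\tilde X$. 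So you should apply it directly to the generalized pair $(X,\Delta,\boldsymbol{\rm M}+\overline{D})$ on $X$ itself: this pair is generalized dlt with the same generalized lc centers (adding $f^{*}D$ does not change the generalized log discrepancies, since $K_{\tilde X}+\tilde\Delta+\boldsymbol{\rm M}_{\tilde X}+f^{*}D=f^{*}(K_{X}+\Delta+\boldsymbol{\rm M}_{X}+D)+\tilde E$ with the same $\tilde\Delta$), its generalized log canonical divisor is $2D$ with $p\cdot 2D$ nef and Cartier, and the same resolution $f$ (whose existence is Theorem \ref{thm--equiv-gendlt}) witnesses the log bigness condition. The "main obstacle" you identify — that $2f^{*}D+\tilde E$ is not nef, forcing an MMP over $X$ to contract $\tilde E$ and a re-verification of log bigness and descent on the resulting model — simply does not arise.

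Two smaller corrections. First, $\boldsymbol{\rm M}_{\tilde X}$ is \emph{not} assumed big, let alone "big on every stratum" (the hypothesis of the present theorem is only that $K_{X}+\Delta+\boldsymbol{\rm M}_{X}$ is nef and log big; $\boldsymbol{\rm M}$ could be $0$). The log bigness of $\boldsymbol{\rm M}_{\tilde X}+f^{*}D$ with respect to $(\tilde X,\tilde\Delta)$ comes from $f^{*}D$ alone: each stratum of $\lfloor\tilde\Delta\rfloor$ maps birationally onto a generalized lc center of $(X,\Delta,\boldsymbol{\rm M})$ because $f$ is an isomorphism over the generic points of those centers, so $f^{*}D$ is big on each stratum, and adding the nef divisor $\boldsymbol{\rm M}_{\tilde X}$ preserves bigness. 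Second, $p(\boldsymbol{\rm M}+\overline{D})=p\boldsymbol{\rm M}+p\overline{D}$ is already b-Cartier (a sum of b-Cartier b-divisors is b-Cartier on a common model), so the index $p$ is unchanged and there is no need to pass to $2p$; with these adjustments one gets $m=2m'$ with $m'$ the constant of Theorem \ref{thm--eff-basepoint-free-gen-dlt} for $(d,p)$.
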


\begin{proof}
By Theorem \ref{thm--equiv-gendlt}, there is a log resolution $f\colon \tilde{X} \to X$ of $(X,\Delta)$ such that $\boldsymbol{\rm M}$ descends to $\tilde{X}$ and $f$ is an isomorphism over an open subset $U$ containing all the generic points of generalized lc centers of $(X,\Delta,\boldsymbol{\rm M})$. 
Set 
$D=K_{X}+\Delta+\boldsymbol{\rm M}_{X}$
 and consider the generalized dlt pair $(X,\Delta, \boldsymbol{\rm M}+\overline{D})$. 
We can write
$$K_{\tilde{X}}+\tilde{\Delta}+\boldsymbol{\rm M}_{\tilde{X}}+f^{*}D=f^{*}(K_{X}+\Delta+\boldsymbol{\rm M}_{X}+D)+\tilde{E}$$
with $\tilde{\Delta}\geq 0$ and $\tilde{E}\geq 0$ having no common components. 
Since $D$ is nef and log big with respect to $(X,\Delta,\boldsymbol{\rm M})$ and $f$ is an isomorphism over the genetic points of all generalized lc centers of $(X,\Delta,\boldsymbol{\rm M})$, we see that $\boldsymbol{\rm M}_{\tilde{X}}+f^{*}D$ is nef and log big with respect to $(\tilde{X},\tilde{\Delta})$. 
Thus we may apply Theorem \ref{thm--eff-basepoint-free-gen-dlt}. 
There exists $m'$, depending only on $d$ and $p$, such that $m'(K_{X}+\Delta+\boldsymbol{\rm M}_{X}+D)$ is base point free. 
Then $2m'(K_{X}+\Delta+\boldsymbol{\rm M}_{X})$ is base point free. 
Hence $m:=2m'$ is the desired positive integer.
\end{proof}

\begin{cor}\label{cor--gendlt-bounded}
Let $d$ and $p$ be positive integers, and let $v$ be a positive real number. 
Then there exists a positive integer $m$, depending only on $d$, $p$, and $v$, satisfying the following. 
Let $(X,\Delta,\boldsymbol{\rm M})$ be a generalized dlt pair such that
\begin{itemize}
\item
${\rm dim}X=d$, 
\item
$p\boldsymbol{\rm M}$ is b-Cartier and $p(K_{X}+\Delta+\boldsymbol{\rm M}_{X})$ is  Cartier, 
\item
$K_{X}+\Delta+\boldsymbol{\rm M}_{X}$ is ample, and
\item
${\rm vol}(X,K_{X}+\Delta+\boldsymbol{\rm M}_{X}) \leq v$. 
\end{itemize}
Then $m(K_{X}+\Delta+\boldsymbol{\rm M}_{X})$ is very ample. 
In particular, $(X,\Delta,\boldsymbol{\rm M})$ belongs to a bounded family $\mathfrak{F}$ depending only on $d$, $p$, and $v$ in the sense of Birkar \cite{birkar-nefpart}. 
\end{cor}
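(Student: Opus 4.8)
The plan is to combine the effective base point free theorem of Theorem \ref{thm--eff-basepoint-free-gen-dlt-general} with the bound on the very ample index provided by Lemma \ref{lem--bound-veryampleindex}. The key point is that once $K_{X}+\Delta+\boldsymbol{\rm M}_{X}$ is known to become base point free after multiplication by an integer depending only on $d$ and $p$, Lemma \ref{lem--bound-veryampleindex} upgrades this to very ampleness with an index depending only on $d$, $p$, and $v$, and the volume bound then controls the geometry of the resulting embedding, which yields the boundedness statement.

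First I would note that since $p(K_{X}+\Delta+\boldsymbol{\rm M}_{X})$ is ample, the divisor $K_{X}+\Delta+\boldsymbol{\rm M}_{X}$ is nef and big, and its pullback to the normalization of any generalized lc center of $(X,\Delta,\boldsymbol{\rm M})$ is ample, hence big; thus $K_{X}+\Delta+\boldsymbol{\rm M}_{X}$ is nef and log big with respect to $(X,\Delta,\boldsymbol{\rm M})$ in the sense of Definition \ref{defn--abund}. Therefore Theorem \ref{thm--eff-basepoint-free-gen-dlt-general} applies and furnishes a positive integer $m_{1}$, depending only on $d$ and $p$, such that $m_{1}(K_{X}+\Delta+\boldsymbol{\rm M}_{X})$ is base point free. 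Next I would set $p_{1}=pm_{1}$: then $p_{1}\boldsymbol{\rm M}$ is b-Cartier, being a positive integral multiple of $p\boldsymbol{\rm M}$, and $p_{1}(K_{X}+\Delta+\boldsymbol{\rm M}_{X})=m_{1}\cdot p(K_{X}+\Delta+\boldsymbol{\rm M}_{X})$ is an ample and base point free Cartier divisor. Since ${\rm vol}(X,K_{X}+\Delta+\boldsymbol{\rm M}_{X})\le v$, Lemma \ref{lem--bound-veryampleindex} applied with $p_{1}$ and $v$ yields a positive integer $m_{2}$, depending only on $d$, $p_{1}$, and $v$, hence only on $d$, $p$, and $v$, such that $m_{2}(K_{X}+\Delta+\boldsymbol{\rm M}_{X})$ is very ample. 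Setting $m=m_{2}$ (or any fixed multiple thereof) gives the first assertion.

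For the boundedness statement, the very ample divisor $m(K_{X}+\Delta+\boldsymbol{\rm M}_{X})$ embeds $X$ into a projective space of dimension bounded by ${\rm dim}\,H^{0}(X,\mathcal{O}_{X}(m(K_{X}+\Delta+\boldsymbol{\rm M}_{X})))$, and the degree of the image equals ${\rm vol}(X,m(K_{X}+\Delta+\boldsymbol{\rm M}_{X}))=m^{d}\,{\rm vol}(X,K_{X}+\Delta+\boldsymbol{\rm M}_{X})\le m^{d}v$, so the varieties $X$ arising this way form a bounded family by the usual Hilbert scheme argument. Since $p\boldsymbol{\rm M}$ is b-Cartier and $p(K_{X}+\Delta+\boldsymbol{\rm M}_{X})$ is Cartier, $p\Delta$ is a Weil divisor, whence the coefficients of $\Delta$ lie in the finite set $\tfrac1p\mathbb{Z}\cap[0,1]$ and the degree of $\Delta$ with respect to the embedding is bounded by the same volume estimate; likewise $\boldsymbol{\rm M}_{X}$ is nef of bounded degree. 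Hence $(X,\Delta,\boldsymbol{\rm M})$ belongs to a bounded family $\mathfrak{F}$ depending only on $d$, $p$, and $v$ in the sense of Birkar \cite{birkar-nefpart}. The only delicate points are the bookkeeping that every constant depends only on $d$, $p$, and $v$ — which is immediate once one observes that $p_{1}=pm_{1}$ depends only on $d$ and $p$ — and the final verification that the boundary and the nef part are bounded, for which one invokes the boundedness formalism for generalized pairs developed in \cite{birkar-nefpart}.
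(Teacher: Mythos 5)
Your proposal is correct and follows essentially the same route as the paper: apply Theorem \ref{thm--eff-basepoint-free-gen-dlt-general} (noting that ampleness of $p(K_{X}+\Delta+\boldsymbol{\rm M}_{X})$ gives nefness and log bigness) to obtain a base point free multiple with index depending only on $d$ and $p$, and then feed this into Lemma \ref{lem--bound-veryampleindex} to get the very ample index depending only on $d$, $p$, and $v$. The bookkeeping with $p_{1}=pm_{1}$ and the concluding boundedness argument match what the paper leaves implicit.
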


\begin{proof}
By Theorem \ref{thm--eff-basepoint-free-gen-dlt-general}, there exists a positive integer $n$, depending only on $d$ and $p$, such that $n(K_{X}+\Delta+\boldsymbol{\rm M}_{X})$ is base point free for every generalized dlt pair $(X,\Delta,\boldsymbol{\rm M})$ as in Corollary \ref{cor--gendlt-bounded}. 
Then Corollary \ref{cor--gendlt-bounded} follows from Lemma \ref{lem--bound-veryampleindex}. 
\end{proof}

\section{Appendix: On definition of generalized dlt pairs}\label{sec6}

The goal of this appendix is to prove that the definition of generalized dlt pairs by Birkar \cite{birkar-compl} and that by Han--Li \cite{hanli} are equivalent when the nef part of a generalized pair is a finite $\mathbb{R}_{>0}$-linear combination of b-nef $\mathbb{Q}$-b-Cartier $\mathbb{Q}$-b-divisors. 

\begin{thm}\label{thm--equiv-gendlt}
The following three conditions are equivalent  
for every generalized lc pair $(X,\Delta,\boldsymbol{\rm M})/Z$ such that $\boldsymbol{\rm M}$ is a finite $\mathbb{R}_{>0}$-linear combination of b-nef$/Z$ $\mathbb{Q}$-b-Cartier $\mathbb{Q}$-b-divisors. 
\begin{enumerate}
\item\label{1}
For any generic point $\eta$ of any generalized lc center of $(X,\Delta,\boldsymbol{\rm M})/Z$, $(X,\Delta)$ is log smooth near $\eta$ and $\boldsymbol{\rm M}$ descends to $X$ on a neighborhood of $\eta$ (\cite[2.13]{birkar-compl}). 
\item\label{2}
There is an open subset $U \subset X$ such that $(U,\Delta|_{U})$ is log smooth, $U$ contains the generic point of any generalized lc center of $(X,\Delta,\boldsymbol{\rm M})/Z$, and the generic point of any generalized lc center of $(X,\Delta,\boldsymbol{\rm M})/Z$ is the generic point of an lc center of $(U,\Delta|_{U})$ (\cite[Definition 2.3]{hanli}).
\item\label{3}
There is a log resolution $f\colon \tilde{X} \to X$ of $(X,\Delta)$ and an open subset $V \subset X$ such that $\boldsymbol{\rm M}$ descends to $\tilde{X}$, $f$ is an isomorphism over $V$, and $V$ contains the generic point of any generalized lc center of $(X,\Delta,\boldsymbol{\rm M})/Z$. 
\end{enumerate} 
\end{thm}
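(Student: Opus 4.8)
The strategy is to prove the cycle of implications $(\ref{3})\Rightarrow(\ref{2})\Rightarrow(\ref{1})\Rightarrow(\ref{3})$, where the first two implications are essentially formal and the last is the substantive one. For $(\ref{3})\Rightarrow(\ref{2})$, given a log resolution $f\colon\tilde{X}\to X$ which is an isomorphism over an open set $V$ containing all the generic points of generalized lc centers, one simply takes $U$ to be (an open subset of) $V$ on which $(X,\Delta)$ is log smooth; since $f$ is an isomorphism over $V$ and $\boldsymbol{\rm M}$ descends to $\tilde{X}$, every generalized lc center meeting $V$ is literally an lc center of $(U,\Delta|_U)$ because the generalized log discrepancy computation near $\eta$ reduces to the usual log discrepancy computation on $U$ (the nef part contributes nothing, as $\boldsymbol{\rm M}$ descends there). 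For $(\ref{2})\Rightarrow(\ref{1})$, the statement is almost a tautology: if $U$ witnesses condition $(\ref{2})$, then near the generic point $\eta$ of any generalized lc center the pair $(X,\Delta)$ is log smooth by construction; the only extra point is that $\boldsymbol{\rm M}$ descends to $\boldsymbol{\rm M}_X$ near $\eta$, which one extracts from the fact that $\eta$ is the generic point of an lc center of $(U,\Delta|_U)$ together with the hypothesis that $(X,\Delta,\boldsymbol{\rm M})/Z$ is generalized lc — if $\boldsymbol{\rm M}$ did not descend near $\eta$, a high enough resolution would produce a divisor over $\eta$ with generalized log discrepancy strictly less than that predicted by $(U,\Delta|_U)$, contradicting either generalized lc-ness or the fact that $\eta$ is a genuine lc center of $(U,\Delta|_U)$. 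Here one uses that $\boldsymbol{\rm M}$ is an $\mathbb{R}_{>0}$-combination of b-nef $\mathbb{Q}$-b-Cartier b-divisors, so that the negativity lemma applies to each summand.

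\textbf{The main implication $(\ref{1})\Rightarrow(\ref{3})$.} This is where the work lies. Starting from condition $(\ref{1})$, I would first take any log resolution $g\colon X'\to X$ of $(X,\Delta)$ to which $\boldsymbol{\rm M}$ descends; such $g$ exists because $\boldsymbol{\rm M}$ is a finite combination of $\mathbb{Q}$-b-Cartier b-divisors, each descending to some model, and we may take a common resolution dominating all of them and a log resolution of $(X,\Delta)$. The issue is that $g$ need not be an isomorphism over a neighborhood of the generic points of the generalized lc centers. To fix this, I would run a suitable MMP (or use the extraction results, e.g.\ Lemma \ref{lem--extraction-gpair}) to modify $X'$ so that the only exceptional divisors remaining are those with positive generalized log discrepancy that we are forced to keep, and then argue that over a neighborhood $V$ of each generic point $\eta$ of a generalized lc center the modified resolution is small, hence an isomorphism in codimension one, and then actually an isomorphism because $(X,\Delta)$ is already log smooth near $\eta$ and there is nothing to resolve. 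More precisely: near $\eta$, condition $(\ref{1})$ says $(X,\Delta)$ is log smooth and $\boldsymbol{\rm M}=\overline{\boldsymbol{\rm M}_X}$ locally, so the restriction of any log resolution over this neighborhood can be taken to be an isomorphism; the content is in gluing these local choices into one global log resolution $f\colon\tilde{X}\to X$ with the required open set $V$. I would choose $V$ to be the union of the (open) loci where $(X,\Delta)$ is log smooth and $\boldsymbol{\rm M}$ descends, which by $(\ref{1})$ contains every generic point of a generalized lc center, and then invoke the fact that a log resolution can be taken to be an isomorphism over any open set on which the pair is already log smooth and the b-divisor already descends — one builds $\tilde{X}$ by blowing up only along centers disjoint from $V$.

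\textbf{Expected obstacle.} The delicate step is verifying that one can simultaneously (a) make $\boldsymbol{\rm M}$ descend to the resolution, (b) log-resolve $(X,\Delta)$, and (c) keep the morphism an isomorphism over $V$. The tension is that forcing $\boldsymbol{\rm M}$ to descend may require blow-ups whose centers a priori could meet $V$; one must check that near $V$ the b-divisor $\boldsymbol{\rm M}$ already descends (which is exactly what $(\ref{1})$ gives, summand by summand, using that each summand is $\mathbb{Q}$-b-Cartier so "descends near $\eta$" is an open condition) so that no blow-up over $V$ is needed. A secondary subtlety is bookkeeping with $\mathbb{R}_{>0}$-coefficients: one should reduce to the $\mathbb{Q}$-b-Cartier summands individually, observe that the set of models to which a fixed $\mathbb{Q}$-b-Cartier b-divisor descends is closed under domination, take a common model, and then check that the generalized lc centers and their generic points are insensitive to this enlargement. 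Once these points are in place, the construction of $f$ and $V$ is routine, and the equivalence follows. I expect the writeup of $(\ref{1})\Rightarrow(\ref{3})$ to be short but to require care precisely at the claim that "$\boldsymbol{\rm M}$ descends near $\eta$" is stable under passing to higher models and can be glued, so that the single resolution $f$ works for all generalized lc centers at once.
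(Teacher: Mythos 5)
Your cycle of implications and your identification of the easy steps are consistent with the paper (which proves $(\ref{3})\Rightarrow(\ref{1})\Rightarrow(\ref{2})$ as immediate and concentrates on $(\ref{2})\Rightarrow(\ref{3})$), but the two places where you defer to ``routine'' arguments are exactly where the substance lies, and neither is filled in. First, all of your negativity-lemma reasoning presupposes that $\boldsymbol{\rm M}_{X}$ is $\mathbb{R}$-Cartier, so that $f^{*}\boldsymbol{\rm M}_{X}-\boldsymbol{\rm M}_{\tilde{X}}$ makes sense; for a generalized lc pair only $K_{X}+\Delta+\boldsymbol{\rm M}_{X}$ is assumed $\mathbb{R}$-Cartier. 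The paper first passes to small lc modifications (twice, using \cite{has-class}) to arrange that $K_{X}$ is $\mathbb{Q}$-Cartier and $\boldsymbol{\rm M}_{X}$ is $\mathbb{R}$-Cartier; without some such reduction your argument for ``$\boldsymbol{\rm M}$ descends near $\eta$'' does not even parse. Second, and more seriously, the claim that a log resolution can be taken to be an isomorphism over any open set on which the b-divisor already descends, ``by blowing up only along centers disjoint from $V$,'' is not something achieved by choosing blow-up centers: making $\boldsymbol{\rm M}$ descend requires extracting specific divisors, and an arbitrary model $g\colon Y\to X$ to which $\boldsymbol{\rm M}$ descends may have exceptional divisors lying over $V$ (with zero coefficient in $g^{*}\boldsymbol{\rm M}_{X}-\boldsymbol{\rm M}_{Y}$) that you have no way to avoid or to contract while preserving descent. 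The paper's solution is Lemma \ref{lem--genklt-descend}: one runs relative $(K+\Delta+\alpha\boldsymbol{\rm M})$-MMPs over an affine cover via \cite{bchm} and glues the resulting relative generalized lc models to produce $g\colon Y\to X$ with $\boldsymbol{\rm M}$ descending to $Y$ and ${\rm Supp}(g^{*}\boldsymbol{\rm M}_{X}-\boldsymbol{\rm M}_{Y})={\rm Ex}(g)$, i.e.\ a model that is an isomorphism precisely over the locus where $\boldsymbol{\rm M}$ already descends. This construction is the engine of the whole proof and is absent from your proposal.

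A related soft spot: in your $(\ref{2})\Rightarrow(\ref{1})$ step, the existence of one lc place $Q$ of $(U,\Delta|_{U})$ over $\eta$ whose multiplicity in (the pullback of) $f^{*}\boldsymbol{\rm M}_{X}-\boldsymbol{\rm M}_{\tilde{X}}$ is forced to vanish by generalized lc-ness does not by itself show that $\boldsymbol{\rm M}$ descends near $\eta$: another component of $f^{*}\boldsymbol{\rm M}_{X}-\boldsymbol{\rm M}_{\tilde{X}}$ could still dominate $\overline{\{\eta\}}$ without containing the center of $Q$. One needs the strong form of the negativity lemma (entire fibres over the non-descent locus are contained in the support of $f^{*}\boldsymbol{\rm M}_{X}-\boldsymbol{\rm M}_{\tilde{X}}$, since that divisor is exceptional and anti-nef over $X$), or, as in the paper, the equality ${\rm Supp}F={\rm Ex}(g)$ supplied by Lemma \ref{lem--genklt-descend}, to upgrade ``this particular $Q$ sees no contribution'' to ``$\eta$ lies in the descent locus.''
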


We note that we do not need to assume that $X$ or $Z$ is quasi-projective. 

The following result is the key ingredient for the proof. 

\begin{lem}[{cf.~\cite[Lemma 5.18]{hacon-liu}}]\label{lem--genklt-descend}
Let $(X,\Delta,\boldsymbol{\rm M})/Z$ be a generalized klt pair such that $\boldsymbol{\rm M}_{X}$ is $\mathbb{R}$-Cartier and $\boldsymbol{\rm M}$ is a finite $\mathbb{R}_{>0}$-linear combination of b-nef$/Z$ $\mathbb{Q}$-b-Cartier $\mathbb{Q}$-b-divisors. 
Then there is a projective birational morphism $g \colon Y \to X$ such that $\boldsymbol{\rm M}$ descends to $Y$ and the divisor $F:=g^{*}\boldsymbol{\rm M}_{X}-\boldsymbol{\rm M}_{Y}$ satisfies ${\rm Supp}F={\rm Ex}(g)$.  
\end{lem}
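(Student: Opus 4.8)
The plan is to reduce to the case of a single $\mathbb{Q}$-b-Cartier b-divisor and then build $g$ by taking a common resolution where all the relevant nef parts descend. First I would write $\boldsymbol{\rm M}=\sum_{j}\mu_{j}\boldsymbol{\rm M}^{(j)}$ with $\mu_{j}\in\mathbb{R}_{>0}$ and each $\boldsymbol{\rm M}^{(j)}$ a b-nef$/Z$ $\mathbb{Q}$-b-Cartier b-divisor, and fix a projective birational morphism $h\colon W\to X$ to which every $\boldsymbol{\rm M}^{(j)}$ (hence $\boldsymbol{\rm M}$) descends; we may take $W$ smooth and $h^{-1}$ of the non-isomorphism locus a divisor, so that $-\sum_{j}\boldsymbol{\rm M}^{(j)}_{W}+h^{*}\boldsymbol{\rm M}_{X}=:F_{W}$ is $h$-exceptional and effective by the negativity lemma applied to each $-\boldsymbol{\rm M}^{(j)}_{W}$ relative to $X$ (each $\boldsymbol{\rm M}^{(j)}_{W}$ is nef over $X$ and $h_{*}\boldsymbol{\rm M}^{(j)}_{W}$ is $\mathbb{Q}$-Cartier). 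The issue is that generally ${\rm Supp}F_{W}\subsetneq{\rm Ex}(h)$, so $W$ is too big: I need a model $Y$ lying between $X$ and $W$ on which exactly the divisors with positive discrepancy in $F$ survive.

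Next I would run an MMP to extract precisely the correct divisors. Take a $\mathbb{Q}$-factorial generalized dlt model, or more simply, since $(X,\Delta,\boldsymbol{\rm M})/Z$ is generalized klt, perturb: by Lemma~\ref{lem--extraction-gpair} applied with $\mathcal{T}$ the set of $h$-exceptional prime divisors $P$ on $W$ with ${\rm coeff}_{P}(F_{W})>0$ — equivalently $0<a(P,X,\Delta+\boldsymbol{\rm M}_{X})<1$, using that generalized klt forces $a>0$ and $h$-exceptionality together with $F_{W}\ge 0$ forces $a<1$ — there is a $\mathbb{Q}$-factorial variety $Y$ with $g\colon Y\to X$ whose exceptional prime divisors are exactly the elements of $\mathcal{T}$. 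On $Y$ we still have $\boldsymbol{\rm M}$ descends: indeed $W\to X$ factors through $Y$ after a further resolution, and the $g$-exceptional divisors are exactly those where $F_{W}$ is positive, so pushing $F_{W}$ forward to $Y$ (via the common resolution of $W$ and $Y$) gives $F=g^{*}\boldsymbol{\rm M}_{X}-\boldsymbol{\rm M}_{Y}$ with ${\rm Supp}F=\mathcal{T}={\rm Ex}(g)$. One must check $\boldsymbol{\rm M}$ genuinely descends to $Y$ and not just to $W$; this follows because for each $j$ the b-nef b-divisor $\boldsymbol{\rm M}^{(j)}$ is $\mathbb{Q}$-b-Cartier and the negativity computation that put ${\rm coeff}_{P}(\boldsymbol{\rm M}^{(j)}_{W}-g'^{*}\boldsymbol{\rm M}^{(j)}_{Y})=0$ for $g'\colon W\to Y$ on all $g'$-exceptional $P$ shows $\boldsymbol{\rm M}^{(j)}_{W}=g'^{*}\boldsymbol{\rm M}^{(j)}_{Y}$, i.e.\ $\boldsymbol{\rm M}^{(j)}$ descends to $Y$.

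The main obstacle I anticipate is bookkeeping the b-divisor combination: Lemma~\ref{lem--extraction-gpair} only extracts divisors with log discrepancy strictly between $0$ and $1$, so I must verify carefully that the set of divisors appearing with positive coefficient in $F=g^{*}\boldsymbol{\rm M}_{X}-\boldsymbol{\rm M}_{Y}$ coincides with the set of $g$-exceptional divisors of log discrepancy in $(0,1)$ — the subtlety being that $a(P,X,\Delta+\boldsymbol{\rm M}_{X})$ involves $\Delta$ as well as $\boldsymbol{\rm M}_{X}$, whereas $F$ only sees $\boldsymbol{\rm M}$. The resolution is that $\mathcal{T}$ can be chosen to be exactly the $g$-exceptional set by construction of $g$ via Lemma~\ref{lem--extraction-gpair}, and then one shows (a) every $P\in\mathcal{T}$ has ${\rm coeff}_{P}(F)>0$ because $\boldsymbol{\rm M}$ fails to descend to $X$ along $P$ forces a strict drop — here I would use that each $\boldsymbol{\rm M}^{(j)}_{Y}$ is nef over $X$ and not equal to the pullback from $X$, so the negativity lemma gives strict positivity on at least those exceptional divisors where descent genuinely fails, and (b) choosing $\mathcal{T}$ small enough (only the divisors where some $\boldsymbol{\rm M}^{(j)}$ actually fails to descend) makes ${\rm Supp}F\supseteq{\rm Ex}(g)$ automatic while ${\rm Supp}F\subseteq{\rm Ex}(g)$ is clear since $g_{*}F=0$. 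I would close by noting $Y$ is automatically normal and the construction is relative over $Z$ throughout, so all nefness statements are over $Z$ as required.
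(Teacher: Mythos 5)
Your overall strategy—resolve so that $\boldsymbol{\rm M}$ descends, observe $F_{W}=h^{*}\boldsymbol{\rm M}_{X}-\boldsymbol{\rm M}_{W}\geq 0$ by negativity, then pass to an intermediate model extracting only ${\rm Supp}F_{W}$—is reasonable in outline, but two steps have genuine gaps. First, your justification for invoking Lemma \ref{lem--extraction-gpair} is false: for $P\in{\rm Supp}F_{W}$ one has $a(P,X,\Delta+\boldsymbol{\rm M}_{X})=a(P,X,\Delta)-{\rm coeff}_{P}(F_{W})$, and $a(P,X,\Delta)$ can be arbitrarily large, so a small positive coefficient in $F_{W}$ does not force the generalized log discrepancy below $1$. (Already for $X=\mathbb{A}^{3}$, $\Delta=0$, and $\boldsymbol{\rm M}$ descending to the blow-up of the origin with $F$ supported on the exceptional $\mathbb{P}^{2}$, the relevant log discrepancy is $3-\varepsilon>1$.) So the extraction tool you rely on simply does not apply to the divisors you need to extract. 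Second, the descent of $\boldsymbol{\rm M}$ to $Y$ is asserted, not proved: the negativity lemma gives $g'^{*}\boldsymbol{\rm M}_{Y}\geq\boldsymbol{\rm M}_{W}$ (equivalently $g'^{*}F_{Y}\leq F_{W}$ on a common resolution), but equality—which is exactly the statement that $\boldsymbol{\rm M}$ descends—is what must be shown, and "the negativity computation that put the coefficient to $0$" is circular. Knowing which divisors you extracted does not by itself control the coefficients of $g'^{*}F_{Y}$ along the contracted divisors lying over ${\rm Supp}F_{Y}$. (A smaller point: the individual $\boldsymbol{\rm M}^{(j)}_{X}$ need not be $\mathbb{Q}$-Cartier, so you should apply negativity to the sum, not to each summand.)

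The paper's proof avoids both problems by constructing $Y$ not as an extraction but as a relative log canonical model: fix $\alpha$ with $\alpha\mu_{i}>2\dim X$, run a $(K_{\bar X}+\bar\Delta+\alpha\boldsymbol{\rm M}_{\bar X})$-MMP over (an affine cover of) $X$, and take the contraction induced by $K+\bar\Delta+2\alpha\boldsymbol{\rm M}$. The length-of-extremal-rays bound guarantees the MMP never modifies $\boldsymbol{\rm M}$, comparing the contractions for $\alpha$ and $2\alpha$ shows $\alpha\boldsymbol{\rm M}\sim_{\mathbb{R},Y}0$ so that $\boldsymbol{\rm M}$ genuinely descends to $Y$, and the relative ampleness of $K_{Y}+\Delta_{Y}+2\alpha\boldsymbol{\rm M}_{Y}$ over $X$ forces the crepant-difference divisor to have support equal to all of ${\rm Ex}(g)$. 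If you want to repair your argument you would need a substitute for both of these mechanisms; as written, the proposal does not close.
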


\begin{proof}
Since $\boldsymbol{\rm M}_{X}$ is $\mathbb{R}$-Cartier, $(X,\Delta)$ is a klt pair. 
Let $f\colon \bar{X} \to X$ be a log resolution of $(X,\Delta)$ such that $\boldsymbol{\rm M}$ descends to $\bar{X}$. 
We can write 
$$K_{\bar{X}}+\bar{\Delta}=f^{*}(K_{X}+\Delta)+\bar{E}$$
with $\bar{\Delta}\geq 0$ and $\bar{E}\geq0$ having no common components. 
We may write $\boldsymbol{\rm M}_{\bar{X}}=\sum_{i}\mu_{i}M_{i}$ where $\mu_{i} \in \mathbb{R}_{>0}$ and $M_{i}$ are Cartier divisors that are nef over $Z$. 
We fix $\alpha$ such that $\alpha \mu_{i}>2\cdot {\rm dim}X$ for all $i$. 

We fix an open affine covering $\{X_{j}\}_{j}$ of $X$. 
In particular, all $X_{j}$ are quasi-projective. 
We put $\bar{X}_{j}=f^{-1}(X_{j})$, $\bar{\Delta}_{j}=\bar{\Delta}|_{\bar{X}_{j}}$, and $\boldsymbol{\rm M}^{j}=\boldsymbol{\rm M}|_{X_{j}}$. 
In the rest of the proof, for any projective birational morphism $W \to X$ and any open subset $V \subset W$, we will denote $\boldsymbol{\rm M}_{W}|_{V}$ by $\boldsymbol{\rm M}_{V}$. 

Since $(\bar{X}_{j},\bar{\Delta}_{j},\alpha \boldsymbol{\rm M}^{j})/X_{j}$ is generalized klt, by applying \cite{bchm} we get a sequence of steps of a $(K_{\bar{X}_{j}}+\bar{\Delta}_{j}+\alpha \boldsymbol{\rm M}_{\bar{X}_{j}})$-MMP over $X_{j}$ that terminates with a good minimal model $(\bar{X}'_{j},\bar{\Delta}'_{j},\alpha \boldsymbol{\rm M}^{j})/X_{j}$ of $(\bar{X}_{j},\bar{\Delta}_{j},\alpha \boldsymbol{\rm M}^{j})/X_{j}$. 
By the length of extremal rays (\cite[Section 18]{fujino-fund} or \cite[Theorem 4.6.2]{fujino-book}), it follows that the birational transform of $\boldsymbol{\rm M}_{\bar{X}_{j}}$ is numerically trivial with respect to the extremal contraction in each step of the MMP. 
Therefore, we see that 
\begin{itemize}
\item
$\boldsymbol{\rm M}^{j}$ descends to $\bar{X}'_{j}$, and
\item
$\bar{X}_{j}\dashrightarrow \bar{X}'_{j}$ is a sequence of steps of a $(K_{\bar{X}_{j}}+\bar{\Delta}_{j}+(\alpha+t) \boldsymbol{\rm M}_{\bar{X}_{j}})$-MMP over $X_{j}$ to a minimal model $(\bar{X}'_{j},\bar{\Delta}'_{j}, (\alpha+t) \boldsymbol{\rm M}^{j})/X_{j}$ for all $t\geq 0$. 
\end{itemize}
Then $K_{\bar{X}'_{j}}+\bar{\Delta}'_{j}+(\alpha+t) \boldsymbol{\rm M}_{\bar{X}'_{j}}$ is semi-ample over $X_{j}$.  

Let $\phi \colon \bar{X}'_{j} \to Y_{j}$ be the contraction over $X_{j}$ induced by $K_{\bar{X}'_{j}}+\bar{\Delta}'_{j}+2 \alpha\boldsymbol{\rm M}_{\bar{X}'_{j}}$, and let 
$\bar{X}'_{j} \to Y'_{j}$ be the contraction over $X_{j}$ induced by $K_{\bar{X}'_{j}}+\bar{\Delta}'_{j}+ \alpha\boldsymbol{\rm M}_{\bar{X}'_{j}}$. 
We pick any curve $\xi \subset \bar{X}'_{j}$ that is contracted by $\bar{X}'_{j} \to X_{j}$. 
Since $K_{\bar{X}'_{j}}+\bar{\Delta}'_{j}+ \alpha\boldsymbol{\rm M}_{\bar{X}'_{j}}$ and $\alpha\boldsymbol{\rm M}_{\bar{X}'_{j}}$ are nef over $X_{j}$, if $\xi$ is contracted by $\phi \colon \bar{X}'_{j} \to Y_{j}$ then
$$0=\xi \cdot (K_{\bar{X}'_{j}}+\bar{\Delta}'_{j}+2 \alpha\boldsymbol{\rm M}_{\bar{X}'_{j}}) \geq \xi \cdot (K_{\bar{X}'_{j}}+\bar{\Delta}'_{j}+ \alpha\boldsymbol{\rm M}_{\bar{X}'_{j}})\geq 0.$$
Thus, if $\xi$ is contracted by $\phi \colon \bar{X}'_{j} \to Y_{j}$ then $\xi$ is contracted by $\bar{X}'_{j} \to Y'_{j}$. 
In particular, the induced birational map $Y_{j} \dashrightarrow Y'_{j}$ is a morphism. 
Then we have
$$\alpha \boldsymbol{\rm M}_{\bar{X}'_{j}}=(K_{\bar{X}'_{j}}+\bar{\Delta}'_{j}+2 \alpha\boldsymbol{\rm M}_{\bar{X}'_{j}})-(K_{\bar{X}'_{j}}+\bar{\Delta}'_{j}+\alpha\boldsymbol{\rm M}_{\bar{X}'_{j}})\sim_{\mathbb{R},Y_{j}}0,$$
so $\boldsymbol{\rm M}_{Y_{j}}$ is $\mathbb{R}$-Cartier and $\boldsymbol{\rm M}_{\bar{X}'_{j}}=\phi^{*}\boldsymbol{\rm M}_{Y_{j}}$. 
From this, we see that $\boldsymbol{\rm M}^{j}$ descends to $Y_{j}$. 

Put $\Delta_{Y_{j}}=\phi_{*}\bar{\Delta}'_{j}$. 
By construction, the generalized pair $(Y_{j}, \Delta_{Y_{j}},2 \alpha\boldsymbol{\rm M}^{j})/X_{j}$ is a weak generalized lc model of $(\bar{X}_{j},\bar{\Delta}_{j},2\alpha \boldsymbol{\rm M}^{j})/X_{j}$ such that $K_{Y_{j}}+\Delta_{Y_{j}}+2 \alpha\boldsymbol{\rm M}_{Y_{j}}$ is ample over $X_{j}$. 
This is a generalized pair analogue of the relative log canonical models for lc pairs. 
Hence we may glue $Y_{j}$, and we get a projective birational morphism $g\colon Y \to X$ and a birational contraction $\psi \colon \bar{X}\dashrightarrow Y$ over $X$ such that
\begin{itemize}
\item
$\boldsymbol{\rm M}$ descends to $Y$, and 
\item
$K_{Y}+\psi_{*}\bar{\Delta}+ 2\alpha \boldsymbol{\rm M}_{Y}$ is ample over $X$.  
\end{itemize} 
Since $\boldsymbol{\rm M}_{X}$ is $\mathbb{R}$-Cartier, we see that $K_{X}+\Delta+ 2\alpha \boldsymbol{\rm M}_{X}$ is $\mathbb{R}$-Cartier. 
Then there is an effective $g$-exceptional $\mathbb{R}$-divisor $E_{Y}$ on $Y$ such that 
$$K_{Y}+\psi_{*}\bar{\Delta}+ 2\alpha \boldsymbol{\rm M}_{Y}=g^{*}(K_{X}+\Delta+ 2\alpha \boldsymbol{\rm M}_{X})-E_{Y}$$
and ${\rm Supp}E_{Y}={\rm Ex}(g)$. 
We recall the relation $K_{\bar{X}}+\bar{\Delta}=f^{*}(K_{X}+\Delta)+\bar{E}$, so we have $K_{Y}+\psi_{*}\bar{\Delta}=g^{*}(K_{X}+\Delta)+\psi_{*}\bar{E}$. 
We put 
$$F=\frac{1}{2\alpha}(E_{Y}+\psi_{*}\bar{E}).$$ 
Then $F$ is effective and $g$-exceptional, ${\rm Supp}F \supset {\rm Ex}(g)$, and $2\alpha \boldsymbol{\rm M}_{Y}=g^{*}(2\alpha \boldsymbol{\rm M}_{X})-2\alpha F$. 
From these facts, we see that $g \colon Y \to X$ is the desired morphism.  
\end{proof}

From now on, we prove Theorem \ref{thm--equiv-gendlt}. 
\begin{proof}[Proof of Theorem \ref{thm--equiv-gendlt}]
Clearly, (\ref{3}) implies (\ref{1}), and (\ref{1}) implies (\ref{2}). 
Hence we only need to prove that (\ref{2}) implies (\ref{3}). 
Let $U \subset X$ be the open subset as in (\ref{2}). 
We will prove the existence of a projective birational morphism $g \colon Y \to X$ and an open subset $V \subset U$ such that $\boldsymbol{\rm M}$ descends to $Y$, $g$ is an isomorphism over $V$, and $V$ contains the generic point of any generalized lc center of $(X,\Delta,\boldsymbol{\rm M})/Z$. 
Assuming the existence of such $g$, then we can construct the desired log resolution $f\colon \tilde{X} \to X$ and the open subset $V\subset X$ by considering an appropriate log resolution of $(X,\Delta)$ factoring through $g$. 
Thus, we only need to prove the existence of $g \colon Y \to X$ and $V \subset U$ stated above. 

In this paragraph, we reduce the problem to the case where $K_{X}$ is $\mathbb{Q}$-Cartier and $\boldsymbol{\rm M}_{X}$ is $\mathbb{R}$-Cartier. 
By \cite[Proposition 4.12]{has-class}, with notations as in \cite{has-class}, the pair $\langle X,\Delta \rangle$ of $X$ and $\Delta$ is pseudo-lc in the sense of \cite[Definition 4.2]{has-class}. 
By \cite[Theorem 1.2]{has-class}, there exists a projective small birational morphism $h \colon W \to X$ from a normal variety $W$ such that putting $\Delta_{W}=h^{-1}_{*}\Delta$ then $K_{W}+\Delta_{W}$ is an $h$-ample $\mathbb{R}$-Cartier divisor on $W$. 
Note that \cite[Theorem 1.2]{has-class} can be applied without the quasi-projectivity of the variety. 
Since $h$ is small, $(K_{W}+\Delta_{W})|_{h^{-1}(U)}$ is the pullback of $(K_{X}+\Delta)|_{U}$, hence $h$ is an isomorphism over $U$. 
Since $h$ is small, we can check that $(W,\Delta_{W},\boldsymbol{\rm M})/Z$ is a generalized lc pair and $h^{-1}(U)$ contains the generic point of any generalized lc center of $(W,\Delta_{W},\boldsymbol{\rm M})/Z$. 
Then it is easy to see that $h^{-1}(U)$ satisfies the conditions of (\ref{2}). 
Thus, replacing $(X,\Delta,\boldsymbol{\rm M})/Z$ with $(W,\Delta_{W},\boldsymbol{\rm M})/Z$, we may assume that $K_{X}+\Delta$ is $\mathbb{R}$-Cartier. 
Then $\boldsymbol{\rm M}_{X}$ is $\mathbb{R}$-Cartier. 
With notations as in \cite{has-class}, the log canonicity of $(X,\Delta)$ and \cite[Lemma 4.3]{has-class} imply that the pair $\langle X,0 \rangle$ is pseudo-lc in the sense of \cite[Definition 4.2]{has-class}. 
By \cite[Theorem 1.2]{has-class}, we get a projective small birational morphism $h' \colon W' \to X$ from a normal variety $W'$ such that $K_{W'}$ is an $h'$-ample $\mathbb{R}$-Cartier divisor on $W'$. 
Then $K_{W'}|_{h'^{-1}(U)}$ is the pullback of $K_{X}|_{U}$, hence $h'$ is an isomorphism over $U$. 
It is easy to see that $(W',h'^{-1}_{*}\Delta,\boldsymbol{\rm M})/Z$ is a generalized lc pair and $h'^{-1}(U)$ satisfies the conditions of (\ref{2}). 
Replacing $(X,\Delta,\boldsymbol{\rm M})/Z$ with $(W',h'^{-1}_{*}\Delta,\boldsymbol{\rm M})/Z$, we may assume that $K_{X}$ is $\mathbb{Q}$-Cartier. 
In this way, we may assume that $K_{X}$ is $\mathbb{Q}$-Cartier and $\boldsymbol{\rm M}_{X}$ is $\mathbb{R}$-Cartier. 

By the definition of $U$, it follows that $(X,\Delta)$ is dlt. 
Hence $(X,0)$ is klt. 
By applying Lemma \ref{lem--genklt-descend} to the generalized klt pair $(X,\tfrac{1}{2}\Delta, \tfrac{1}{2}\boldsymbol{\rm M})/Z$, we get a projective birational morphism $g \colon Y \to X$ such that $\tfrac{1}{2}\boldsymbol{\rm M}$ descends to $Y$ and the divisor $F=\tfrac{1}{2}g^{*}\boldsymbol{\rm M}_{X}-\tfrac{1}{2}\boldsymbol{\rm M}_{Y}$ satisfies ${\rm Supp}F={\rm Ex}(g)$. 
Let $V_{0}\subset X$ be the largest open subset over which $g$ is an isomorphism. 
Let $\eta$ be the generic point of a generalized lc center of $(X,\Delta, \boldsymbol{\rm M})/Z$.
Then $\eta$ is the generic point of an lc center of $(U,\Delta|_{U})$, and $\eta$ is also the generic point of a generalized lc center of $(U,\Delta|_{U}, \boldsymbol{\rm M}|_{U})/U$ with the identity $U \to U$.  
Pick a prime divisor $P$ over $U$ whose center is $\overline{\{\eta\}}\cap U$ and the log discrepancy $a(P,U, \Delta|_{U})$ is zero. 
Then 
$$0 \leq a(P,U,\Delta|_{U}+ \boldsymbol{\rm M}_{U})\leq a(P,U, \Delta|_{U}) = 0.$$
In particular, we have $a(P,U,\Delta|_{U}+ \boldsymbol{\rm M}_{U}) = a(P,U, \Delta|_{U})$. 
Since ${\rm Supp}F={\rm Ex}(g)$, the center $c_{g^{-1}(U)}(P)$ of $P$ on $g^{-1}(U)$ is not contained in ${\rm Ex}(g)$. 
Then $c_{g^{-1}(U)}(P)$ intersects $g^{-1}(V_{0})$, hence we see that $\eta \in V_{0}$. 
 
We set $V=V_{0}\cap U$. 
Then $g$ is an isomorphism over $V$. 
By the above discussion, $V$ contains the generic point of any generalized lc center of $(X,\Delta, \boldsymbol{\rm M})/Z$. 
By the property of $U$ in (\ref{2}), we see that $(V, \Delta|_{V})$ is log smooth. 
In this way, we get a projective birational morphism $g \colon Y \to X$ and an open subset $V \subset U$ such that $\boldsymbol{\rm M}$ descends to $Y$, $g$ is an isomorphism over $V$, and $V$ contains the generic point of any generalized lc center of $(X,\Delta,\boldsymbol{\rm M})/Z$. 
From this fact, (\ref{2}) implies (\ref{3}). 
We complete the proof. 
\end{proof}


\end{document}